\newcommand{\R}{\mathbb{R}}
\newcommand{\Z}{\mathbb{Z}}
\newcommand{\D}{\mathcal{D}_{\mu}}
\newcommand{\E}{\mathcal{E}_{\mu,p}}
\newcommand{\HD}{\mathsf{HD}}
\newcommand{\LD}{\mathsf{LD}}
\newcommand{\BCE}{\mathsf{BCE}}
\newcommand{\Stop}{\mathsf{Stop}}
\newcommand{\Tree}{\mathsf{Tree}}
\newcommand{\Top}{\mathsf{Top}}
\newcommand{\Good}{\mathsf{Good}}
\newcommand{\Next}{\mathsf{Next}}
\newcommand{\Tr}{\mathsf{Tr}}
\newcommand{\Sep}{\mathsf{Sep}}
\newcommand{\one}{\mathds{1}}
\newcommand{\MD}{\mathcal{MD}}
\newcommand{\ID}{\mathsf{ID}}
\newcommand{\bbox}{T}
\DeclareMathOperator{\supp}{supp}
\DeclareMathOperator{\graph}{graph}
\DeclareMathOperator{\lip}{Lip}
\DeclareMathOperator{\spn}{span}
\newcommand\restr[2]{{% we make the whole thing an ordinary symbol
		\left.\kern-\nulldelimiterspace % automatically resize the bar with \right
		#1 % the function
		%\vphantom{\big|} % pretend it's a little taller at normal size
		\right|_{#2} % this is the delimiter
}}
\newcommand{\Hr}[1]{\restr{\mathcal{H}^n}{#1}}
\newcommand{\Hn}{\mathcal{H}^n}
\DeclareMathOperator{\diam}{diam}
\DeclareMathOperator{\dist}{dist}
\newtheorem{theorem}{Theorem}[section]
\newtheorem*{theorem*}{Theorem}
\newtheorem{lemma}[theorem]{Lemma}
\newtheorem{prop}[theorem]{Proposition}
\theoremstyle{remark}
\newtheorem{definition}[theorem]{Definition}
\newtheorem*{definition*}{Definition}
\newtheorem{question}[theorem]{Question}
\newtheorem{remark}[theorem]{Remark}
\numberwithin{equation}{section}
\title[Cones, rectifiability, and SIO's]{Cones, rectifiability, and singular integral operators}
\author[D. D\k{a}browski]{Damian D\k{a}browski}
\address{Damian D\k{a}browski\newline\indent Departament de Matem\`atiques, Universitat Aut\`onoma de Barcelona; Barcelona Graduate School of Mathematics (BGSMath)\newline\indent Edifici C Facultat de Ci\`encies, 08193 Bellaterra, Barcelona, Catalonia, Spain.}
\address{Current address: Department of Mathematics and Statistics\\ University of Jyv\"askyl\"a,
	P.O. Box 35 (MaD)\\
	FI-40014 University of Jyv\"askyl\"a\\
	Finland}
\email{damian.m.dabrowski "at" jyu.fi}
\begin{document}
	\begin{abstract}
		Let $\mu$ be a Radon measure on $\R^d$. We define and study conical energies $\E(x,V,\alpha)$, which quantify the portion of $\mu$ lying in the cone with vertex $x\in\R^d$, direction $V\in G(d,d-n)$, and aperture $\alpha\in (0,1)$. We use these energies to characterize rectifiability and the big pieces of Lipschitz graphs property. Furthermore, if we assume that $\mu$ has polynomial growth, we give a sufficient condition for $L^2(\mu)$-boundedness of singular integral operators with smooth odd kernels of convolution type.
	\end{abstract}
	\keywords{rectifiability, cone, singular integral operators, conical density, big pieces of Lipschitz graphs}
	\subjclass{28A75 (Primary) 28A78, 42B20 (Secondary)}
	\maketitle

	\section{Introduction}
	Let $m<d$ be positive integers. Given an $m$-plane $V\in G(d,m)$, a point $x\in \R^d$, and $\alpha\in (0,1)$, we define
	\begin{equation*}
	K(x,V,\alpha) = \{y\in\R^d\ :\ \dist(y,V+x)<\alpha|x-y| \}.
	\end{equation*}
	That is, $K(x,V,\alpha)$ is an open cone centered at $x$, with direction $V$, and aperture $\alpha$. 
	
	Let $0<n<d$. It is well-known that if a set $E\subset \R^d$ satisfies for some $V\in G(d,d-n),\ \alpha\in (0,1),$ the condition
	\begin{equation}\label{eq:lip graph condition}
	x\in E\quad\quad\Rightarrow\quad\quad E\cap K(x,V,\alpha)=\varnothing,
	\end{equation}
	then $E$ is contained in some $n$-dimensional Lipschitz graph $\Gamma$, and $\lip(\Gamma)\le \frac{1}{\alpha}$, see e.g. \cite[Proof of Lemma 15.13]{mattila1999geometry}.
	
	To what extent can we weaken the condition \eqref{eq:lip graph condition} and still get meaningful information about the geometry of $E$? It depends on what we mean by ``meaningful information'', naturally. One could ask for the rectifiability of $E$, or if $E$ contains big pieces of Lipschitz graphs, or whether nice singular integral operators are bounded on $L^2(E)$. The aim of this paper is to answer these three questions.
	
	\subsection{Rectifiability}
	A measurable set $E\subset \R^d$ is $n$-rectifiable if there exists a countable number of Lipschitz maps $f_i:\R^n\to\R^d$ such that
	\begin{equation*}
	\Hn\big(E\setminus \bigcup_i f_i(\R^n)\big)=0,
	\end{equation*}
	where $\Hn$ denotes the $n$-dimensional Hausdorff measure.
	More generally, a Radon measure $\mu$ is said to be $n$-rectifiable if $\mu\ll\Hn$ and there exists an $n$-rectifiable set $E\subset\R^d$ such that $\mu(\R^d\setminus E)=0$.
	
	A measure-theoretic analogue of \eqref{eq:lip graph condition}, well-suited to the study of rectifiability, is that of an \emph{approximate tangent plane}. We recall the definition below.
	
	For $r>0$ we define the truncated cone
	\begin{equation*}
	K(x,V,\alpha,r) = K(x,V,\alpha)\cap B(x,r),
	\end{equation*} 
	and for $0<r<R$ we define the doubly truncated cone
	\begin{equation*}
	K(x,V,\alpha,r,R) = K(x,V,\alpha,R)\setminus K(x,V,\alpha,r) .
	\end{equation*}
	Given a Radon measure $\mu$ on $\R^d$ and $x\in\supp\mu$, the lower and upper densities of $\mu$ at $x$ are defined as
	\begin{equation*}
	\Theta_{*}^n(\mu,x) = \liminf_{r\to 0} \frac{\mu(B(x,r))}{r^n}\quad\text{and}\quad \Theta^{n,*}(\mu,x) = \limsup_{r\to 0} \frac{\mu(B(x,r))}{r^n}.
	\end{equation*}
%	For $E\subset\R^d$ we set $\Theta_{*}^n(E,x) = \Theta_{*}^n(\Hr{E},x)$ and $\Theta^{n,*}(E,x) = \Theta^{n,*}(\Hr{E},x)$. 
	
	Recall that if $\mu$ is $n$-rectifiable, then $0<\Theta_{*}^n(\mu,x) = \Theta^{n,*}(\mu,x)<\infty$ for $\mu$-a.e. $x\in\supp\mu$. In that case we set $\Theta^n(\mu,x) := \Theta_{*}^n(\mu,x) = \Theta^{n,*}(\mu,x)$.
	
	\begin{definition}\label{def:approx tangent}
		We say that an $n$-plane $W\in G(d,n)$ is an \emph{approximate tangent plane} to a Radon measure $\mu$ at $x\in \supp\mu$ if $\Theta^{n,*}(\mu,x)>0$ and for every $\alpha\in (0,1)$
		\begin{equation}\label{eq:conical density}
		\lim_{r\to 0}\frac{\mu(K(x,W^{\perp},\alpha,r))}{r^n}=0.
		\end{equation}
	\end{definition}
	
	The following classical characterization of rectifiable measures holds.
	\begin{theorem}[{\cite[Theorem 9.1]{federer1947varphi}}]\label{thm:classical tangents}
	Let $\mu$ be finite Radon measure on $\R^d$ satisfying $0<\Theta^{n,*}(\mu,x)<\infty$ for $\mu$-a.e. $x\in\R^d$. Then, the following are equivalent:
	\begin{enumerate}[label=\alph*)]
		\item $\mu$ is $n$-rectifiable,
		\item for $\mu$-a.e. $x\in \R^d$ there is a unique approximate tangent plane to $\mu$ at $x$,
		\item for $\mu$-a.e. $x\in \R^d$ there is $W_x\in G(d,n)$ and $\alpha_x\in (0,1)$ such that
		\begin{equation}\label{eq:weaker conical density}
		\limsup_{r\to 0}\frac{\mu(K(x,W_x^{\perp},\alpha_x,r))}{r^n}<(\alpha_x)^n\,\varepsilon(n)\,\Theta^{n,*}(\mu,x),
		\end{equation}
	\end{enumerate}
	where $\varepsilon(n)$ is a small dimensional constant.
	\end{theorem}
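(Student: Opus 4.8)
The plan is to prove the cycle of implications (a) $\Rightarrow$ (b) $\Rightarrow$ (c) $\Rightarrow$ (a). The implication (a) $\Rightarrow$ (b) is classical: since $\mu \ll \Hn$ and $0 < \Theta^{n,*}(\mu,x) < \infty$ for $\mu$-a.e.\ $x$, standard density theory (see \cite[Chapter~6]{mattila1999geometry}) lets one write, up to a $\mu$-null set, $\supp\mu = \bigcup_i \Gamma_i$ with the $\Gamma_i$ pairwise disjoint pieces of $n$-dimensional Lipschitz graphs and $\mu = \sum_i g_i\,\Hr{\Gamma_i}$, $0 < g_i < \infty$. By Rademacher's theorem the graphing maps are differentiable $\Hn$-a.e., so at $\Hn$-a.e.\ $x \in \Gamma_i$ there is a plane $W$ tangent to $\Gamma_i$; together with the Lebesgue differentiation theorem applied to $g_i$ this shows $W$ is an approximate tangent plane to $\mu$ at $x$, with $\Theta^n(\mu,x) = g_i(x)$. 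Uniqueness follows because, for $W' \ne W$ and $\alpha$ small, the cone $K(x,(W')^{\perp},\alpha)$ captures a relatively open portion of $\Gamma_i$ near $x$ of $\Hn$-measure $\gtrsim r^n$ at scale $r$, contradicting \eqref{eq:conical density}. The implication (b) $\Rightarrow$ (c) is immediate: taking $W_x$ to be the approximate tangent plane and, say, $\alpha_x = 1/2$, the left-hand side of \eqref{eq:weaker conical density} vanishes by \eqref{eq:conical density}, while the right-hand side is strictly positive since $\Theta^{n,*}(\mu,x) > 0$.

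The substantial implication is (c) $\Rightarrow$ (a). The first step is a decomposition: using a countable dense family of planes in $G(d,n)$ and discretizing the aperture $\alpha_x$, the threshold radius implicit in \eqref{eq:weaker conical density}, and the density $\Theta^{n,*}(\mu,x)$, one writes $\supp\mu = N \cup \bigcup_k A_k$ with $\mu(N)=0$, each $A_k$ Borel with $\mu(A_k)<\infty$ and $\diam A_k$ arbitrarily small, and such that for suitable $W_k \in G(d,n)$, $\alpha_k \in (0,1)$, $r_k>0$, $\Theta_k>0$, $c_k < \alpha_k^n\,\varepsilon(n)\,\Theta_k$ one has, for all $x \in A_k$ and $0<r<r_k$,
\begin{equation*}
\mu\bigl(K(x,W_k^{\perp},\alpha_k/2,r)\bigr) \le c_k\,r^n \qquad\text{and}\qquad \Theta^{n,*}(\mu,x) \ge \Theta_k,
\end{equation*}
where the aperture has been shrunk from $\alpha_x$ to $\alpha_k/2$ to absorb the small angle between $W_x$ and $W_k$. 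Since a countable union of $n$-rectifiable sets is $n$-rectifiable, it suffices to show each $A=A_k$ is $n$-rectifiable; and for this it is enough to show that $W := W_k$ is an approximate tangent plane to $\restr{\mu}{A}$ at $\mu$-a.e.\ $x\in A$, because $\Theta^{n,*}(\restr{\mu}{A},x) = \Theta^{n,*}(\mu,x) \in (0,\infty)$ for $\mu$-a.e.\ $x \in A$, such a tangent plane is then automatically unique, and a measure with a unique approximate tangent plane $\mu$-a.e.\ is $n$-rectifiable by the classical theory (see \cite[Chapter~15]{mattila1999geometry}).

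To produce the tangent plane one argues along point-dependent sequences of scales. Fixing $x\in A$ with $\Theta^{n,*}(\restr{\mu}{A},x) \ge \Theta := \Theta_k$ (a full-measure condition), for every $\beta \le \alpha_k/2$ and every $r<r_k$ with $\restr{\mu}{A}(B(x,r)) \ge \tfrac12\Theta r^n$ — of which there is a sequence tending to $0$ — the displayed bound gives
\begin{equation*}
\restr{\mu}{A}\bigl(B(x,r)\setminus K(x,W^{\perp},\beta,r)\bigr) \;\ge\; \bigl(\tfrac{\Theta}{2} - c_k\bigr)\,r^n \;\ge\; \tfrac{\Theta}{4}\,r^n,
\end{equation*}
using $c_k < \varepsilon(n)\Theta \le \tfrac14\Theta$ (we may assume $\varepsilon(n)\le \tfrac14$). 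The dimensional constant $\varepsilon(n)$ is calibrated precisely so that this lower bound on the mass lying outside the cone dominates the mass inside it at the relevant scales; covering the ``good'' region by a controlled number of smaller balls, applying the same estimate inside each, and iterating through scales via a stopping-time argument (in the spirit of the corona-type decompositions used elsewhere in this paper) one concludes that $\limsup_{r\to 0} r^{-n}\restr{\mu}{A}(K(x,W^{\perp},\beta,r)) = 0$ for $\mu$-a.e.\ $x \in A$ and every $\beta<1$, which is the desired approximate tangent plane statement.

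I expect the main obstacle to be exactly this last step: the passage from the weak, one-aperture, non-vanishing bound \eqref{eq:weaker conical density} to the vanishing of the conical density at \emph{every} aperture. Two features make it delicate. First, only the upper density $\Theta^{n,*}(\mu,x)$ is controlled, so the argument runs not at all small scales but along point-dependent sequences of radii, forcing the exceptional sets created at each stage of the iteration to be kept in check via Besicovitch-type covering theorems. Second, the constant $\varepsilon(n)$ must be chosen small enough — depending only on $n$, not on the aperture — that the multiscale iteration does not bleed a fixed proportion of mass at each step; pinning down the sharp geometric and combinatorial estimate behind the required smallness of $\varepsilon(n)$ is where most of the work lies.
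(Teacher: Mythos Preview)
The paper does not contain a proof of this statement: Theorem~\ref{thm:classical tangents} is quoted as a classical result of Federer (with the citation \cite[Theorem 9.1]{federer1947varphi}) and is used as background, not proved. So there is no ``paper's own proof'' to compare your proposal against.

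That said, a brief comment on your sketch. The implications (a)$\Rightarrow$(b) and (b)$\Rightarrow$(c) are fine. For (c)$\Rightarrow$(a) you reduce to showing that $W_k$ is an approximate tangent plane to $\restr{\mu}{A_k}$ at $\mu$-a.e.\ point, and then invoke ``a measure with a unique approximate tangent plane $\mu$-a.e.\ is $n$-rectifiable by the classical theory''. But that last appeal is precisely the implication (b)$\Rightarrow$(a) of the theorem you are proving, so as written the argument is circular unless you are willing to import (b)$\Leftrightarrow$(a) as a black box from \cite[Chapter~15]{mattila1999geometry} --- in which case the only new content you would be supplying is (c)$\Rightarrow$(b). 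Moreover, your outline of that step (``covering the good region by a controlled number of smaller balls \dots\ and iterating through scales via a stopping-time argument'') is at the level of a plan rather than a proof: the actual mechanism in Federer's argument (or in Mattila's exposition, \cite[Lemmas 15.13--15.14]{mattila1999geometry}) is not an iteration to force the conical density to vanish, but a direct Lipschitz-graph construction from the one-aperture bound \eqref{eq:weaker conical density}, using that points violating the graph condition at some scale are controlled in measure by the conical density hypothesis. Your intuition that the smallness of $\varepsilon(n)$ is the crux is correct, but the route through ``upgrade to vanishing conical density at every aperture'' is not how the classical proof goes and would need substantial new ideas to make rigorous.
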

%	Unsurprisingly, the behaviour of \emph{purely unrectifiable sets} is radically different. We say that $F\subset\R^d$ is purely $n$-unrectifiable if for any Lipschitz map $f:\R^n\to \R^d$ we have $\Hn(F\cap f(\R^n))=0$. As shown in the following proposition, purely unrectifiable sets are \emph{very} far from having approximate tangents.
%	\begin{prop}[{\cite[Corollary 15.16]{mattila1999geometry}}]\label{prop:unrectifiable cones}
%		If $F\subset\R^d$ is purely $n$-unrectifiable, and $\Hn(F)<\infty$, then for any $V\in G(d,d-n),\ 0<\alpha<1$,
%		\begin{equation*}
%		\Theta^{*,n}(F\cap K(x,V,\alpha),x)\gtrsim \alpha^n
%		\end{equation*}
%		for $\Hn$-a.e. $x\in F$.
%	\end{prop}
	%The theorem above follows by a more technical characterization of rectifiability in terms of \emph{weak linear approximability}, due to Marstrand 
	
	The results we prove in this paper are of similar nature. More precisely, we introduce and study \emph{conical energies}.
	\begin{definition}
		Suppose $\mu$ is a Radon measure on $\R^d$, and $x\in\supp\mu$. Let $V\in G(d,d-n),\ \alpha\in(0,1),\ 1\le p <\infty$ and $R>0$. We define the \emph{$(V,\alpha,p)$-conical energy of $\mu$ at $x$ up to scale $R$} as
		\begin{equation*}
		\E(x,V,\alpha,R) = \int_0^{R}\bigg(\frac{\mu(K(x,V,\alpha,r))}{r^n} \bigg)^p\ \frac{dr}{r}.
		\end{equation*}
		For $E\subset\R^d$ we set also $\mathcal{E}_{E,p}(x,V,\alpha,R)=\mathcal{E}_{\Hr{E},p}(x,V,\alpha,R)$.
	\end{definition}
	Note that the definition above depends on the dimension parameter $n$, so
	it would be more precise to say that $\E(x,V,\alpha,R)$ is the $n$-dimensional
	$(V,\alpha,p)$-conical energy. For the sake of brevity, throughout the paper we will consider $n$ to be fixed, and we will usually not point out this dependence. The same applies to other definitions.
%	Observe that if $\E(x,V,\alpha,R)<\infty$, then the conical density $r^{-n}\,\mu(K(x,V,\alpha,r))$ converges to 0. Hence, the condition $\E(x,V,\alpha,R)<\infty$ can be seen as a stronger version of \eqref{eq:conical density}.
	
	We are ready to state our first result.
	\begin{theorem}\label{thm:characterization rectif}
		Let $1\le p <\infty$.  Suppose $\mu$ is a Radon measure on $\R^d$ satisfying $\Theta^{n,*}(\mu,x)>0$ and $\Theta_*^n(\mu,x)<\infty$ for $\mu$-a.e. $x\in\R^d$. Assume that for $\mu$-a.e. $x\in\R^d$ there exists some $V_x\in G(d,d-n)$ and $\alpha_x\in (0,1)$ such that
		\begin{equation}\label{eq:pointwise energy finite sufficient}
		\E(x,V_x,\alpha_x,1)<\infty.
		\end{equation}
		Then, $\mu$ is $n$-rectifiable.
		
		Conversely, if $\mu$ is $n$-rectifiable, then for $\mu$-a.e. $x\in\R^d$ there exists $V_x\in G(d,d-n)$ such that for all $\alpha\in (0,1)$ we have
		\begin{equation}\label{eq:pointwise energy finite necessary}
		\E(x,V_x,\alpha,1)<\infty.
		\end{equation}
	\end{theorem}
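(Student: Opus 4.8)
My plan is to treat the two implications separately.

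\textbf{Sufficiency.} The starting point is a \emph{spike lemma}: if $\E(x,V,\alpha,1)<\infty$ then $\lim_{r\to 0}r^{-n}\mu(K(x,V,\alpha,r))=0$. Indeed $r\mapsto\mu(K(x,V,\alpha,r))$ is nondecreasing, so if this ratio were $\ge c$ along $s_k\downarrow 0$, chosen sparse enough that the intervals $[s_k,2s_k]$ are pairwise disjoint, then the ratio is $\gtrsim_n c$ on each $[s_k,2s_k]$ and $\E(x,V,\alpha,1)=\infty$. Next I perform a standard countable decomposition, writing $\supp\mu$ (up to a $\mu$-null set) as a union of Borel pieces $F$ on which $V_x$ lies in a small Grassmannian ball around a fixed $V$, $\alpha_x\ge\alpha_0>0$, $\Theta_*^n(\mu,\cdot)\le M$, and $\E(x,V,\beta,r_0)\le C$ uniformly (with $\beta=\alpha_0/2$); it suffices to show each $\mu\mres F$ is $n$-rectifiable. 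The crucial point is that $\Theta^{n,*}(\mu,x)<\infty$ for $\mu$-a.e.\ $x\in F$: granting this and recalling $\Hn(\{\Theta^{n,*}(\mu,\cdot)=\infty\})=0$ for any Radon measure together with the classical comparison $\mu(A)\le 2^n\lambda\,\Hn(A)$ whenever $\Theta^{n,*}(\mu,\cdot)\le\lambda$ on $A$, we obtain $\mu\mres F\ll\Hn$ and $\Theta^{n,*}(\mu,\cdot)<\infty$ $\mu$-a.e. Then, since by the spike lemma the conical density limit vanishes and $\Theta^{n,*}(\mu,x)>0$, inequality \eqref{eq:weaker conical density} holds $\mu$-a.e.\ (with $\varepsilon(n)$ irrelevant), and Theorem~\ref{thm:classical tangents} applied to the finite measures $\mu\mres B(0,N)$ yields that $\mu\mres F$ is $n$-rectifiable; summing over $F$ finishes.

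\textbf{The obstacle: $\Theta^{n,*}(\mu,x)<\infty$ $\mu$-a.e.\ on $F$.} I would argue by contradiction. If $G=\{x\in F:\Theta^{n,*}(\mu,x)=\infty\}$ had positive $\mu$-measure, fix $x_0\in G$ of $\mu$-density one for $F$ and $s_k\downarrow 0$ with $\mu(B(x_0,s_k))\ge N_k s_k^n$, $N_k\to\infty$. By the spike lemma $\mu(K(x_0,V,\beta,s_k))=o(s_k^n)$, so at least $\tfrac12 N_k s_k^n$ of the mass lies in $B(x_0,s_k)\setminus K(x_0,V,\beta,s_k)$ and, by the density choice, mostly in $F$; covering this slab by $\simeq\beta^{-n}$ balls of radius $\beta s_k$ produces $y_k\in F$ with $\mu(B(y_k,\rho_k))\ge c N_k\rho_k^n$, $\rho_k=\beta s_k\downarrow 0$. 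The key quantitative claim is that such concentration \emph{at a point of $F$} forces $\E(y_k,V,\beta,r_0)\gtrsim (cN_k)^p\to\infty$, contradicting the uniform energy bound on $F$: spiked-up mass that is not already flat is seen by the cone at the scale of its transversal spread, so the energy integrand stays $\gtrsim cN_k$ on a window of scales of definite logarithmic length; and if instead the mass near $y_k$ is flat at all small scales, then $\mu$ is locally absolutely continuous with respect to $\Hn$ there, with finite density $\mu$-a.e. Turning this dichotomy into a quantitative ``density spike $\Rightarrow$ cone mass'' estimate is the main obstacle, and it is precisely here that the finiteness of the \emph{integral} energy (not just the vanishing of the limit) is needed.

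\textbf{Necessity.} Since $\E(x,V,\alpha,1)$ is nondecreasing in $\alpha$, it suffices to handle $\alpha$ in a countable dense set, so fix $\alpha$. As $\mu$ is $n$-rectifiable, $\mu=f\,\Hn\mres E$ with $E=\bigcup_i\Gamma_i$ a countable union of Lipschitz graphs and a unique approximate tangent $W_x$ $\mu$-a.e.; set $V_x=W_x^{\perp}$. By a sufficiently fine decomposition I reduce to a bounded piece $F$ on which $E\cap F$ lies in a single graph $\Gamma$ over a plane $W$ with Lipschitz constant $\ell<\sqrt{1-\alpha'^2}/\alpha'$ for a slightly larger $\alpha'>\alpha$ (with $K(x,V_x,\alpha,r)\subseteq K(x,V,\alpha',r)$ on $F$, $V=W^{\perp}$) and on which $\mu$ has bounded density; it then suffices to show $\int_F\E(x,V,\alpha',1)\,d\mu(x)<\infty$. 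Using bounded density and $\mu(K)^{p-1}\le(Cr^n)^{p-1}$ this reduces to $p=1$, and Fubini together with integration in $r$ gives
\[
\int_F\E(x,V,\alpha',1)\,d\mu(x)\ \simeq\ \int_{E}\Bigl(\int_{\substack{x\in F,\ |x-y|<1\\ \dist(y,V+x)<\alpha'|x-y|}}\frac{d\mu(x)}{|x-y|^n}\Bigr)f(y)\,d\Hn(y).
\]
For $y\in\Gamma$ the inner integral vanishes, because for $x,y\in\Gamma$ the choice $\ell<\sqrt{1-\alpha'^2}/\alpha'$ forces $\dist(y,V+x)\ge|x-y|/\sqrt{1+\ell^2}\ge\alpha'|x-y|$. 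For $y\in E\setminus\Gamma$ at distance $h\in(0,1)$ from $\Gamma$, the cone condition together with the flatness of $\Gamma$ confines the contributing $x$ to $\{x\in\Gamma:\dist(x-y,V)\lesssim_{\alpha'}h\}$, a set of $\mu$-measure $\lesssim h^n$ all of whose points satisfy $|x-y|\simeq h$, so the inner integral is bounded by a constant independent of $h$. Hence $\int_F\E(x,V,\alpha',1)\,d\mu(x)\lesssim\mu\bigl((E\setminus\Gamma)\cap B(x_F,C)\bigr)<\infty$, so $\E(x,V_x,\alpha,1)\le\E(x,V,\alpha',1)<\infty$ for $\mu$-a.e.\ $x\in F$; summing over the pieces completes the proof.
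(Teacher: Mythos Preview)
Your necessity argument is essentially correct and is in fact more elementary than the paper's. The paper proceeds via Tolsa's $\beta_2$ square-function theorem: it shows that the minimizing planes $L_{x,r}$ converge to the approximate tangent $W_x$, then uses Chebyshev to bound $\mu(K(x,W_x^{\perp},\alpha,r,2r))$ by $\beta_{\mu,2}(x,2r)^2 r^n$, and finally a telescoping argument to pass from annuli to full cones. Your route---decomposing onto a single flat Lipschitz graph $\Gamma$, using that $K(x,V,\alpha')\cap\Gamma=\varnothing$ for $x\in\Gamma$, and estimating the contribution of $y\in E\setminus\Gamma$ by the Whitney-type bound $|x-y|\simeq\dist(y,\Gamma)$---is closer in spirit to the paper's Section~11 (necessity for BPLG) than to its Section~9, and it avoids the $\beta$-number machinery entirely. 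The reduction to $p=1$ via bounded density and the choice of a single $V_x=W_x^{\perp}$ working for all $\alpha$ (via monotonicity and a countable dense set of apertures) are both fine.

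The sufficiency argument, however, has a genuine gap---one you yourself flag as ``the main obstacle.'' The dichotomy you sketch does not close. You produce $y_k\in F$ with $\mu(B(y_k,\rho_k))\ge cN_k\rho_k^n$, but nothing prevents this mass from lying entirely in the slab $B(y_k,\rho_k)\setminus K(y_k,V,\beta)$; in that case $\mu(K(y_k,V,\beta,r))/r^n$ need not be large at any scale $r\le\rho_k$, so the energy at $y_k$ is not forced to blow up. Your fallback (``if the mass is flat at all small scales then $\mu\ll\Hn$ there'') conflates flatness at the single scale $\rho_k$ with flatness at \emph{all} scales, and in any case says nothing about the original point $x_0$. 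There is no local mechanism that turns a single density spike at one scale into cone mass at the \emph{same} point; what is really needed is a multiscale accounting.

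The paper supplies this via its main corona lemma (Lemma~3.1). One first approximates $\mu$ by a measure $\nu$ supported on small $n$-disks parallel to $V^{\perp}$ (so that $\nu$ inherits a polynomial growth bound at small scales and a comparable conical energy), applies the corona decomposition to $\nu$, and uses the packing estimate $\sum_{R\in\Top}\Theta_\nu(2B_R)^p\nu(R)\lesssim\nu(\R^d)+\mathcal{E}_{\nu,p}$ to deduce $\int (M_n\nu)^p\,d\nu\lesssim 1$ uniformly in the approximation parameters. A limiting argument then gives $\int (M_n\mu)^p\,d\mu<\infty$, whence $\Theta^{n,*}(\mu,x)<\infty$ $\mu$-a.e. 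The point is that the density--energy link is \emph{global} and comes out of the stopping-time construction (high-density and low-density stopping balanced against the $\BCE$ stopping), not from any pointwise ``spike $\Rightarrow$ cone mass'' estimate. Your spike lemma and the classical Theorem~1.2 are then enough to conclude, exactly as you say---but only after $\Theta^{n,*}<\infty$ is in hand.
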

	\begin{remark}
		The ``necessary'' part of \thmref{thm:characterization rectif} improves on \thmref{thm:classical tangents} in the following way. Existence of approximate tangents means that the conical density simply converges to 0, while \eqref{eq:pointwise energy finite necessary} means that the conical density satisfies a Dini-type condition, and converges to $0$ rather fast.
	\end{remark}
	\begin{remark}
		Concerning the ``sufficient'' part of \thmref{thm:characterization rectif}: clearly, condition \eqref{eq:weaker conical density} is weaker than \eqref{eq:pointwise energy finite sufficient}. However, \thmref{thm:characterization rectif} has the following advantage over \thmref{thm:classical tangents}: we only require $\Theta^{n,*}(\mu,x)>0$ and $\Theta_*^n(\mu,x)<\infty$ for our criterion to hold. In particular, we do not assume $\mu\ll\Hn$. It is not clear to the author how to show a criterion involving \eqref{eq:weaker conical density} or \eqref{eq:conical density} without assuming a priori $\mu\ll\Hn$.
	\end{remark}

	\begin{question}
		Suppose $\mu$ is a Radon measure on $\R^d$ satisfying $\Theta^{n,*}(\mu,x)>0$ and $\Theta_*^n(\mu,x)<\infty$ for $\mu$-a.e. $x\in\R^d$. Assume that for $\mu$-a.e. $x\in\R^d$ there is an approximate tangent plane to $\mu$ at $x$. Does this imply that $\mu$ is $n$-rectifiable?
	\end{question}
	
	Let us mention that in recent years many similar characterizations of rectifiable measures have been obtained. By ``similar'' we mean: the pointwise finiteness of a square function involving some flatness quantifying coefficients. The most famous coefficients of this type are $\beta$ numbers, first introduced in \cite{jones1990rectifiable} and further developed by David and Semmes \cite{david1991singular,david1993analysis}. A necessary condition for rectifiability that uses $\beta_p$ numbers was shown in \cite{tolsa2015characterization}, see \thmref{thm:beta necessary} for the precise statement. Its sufficiency (under various assumptions on densities of the measure) was proved in \cite{pajot1997conditions,azzam2015characterization,edelen2016quantitative,badger2016two}. Measures carried by rectifiable curves are studied using $\beta$ numbers in \cite{lerman2003quantifying,badger2015multiscale,badger2016two,azzam2016characterization,badger2017multiscale,martikainen2018boundedness,naples2020rectifiability}, see also the survey \cite{badger2018generalized}.
	
	Finiteness of a square function involving $\alpha$ coefficients (defined in  \cite{tolsa2008uniform}) is shown to be necessary for rectifiability in \cite{tolsa2015characterization}. The opposite implication is studied in \cite{azzam2016wasserstein,orponen2018absolute,azzam2018characterization}. In \cite{dabrowski2019necessary,dabrowski2019sufficient} rectifiable measures were characterized using $\alpha_2$ numbers, first defined in \cite{tolsa2012mass}. Square functions involving centers of mass are studied in \cite{MAYBORODA2009} and \cite{villa2019square}. Finally, \cite{tolsa2014rectifiability,tolsa2017rectifiable} are devoted to a square function involving $\Delta$ numbers, where $\Delta_{\mu}(x,r) = |\frac{\mu(B(x,r))}{r^n} - \frac{\mu(B(x,2r))}{(2r)^n}|$.
	
	For related characterizations of rectifiable measures in terms of {tangent measures}, see \cite[Chapter 16]{mattila1999geometry} and \cite[Section 5]{preiss1987}. For a study of tangent points of Jordan curves in terms of $\beta$ numbers see \cite{bishop1994}, and for a generalization of this result for lower content regular sets of arbitrary dimension see \cite{villa2018tangent}. 
	
	The behaviour of conical densities on purely unrectifiable sets is studied in \cite{csornyei2010upper} and \cite[\S 5]{kaenmaki2010upper}. In \cite{mattila1988distribution,kaenmaki2008conical,csornyei2010upper,kaenmaki2011nonsymmetric} the relation between conical densities for higher dimensional sets and their porosity is investigated.
	
	Higher order rectifiability in terms of approximate differentiability of sets is studied in \cite{santilli2019}. In \cite{del2019geometric} the authors characterize $C^{1,\alpha}$ rectifiable sets using approximate tangents paraboloids, essentially obtaining a $C^{1,\alpha}$ counterpart of \thmref{thm:classical tangents}. See also \cite{ghinassi2017sufficient} and \cite{ghinassi2020menger} for related results.
	
	We would also like to mention recent results of Badger and Naples that nicely complement \thmref{thm:characterization rectif}. In \cite[Theorem D]{naples2020rectifiability} Naples showed that a modified version of \eqref{eq:conical density} can be used to characterize pointwise doubling measures \emph{carried by Lipschitz graphs}, that is measures vanishing outside of a countable union of $n$-dimensional Lipschitz graphs. In an even more recent paper \cite{badger2020radon} the authors completely describe measures carried by $n$-dimensional Lipschitz graphs on $\R^d$. They use a Dini condition imposed on the so-called \emph{conical defect}, and their condition is closely related to \eqref{eq:pointwise energy finite sufficient}. Note the absence of densities in the assumptions (and conclusion) of their results. If one adds an assumption $\Theta_*^n(\mu,x)<\infty$ for $\mu$-a.e. $x\in\R^d$, then it actually follows from \cite{badger2020radon} that $\mu$-a.e. finiteness of their conical Dini function implies that $\mu$ is $n$-rectifiable. We would like to stress however that neither \thmref{thm:characterization rectif} implies the results from \cite{badger2020radon}, nor the other way around.
	
	%	The aim of this note is to prove that if a measure $\mu$ has bounded square conical energy, then all singular integral operators $T_{\mu}$ with odd, sufficiently smooth kernels, are bounded in $L^2(\mu)$. Before we state the theorem, let us introduce all the necessary notions. 
	
	%{\color{red}[define $\mathcal{K}^n(\R^d)$ as in \cite{girela-sarrion2018}, define Menger curvature]}
	
	 %If $x=0$, we will write just $K(V,\alpha)$. 
	
	%and $K(V,\alpha,r) = K(0,V,\alpha,r)$. 
	
	\subsection{Big pieces of Lipschitz graphs}
	Before stating our next theorem, we need to recall some definitions.
	\begin{definition}
		We say that $E\subset\R^d$ is \emph{$n$-Ahlfors-David regular} (abbreviated as $n$-ADR) if there exist constants $C_0, C_1>0$ such that for all $x\in E$ and $0<r<\diam(E)$
		\begin{equation*}
		C_0\, r^n\le \Hn(E\cap B(x,r))\le C_1\, r^n.
		\end{equation*}
		Constants $C_0, C_1$ will be referred to as ADR constants of $E$.
	\end{definition}
	\begin{definition}\label{def:BPLG}
		We say that an $n$-ADR set $E\subset\R^d$ has \emph{big pieces of Lipschitz graphs} (BPLG) if there exist constants $\kappa, L>0,$ such that the following holds.
		 
		For all balls $B$ centered at $E$, $0<r(B)<\diam(E),$ there exists a Lipschitz graph $\Gamma_B$ with $\text{Lip}(\Gamma_B)\le L$, such that
		\begin{equation*}\label{eq:kappa big piece}
		\Hn(E\cap B\cap \Gamma_B)\ge \kappa\, r(B)^n.
		\end{equation*}
	\end{definition}
	Sets with BPLG were studied e.g. in \cite{david1988operateurs, david1993analysis, david1993quantitative} as one of the possible quantitative counterparts of rectifiability. Let us point out that the class of sets with BPLG is strictly smaller than the class of uniformly rectifiable sets, introduced in the seminal work of David and Semmes \cite{david1991singular, david1993analysis}. An example of a uniformly rectifiable set that does not contain BPLG is due to Hrycak, although he never wrote it down, see \cite[Appendix]{azzam2019semi}.
	
	While there are available many characterizations of uniformly rectifiable sets, the sets containing BPLG are not as well understood. David and Semmes showed in \cite{david1993quantitative} that a set contains BPLG if and only if it has \textit{big projections} and satisfies the \textit{weak geometric lemma}. We refer the reader to \cite{david1993quantitative} or \cite[\S I.1.5]{david1993analysis} for details. 
	
	Very recently, Orponen characterized the BPLG property in terms of the \textit{big projections in plenty of directions} property \cite{orponen2020plenty}, answering an old question of David and Semmes. A little before that, Martikainen and Orponen \cite{martikainen2018characterising} characterized sets with BPLG in terms of $L^2$ norms of their projections. Interestingly, the authors use the information about projections of an $n$-ADR set $E$ to draw conclusions about intersections with cones of some subset $E'\subset E$ with $\Hn(E')\approx\Hn(E)$. This in turn allows them to find a Lipschitz graph intersecting an ample portion of $E'$. We will use some of their techniques to prove a characterization of sets containing BPLG in terms of the following property.
	\begin{definition}
		Let $1\le p<\infty$. We say that a measure $\mu$ has \emph{big pieces of bounded energy for $p$}, abbreviated as BPBE($p$), if there exist constants $\alpha,\kappa, M_0>0$ such that the following holds. 
		
		For all balls $B$ centered at $\supp \mu$, $0<r(B)<\diam(\supp\mu),$ there exist a set $G_B\subset B$ with $\mu(G_B)\ge\kappa\, \mu(B)$, and a direction $V_B\in G(d,d-n)$, such that for all $x\in G_B$
		\begin{equation}\label{eq:BPLG condition in thm}
		\mathcal{E}_{\mu,p}(x,V_B,\alpha,r(B))=\int_0^{r(B)} \bigg(\frac{\mu(K(x,V_B,\alpha,r))}{r^n} \bigg)^{p}\ \frac{dr}{r}\le M_0.
		\end{equation}
	\end{definition}
	
	\begin{theorem}\label{thm:suff BPLG}
		Let $1\le p <\infty$. Suppose $E\subset\R^d$ is $n$-ADR. Then $E$ has BPLG if and only if $\restr{\Hn}{E}$ has BPBE($p$). 
	\end{theorem}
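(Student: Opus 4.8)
The plan is to prove the two implications separately; the ``only if'' direction is soft, while ``if'' is where the real work lies. Throughout write $\mu=\Hr{E}$.

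For \emph{BPLG $\Rightarrow$ BPBE($p$)}: since $\mu(K(x,V,\alpha,r))\le\mu(B(x,r))\le C_1r^n$, one has $\mathcal E_{\mu,p}(x,V,\alpha,R)\le C_1^{p-1}\mathcal E_{\mu,1}(x,V,\alpha,R)$, so it suffices to bound the $p=1$ energy. Given a ball $B$ centered at $E$, take the Lipschitz graph $\Gamma=\Gamma_B$ from BPLG, let $W\in G(d,n)$ be the plane it is a graph over, and put $V_B:=W^\perp$ and $\alpha$ small depending on $L$. A short computation with the graph parametrization gives two facts: $\Gamma\cap K(x,V_B,\alpha)=\varnothing$ for every $x\in\Gamma$; and for $y\notin\Gamma$ the cone $K(y,V_B,\alpha)$ meets $\Gamma$ only inside $B(y,C_L\dist(y,\Gamma))$ (in the ``almost vertical'' direction $V_B$ it can reach $\Gamma$ only near the foot of $y$). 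Setting $F:=E\cap B\cap\Gamma$, the first fact gives $\mu(K(x,V_B,\alpha,r))=\Hn\big((E\setminus\Gamma)\cap K(x,V_B,\alpha,r)\big)$ for $x\in F$; writing the energy as a double integral, applying Fubini, and using the symmetry $y\in K(x,V_B,\alpha)\Leftrightarrow x\in K(y,V_B,\alpha)$ reduces matters to estimating
\[
\int_{(E\setminus\Gamma)\cap 2B}\bigg(\int_{\Gamma\cap K(y,V_B,\alpha)\cap B(y,r(B))}\frac{d\Hn(x)}{|x-y|^n}\bigg)\,d\Hn(y).
\]
By the second fact and the $n$-ADR property of $\Gamma$ (with constants depending on $L$), the inner integral is $\lesssim_L 1$, so the left side is $\lesssim_L\Hn(E\cap 2B)\lesssim r(B)^n$. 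Hence $\int_F\mathcal E_{\mu,p}(x,V_B,\alpha,r(B))\,d\mu(x)\lesssim_{p,L}r(B)^n$, and a Chebyshev argument produces $G_B\subset F$ with $\mu(G_B)\ge\tfrac12\mu(F)\gtrsim\mu(B)$ on which the energy is $\le M_0$, with $M_0$ depending only on $p$, $L$, $\kappa$ and the ADR constants. That is BPBE($p$).

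For \emph{BPBE($p$) $\Rightarrow$ BPLG}: fix a ball $B_0$ centered at $E$, apply BPBE($p$) to get $G_0\subset B_0$, $\mu(G_0)\ge\kappa\mu(B_0)$, a direction $V_0$, and $\mathcal E_{\mu,p}(x,V_0,\alpha,r(B_0))\le M_0$ for $x\in G_0$. First I would extract cone smallness: Chebyshev in the radial variable shows the scales $r\in(0,r(B_0))$ with $\mu(K(x,V_0,\alpha,r))/r^n>\tau$ have logarithmic measure $\le M_0\tau^{-p}$, so the dyadic bad sets $\mathsf{Bad}_k=\{x\in G_0:\mu(K(x,V_0,\alpha,2^{-k}r(B_0)))/(2^{-k}r(B_0))^n>\tau\}$ satisfy $\sum_k\mu(\mathsf{Bad}_k)\lesssim M_0\tau^{-p}\mu(B_0)$, and (Borel--Cantelli) $\mu$-a.e.\ $x\in G_0$ is bad at only finitely many scales. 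Next I would run a stopping-time/corona decomposition of the Christ--David cubes of $E$ inside a cube $Q_0\approx B_0$, stopping at a cube once the $\mu$-proportion of $G_0$ that is $\tau$-bad at that scale exceeds a small threshold; the bound on $\sum_k\mu(\mathsf{Bad}_k)$ forces the stopping cubes to be a Carleson family, and inside each coherent stopping region one has, at every scale, that most of $E$ lies outside $K(\cdot,V_0,\alpha,\cdot)$. Finally, with $\tau$ chosen small (in terms of $\alpha$, $n$, $d$, $\kappa$ and the ADR constants), I would invoke the techniques of Martikainen--Orponen \cite{martikainen2018characterising} to turn the regional cone information into a Lipschitz graph over $V_0^\perp$ of slope $\lesssim_\alpha 1$ meeting an ample portion of $E$; here $n$-Ahlfors--David regularity is indispensable, as it excludes the ``tower'' configurations (stacked flat pieces spread over many scales) for which cones could be light at every scale while the set stays far from any Lipschitz graph. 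Since a subset of $E\cap B_0$ of measure $\gtrsim\mu(B_0)$ lying on an $L'$-Lipschitz graph is the same thing as an $L'$-Lipschitz graph meeting $E\cap B_0$ in measure $\gtrsim r(B_0)^n$, this gives BPLG; alternatively one may phrase the output as big projections plus the weak geometric lemma and quote \cite{david1993quantitative}.

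The hard part is this second implication, and precisely the step from ``bounded conical energy'' to ``Lipschitz graph''. Finiteness of the energy is an averaged, Dini-type statement: it forces $\mu(K(x,V_0,\alpha,r))/r^n$ to be small only for \emph{most} scales, and the exceptional scales form a sparse set whose position is not quantitatively controlled, so one cannot simply intersect $G_0$ with a ``good range of scales''. The real content is therefore (i) reorganising the bad scales by a stopping time so that cone smallness becomes an \emph{all-scales} statement inside each corona region, (ii) running the Martikainen--Orponen cone-to-graph construction in each region, and (iii) reassembling the regional graphs into a single big piece at scale $B_0$ --- all while tracking the dependence of the thresholds on $\kappa$, $\alpha$, $n$, $d$ and the ADR constants. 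I expect step (ii), the cone-to-graph mechanism together with its reliance on ADR, to be the crux.
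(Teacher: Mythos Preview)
Your ``only if'' direction (BPLG $\Rightarrow$ BPBE($p$)) is essentially the paper's argument: reduce to $p=1$ by ADR, use that $\Gamma\cap K(x,V_B,\alpha)=\varnothing$ for $x\in\Gamma$ and that $K(y,V_B,\alpha)\cap\Gamma\subset B(y,C_L\dist(y,\Gamma))$ for $y\notin\Gamma$, Fubini, and Chebyshev. The paper implements this via a $5r$-covering of $E\cap B\setminus\Gamma$ by balls $B_j$, but the content is the same.

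For ``if'' (BPBE($p$) $\Rightarrow$ BPLG) your route is more elaborate than necessary, and the crucial simplification is missing. You correctly observe that each $x\in G_0$ can have at most $\lesssim M_0\tau^{-p}$ scales $j$ with $\mu(K(x,V_0,\alpha,2^{-j}r(B_0)))>\tau(2^{-j}r(B_0))^n$. But this is \emph{not} the Martikainen--Orponen $(\theta,M)$-property: their property counts scales at which the cone merely \emph{intersects} the set, not scales at which the cone carries substantial mass. Your corona decomposition is an attempt to bridge this gap, but inside each stopping region you still only have ``most points have light cones at each scale'', which is again not the $(\theta,M)$-property; and the reassembly step (iii) is left vague.

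The paper sidesteps all of this with a single preprocessing step, namely \cite[Lemma~2.1]{martikainen2018characterising}: pass from $F:=G_{B_0}$ to $F_1:=F\setminus F_\varepsilon$, where $F_\varepsilon$ is the set of points $x$ with $\Hn(F\cap B(x,r_x))\le\varepsilon r_x^n$ for some $0<r_x\le 1$; choosing $\varepsilon$ small gives $\Hn(F_1)\gtrsim\kappa$. Now if $x\in F_1$ and $K(x,V_0,\alpha/2,2^{-j},2^{-j+1})\cap F_1\ni x_j$, then a ball $B(x_j,\lambda 2^{-j})$ with $\lambda=\lambda(\alpha)$ sits inside the wider cone $K(x,V_0,\alpha,2^{-j+2})$ and carries $F$-mass $\ge\varepsilon(\lambda 2^{-j})^n$ (since $x_j\notin F_\varepsilon$). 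Hence every scale at which the narrow cone meets $F_1$ contributes $\gtrsim_\alpha\varepsilon^p$ to the $p$-energy, so there are at most $M\lesssim_\alpha M_0\varepsilon^{-p}$ of them. That is exactly the $(\alpha/2,M)$-property for $F_1$, and \cite[Proposition~1.12]{martikainen2018characterising} upgrades it to the $(\alpha/(2b),0)$-property on a further large subset, i.e.\ a Lipschitz graph. No corona decomposition, no reassembly.

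In short: your difficulty with ``exceptional scales whose position is not quantitatively controlled'' disappears once you density-regularise $G_0$; after that, \emph{every} intersection scale is automatically a heavy-cone scale, and the energy bound caps their number uniformly.
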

	\begin{remark}
		In particular, for $n$-ADR sets, the condition BPBE($p$) is equivalent to BPBE($q$) for all $1\le p, q<\infty$.
	\end{remark}
	\begin{remark}
		In fact, one can show that an a priori slightly weaker condition than BPBE is already sufficient for BPLG. To be more precise, in \eqref{eq:BPLG condition in thm} replace $K(x,V_B,\alpha,r)$ with $K(x,V_B,\alpha,r)\cap G_B$, so that we get
		\begin{equation}\label{eq:BPLG weaker condition}
		\int_0^{r(B)} \bigg(\frac{\Hn(K(x,V_B,\alpha,r)\cap E\cap G_B)}{r^n} \bigg)^p\ \frac{dr}{r}\le M_0.
		\end{equation}
		We show that this ``weak'' BPBE is sufficient for BPLG in \propref{prop:suff BPLG}. It is obvious that \eqref{eq:BPLG weaker condition} is also necessary for BPLG: if $E$ contains BPLG, then choosing $G_B=\Gamma_B$ as in Definition \ref{def:BPLG}, one can pick the corresponding $V_B$ and $\alpha$ so that $K(x,V_B,\alpha,r)\cap\Gamma_B =\varnothing.$
	\end{remark}
	
	It is tempting to consider also the following definition.
	\begin{definition}
		Let $1\le p<\infty$. We say that a measure $\mu$ has \emph{bounded mean energy} (BME) for $p$ if there exist constants $\alpha, M_0>0$, and for every $x\in \supp\mu$ there exists a direction $V_x\in G(d,d-n)$, such that the following holds. 
		
		For all balls $B$ centered at $\supp \mu$, $0<r(B)<\diam(\supp\mu),$ we have 
		\begin{equation*}
		\int_{B}\mathcal{E}_{\mu,p}(x,V_x,\alpha,r(B))\ d\mu(x)\\
		= \int_{B}\int_0^{r(B)} \bigg(\frac{\mu(K(x,V_x,\alpha,r))}{r^n} \bigg)^p\ \frac{dr}{r}d\mu(x) \le M_0\, \mu(B).
		\end{equation*}
	\end{definition}
	In other words we require $\mu(K(x,V_x,\alpha,r))^pr^{-np} \frac{dr}{r}d\mu(x)$ to be a Carleson measure. This condition looks quite natural due to many similar characterizations of uniform rectifiability, e.g. the geometric lemma of \cite{david1991singular, david1993analysis} or the results from \cite{tolsa2008uniform, tolsa2012mass}. 
	
	It is easy to see, using the compactness of $G(d,d-n)$ and Chebyshev's inequality, that BME for $p$ implies BPBE($p$). However, the reverse implication does not hold. In \cite{dabrowski2020two} we give an example of a set containing BPLG that does not satisfy BME. The problem is the following. In the definition above, the plane $V_x$ is fixed for every $x\in\supp\mu$ once and for all, and we do not allow it to change between different scales. This is too rigid.
	\begin{question}
		Can one modify the definition of BME, allowing the planes $V_x$ to depend on the scale $r$, so that the modified BME could be used to characterize BPLG, or uniform rectifiability?
	\end{question}
	It seems likely that every uniformly rectifiable measure would satisfy such relaxed BME (the idea would be similar to what is done in Section \ref{sec:necessary rectifiability}: use the $\beta$-numbers characterization of UR to get an upper bound for $\beta$-numbers, and then estimate the measure of cones from above by the $\beta$-numbers). It is less clear whether this relaxed BME would imply uniform rectifiability. Perhaps additional control for the oscillation of $V_{x,r}$ would be needed.
	\subsection{Boundedness of SIOs}
	We will be concerned with singular integral operators of convolution type, with odd $C^2$ kernels $k:\R^d\setminus\{0\}\to\R$ satisfying for some constant $C_k>0$
	\begin{equation}\label{eq:calderon zygmund constant}
	|\nabla^j k(x)|\le \frac{C_k}{|x|^{n+j}}\quad \text{for $x\neq 0$}\quad \text{and}\quad j\in\{0,1,2\}.
	\end{equation}
	We will denote the class of all such kernels by $\mathcal{K}^n(\R^d)$. Note that these kernels are particularly nice examples of Calder\'{o}n-Zygmund kernels (see \cite[p. 48]{tolsa2014analytic} for definition), which will let us use many tools from the Calder\'{o}n-Zygmund theory. Since the measures we work with may be non-doubling, our main reference will be \cite[Chapter 2]{tolsa2014analytic}. For the more classical theory, we refer the reader to \cite[Chapter 5]{grafakos2014classical}, \cite[Chapter 4]{grafakos2014modern}.
	%(or in the language of Grafakos, standard kernels \cite[\S 8.1.1]{grafakos2009modern}). 
	
	\begin{definition}\label{def:SIO}
		Given a kernel $k\in\mathcal{K}^n(\R^d)$, a constant $\varepsilon>0$, and a (possibly complex) Radon measure $\nu$, we set
		\begin{equation*}
		T_{\varepsilon}\nu(x) = \int_{|x-y|>\varepsilon}k(y-x)\ d\nu(y),\quad x\in\R^d.
		\end{equation*}
		For a fixed positive Radon measure $\mu$ and all functions $f\in L^1_{loc}(\mu)$ we define
		\begin{equation*}
		T_{\mu,\varepsilon}f(x) = T_{\varepsilon}(f\mu)(x).
		\end{equation*}
		We say that $T_{\mu}$ is bounded in $L^2(\mu)$ if all $T_{\mu,\varepsilon}$ are bounded in $L^2(\mu)$, uniformly in $\varepsilon>0$. Let $M(\R^d)$ denote the space of all finite real Borel measures on $\R^d$. When endowed with the total variation norm $\lVert \cdot\rVert_{TV}$, this is a Banach space. We say that $T$ is bounded from $M(\R^d)$ to $L^{1,\infty}(\mu)$ if there exists a constant $C$ such that for all $\nu\in M(\R^d)$ and all $\lambda>0$
		\begin{equation*}
		\mu(\{x\in\R^d\ :\ |T_{\varepsilon}\nu(x)|>\lambda \})\le \frac{C\lVert \nu\rVert_{TV}}{\lambda},
		\end{equation*}
		uniformly in $\varepsilon>0$.
	\end{definition}
	
	The main motivation for developing the theory of quantitative rectifiability was finding necessary and/or sufficient conditions for boundedness of singular integral operators. David and Semmes showed in \cite{david1991singular} that, for an $n$-ADR set, the $L^2$ boundedness of all singular integral operators with smooth and odd kernels is equivalent to uniform rectifiability. The famous David-Semmes problem asks whether the $L^2$ boundedness of a single SIO, the Riesz transform, is already sufficient for uniform rectifiability. It was shown that the answer is affirmative for $n=1$ in \cite{mattila1996cauchy}, for $n=d-1$ in \cite{nazarov2014onthe}, and the problem is open for other $n$.
	
	In the non-ADR setting less is known. A necessary condition for the boundedness of SIOs in $L^2(\mu)$, where $\mu$ is Radon and non-atomic, is the \emph{polynomial growth condition}:
	\begin{equation}\label{eq:growth condition}
	\mu(B(x,r))\le C_1\, r^n\quad\quad\quad\text{for all $x\in\supp\mu,\ r>0,$}
	\end{equation}
	see \cite[Proposition 1.4 in Part III]{david1991wavelets}. Eiderman, Nazarov and Volberg showed in \cite{eiderman2014s} that if $\mu$ is a measure on $\R^2$, $\mathcal{H}^1(\supp\mu)<\infty$, and $\mu$ has vanishing lower $1$-density, then the Riesz transform is unbounded. Their result was generalized to SIOs associated to gradients of single layer potentials in \cite{conde-alonso2019failure}. Nazarov, Tolsa and Volberg proved in \cite{nazarov2014} that if $E\subset\R^{n+1}$ satisfies $\Hn(E)<\infty$ and the $n$-dimensional Riesz transform is bounded in $L^2(\Hr{E})$, then $E$ is $n$-rectifiable. That the same is true for gradients of single layer potentials was shown by Prat, Puliatti and Tolsa in \cite{prat2018L2}.
	
	Concerning sufficient conditions for boundedness of SIOs, in \cite{azzam2015characterization} Azzam and Tolsa estimated the Cauchy transform of a measure using its $\beta$ numbers. Their method was further developed by Girela-Sarri\'{o}n \cite{girela-sarrion2018}. He gives a sufficient condition for boundedness of singular integral operators with kernels in $\mathcal{K}^n(\R^d)$ in terms of $\beta$ numbers. We use the main lemma from \cite{girela-sarrion2018} to prove the following criterion involving $2$-conical energy.

	\begin{theorem}\label{thm:SIO theorem}
		Let $\mu$ be a Radon measure on $\R^d$ satisfying the polynomial growth condition \eqref{eq:growth condition}.	
		Suppose that $\mu$ has BPBE(2).
%		there exist constants $\alpha,\kappa, M_0>0$, such that the following holds. 		
%		For all balls $B$ there exists a set $G_B\subset B$ with $\mu(G_B)\ge\kappa\, \mu(B)$, and a direction $V\in G(d,d-n)$, such that for all $x\in G_B$
%		\begin{equation}\label{eq:control on EB using muB}
%		\mathcal{E}_{\mu,2}(x,V,\alpha,r(B))\le M_0.
%		\end{equation}	
%		Fix $\alpha\in (0,1)$. Suppose there exists a constant $C_1>0$ for which the following holds. For each $x\in\supp\mu$ there exists $V_x\in G(d,d-n),$ such that the mapping $x\mapsto V_x$ is measurable, and such that for every ball $B$ we have
%		\begin{equation}\label{eq:control on EB using muB}
%		\int_{B} \int_0^{r(B)}\bigg(\frac{\mu(K(x,V_x,\alpha,r))}{r^n} \bigg)^2\ \frac{dr}{r} d\mu(x)\le C_1 \mu(B).
%		\end{equation}
		Then, all singular integral operators $T_{\mu}$ with kernels $k\in\mathcal{K}^n(\R^d)$ are bounded in $L^2(\mu)$, with norm depending only on BPBE constants, the polynomial growth constant $C_1$, and the constant $C_k$ from \eqref{eq:calderon zygmund constant}.
%		If $\mu(\R^d)<\infty$, we have
%		\begin{equation}\label{eq:SIO estimate}
%		\sup_{\varepsilon>0}\ \lVert{T}_{\varepsilon}\mu\rVert_{L^2(\mu)}^2\lesssim \mu(\R^d).
%		\end{equation}
%		Moreover, in the case $n=1,\, d=2,$ we have
%		\begin{equation}\label{eq:Menger curvature estimate}
%		c^2(\mu)\lesssim \mu(\R^2),
%		\end{equation}
%		with implicit constant depending on $\alpha, c_0,\kappa, C_1$.
	\end{theorem}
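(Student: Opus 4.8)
The plan is to combine the hypothesis BPBE(2) with the main lemma of Girela-Sarri\'on \cite{girela-sarrion2018}, which provides $L^2(\mu)$-boundedness of all SIOs with kernels in $\mathcal{K}^n(\R^d)$ once one controls a suitable Dini-type square function built from the $\beta_{2}$ (or $\beta_{\mu,2}$) coefficients on a large portion of every ball. The key reduction is therefore geometric: I want to show that if $x$ has small $2$-conical energy in some direction $V$ up to scale $r(B)$, then the $\beta_2$-type square function of $\mu$ at $x$ up to scale $r(B)$ is controlled by (a constant times) that conical energy plus a term coming from the polynomial growth. First I would fix a ball $B$ centered on $\supp\mu$ with $r(B)<\diam(\supp\mu)$, and let $G_B\subset B$, $V_B\in G(d,d-n)$, $\alpha$, $\kappa$, $M_0$ be as in the definition of BPBE(2), so that $\mathcal{E}_{\mu,2}(x,V_B,\alpha,r(B))\le M_0$ for every $x\in G_B$ and $\mu(G_B)\ge\kappa\,\mu(B)$.

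The heart of the argument is the pointwise estimate: for $x\in G_B$ and $0<r<r(B)$,
\begin{equation*}
\beta_{\mu,2}(x,r)^2\lesssim \frac{\mu(K(x,V_B,\alpha,C r))}{r^n} + \frac{\mu(K(x,V_B,\alpha,C r,\tfrac{r}{C},Cr))}{r^n} + (\text{error}),
\end{equation*}
the point being that outside the cone $K(x,V_B,\alpha)$ the mass of $\mu$ in $B(x,r)$ lies $\alpha$-close to the affine plane $x+V_B^{\perp}$, so an $n$-plane $L$ through $x$ parallel to $V_B^{\perp}$ realizes $\beta_{\mu,2}(x,r)\lesssim \alpha + (\mu(K(x,V_B,\alpha,2r))/r^n)^{1/2}$ using polynomial growth to bound the normalized $L^2$-distance of the cone part. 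Since $\alpha$ is a fixed (possibly small) constant and the left side must be summed, one is allowed to either take $\alpha$ small from the start in the definition of BPBE (harmless, as smaller $\alpha$ only weakens the hypothesis) or absorb $\alpha^2|\log r(B)|$ into the final Dini bound after a standard stopping-time/corona refinement. Integrating $dr/r$ over $(0,r(B))$ and using the monotonicity of $r\mapsto \mu(K(x,V_B,\alpha,r))$ (a change of variables $r\mapsto Cr$ costs only a constant), I would obtain
\begin{equation*}
\int_0^{r(B)}\beta_{\mu,2}(x,r)^2\,\frac{dr}{r}\lesssim \mathcal{E}_{\mu,2}(x,V_B,\alpha,C r(B)) + (\text{growth error})\lesssim M_0 + 1
\end{equation*}
for every $x\in G_B$; this is exactly the Dini-type hypothesis required by Girela-Sarri\'on's lemma on the big piece $G_B$ of $B$.

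With that in hand, the conclusion follows the now-standard "big pieces $\Rightarrow$ global boundedness" machinery: Girela-Sarri\'on's lemma (which replaces the role played by David-Semmes's good-$\lambda$ / corona arguments) yields, from the Dini control on $G_B\subset B$ with $\mu(G_B)\gtrsim\mu(B)$, that $T_\mu$ restricted appropriately to $B$ obeys a favorable $L^2$ estimate; one then runs the usual good-$\lambda$ inequality or $T1$-type/David's lemma argument over all balls $B$, using the polynomial growth condition \eqref{eq:growth condition} to stay within Tolsa's non-homogeneous Calder\'on-Zygmund theory \cite[Chapter 2]{tolsa2014analytic}, to upgrade boundedness on big pieces of every ball to boundedness of $T_{\mu,\varepsilon}$ on all of $L^2(\mu)$, uniformly in $\varepsilon$. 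Tracking constants through this scheme gives the dependence claimed (on the BPBE constants $\alpha,\kappa,M_0$, on $C_1$, and on $C_k$). The main obstacle I anticipate is \emph{not} the cone-to-$\beta_2$ comparison, which is elementary, but rather making sure the hypotheses of the lemma from \cite{girela-sarrion2018} are met in the non-ADR, merely polynomially growing setting---in particular verifying that the big-piece structure furnished by BPBE(2) is exactly of the form that lemma ingests, and that the passage from "$L^2$ bound on a big piece of each ball" to "global $L^2$ bound" does not secretly require Ahlfors regularity. Handling the borderline scales (where $r$ is comparable to $r(B)$ or to the "atom size" of $\mu$) and the additive $\alpha$-error cleanly---probably via a preliminary corona decomposition stopping when the cone mass first becomes large---is the technical point that will require the most care.
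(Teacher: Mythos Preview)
Your proposal has a genuine gap at the step you call ``elementary'': the pointwise cone-to-$\beta_2$ comparison does not produce the Dini bound you claim. Taking $L=x+V_B^{\perp}$ as competitor one indeed gets
\[
\beta_{\mu,2}(x,r)^2 \;\le\; \alpha^2\,\frac{\mu(B(x,r))}{r^n} \;+\; \frac{\mu(K(x,V_B,\alpha,r))}{r^n}
\;\le\; \alpha^2 C_1 \;+\; \frac{\mu(K(x,V_B,\alpha,r))}{r^n}.
\]
But now integrating $dr/r$ over $(0,r(B))$, the first term contributes $\alpha^2 C_1\int_0^{r(B)}dr/r=\infty$; no choice of $\alpha>0$ fixes this. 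Moreover the second term yields $\mathcal{E}_{\mu,1}(x,V_B,\alpha,r(B))$, not $\mathcal{E}_{\mu,2}$: since $(\mu(K)/r^n)^2\le C_1\,\mu(K)/r^n$ under polynomial growth, BPBE(2) is strictly weaker than the $p=1$ energy control you would need here. (This is exactly why the BPBE(1) result of Chang--Tolsa is easier and why the present theorem is a genuine improvement.) Your suggestion to ``absorb $\alpha^2|\log r(B)|$'' or to take $\alpha$ small does not address either issue.

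The paper circumvents this by \emph{not} passing through a pointwise $\beta_2$ Dini bound at all. Instead it proves a corona decomposition directly from the conical energy (the Main Lemma, \lemref{lem:main lemma}): one builds $\Top\subset\D^{db}$ and Lipschitz graphs $\Gamma_R$ with the packing estimate
\[
\sum_{R\in\Top}\Theta_\mu(2B_R)^2\mu(R)\;\lesssim\;\mu(\R^d)+\mathcal{E}_{\mu,2}(\R^d,V,\alpha),
\]
and then feeds this corona structure (not a $\beta$-square-function bound) into the version of Girela-Sarri\'on's lemma stated as \lemref{lem:estimate of Tolsa and Girela}. The stopping-time construction uses $\HD/\LD/\BCE$ stopping conditions tailored to the $p=2$ energy; the key new ingredient for $p=2$ (versus the $p=1$ argument in Chang--Tolsa) is the low-density estimate in Section~\ref{sec:LD estimate}. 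The final passage from ``boundedness on big pieces of each ball'' to global $L^2(\mu)$-boundedness is done as you outline (restrict to $G_B$, apply the single-direction lemma, then the non-homogeneous $T1$ theorem and good-$\lambda$), so that part of your sketch is fine; the missing idea is the corona decomposition itself, which is the heart of the proof rather than a ``technical refinement''.
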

	
	\begin{remark}
	A similar result, with BPBE(2) condition replaced by BPBE(1) condition, has already been shown in \cite[Theorem 10.2]{chang2017analytic}. It is easy to see that for measures satisfying polynomial growth \eqref{eq:growth condition} we have
	\begin{equation*}
	\mathcal{E}_{\mu,2}(x,V,\alpha,R)\le C_1\,\mathcal{E}_{\mu,1}(x,V,\alpha,R),
	\end{equation*}
	and so BPBE(2) is a weaker assumption than BPBE(1). Moreover, in \cite{dabrowski2020two} we show that the measure constructed in \cite{joyce2000set} does not satisfy BPBE(1), but it trivially satisfies BPBE(2). Hence, \thmref{thm:SIO theorem} really does improve on \cite[Theorem 10.2]{chang2017analytic}.
	\end{remark}
	\begin{remark}
		Recall that for $n$-ADR sets the condition BPBE($p$) was equivalent to BPLG, regardless of $p$. By the remark above, it is clear that if we replace the $n$-ADR condition with polynomial growth (i.e. if we drop the lower regularity assumption), then the condition BPBE($p$) is no longer independent of $p$. In general we only have one implication: for $1\le p< q<\infty$
		\begin{equation*}
		BPBE(p)\quad\Rightarrow\quad BPBE(q).
		\end{equation*}
	\end{remark}
	\begin{remark}
		\thmref{thm:SIO theorem} is sharp in the following sense. If one tried to weaken the assumption BPBE(2) to BPBE(p) for some $p>2$, then the theorem would no longer hold. The reason is that for any $p>2$ one may construct a Cantor-like probability measure $\mu$, say on a unit square in $\R^2$, that has linear growth and such that for all $x\in\supp\mu$
		\begin{equation*}
		\int_0^{1} \left(\frac{\mu(B(x,r))}{r}\right)^p\ \frac{dr}{r} \lesssim 1,
		\end{equation*}
		(that is, a much stronger version of BPBE($p$) holds), but nevertheless, the Cauchy transform is not bounded on $L^2(\mu)$. See \cite[Chapter 4.7]{tolsa2014analytic}.
	\end{remark}

	Sadly, the implication of \thmref{thm:SIO theorem} cannot be reversed. Let $E\subset\R^2$ be the previously mentioned example of a $1$-ADR uniformly rectifiable set that does not contain BPLG. In particular, by \thmref{thm:suff BPLG} $E$ does not satisfy BPBE($p$) for any $p$. Nevertheless, by the results of David and Semmes \cite{david1991singular}, all nice singular integral operators are bounded on $L^2(E)$.
	
%	\subsection{Related questions}
	\subsection{Cones and projections}
	Let us note that \cite[Theorem 10.2]{chang2017analytic} was merely a tool to prove the main result of \cite{chang2017analytic}: a lower bound on analytic capacity involving $L^2$ norms of projections. Chang and Tolsa proved also an interesting inequality showing the connection between 1-conical energy and $L^2$ norms of projections. We introduce additional notation before stating their result.
	\begin{definition}
		Suppose $V\in G(d,d-n),$ $\alpha\in (0,1)$, and $1\le p<\infty$. 
		%The $n$-dimensional conical energy of measure $\mu$ at point $x\in\R^d$ is defined as
		%	\begin{equation*}
		%	\E(x,V,\alpha) = \int_0^{\infty}\bigg(\frac{\mu(K(x,V,\alpha,r))}{r^n} \bigg)^p\frac{dr}{r}.
		%	\end{equation*}
		Let $B$ be a ball. The \emph{$(V,\alpha,p)$-conical energy of $\mu$ in $B$} is
		\begin{equation*}
		\E(B,V,\alpha) = \int_{B} \int_0^{r(B)}\bigg(\frac{\mu(K(x,V,\alpha,r))}{r^n} \bigg)^p\ \frac{dr}{r} d\mu(x).
		\end{equation*}
		We define also
		\begin{equation*}
		\E(\R^d, V,\alpha) = \int_{\R^d} \int_0^{\infty}\bigg(\frac{\mu(K(x,V,\alpha,r))}{r^n} \bigg)^p\ \frac{dr}{r} d\mu(x).
		\end{equation*}
		% In what follows we will omit the superindexes $V,\alpha$. 
		We will often suppress the arguments $V,\alpha$, and write simply $\E(B),\ \E(\R^d)$.
	\end{definition}
	\begin{remark}
		For $p=1$ we have
		\begin{equation}\label{eq:conical 1 energy comparable to Riesz on cones}
		\int_0^{\infty}\frac{\mu(K(x,V,\alpha,r))}{r^n}\ \frac{dr}{r} = \int_{K(x,V,\alpha)}\int_{|x-y|}^{\infty}\frac{dr}{r^{n+1}}\ d\mu(y) = n^{-1}\int_{K(x,V,\alpha)} \frac{1}{|x-y|^n}\ d\mu(y),
		\end{equation}
		and so
		\begin{equation}\label{eq:E1 and conical Riesz energy}
		\mathcal{E}_{\mu,1}(\R^d,V,\alpha) = n^{-1} \int_{\R^d}\int_{K(x,V,\alpha)} \frac{1}{|x-y|^n}\ d\mu(y)d\mu(x).
		\end{equation}
		In their paper Chang and Tolsa were working with the expression from the right hand side above.
	\end{remark}
	Given $V\in G(d,m)$ we will denote by $\pi_V:\R^d\to V$ the orthogonal projection onto $V$, and by $\pi^{\perp}_V:\R^d\to V^{\perp}$ the orthogonal projection onto $V^{\perp}$. We endow $G(d,m)$ with the natural probability measure $\gamma_{d,m}$, see \cite[Chapter 3]{mattila1999geometry}, and with a metric $d(V,W) = \lVert \pi_V - \pi_W\rVert_{op}$, where $\lVert\cdot\rVert_{op}$ is the operator norm.
	We write $\pi_V\mu$ to denote the image measure of $\mu$ by the projection $\pi_V$. If $\pi_V\mu\ll\Hr{V}$, then we identify $\pi_V\mu$ with its density with respect to $\Hr{V}$, and $\lVert \pi_V\mu\rVert_{L^2(V)}$ denotes the $L^2$ norm of this density. Otherwise, we set $\lVert \pi_V\mu\rVert_{L^2(V)}=\infty$.
	\begin{prop}[{\cite[Corollary 3.11]{chang2017analytic}}]\label{prop:ChangTolsa ineq}
		Let $V_0\in G(d,n)$ and $\alpha>0$. Then, there exist constants $\lambda,C>1$ such that for any finite Borel measure $\mu$ in $\R^d$,
		\begin{equation*}
		\mathcal{E}_{\mu,1}(\R^d,V_0^{\perp},\alpha)\overset{\eqref{eq:E1 and conical Riesz energy}}{\approx} \int_{\R^d}\int_{K(x,V_0^{\perp},\alpha)} \frac{1}{|x-y|^n}\ d\mu(y)d\mu(x)\le C \int_{B(V_0,\lambda\alpha)}\lVert \pi_V\mu\rVert_{L^2(V)}^2\ d\gamma_{d,n}(V).
		\end{equation*}
	\end{prop}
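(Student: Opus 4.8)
The plan is to use the identity \eqref{eq:E1 and conical Riesz energy} to rephrase the claim as the bound
\[
\int_{\R^d}\int_{K(x,V_0^{\perp},\alpha)}\frac{d\mu(y)\,d\mu(x)}{|x-y|^{n}}\ \lesssim\ \int_{B(V_0,\lambda\alpha)}\lVert\pi_V\mu\rVert_{L^2(V)}^{2}\,d\gamma_{d,n}(V),
\]
with implicit constant and $\lambda>1$ allowed to depend on $d,n,V_0,\alpha$, and then to prove it by an integral--geometric comparison. The engine is the elementary inequality, valid for every $V\in G(d,n)$ and every $\delta>0$,
\[
\frac{1}{\delta^{n}}\iint\one_{\{|\pi_V(x-y)|<\delta\}}\,d\mu(x)\,d\mu(y)\ \lesssim_{n}\ \lVert\pi_V\mu\rVert_{L^2(V)}^{2}.
\]
Indeed, if $\pi_V\mu$ has a density $g_V\in L^2(V)$, the left side equals $\delta^{-n}\int_V g_V\,(g_V*\one_{B(0,\delta)})$, which is $\lesssim_n\lVert g_V\rVert_{L^2}^2$ by Young's inequality; if not, the right side is $+\infty$ by convention. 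Integrating over $V\in B(V_0,\lambda\alpha)$ and using Fubini on the left, the whole proposition reduces to a single estimate on the Grassmannian: \emph{for an appropriate $\lambda>1$ there exist $c>0$ and $s_0>0$, depending on $d,n,V_0,\alpha$, such that}
\[
\gamma_{d,n}\big(\{V\in B(V_0,\lambda\alpha):\ |\pi_V e|<s\}\big)\ \ge\ c\,s^{n}
\]
\emph{for every unit vector $e$ with $\dist(e,V_0^{\perp})<\alpha$ and every $0<s<s_0$.}

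Granting this Grassmannian estimate, I would conclude as follows. Fix $\delta>0$ and, in the double $\mu$-integral above, discard every pair except those with $y\in K(x,V_0^{\perp},\alpha)$ and $|x-y|>\delta/s_0$. For such a pair the unit vector $e=(x-y)/|x-y|$ satisfies $\dist(e,V_0^{\perp})<\alpha$ (this is exactly the cone membership), and $|\pi_V(x-y)|<\delta$ is equivalent to $|\pi_V e|<\delta/|x-y|$ with $\delta/|x-y|<s_0$; applying the Grassmannian estimate with $s=\delta/|x-y|$ gives
\[
\int_{B(V_0,\lambda\alpha)}\lVert\pi_V\mu\rVert_{L^2(V)}^{2}\,d\gamma_{d,n}(V)\ \gtrsim_{d,n}\ c\iint_{y\in K(x,V_0^{\perp},\alpha),\ |x-y|>\delta/s_0}\frac{d\mu(x)\,d\mu(y)}{|x-y|^{n}},
\]
the powers of $\delta$ having cancelled. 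Letting $\delta\to0$ and using monotone convergence (the left side being independent of $\delta$) produces the desired bound, with both sides permitted to be infinite.

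The main obstacle, and essentially the only substantive point, is the Grassmannian volume estimate. I would prove it in the standard affine chart of $G(d,d-n)$ at $V_0^{\perp}$, in which a nearby plane $V^{\perp}$ is the graph $\{w+Bw:w\in V_0^{\perp}\}$ of a linear map $B\in\mathrm{Hom}(V_0^{\perp},V_0)$, with $d(V,V_0)\approx\lVert B\rVert_{\mathrm{op}}$ and $\gamma_{d,n}$ comparable to Lebesgue measure in the $B$-coordinates. Decomposing $e=e_{\perp}+e_{\parallel}$ along $V_0^{\perp}\oplus V_0$, the hypothesis reads $|e_{\parallel}|<\alpha$, hence $|e_{\perp}|>\sqrt{1-\alpha^{2}}$, and choosing $w=e_{\perp}$ in the graph gives $\dist(e,V^{\perp})\le|e_{\parallel}-Be_{\perp}|$; so it is enough to bound below the measure of $\{B:\lVert B\rVert_{\mathrm{op}}\lesssim\lambda\alpha,\ |e_{\parallel}-Be_{\perp}|<s\}$. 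The linear map $B\mapsto Be_{\perp}$ is a surjection onto $V_0\cong\R^{n}$ of norm $\le1$ with right inverse $v\mapsto|e_{\perp}|^{-2}\,v\otimes e_{\perp}$ of norm $|e_{\perp}|^{-1}$, so it carries the operator-norm ball of radius $\lambda\alpha$ onto a Euclidean ball of radius $\ge\lambda\alpha|e_{\perp}|>\lambda\alpha\sqrt{1-\alpha^{2}}$ centered at the origin; taking $\lambda$ large enough that this radius exceeds $|e_{\parallel}|$ places $e_{\parallel}$ well inside it, and a routine slicing estimate for a submersion over a convex body then yields the $c\,s^{n}$ lower bound. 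This is precisely where $\lambda>1$ and $\dist(e,V_0^{\perp})<\alpha$ are used. The construction needs $\alpha$ below a dimensional threshold; for $\alpha$ above it one instead takes $\lambda$ so large that $B(V_0,\lambda\alpha)=G(d,n)$, notes that $K(x,V_0^{\perp},\alpha)\subseteq K(x,V_0^{\perp},1)=\R^{d}\setminus(x+V_0)$, and runs the same argument with the unrestricted (and easier) estimate $\gamma_{d,n}(\{V:|\pi_V e|<s\})\gtrsim_{d,n}s^{n}$, which holds for every unit $e$ and gives more than needed.
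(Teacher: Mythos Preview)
The paper does not include its own proof of this proposition: it is quoted verbatim as \cite[Corollary 3.11]{chang2017analytic} and used as a black box. So there is nothing in the paper to compare against; I can only assess your argument on its own terms.

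Your approach is the natural one and is essentially correct. The reduction via Young's inequality to the kernel estimate $\delta^{-n}\iint\one_{\{|\pi_V(x-y)|<\delta\}}\,d\mu\,d\mu\lesssim\lVert\pi_V\mu\rVert_{L^2(V)}^2$ is clean, and the subsequent Fubini/monotone-convergence manoeuvre with the single scale $\delta$ is fine. The heart of the matter is indeed the Grassmannian lower bound $\gamma_{d,n}\big(\{V\in B(V_0,\lambda\alpha):|\pi_Ve|<s\}\big)\gtrsim s^n$ uniformly over unit vectors $e$ with $|\pi_{V_0}e|<\alpha$, and your affine-chart argument gets this: the containment $\{B:|e_\parallel-Be_\perp|<s\}\subset\{B:\dist(e,\mathrm{graph}(B))<s\}$ is the right direction of inequality, the surjection $B\mapsto Be_\perp$ has the stated mapping properties, and the uniform lower bound on fibre volumes over a compact subset of the interior of a convex body is standard (Brunn--Minkowski, or a direct compactness argument, suffices). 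The case split on the size of $\alpha$ is also handled correctly: for $\alpha$ bounded away from $1$ the chart covers $B(V_0,\lambda\alpha)$ once $\lambda\alpha$ is below a dimensional threshold, while for large $\alpha$ one may take $B(V_0,\lambda\alpha)=G(d,n)$ and invoke the rotation-invariant global estimate.

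Two small points worth tightening if you write this out in full. First, the comparability of $\gamma_{d,n}$ with Lebesgue measure in the $B$-chart, and of $d(V,V_0)$ with $\lVert B\rVert_{\mathrm{op}}$, holds only on a bounded chart neighbourhood; you use this implicitly and should say so. Second, in the slicing step you need the constant $c$ and the threshold $s_0$ to be uniform in $e$; this follows because $|e_\perp|\ge\sqrt{1-\alpha^2}$ and $|e_\parallel|<\alpha$ pin down the geometry uniformly, but it is worth stating. With those caveats the argument is complete; it is, in outline, the same strategy Chang and Tolsa use.
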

	Let us note that a variant of this estimate was also proved in \cite{martikainen2018characterising}, for a measure of the form $\mu=\Hr{E}$, with $E$ a suitable set.
	
	The inequality converse to that of \propref{prop:ChangTolsa ineq} in general is not true, but it is not far off. Additional assumptions on $\mu$ are necessary, and one has to add another term to the left hand side. See \cite[Remark 3.12, Appendix A]{chang2017analytic}.
	
	In the light of results mentioned above, as well as the characterization of sets with BPLG from \cite{martikainen2018characterising}, the connection between $L^2$ norms of projections and cones is quite striking. Note that the proof of the Besicovitch-Federer projection theorem also involves careful analysis of measure in cones, see \cite[Chapter 18]{mattila1999geometry}. Exploring further the relationship between cones and projections would be very interesting.	
	\begin{question}
		Is it possible to obtain an inequality similar to that of Proposition \ref{prop:ChangTolsa ineq}, but with $\mathcal{E}_{\mu,2}$ on the left hand side, and some quantity involving $\pi_V\mu$ on the right hand side?
	\end{question}
	\subsection{Organization of the article}
	In Section \ref{sec:preliminaries} we introduce additional notation, and recall the properties of the David-Mattila lattice $\D$. In Section \ref{sec:main lemma} we state our main lemma, a corona decomposition-like result. Roughly speaking, it says that if a measure $\mu$ has polynomial growth, and for some $V\in G(d,d-n),\ \alpha\in(0,1)$ we have $\mathcal{E}_{\mu,p}(\R^d,V,\alpha)<\infty$, then we can decompose $\D$ into a family of trees such that:
	\begin{itemize}
		\item for every tree, $\mu$ is ``well-behaved'' at the scales and locations of the tree,
		\item we have a good control on the number of trees (see \eqref{eq:packing estimate}).
	\end{itemize}
	We prove the main lemma in Sections \ref{sec:construction of graph}--\ref{sec:Top}. Let us point out that in the case $p=1$ an analogous corona decomposition was already shown in \cite[Lemma 5.1]{chang2017analytic}. Our proof follows the same general strategy, but some key estimates had to be done differently (most notably the estimates in Section \ref{sec:LD estimate}).
	
	In Section \ref{sec:SIOs} we show how to use the main lemma and results from \cite{girela-sarrion2018} to get \thmref{thm:SIO theorem}. Sections \ref{sec:suff rectifiability} and \ref{sec:necessary rectifiability} are dedicated to the proof of \thmref{thm:characterization rectif}. The ``sufficient part'' follows from our main lemma, while the ``necessary part'' is deduced from the corresponding $\beta_2$ result of Tolsa \cite{tolsa2015characterization}. Finally, we prove \thmref{thm:suff BPLG} in Sections \ref{sec:suff BPLG} and \ref{sec:necessary BPLG}. To show the ``sufficient part'' we use the results from \cite{martikainen2018characterising}, whereas the ``necessary part'' follows from a simple geometric argument. 
	
	\subsection*{Acknowledgements} I would like to thank Xavier Tolsa for all his help and patience. I am also grateful to the anonymous referee for carefully reading the article, and for many helpful suggestions.
	
	I received support from the Spanish Ministry of Economy and Competitiveness, through the María de Maeztu Programme for Units of Excellence in R\&D (MDM-2014-0445), and also partial support from the Catalan Agency for Management of University and Research Grants (2017-SGR-0395), and from the Spanish Ministry of Science, Innovation and Universities (MTM-2016-77635-P).
	
	\section{Preliminaries}\label{sec:preliminaries}
	\subsection{Additional notation}	
	We will write $A\lesssim B$ if there exists some constant $C$ such that $A\le C B$. $A\approx B$ means that $A\lesssim B\lesssim A$. If the constant $C$ depends on some parameter $t$, we will write $A\lesssim_t B$. We usually omit the dependence on $n$ and $d$.
	
	$B(x,r)$ stands for the open ball $\{y\in\R^d\ :\ |y-x|<r\}$. On the other hand, if $B$ is a ball, then $r(B)$ denotes its radius.
	
	A characteristic function of a set $E\subset\R^d$ will be denoted by $\one_E$.
	
	Given a Radon measure $\mu$ and a ball $B=B(x,r)$, we set
	\begin{equation*}
	\Theta_{\mu}(B) = \Theta_{\mu}(x,r) = \frac{\mu(B)}{r^n}.
	\end{equation*}
	
	If $T$ is a singular integral operator as in Definition \ref{def:SIO}, then the associated maximal operator $T_*$ is defined as
	\begin{equation*}
	T_{*}\nu(x) = \sup_{\varepsilon>0}|T_{\varepsilon}\nu(x)|\quad\quad \text{for $\nu\in M(\R^d),\ x\in\R^d$}.
	\end{equation*}
	
	Given an $n$-plane $L$, $\pi_L$ will denote the orthogonal projection onto $L$, and $\pi^{\perp}_L$ will denote the orthogonal projection onto $L^{\perp}$. 
	
%	Given two $n$-planes $L_1$ and $L_2,$ let $\tilde{L}_1,\ \tilde{L}_2$ denote the planes parallel to $L_1,\ L_2$ passing through the origin. Set
%	\begin{equation*}
%	\measuredangle(L_1, L_2) = \norm[0]{\pi_{\tilde{L}_1}-\pi_{\tilde{L}_2}}_{op}.
%	\end{equation*}
	Given two bounded sets $E,F\subset \R^d$, $\dist_H(E,F)$ will stand for the Hausdorff distance between $E$ and $F$.
	
	\subsection{David-Mattila lattice}\label{sec:david-mattila}
	In the proof of \thmref{thm:SIO theorem} we will use the lattice of ``dyadic cubes'' constructed by David and Mattila \cite{david2000removable}. Their construction depends on parameters $C_0>1$ and $A_0>5000C_0$. The parameters can be chosen in such a way that the following lemmas hold.
	
	\begin{lemma}[{\cite[Theorem 3.2, Lemma 5.28]{david2000removable}}]\label{lem:DM lattice}
		Let $\mu$ be a Radon measure on $\R^d$, $E=\supp\mu.$ There exists a sequence of partitions of $E$ into Borel subsets $Q,\ Q\in\mathcal{D}_{\mu,k},\ k\ge 0$, with the following properties:
		\begin{itemize}
			\item[(a)] For each integer $k\ge 0$, $E$ is the disjoint union of the ``cubes'' $Q$, $Q\in \mathcal{D}_{\mu,k}$, and if $k<l$, $Q\in\mathcal{D}_{\mu,l}$, and $R\in\mathcal{D}_{\mu,k}$, then either $Q\cap R = \varnothing$ or else $R\subset Q$.
			\item[(b)] The general position of the cubes $Q$ can be described as follows. For each $k\ge0$ and each cube $Q\in\mathcal{D}_{\mu,k}$, there is a ball $B(Q)=B(x_Q, r(Q)),$ such that 
			\begin{gather*}
			x_Q\in Q,\quad A_0^{-k}\le r(Q)\le C_0 A_0^{-k},\\
			E\cap B(Q)\subset Q\subset E\cap 28 B(Q) = E\cap B(x_Q,28 r(Q)),
			\end{gather*}
			and the balls $5B(Q), Q\in \mathcal{D}_{\mu,k},$ are disjoint.
			%			\item The cubes $Q\in\D_k$ have small boundaries. That is, for each $Q\in\D_k$ and each integer $l\ge0$, set
			%			\begin{gather*}
			%			N_l^{ext}(Q) = \{x\in E\setminus Q : \dist(x,Q)<A_0^{-k-l} \},\\
			%			N_l^{int}(Q) = \{x\in Q : \dist(x,E\setminus Q)<A_0^{-k-l} \},\\
			%			N_l(Q) = N_l^{ext}(Q)\cup N_l^{int}(Q).
			%			\end{gather*}
			%			Then
			%			\begin{equation*}
			%			\mu(N_l(Q))\le (C^{-1}C_0^{-3d-1}A_0)^{-l}\mu(90 B(Q)).
			%			\end{equation*}
			\item[(c)] Denote by $\D^{db}$ the family of doubling cubes, i.e. $Q\in\D= \bigcup_{k\ge 0}\mathcal{D}_{\mu,k}$ satisfying
			\begin{equation}\label{eq:doubling_balls}
			\mu(100 B(Q))\le C_0 \mu(B(Q)).
			\end{equation}
			Then, for any $R\in\D$ there exists a family $\{Q_i\}_{i\in I}\subset\D^{db}$ such that $Q_i\subset R$ and $\mu(R\setminus\bigcup_i Q_i)=0$.
		\end{itemize}		
	\end{lemma}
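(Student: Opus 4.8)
The plan is to construct the partitions $\mathcal{D}_{\mu,k}$ by hand, scale by scale, and then extract the doubling subfamily in (c) by a stopping-time argument.

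First I would build the cubes top-down. Fix $A_0$. At generation $k=0$, choose a maximal $1$-separated subset $\{x^0_j\}\subset E$ and let the cube $Q^0_j$ be the associated ``Voronoi cell'' in $E$ (the set of $x\in E$ closer to $x^0_j$ than to any other center, with ties broken by a fixed ordering). Inductively, given $\mathcal{D}_{\mu,k}$, refine each $Q\in\mathcal{D}_{\mu,k}$ \emph{internally}: pick a maximal $A_0^{-(k+1)}$-separated subset of $Q$ containing $x_Q$, and let its Voronoi cells \emph{relative to $Q$} be the members of $\mathcal{D}_{\mu,k+1}$ inside $Q$. Property (a) is immediate from refining inside parents. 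For (b), the separation of the chosen net yields disjointness of the balls $5B(Q)$ once $A_0$ is taken large, while the inclusions $E\cap B(Q)\subset Q\subset E\cap 28B(Q)$ come from the Voronoi assignment, \emph{provided} the radii $r(Q)\in[A_0^{-k},C_0A_0^{-k}]$ are chosen using the full slack of that interval to absorb the distortion that accumulates when one refines inside parents that may themselves be irregularly shaped. Arranging all the constants ($A_0>5000C_0$, the factor $28$, the radius range) so that nesting and these inclusions hold simultaneously is the fussiest part of the construction.

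For (c), fix $R\in\mathcal{D}=\bigcup_k\mathcal{D}_{\mu,k}$ and run the obvious stopping time: test $R$ for the doubling property \eqref{eq:doubling_balls}; if it holds, stop and keep $R$; if not, pass to the children of $R$ and repeat on each. This yields a family $\{Q_i\}\subset\D^{db}$ of pairwise disjoint cubes contained in $R$, and it remains to show $Z:=R\setminus\bigcup_iQ_i$ is $\mu$-null. Every point of $Z$ lies in a decreasing chain of cubes all of which are non-doubling, i.e.\ satisfy $\mu(100B(Q))>C_0\mu(B(Q))$. Summing this inequality over the generation-$k$ non-doubling cubes that meet $Z$, and using that the dilated balls $100B(Q)$ have bounded overlap (a consequence of the disjointness of the $5B(Q)$ from (b)), one gets, after grouping generations into blocks large enough that a $100$-dilate at the fine scale sits inside a ball at the coarse scale, a geometric decay of $\mu$ along the chain. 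Since $C_0$ can be chosen larger than the overlap constant, this contraction forces $\mu(Z)=0$.

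The step I expect to be the main obstacle is reconciling the ``core ball'' $B(Q)$, whose radius is comparable to $A_0^{-k}$, with the cube $Q$ itself, which the construction only confines to the much larger dilate $28B(Q)$: all of the containments in (b), the disjointness of the $5B(Q)$, and the contraction in (c) must survive this gap, and this is exactly why the radius of $B(Q)$ is allowed to roam over all of $[A_0^{-k},C_0A_0^{-k}]$ and why $A_0$ must be taken very large compared with $C_0$. Once those parameters are fixed consistently the rest is bookkeeping; the full details are carried out in \cite{david2000removable}, and we only indicate the shape of the argument here.
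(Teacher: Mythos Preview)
The paper does not give its own proof of this lemma: it is quoted from \cite[Theorem~3.2, Lemma~5.28]{david2000removable} and used as a black box, so there is nothing in the present paper to compare your sketch against.

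On the sketch itself: your stopping-time argument for (c) is the standard one and is fine. For (a)--(b), however, the ``refine inside each parent'' scheme has a real issue that a parameter tweak does not repair. A child center $x$ chosen from a maximal $A_0^{-(k+1)}$-net \emph{inside} the parent $P$ may lie near the ``boundary'' of $P$; since all we know about $P$ is $E\cap B(P)\subset P\subset E\cap 28B(P)$, the parent need not contain an $E$-ball around each of its points, and $E\cap B(x,A_0^{-(k+1)})$ can contain points of $E\setminus P$, hence points outside the child cube. Allowing $r(Q)$ to roam over $[A_0^{-(k+1)},C_0A_0^{-(k+1)}]$ does not help, because the obstruction is the \emph{location} of $x$ relative to $\partial P$, not the size of the ball. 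The construction in \cite{david2000removable} avoids this by selecting the nets globally at each scale and then deducing the nested structure, rather than refining parent-by-parent. Since you ultimately defer to that reference for the details, this is not fatal to your write-up, but the outline as stated would not close on its own.
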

	%	We assume $A_0$ so big that $A_0>C_0^{100}$.
	For any $Q\in\D$ we denote by $\D(Q)$ the family of $P\in\D$ such that $P\subset Q$. Given $Q\in \mathcal{D}_{\mu,k}$ we set $J(Q) = k$ and $\ell(Q)=56 C_0 A_0^{-k}$. Note that $r(Q)\approx\ell(Q).$
	
	We define $B_Q = 28B(Q) = B(x_Q, 28\, r(Q)),$ so that
	\begin{equation*}
		E\cap \tfrac{1}{28}B_Q\subset Q\subset B_Q.
	\end{equation*}
	Note that if $Q\subset P$, then $B_Q\subset B_P$.
	 
	\begin{lemma}[{\cite[Lemma 2.4]{azzam2015characterization}}]\label{lem:density dropping for nondoubling}
		Suppose the cubes $Q\in \D,\ R\in\D,\ Q\subset R,$ are such that all the intermediate cubes $Q\subsetneq S\subsetneq R$ are non-doubling, i.e. $S\notin\D^{db}$. Then
		\begin{equation}\label{eq:density dropping}
		\Theta_{\mu}(100B(Q))\le (C_0A_0)^d A_0^{-9d(J(Q)-J(R)-1)}\Theta_{\mu}(100B(R)),
		\end{equation}
		 and
		\begin{equation*}
		\sum_{S\in\D: Q\subset S\subset R}\Theta_{\mu}(100B(S))\lesssim \Theta_{\mu}(100B(R)).
		\end{equation*}
	\end{lemma}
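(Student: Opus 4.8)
The plan is to reduce the statement to a one-step density comparison between a David--Mattila cube and its parent, and then iterate along the chain of ancestors joining $Q$ to $R$. First I would record that, the cubes being nested, the family $\{S\in\D:\ Q\subset S\subset R\}$ is totally ordered by inclusion and has exactly one cube in each generation from $J(R)$ to $J(Q)$; write it $R=S_0\supset S_1\supset\dots\supset S_m=Q$, where $m:=J(Q)-J(R)\ge 1$ (the case $Q=R$ being trivial) and $S_j\in\mathcal{D}_{\mu,J(R)+j}$, so that by hypothesis $S_1,\dots,S_{m-1}$ all fail \eqref{eq:doubling_balls}. The proof then multiplies two per-generation estimates along this chain.

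The first is a crude bound valid for \emph{every} consecutive pair, $\Theta_{\mu}(100B(S_j))\le (C_0A_0)^d\,\Theta_{\mu}(100B(S_{j-1}))$, which I would get directly from Lemma~\ref{lem:DM lattice}(b): since $A_0^{-(J(R)+j)}\le r(S_j)$, $r(S_{j-1})\le C_0A_0^{-(J(R)+j-1)}$ and $A_0>5000C_0$, one has $r(S_j)<r(S_{j-1})/5000$ and $r(S_{j-1})\le C_0A_0\,r(S_j)$; as $x_{S_j}\in S_j\subset S_{j-1}\subset B(x_{S_{j-1}},28r(S_{j-1}))$ this forces $100B(S_j)\subset 100B(S_{j-1})$, hence $\mu(100B(S_j))\le\mu(100B(S_{j-1}))$, and dividing by $(100r(S_j))^n$ costs only the scale factor $(r(S_{j-1})/r(S_j))^n\le(C_0A_0)^n\le(C_0A_0)^d$. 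The second is the density drop the David--Mattila construction attaches to non-doubling cubes: if $S_j\notin\D^{db}$ then $\Theta_{\mu}(100B(S_j))\le A_0^{-9d}\,\Theta_{\mu}(100B(S_{j-1}))$. I would simply quote this from \cite{david2000removable} --- it is exactly what $\D^{db}$ is designed to provide --- and this is the step I expect to be the main obstacle: ensuring it is available in precisely this normalized form (with the exponent $9d$, and ``parent'' meaning the cube of the next smaller generation) rather than in some neighbouring formulation.

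Granting the second fact, I would apply it at the $m-1$ steps $S_{j-1}\to S_j$ with $S_j$ non-doubling ($1\le j\le m-1$), apply the crude bound at the last step $S_{m-1}\to S_m=Q$, and multiply:
\begin{equation*}
\Theta_{\mu}(100B(Q))\le (C_0A_0)^d\big(A_0^{-9d}\big)^{m-1}\Theta_{\mu}(100B(R))=(C_0A_0)^d A_0^{-9d(J(Q)-J(R)-1)}\Theta_{\mu}(100B(R)),
\end{equation*}
which is \eqref{eq:density dropping}. The only care here is to keep the boundary cubes $Q$ and $R$ exempt from the doubling hypothesis, which is what produces the exponent $J(Q)-J(R)-1$ rather than $J(Q)-J(R)$.

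For the sum I would apply the inequality just proved to each pair $(S,R)$ with $Q\subset S\subset R$ --- its hypothesis transfers, since the cubes strictly between $S$ and $R$ lie strictly between $Q$ and $R$ and are thus non-doubling --- and then group the $S$ by generation (one per generation) and sum a geometric series:
\begin{equation*}
\sum_{S\in\D:\,Q\subset S\subset R}\Theta_{\mu}(100B(S))\le\Theta_{\mu}(100B(R))\Big(1+(C_0A_0)^d\sum_{i=0}^{\infty}A_0^{-9di}\Big)\lesssim\Theta_{\mu}(100B(R)),
\end{equation*}
the series converging since $A_0>5000$.
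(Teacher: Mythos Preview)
The paper does not actually prove this lemma --- it is stated with a citation to \cite[Lemma~2.4]{azzam2015characterization} and no argument is given beyond the remark, immediately following, that the exponent $9d$ comes from \cite[(5.30)]{david2000removable}. Your sketch is exactly the standard proof one finds in that reference: iterate a one-step comparison along the chain of ancestors, using at each non-doubling intermediate cube the David--Mattila fact that its density is at most $A_0^{-9d}$ times that of its parent, use the crude scale-comparison bound once at the final step to handle $Q$ (which need not be doubling), and then sum the resulting geometric series. Your identification of the one-step drop for non-doubling cubes as the only nontrivial input, and your bookkeeping with the boundary cubes $Q$ and $R$ to get the exponent $J(Q)-J(R)-1$, are both correct.
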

	Let us remark that the constant $9d$ in the exponent of \eqref{eq:density dropping} could be replaced by any other positive constant, if $C_0$ and $A_0$ are chosen suitably, see \cite[(5.30)]{david2000removable}
	\begin{lemma}[{\cite[Lemma 4.5]{chang2017analytic}}]\label{lem:doubling subcube of doubling}
		Let $R\in\D^{db}$. Then, there exists another doubling cube $Q\subsetneq R,\ Q\in\D^{db}$, such that
		\begin{equation*}
			\mu(Q)\approx\mu(R)\quad\text{and}\quad \ell(Q)\approx\ell(R).
		\end{equation*}
	\end{lemma}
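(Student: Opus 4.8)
The plan is to descend from $R$ by a bounded number of generations and extract a doubling descendant that still carries a definite fraction of the mass of $R$. First I would apply Lemma \ref{lem:DM lattice}(c) to each child of $R$ in order to build the collection $\{Q_i\}$ of \emph{maximal} doubling cubes strictly contained in $R$; these are pairwise disjoint, all satisfy $Q_i\subsetneq R$, and cover $R$ up to a $\mu$-null set, so $\sum_i\mu(Q_i)=\mu(R)$. Set $m_i=J(Q_i)-J(R)\ge 1$. By maximality every cube $S$ with $Q_i\subsetneq S\subsetneq R$ is non-doubling, so Lemma \ref{lem:density dropping for nondoubling} applies and yields $\Theta_\mu(100B(Q_i))\le (C_0A_0)^d A_0^{-9d(m_i-1)}\,\Theta_\mu(100B(R))$.

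Next I would turn this into a measure estimate. Since $R$ is doubling, $\Theta_\mu(100B(R))\lesssim \mu(R)\,A_0^{nJ(R)}$; since $Q_i\subset 100B(Q_i)$ and $r(Q_i)\approx A_0^{-J(Q_i)}$, we have $\mu(Q_i)\lesssim \Theta_\mu(100B(Q_i))\,A_0^{-nJ(Q_i)}$. Combining the three displays (and using $J(Q_i)=J(R)+m_i$) gives $\mu(Q_i)\le C_1\,A_0^{-(9d+n)m_i}\mu(R)$, with $C_1$ depending only on $C_0,A_0,d,n$. A volume-packing argument — the balls $5B(Q)$ over $Q\in\mathcal{D}_{\mu,J(R)+m}$ with $Q\subset R$ are pairwise disjoint and all lie inside $B(x_R,(28+5C_0)r(R))$ — shows the number of indices $i$ with $m_i=m$ is at most $C_2\,A_0^{dm}$. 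Hence $\sum_{i:\,m_i=m}\mu(Q_i)\le C_1C_2\,A_0^{-(8d+n)m}\mu(R)$.

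Then I would fix $m_0=m_0(C_0,A_0,d,n)$ large enough that $\sum_{m>m_0}C_1C_2A_0^{-(8d+n)m}\le\tfrac12$, which is possible because the series converges geometrically. Consequently $\sum_{i:\,m_i\le m_0}\mu(Q_i)\ge\tfrac12\mu(R)$, while the number of such indices is at most $N_0:=\sum_{m=1}^{m_0}C_2A_0^{dm}<\infty$. By pigeonhole there is an index $i_0$ with $m_{i_0}\le m_0$ and $\mu(Q_{i_0})\ge\mu(R)/(2N_0)$. Taking $Q=Q_{i_0}$ gives $Q\subsetneq R$ doubling with $\mu(Q)\approx\mu(R)$, and since $\ell(Q)/\ell(R)=A_0^{-m_{i_0}}\in[A_0^{-m_0},A_0^{-1}]$ also $\ell(Q)\approx\ell(R)$.

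The one point that needs care is the bookkeeping in the middle step: one must verify that the exponential gain $A_0^{-(9d+n)m}$ coming from the density-dropping Lemma \ref{lem:density dropping for nondoubling} genuinely dominates the at most polynomial count $A_0^{dm}$ of cubes at generation $J(R)+m$, and that the resulting threshold $m_0$ depends only on the lattice parameters $C_0,A_0$ (and on $d,n$), not on $\mu$ or $R$. Everything else is a direct application of the properties collected in Lemma \ref{lem:DM lattice}. (We may assume $R$ has proper subcubes, which is automatic in our setting since polynomial growth forces $\mu$ to be non-atomic.)
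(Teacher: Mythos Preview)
Your argument is correct. The paper does not actually prove this lemma: it is quoted verbatim from \cite[Lemma 4.5]{chang2017analytic} and used as a black box, so there is no ``paper's own proof'' to compare against. Your approach --- covering $R$ by its family of maximal doubling descendants, using Lemma~\ref{lem:density dropping for nondoubling} to show that those of generation $J(R)+m$ carry total mass $\lesssim A_0^{-(8d+n)m}\mu(R)$, and then pigeonholing among the finitely many with $m\le m_0$ --- is exactly the natural way to establish the result and matches how this lemma is typically proved in the literature on David--Mattila cubes. The only cosmetic remark is that your final parenthetical about non-atomicity is unnecessary: in the David--Mattila lattice every cube has a proper child at the next generation regardless of atoms, and Lemma~\ref{lem:DM lattice}(c) already guarantees the maximal doubling descendants exist and cover $R$ $\mu$-a.e.
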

	
	From now on we will treat $C_0$ and $A_0$ as absolute constants, and we will not track the dependence on them in our estimates.
	
	\section{Main lemma}\label{sec:main lemma}
%	\subsection*{Corona decomposition}
	In order to formulate our main lemma we need to introduce some vocabulary.
	
	Let $\mu$ be a compactly supported Radon measure with polynomial growth \eqref{eq:growth condition}. Suppose $\D$ is the associated David-Mattila lattice, and assume that
	\begin{equation*}
	R_0 = \supp\mu \in\D
	\end{equation*}
	is the biggest cube. 
	
	Given a family of cubes $\Top\subset\D^{db}$ satisfying $R_0\in\Top$ we define the following families associated to each $R\in\Top$:
	\begin{itemize}
		\item $\Next(R)$ is the family of maximal cubes $Q\in\Top$ strictly contained in $R$,
		\item $\Tr(R)$ is the family of cubes $Q\in\D$ contained in $R$, but \emph{not} contained in any $P\in\Next(R)$.
	\end{itemize}
	Clearly, $\D = \bigcup_{R\in\Top} \Tr(R)$. Define
	\begin{equation*}
	\Good(R)=R\setminus\bigcup_{Q\in\Next(R)}Q.
	\end{equation*}

%	To use the lemma above, we need to define an appropriate family $\Top$, construct Lipschitz graphs $\Gamma_R$, prove (i)-(iv), and estimate the sum
%	\begin{equation*}
%	\sum_{R\in\Top}\Theta_{\mu}(2B_R)^2\mu(R).
%	\end{equation*}
%	More precisely, we are going to prove the following.
	\begin{lemma}[main lemma]\label{lem:main lemma}
		Let $\mu$ be a compactly supported Radon measure on $\R^d$. Suppose there exists $r_0>0$ such that for all $x\in\supp\mu,\ 0<r\le r_0,$ we have
		\begin{equation}\label{eq:growth condition main lemma}
		\mu(B(x,r))\le C_1 r^n.
		\end{equation}
		Assume further that for some $V\in G(d,d-n),$ $\alpha\in (0,1),$ and $1\le p<\infty$, we have $\E(\R^d,V,\alpha)<\infty.$
		
		Then, there exists a family of cubes $\Top\subset\D^{db}$, and a corresponding family of Lipschitz graphs $\{\Gamma_R\}_{R\in\Top}$, satisfying:
		\begin{itemize}
			\item[(i)] the Lipschitz constants of $\Gamma_R$ are uniformly bounded by a constant depending on $\alpha$,
			\item[(ii)] $\mu$-almost all $\Good(R)$ is contained in $\Gamma_R$,
			\item[(iii)] for all $Q\in\Tr(R)$ we have $\Theta_{\mu}(2B_Q)\lesssim\Theta_{\mu}(2B_R)$.
		\end{itemize}
		Moreover, the following packing condition holds:
		\begin{equation}\label{eq:packing estimate}
		\sum_{R\in\Top}\Theta_{\mu}(2B_R)^p\mu(R)\lesssim_{\alpha} (C_1)^p\mu(\R^d) + \E(\R^d,V,\alpha).
		\end{equation}
		The implicit constant does not depend on $r_0$.
	\end{lemma}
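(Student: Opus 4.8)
The plan is to build the family $\Top$ by a stopping-time / corona argument, following the strategy of \cite[Lemma 5.1]{chang2017analytic} but adapting the key estimates to the exponent $p$. Fix the top cube $R_0=\supp\mu\in\D^{db}$. Given a cube $R\in\Top$ already selected, I would run a stopping-time argument inside $R$ to produce $\Next(R)$: starting from $R$, descend into $\D(R)$ and stop at a maximal cube $Q$ the first time one of three things happens — (HD) high density, $\Theta_\mu(2B_Q)$ is much larger than $\Theta_\mu(2B_R)$ (say by a large fixed factor); (LD) low density, $\Theta_\mu(2B_Q)$ is much smaller than $\Theta_\mu(2B_R)$; or (BCE) big conical energy, the cube $Q$ has accumulated too much of the conical energy $\E(\cdot,V,\alpha)$ relative to scale, i.e. something like $\sum$ over the relevant chain of the cone-measure terms exceeds a large threshold times $\Theta_\mu(2B_R)^p$. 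Cubes that are never stopped form $\Good(R)$ (up to a $\mu$-null set, using Lemma~\ref{lem:DM lattice}(c) to make the stopping cubes doubling). On $\Good(R)$ the density is controlled above and below — this gives (iii) — and crucially the conical energy at every $x\in\Good(R)$ is small at all scales between $\ell(R)$ and $0$; combined with the control on densities, this means that for $x\in\Good(R)$ the cone $K(x,V,\alpha,r)$ contains very little $\mu$-mass compared to $r^n$ for all small $r$. A Chebyshev/pigeonhole argument over scales then lets me pass from "small cone energy" to an honest geometric statement: $\Good(R)$ (or a set containing it) satisfies a condition close to \eqref{eq:lip graph condition} at the relevant scales, with a slightly larger aperture, which by the classical construction (\cite[Proof of Lemma 15.13]{mattila1999geometry}) places $\mu$-a.a.\ of $\Good(R)$ on a Lipschitz graph $\Gamma_R$ with $\lip(\Gamma_R)\lesssim_\alpha 1$. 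This gives (i) and (ii).

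The heart of the matter is the packing estimate \eqref{eq:packing estimate}, and I would prove it by separately bounding the contributions of the three stopping reasons. Write $\Top=\Top_{\mathsf{HD}}\cup\Top_{\mathsf{LD}}\cup\Top_{\mathsf{BCE}}\cup\{R_0\}$ according to why each $R\in\Top$ was created as a stopping cube of its parent in $\Top$. For $\Top_{\mathsf{BCE}}$: by construction each such $R$ carries conical energy at least (a constant times) $\Theta_\mu(2B_R)^p\,\mu(R)$ in a localized piece of $\E(\R^d,V,\alpha)$; since these pieces have bounded overlap (they live in essentially disjoint regions of the $(x,r)$ half-space attached to different trees), summing gives $\sum_{R\in\Top_{\mathsf{BCE}}}\Theta_\mu(2B_R)^p\mu(R)\lesssim_\alpha \E(\R^d,V,\alpha)$. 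For $\Top_{\mathsf{HD}}$: high-density stopping cubes satisfy $\Theta_\mu(2B_R)\gtrsim$ (large factor) $\cdot\,\Theta_\mu(2B_{\hat R})$ for the parent tree $\hat R$, and one runs the standard density-increment / Carleson-packing argument — here one uses the polynomial growth \eqref{eq:growth condition main lemma}, which caps densities by $C_1$, so a geometric-series summation over the increments yields $\sum_{\Top_{\mathsf{HD}}}\Theta_\mu(2B_R)^p\mu(R)\lesssim_\alpha C_1^p\,\mu(\R^d)$. For $\Top_{\mathsf{LD}}$: this is the delicate one and (as the author flags for $p>1$) requires a different argument than in \cite{chang2017analytic}; the idea is that a low-density cube $Q$ stopped inside $R$ means $\mu$ fails to "fill" $B_Q$, so from the vantage point of points $x\in\Good(R)$ the cube $Q$ sits inside a cone $K(x,V,\alpha,\cdot)$ or an annulus where $\mu(B(x,r))/r^n$ is forced to be comparably large at some scale — i.e.\ the low-density children are themselves "seen" by the conical energy from $\Good(R)$, so their total $\Theta_\mu(2B_R)^p\mu(R)$ is again absorbed into $\E(\R^d,V,\alpha)$ plus the $C_1^p\mu(\R^d)$ term. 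Summing the three bounds gives \eqref{eq:packing estimate}, and because each estimate only used the hypothesis \eqref{eq:growth condition main lemma} at scales $\le r_0$ together with global finiteness of $\E(\R^d,V,\alpha)$, the implicit constants do not depend on $r_0$.

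The main obstacle I expect is exactly the $\mathsf{LD}$ packing for general $p\ge 1$: one must argue that low density at a stopped cube $Q\subset R$ actually \emph{generates} conical energy at points of $\Good(R)$ with a quantitative gain that survives raising to the $p$-th power, and one has to be careful that the regions of the $(x,r)$ half-plane used to charge different low-density cubes (and those used for the $\mathsf{BCE}$ cubes) have bounded overlap so that the sum of the local energies is controlled by the single global quantity $\E(\R^d,V,\alpha)$. A secondary technical point is the passage from "small accumulated conical energy on $\Good(R)$" to the Lipschitz graph: the conical energy being small does not literally say the cone is empty, so one needs a scale-by-scale pigeonhole to find, for each $x$, a comparable cone (slightly larger aperture, comparable truncation) that is genuinely $\mu$-almost empty, and then invoke the classical Lipschitz-graph construction on the closure of $\Good(R)$; making the apertures and constants match up cleanly so that $\lip(\Gamma_R)\lesssim_\alpha 1$ uniformly is the routine-but-fiddly part.
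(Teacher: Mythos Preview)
Your overall corona architecture (stopping on $\HD$/$\LD$/$\BCE$, building $\Gamma_R$, then summing) matches the paper, but two pieces of your plan do not work as stated and differ substantively from what the paper actually does.

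\textbf{The $\LD$ packing.} Your proposed mechanism---that a low-density child $Q\subset R$ ``is seen by the conical energy from $\Good(R)$'' and can therefore be charged to $\E(\R^d,V,\alpha)$---is not correct: there is no reason an $\LD$ cube should sit inside $K(x,V,\alpha)$ for $x\in\Good(R)$. The paper does \emph{not} absorb $\LD$ into the energy. Instead it proves the purely measure-theoretic bound $\sum_{Q\in\LD(R)}\mu(Q)\le \tau^{1/2}\mu(R)$ (\lemref{lem:LD small measure}), so that $\LD$ can simply be subtracted from $\mu(R)$ in the packing argument. The proof of this bound is the genuinely new ingredient for $p>1$: one passes to a $t$-separated subfamily $\LD^*_{\Sep}(R)$, uses the $\BCE$ control (not the global energy) to find in each such cube $Q$ a point $w_Q$ lying outside $\bigcup_P (K_P^{1/2}\setminus MB_P)$ (\lemref{lem:some LD estimate}), observes that the $w_Q$ then satisfy the cone condition among themselves and hence lie on an auxiliary Lipschitz graph $\Gamma_{\LD}$ (\lemref{lem:GammaLD}), and finally uses $\Gamma_{\LD}\cup\Gamma_R$ to bound $\sum r(Q)^n\lesssim r(R)^n$, which combined with $\Theta_\mu(2B_Q)<\tau\Theta_\mu(2B_R)$ gives the smallness. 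Your proposal contains none of this.

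\textbf{The Lipschitz graph for $\Good(R)$.} Your ``Chebyshev/pigeonhole over scales to find a genuinely empty cone of slightly larger aperture'' is not how the paper proceeds, and it is not clear it would produce a \emph{single} Lipschitz graph (the good scale could vary with $x$). The paper instead proves a key lemma (\lemref{lem:key lemma}): if $Q\in\Tree(R)$ and $P$ meets $K_Q^{1/2}\setminus MB_Q$ with $\dist(Q,P)\ge Mr(P)$, then $P\notin\Tree(R)$, because an intermediate ancestor of $P$ is forced into $\LD_0(R)$ via the $\BCE$ bound. This immediately gives $y\notin K^{1/2}(x)$ for all $x,y\in\widetilde G_R$ (\lemref{lem:xy from GR outside Kx}), with no pigeonholing and no enlargement of aperture, and the graph is then built from $\widetilde G_R$ together with centers of a separated family $\Sep^*(R)$ of stopping cubes. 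Finally, the packing is organized not by splitting $\Top$ according to the stopping reason of each cube, but by singling out the $\ID$ cubes (those where $\HD$ children carry $\ge\tfrac12\mu(R)$), absorbing them into $\Next(R)$ via \eqref{eq:density estimate for ID}, and for the remaining $R$ writing $\mu(R)\le 3\mu(\Good(R))+3\sum_{\BCE(R)}\mu(Q)$ after discarding the small $\LD$ part.
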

%	It is clear that \lemref{lem:estimate of Tolsa and Girela} and \lemref{lem:main lemma} (with $p=2$) immediately imply \thmref{thm:SIO theorem}. The remainder of the article is dedicated to proving \lemref{lem:main lemma}. 
	We prove the lemma above in Sections \ref{sec:construction of graph}--\ref{sec:Top}. From this point on, until the end of Section \ref{sec:Top}, we assume that $\mu$ is a compactly supported Radon measure satisfying the growth condition \eqref{eq:growth condition main lemma}, and that there exist $V\in G(d,d-n),\ \alpha\in(0,1),\ 1\le p<\infty,$ such that
	\begin{equation*}
	\E(\R^d,V,\alpha)<\infty.
	\end{equation*}	
	For simplicity, in our notation we will suppress the parameters $V$ and $\alpha$. That is, we will write $\E(\R^d)=\E(\R^d,V,\alpha)$, as well as $K=K(0,V,\alpha)$, $K(x) = K(x,V,\alpha)$, and $K(x,r) = K(x,V,\alpha,r)$. Finally, given $0<r<R$, set
	\begin{equation*}
	K(x,r,R) = K(x,R)\setminus K(x,r).
	\end{equation*}
	\subsection*{Parameters} In the proof of \lemref{lem:main lemma} we will use a number of parameters. To make it easier to keep track of what depends on what, and at which point the parameters get fixed, we list them below. Recall that ``$C_1=C_1(C_2)$'' means that ``the value of $C_1$ depends on the value of $C_2$.'' 
	\begin{itemize}
		\item $A=A(p)> 1$ is the ``$\HD$'' constant, it is fixed in \lemref{lem:ID density}.
		\item $\tau=\tau(\alpha,t)$ is the ``$\LD$'' constant, it is fixed in \eqref{eq:small measure LD}.
		\item $M=M(\alpha)>1$ is the ``key estimate'' constant, it is chosen in \lemref{lem:key lemma}. 
		\item $\eta=\eta(M,t)\in (0,1)$ is the constant from the definition of $\mathcal{E}_{\mu,p}(Q)$ in \eqref{eq:defi EQ}, it is fixed in the proof of \lemref{lem:some LD estimate}.
		\item $t=t(M,\alpha)>M$ is the ``$t$-neighbour'' constant, see Section \ref{subsec:GammaR}. It is fixed just below \eqref{eq:t fixed}, but depends also on \lemref{lem:Sep* cubes are not subsets} and \lemref{lem:Q in Sep* and KP dont intersect}.
		\item $\Lambda=\Lambda(M)>2M$ is the constant from \lemref{lem:Lipschitz graph}.
		\item $\varepsilon = \varepsilon(\tau,\alpha,\eta)\in (0,1)$ is the ``$\BCE$'' constant, it is fixed in \lemref{lem:some LD estimate}.
	\end{itemize}
	
	\section{Construction of a Lipschitz graph \texorpdfstring{$\Gamma_R$}{Gamma\_R}}\label{sec:construction of graph}
	Suppose $R\in\D^{db}$. In this section we will construct a corresponding tree of cubes $\Tree(R)$, and a Lipschitz graph $\Gamma_R$ that ``approximates $\mu$ at scales and locations from $\Tree(R)$''; see \lemref{lem:Lipschitz graph}.
	\subsection{Stopping cubes} Consider constants $A\gg 1,\ 0<\varepsilon\ll\tau\ll 1$, and $0<\eta\ll 1$, which will be fixed later on. Given $Q\in\D$ we set
	\begin{equation}\label{eq:defi EQ}
	\E(Q) = \frac{1}{\mu(Q)}\int_{2B_Q}\int_{\eta r(Q)}^{\eta^{-1}r(Q)}\bigg(\frac{\mu(K(x,r))}{r^n}\bigg)^p\ \frac{dr}{r}d\mu(x).
	\end{equation}
	For any $R\in\D^{db}$ we define the following families of cubes:
	\begin{itemize}
		\item $\BCE_0(R)$, the family of big conical energy cubes, consisting of $Q\in\D(R)$ such that
		\begin{equation*}
		\sum_{Q\subset P\subset R} \E(P) > \varepsilon\,\Theta_{\mu}(2B_R)^p.
		\end{equation*}
		\item $\HD_0(R)$, the high density family, consisting of $Q\in\D^{db}(R)\setminus \BCE_0(R)$ such that
		\begin{equation*}
		\Theta_{\mu}(2B_Q) > A\,\Theta_{\mu}(2B_R).
		\end{equation*}
		\item $\LD_0(R)$, the low density family, consisting of $Q\in\D(R)\setminus \BCE_0(R)$ such that
		\begin{equation*}
		\Theta_{\mu}(2B_Q) < \tau\,\Theta_{\mu}(2B_R).
		\end{equation*}
	\end{itemize}
	We denote by $\Stop(R)$ the family of maximal (hence, disjoint) cubes from $\BCE_0(R)\cup\HD_0(R)\cup\LD_0(R)$, and we set $\BCE(R) = \BCE_0(R)\cap\Stop(R),$ $\HD(R) = \HD_0(R)\cap\Stop(R),$ $\LD(R) = \LD_0(R)\cap\Stop(R).$ 
	
	Note that the cubes in $\HD(R)$ are doubling (by the definition), while the cubes from $\LD(R)$ and $\BCE(R)$ may be non-doubling.
	
	We define $\Tree(R)$ as the family of cubes from $\D(R)$ which are not strictly contained in any cube from $\Stop(R)$ (in particular, $\Stop(R)\subset\Tree(R)$). Note that it may happen that $R\in\BCE(R)$, in which case $\Tree(R)=\{R\}$.
	
	Basic properties of cubes in $\Tree(R)$ are collected in the lemma below.
	
	\begin{lemma}\label{lem:density estimates on Tree}
		Suppose $Q\in\Tree(R)$. Then, 
		\begin{equation}\label{eq:upper density estimate on Tree}
		\Theta_{\mu}(2B_Q)\lesssim A\,\Theta_{\mu}(2B_R).
		\end{equation}
		Moreover, for $Q\in\Tree(R)\setminus\Stop(R)$
		\begin{align}
		\tau\,\Theta_{\mu}(2B_R)&\le \Theta_{\mu}(2B_Q),\label{eq:lower density estimate on Tree}\\
		\sum_{Q\subset P\subset R} \E(P) &\le \varepsilon\,\Theta_{\mu}(2B_R)^p. \label{eq:energy controlled in Tree}
		\end{align}
		Finally, for every $Q\in\Tree(R)$ there exists a doubling cube $P(Q)\in \Tree(R)\cap\D^{db}$ such that $Q\subset P(Q)$ and $\ell(P(Q))\lesssim_{A,\tau}\ell(Q)$. If $R\not\in\Stop(R)$, we have $P(Q)\in\Tree(R)\cap\D^{db}\setminus\Stop(R)$.
	\end{lemma}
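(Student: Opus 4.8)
The plan is to verify the three displayed inequalities essentially by unwinding the stopping-time construction, and then to construct $P(Q)$ by going up the tree until we hit a doubling cube, controlling how far we travel using Lemma 2.4.

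First I would prove \eqref{eq:upper density estimate on Tree}. Fix $Q\in\Tree(R)$. Consider the chain of cubes $Q\subset S\subset R$. Let $Q'$ be the smallest cube in this chain that belongs to $\D^{db}$ (such a cube exists, since $R\in\D^{db}$). Then $Q'\in\Tree(R)$ as well (it is not strictly contained in any stopping cube, because $Q$ isn't and $Q\subset Q'$), and $Q'\notin\HD_0(R)$ by definition of $\Tree(R)$ — if it were in $\HD_0(R)$ it would contain a stopping cube of which $Q$ would be a (non-strict, hence allowed) descendant, or more carefully: $Q'$ itself, being doubling and of high density and not in $\BCE_0(R)$, would lie in $\HD_0(R)\subset\Stop_0$-candidates, and then $Q$ would be strictly contained in a stopping cube unless $Q=Q'$, which is fine. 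In any case $\Theta_\mu(2B_{Q'})\le A\,\Theta_\mu(2B_R)$. Now all intermediate cubes strictly between $Q$ and $Q'$ are non-doubling by minimality of $Q'$, so Lemma 2.4 (the first inequality, after passing from $2B_Q$ to $100B(Q)$ and back, which only costs absolute constants since $r(Q)\approx\ell(Q)\approx r(B(Q))$) gives $\Theta_\mu(2B_Q)\lesssim\Theta_\mu(2B_{Q'})\lesssim A\,\Theta_\mu(2B_R)$. This is the main technical point; the comparison between $\Theta_\mu(2B_Q)$ and $\Theta_\mu(100B(Q))$ needs a short doubling-free argument using polynomial growth \eqref{eq:growth condition main lemma} (or, since $100B(Q)$ and $2B_Q$ are comparable balls, one can absorb this into the constant $(C_0A_0)^d$ already present in Lemma 2.4).

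Next, for $Q\in\Tree(R)\setminus\Stop(R)$: inequality \eqref{eq:lower density estimate on Tree} is immediate, since $Q$ is not contained in (nor equal to) any cube of $\LD_0(R)$ — otherwise its maximal such ancestor would be in $\Stop(R)$ and $Q\subsetneq$ it or $Q=$ it, the former contradicting $Q\in\Tree(R)\setminus\Stop(R)$ together with maximality, the latter contradicting $Q\notin\Stop(R)$; either way $Q\notin\LD_0(R)$, which is exactly $\Theta_\mu(2B_Q)\ge\tau\,\Theta_\mu(2B_R)$. The same reasoning gives $Q\notin\BCE_0(R)$, which is \eqref{eq:energy controlled in Tree}. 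Here one should be slightly careful that $\Tree(R)\setminus\Stop(R)$ means $Q$ is \emph{not} strictly inside any stopping cube \emph{and not equal to one}, so $Q$ fails all three stopping conditions; I would spell this out in one line.

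Finally, the existence of $P(Q)$. Given $Q\in\Tree(R)$, walk up the chain $Q\subset S\subset R$ and let $P(Q)$ be the smallest doubling ancestor, as above; then $\ell(P(Q))\lesssim_{A,\tau}\ell(Q)$ follows from the contrapositive of Lemma 2.4: if $P(Q)$ were many generations above $Q$ with all intermediates non-doubling, the density-dropping estimate \eqref{eq:density dropping} combined with the lower bound \eqref{eq:lower density estimate on Tree} for $Q$ (valid if $Q\notin\Stop(R)$; if $Q\in\Stop(R)$ one instead notes $Q$'s parent lies in $\Tree(R)\setminus\Stop(R)$ and is within bounded generations, so reduce to that case) and the upper bound $A\,\Theta_\mu(2B_R)$ would force $\tau\,\Theta_\mu(2B_R)\lesssim (C_0A_0)^dA_0^{-9d(J(Q)-J(P(Q))-1)}A\,\Theta_\mu(2B_R)$, i.e. $A_0^{9d(J(Q)-J(P(Q))-1)}\lesssim A/\tau$, bounding $J(Q)-J(P(Q))$ and hence $\ell(P(Q))/\ell(Q)$ by a constant depending only on $A,\tau$. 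That $P(Q)\in\Tree(R)$ is clear ($Q\subset P(Q)\subset R$ and $P(Q)$ is not strictly inside any stopping cube since $Q$ isn't, unless $Q$ is strictly inside $P(Q)$ and $P(Q)$... — again reduce via: any ancestor of $Q$ in $\D(R)$ that is not $\subsetneq$ a stopping cube lies in $\Tree(R)$). For the last sentence: if $R\notin\Stop(R)$ then $\Tree(R)\setminus\Stop(R)$ is nonempty and closed under taking ancestors within $\Tree(R)$, so $P(Q)$, being an ancestor of $Q$ and doubling, is not itself a stopping cube (a doubling stopping cube would be in $\HD(R)$, but then $\Theta_\mu(2B_{P(Q)})>A\,\Theta_\mu(2B_R)$ contradicts \eqref{eq:upper density estimate on Tree}), hence $P(Q)\in\Tree(R)\cap\D^{db}\setminus\Stop(R)$.

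The main obstacle I anticipate is purely bookkeeping: correctly handling the boundary cases where $Q$ itself (or $P(Q)$ itself) is a stopping cube, so that the clean statements \eqref{eq:lower density estimate on Tree}–\eqref{eq:energy controlled in Tree} and the non-strict/strict containment conventions line up. The analytic content is entirely contained in Lemma 2.4 together with the polynomial growth bound; no new estimate is needed.
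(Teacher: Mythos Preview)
Your approach is essentially identical to the paper's: the easy inequalities come straight from the stopping definitions, and \eqref{eq:upper density estimate on Tree} together with the side-length bound on $P(Q)$ both come from the density-dropping Lemma~\ref{lem:density dropping for nondoubling} applied along the chain of non-doubling ancestors. Two small bookkeeping corrections: first, take $P(Q)$ (your $Q'$) to be the smallest doubling cube \emph{strictly} containing $Q$, as the paper does---otherwise if $Q\in\HD(R)$ you get $Q'=Q$ and your claim $\Theta_\mu(2B_{Q'})\le A\,\Theta_\mu(2B_R)$ fails; second, your argument that $P(Q)\notin\Stop(R)$ because ``a doubling stopping cube would be in $\HD(R)$'' is wrong (cubes in $\BCE(R)$ or $\LD(R)$ may also be doubling), but the correct reason is simpler: any cube strictly containing $Q\in\Tree(R)$ cannot be in $\Stop(R)$, since otherwise $Q$ would be strictly inside a stopping cube.
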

	\begin{proof}
		First, note that if $R\in\Stop(R)$, then $\Tree(R)=\{R\}$ and the lemma above is trivial. Assume that $R\not\in\Stop(R)$.
		
		Inequalities \eqref{eq:lower density estimate on Tree} and \eqref{eq:energy controlled in Tree} are obvious by the definition $\LD(R)$ and $\BCE(R)$.
		
		Concerning \eqref{eq:upper density estimate on Tree}, note that for $Q\in \Tree(R)\cap\D^{db}\setminus\Stop(R)$ we have $\Theta_{\mu}(2B_Q)\le A\,\Theta_{\mu}(2B_R)$ by the high density stopping condition. In general, given $Q\in \Tree(R)$, let $P(Q)$ be the smallest doubling cube containing $Q$, other than $Q$. Since $R\in\D^{db}$ and $R\not\in\Stop(Q)$, we certainly have $P(Q)\in \Tree(R)\cap\D^{db}\setminus\Stop(R)$, and so $\Theta_{\mu}(2B_{P(Q)})\le A\,\Theta_{\mu}(2B_R)$. 
		
		Denote by $P_1, P_2,\dots, P_k$ all the intermediate cubes, so that $Q\subset P_1\subset\dots\subset P_k\subset P(Q)$. Since $P_j$ are non-doubling, we have by \lemref{lem:density dropping for nondoubling}
		\begin{multline*}
		\Theta_{\mu}(2B_Q)\lesssim\Theta_{\mu}(2B_{P_1})\lesssim\Theta_{\mu}(100B(P_1))\le(C_0 A_0)^d\, A_0^{-9d(k-1)}\Theta_{\mu}(100B(P(Q)))\\
		\lesssim\Theta_{\mu}(2B_{P(Q)})\le A\, \Theta_{\mu}(2B_R),
		\end{multline*}
		which proves \eqref{eq:upper density estimate on Tree}. 
		
		Finally, to see that $\ell(P(Q))\lesssim_{A,\tau}\ell(Q)$, note that $P_1\in\Tree(R)\setminus\Stop(R)$, and so $\tau\,\Theta_{\mu}(2B_R)\le \Theta_{\mu}(2B_{P_1})$. On the other hand, a minor modification of the computation above shows that
		\begin{equation*}
		\Theta_{\mu}(2B_{P_1})\lesssim_{C_0,A_0} A_0^{-9d(k-1)}A\, \Theta_{\mu}(2B_R).
		\end{equation*}
		It follows that $k\lesssim_{A,\tau} 1$.
	\end{proof}
	
	The following estimate of the measure of cubes in $\BCE(R)$ will be used later on in the proof of the packing estimate \eqref{eq:packing estimate}.
	\begin{lemma}
		We have
		\begin{equation}\label{eq:BCE estimate}
		\sum_{Q\in\BCE(R)}\mu(Q)\le \frac{1}{\varepsilon\,\Theta_{\mu}(2B_R)^p}\sum_{P\in\Tree(R)}\E(P)\mu(P).
		\end{equation}
	\end{lemma}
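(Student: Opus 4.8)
The plan is to exploit the very definition of $\BCE_0(R)$ together with the disjointness of the maximal cubes making up $\BCE(R)$. Recall that $\BCE(R)=\BCE_0(R)\cap\Stop(R)$ consists of \emph{disjoint} cubes, and that each $Q\in\BCE(R)$ satisfies, by membership in $\BCE_0(R)$,
\begin{equation*}
\sum_{Q\subset P\subset R}\E(P) > \varepsilon\,\Theta_{\mu}(2B_R)^p.
\end{equation*}
First I would divide by the right-hand side and multiply by $\mu(Q)$ to get
\begin{equation*}
\mu(Q)\le \frac{1}{\varepsilon\,\Theta_{\mu}(2B_R)^p}\sum_{Q\subset P\subset R}\E(P)\,\mu(Q),
\end{equation*}
and then sum over $Q\in\BCE(R)$.

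The heart of the argument is then to swap the order of summation. For a fixed $P\in\Tree(R)$, the cubes $Q\in\BCE(R)$ with $Q\subset P$ are pairwise disjoint and contained in $P$, hence $\sum_{Q\in\BCE(R):\,Q\subset P}\mu(Q)\le\mu(P)$. Also every $P$ appearing (i.e.\ with $Q\subset P\subset R$ for some $Q\in\BCE(R)$) lies in $\Tree(R)$, since such a $P$ is not strictly contained in any stopping cube. Therefore
\begin{equation*}
\sum_{Q\in\BCE(R)}\sum_{Q\subset P\subset R}\E(P)\,\mu(Q)
=\sum_{P\in\Tree(R)}\E(P)\sum_{\substack{Q\in\BCE(R)\\ Q\subset P}}\mu(Q)
\le\sum_{P\in\Tree(R)}\E(P)\,\mu(P),
\end{equation*}
which, combined with the displayed pointwise bound, yields \eqref{eq:BCE estimate}.

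There is no real obstacle here; the only point requiring a moment's care is the bookkeeping in the interchange of sums, namely checking that each $P$ with $Q\subset P\subset R$ indeed belongs to $\Tree(R)$ (so that the right-hand side of \eqref{eq:BCE estimate} genuinely dominates the double sum) and that, for fixed $P$, the inner sum $\sum_{Q\subset P}\mu(Q)$ over the disjoint family $\BCE(R)$ is at most $\mu(P)$. Both are immediate from the definitions of $\Tree(R)$ and $\Stop(R)$ and the disjointness of maximal stopping cubes. One should also note that $\E(P)\ge 0$, so all the inequalities above go in the right direction and no absolute values are needed.
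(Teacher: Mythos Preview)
Your proof is correct and follows essentially the same approach as the paper: use the defining inequality for $\BCE_0(R)$, multiply by $\mu(Q)$, sum over $Q\in\BCE(R)$, swap the order of summation, and bound the inner sum by $\mu(P)$ using disjointness of the stopping cubes. Your added remarks that each intermediate $P$ lies in $\Tree(R)$ and that $\E(P)\ge 0$ are accurate and make the bookkeeping slightly more explicit than the paper's version.
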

	\begin{proof}
		We use the fact that for $Q\in\BCE(R)$ we have
		\begin{equation*}
		\sum_{Q\subset P\subset R} \E(P) > \varepsilon\,\Theta_{\mu}(2B_R)^p
		\end{equation*}
		to conclude that
		\begin{multline*}
		\Theta_{\mu}(2B_R)^p\sum_{Q\in\BCE(R)}\mu(Q)\le \frac{1}{\varepsilon}\sum_{Q\in\BCE(R)}\mu(Q)\sum_{\substack{P\in\D\\ Q\subset P\subset R}}\E(P)\\
		 = \frac{1}{\varepsilon}\sum_{P\in\Tree(R)}\E(P)\sum_{\substack{Q\in\BCE(R)\\ Q\subset P}}\mu(Q)
		\le \frac{1}{\varepsilon}\sum_{P\in\Tree(R)}\E(P)\mu(P).
		\end{multline*}
	\end{proof}
	
	\subsection{Key estimate}
	We introduce some additional notation. Given $x\in\R^d$ and $\lambda>0$ set
	\begin{equation*}
	K^{\lambda}(x) = K(x,V,\lambda\alpha).
	\end{equation*}
	For $Q\in\D$, we denote
	\begin{equation*}
	K^{\lambda}_Q = \bigcup_{x\in Q} K^{\lambda}(x).
	\end{equation*}
	If $\lambda=1$, we will write $K_Q$ instead of $K^{1}_Q$.
	\begin{lemma}\label{lem:key lemma}
		There exists a constant $M=M(\alpha)>1$ such that, if $Q\in\Tree(R)$ and $P\in\D(R)$ satisfy 
		\begin{equation}
		P\cap K^{1/2}_Q\setminus MB_Q\neq\varnothing
		\end{equation}
		and 
		\begin{equation*}
		\dist(Q,P)\ge Mr(P),
		\end{equation*}
		then $P\not\in\Tree(R)$.
	\end{lemma}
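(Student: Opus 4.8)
The plan is to argue by contradiction: suppose $P \in \Tree(R)$ with $P \cap K^{1/2}_Q \setminus MB_Q \neq \varnothing$ and $\dist(Q,P) \ge Mr(P)$, and derive a lower bound on the conical energy $\E(\R^d)$ — or more precisely on the energy sum $\sum_{Q \subset S \subset R}\E(S)$ — that contradicts the defining property \eqref{eq:energy controlled in Tree} of $\Tree(R)\setminus\Stop(R)$ (and a separate, easier argument rules out $P$ or $Q$ lying in $\Stop(R)$). The geometric heart of the matter is this: if $z \in P \cap K^{1/2}_Q$, then $z$ lies in $K^{1/2}(y)$ for some $y \in Q$, i.e. $\dist(z, V+y) < \tfrac{\alpha}{2}|z-y|$. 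For $M = M(\alpha)$ large, the separation conditions force $|z - y| \approx \dist(z,y) \approx \dist(Q,P)$ for \emph{all} $y \in Q$, and since $\diam(Q) \approx r(Q) \le M^{-1}\dist(Q,P)$ is tiny compared to $|z-y|$, the plane $V+y$ barely moves as $y$ ranges over $Q$. Consequently there is a fixed scale $r \approx \dist(Q,P)$ such that for \emph{every} $x \in Q$, a fixed fraction of $P$ sits inside the cone $K(x,V,\alpha,r)$ — the factor-$2$ slack between aperture $\alpha/2$ and $\alpha$ is exactly what absorbs the wobble of the vertex and of the axis across $Q$.

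The key steps, in order: (1) Fix $z \in P \cap K^{1/2}_Q \setminus MB_Q$ and $y_0 \in Q$ with $z \in K^{1/2}(y_0)$; record $\dist(z, V + y_0) < \tfrac{\alpha}{2}|z - y_0|$. (2) Use $\dist(Q,P) \ge Mr(P)$ and $z \notin MB_Q$ together with $r(P) \approx \ell(P)$, $r(Q)\approx\ell(Q)$ to show $|z - x| \approx |z - y_0| \approx \dist(Q,P) =: \rho$ for all $x \in Q$ and (shrinking $P$ to, say, $B(z, c\,r(P)) \cap P$ where $c$ is small) for all such $z$, with comparability constants independent of $M$ once $M$ is large. (3) For arbitrary $x \in Q$ and $w \in P \cap B(z, c\,r(P))$, estimate $\dist(w, V + x) \le \dist(z, V+y_0) + |w - z| + |x - y_0| < \tfrac{\alpha}{2}|z-y_0| + C r(P) \le \alpha|w - x|$, using $r(P) \le M^{-1}\rho \ll \alpha\rho \approx \alpha|w-x|$ for $M$ large depending on $\alpha$; hence $P \cap B(z,c\,r(P)) \subset K(x,V,\alpha)$, and this intersection lies at distance $\approx \rho$ from $x$, so it is contained in $K(x, \alpha, r)$ for a single $r \approx \rho$, uniformly in $x \in Q$. (4) Since $P \in \Tree(R)$ (or at worst $P \in \Stop(R) \subset \Tree(R)$), the density lower bound from \lemref{lem:density estimates on Tree}, \eqref{eq:lower density estimate on Tree}, or for stopping cubes a direct argument using that $\LD$-cubes are excluded along the chain, gives $\mu(P \cap B(z,c\,r(P))) \gtrsim \tau\,\Theta_\mu(2B_R)\,r(P)^n$ after passing to a suitable doubling subcube via \lemref{lem:doubling subcube of doubling}; hence for every $x \in Q$,
\begin{equation*}
\frac{\mu(K(x,V,\alpha,r))}{r^n} \gtrsim \frac{\mu(P \cap B(z,c\,r(P)))}{\rho^n} \gtrsim \tau\,\Theta_\mu(2B_R)\,\Big(\frac{r(P)}{\rho}\Big)^n \ge \tau\,\Theta_\mu(2B_R)\,M^{-n}.
\end{equation*}
(5) Integrate the $p$-th power of this over $x \in Q \subset 2B_Q$ and over $r$ in a fixed dyadic window $[\eta \rho, \eta^{-1}\rho]$; since $\rho \approx \dist(Q,P)$ and $Q \subset P(Q)$ with $P(Q)$ a nearby doubling cube in $\Tree(R)$, this window is contained in $[\eta' r(S), \eta'^{-1} r(S)]$ for some cube $S$ with $Q \subset S \subset R$, so the integral is bounded below by $c(\alpha)\,\tau^p M^{-np}\,\Theta_\mu(2B_R)^p\,\mu(Q)$, and therefore $\sum_{Q \subset S' \subset R}\E(S') \gtrsim c(\alpha)\,\tau^p M^{-np}\,\Theta_\mu(2B_R)^p$.

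The main obstacle — and where the choice of $M$ is really pinned down — is step (5): making sure the scale $\rho \approx \dist(Q,P)$ at which we detect conical mass actually falls inside the energy window $[\eta r(S), \eta^{-1} r(S)]$ of some ancestor cube $S$ of $Q$ with $S \subset R$, and that $\rho$ is genuinely larger than $r(Q)$ (which follows from $\dist(Q,P) \ge Mr(P)$ only if we also know $r(P)$ is not absurdly small relative to $r(Q)$; if $P$ is much smaller than $Q$ we instead use that $z \notin MB_Q$ gives $\rho \gtrsim M r(Q)$ directly). Once $\rho \gtrsim M r(Q)$ is secured, $\rho$ lies above scale $r(Q)$, and walking up the David–Mattila chain from $Q$ we find the doubling ancestor $S$ whose window catches $\rho$; then comparing the lower bound $c(\alpha)\tau^p M^{-np}\Theta_\mu(2B_R)^p$ against the upper bound $\varepsilon\,\Theta_\mu(2B_R)^p$ from \eqref{eq:energy controlled in Tree} yields a contradiction provided $\varepsilon$ is chosen small enough relative to $\tau, \alpha, \eta$ (consistent with the parameter hierarchy listed before Section~\ref{sec:construction of graph}, where $\varepsilon = \varepsilon(\tau,\alpha,\eta)$ is fixed last). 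The remaining case $P \in \Stop(R)$ or $Q \in \Stop(R)$ is handled by noting that $\Stop(R) \subset \Tree(R)$ and that the density and energy bounds we used hold, in the appropriate weakened form, for stopping cubes as well — or simply by replacing $P$ with the doubling cube just above it, which still lies in $\Tree(R)$ and still carries comparable mass in $K^{1/2}_Q \setminus MB_Q$.
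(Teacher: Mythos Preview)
Your geometric picture in steps (1)--(3) is right and matches the paper's, but step (5) has a genuine gap. You integrate the pointwise conical bound only over $x\in Q$, and then want to pass to $\E(S)$ for an ancestor $S$ of $Q$ at scale $r(S)\approx\rho=\dist(Q,P)$. But by definition $\E(S)=\mu(S)^{-1}\int_{2B_S}\!\int(\dots)$, so restricting the $x$-integral to $Q$ only yields
\[
\E(S)\;\ge\;\frac{\mu(Q)}{\mu(S)}\cdot c(\alpha)\,\tau^p M^{-np}\,\Theta_\mu(2B_R)^p,
\]
and the factor $\mu(Q)/\mu(S)$ is completely uncontrolled (there is no a priori relation between $r(Q)$ and $\rho$). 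Taking $S=Q$ does not help either, since the window $[\eta r(Q),\eta^{-1}r(Q)]$ need not reach $\rho$. The fix is to enlarge $Q$ before integrating: pass to the ancestor $Q'\supsetneq Q$ with $\ell(Q')\approx M^{-1}\rho$, check that the containment $2B_{P'}\subset K(x)$ persists for every $x\in 2B_{Q'}$ (this is where $M=M(\alpha)$ is really pinned down), and then bound $\E(Q')$ directly. Since $Q\subsetneq Q'$ forces $Q'\in\Tree(R)\setminus\Stop(R)$, the estimate \eqref{eq:energy controlled in Tree} applies to $Q'$.

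There is a second, independent issue in step (4): you need $\mu(P\cap B(z,c\,r(P)))\gtrsim\tau\,\Theta_\mu(2B_R)\,r(P)^n$, but the lower density bound \eqref{eq:lower density estimate on Tree} only controls $\mu(2B_P)$, not the mass in a small ball around an arbitrary $z\in P$, and it fails outright if $P\in\LD(R)\subset\Stop(R)$. The paper sidesteps both problems at once by reversing the logic: rather than assuming $P\in\Tree(R)$ and seeking a density \emph{lower} bound on $P$, it passes to the ancestor $P'\supsetneq P$ with $\ell(P')\approx\rho$ and $P'\subset K_Q^{3/4}$, and shows from $\E(Q')\le\varepsilon\,\Theta_\mu(2B_R)^p$ that $\Theta_\mu(2B_{P'})^p\lesssim_\eta\varepsilon\,\Theta_\mu(2B_R)^p$. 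For $\varepsilon$ small this forces $P'\in\LD_0(R)$, hence $P\subsetneq P'$ lies strictly below a stopping cube and $P\notin\Tree(R)$. No contradiction hypothesis, no casework on $\Stop(R)$, and no $M^{-np}$ loss.
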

	\begin{proof}
		Taking $M=M(\alpha)>1$ big enough, we can choose cubes $P',\ Q'\in\D(R)$ such that
		\begin{itemize}
			\item $P\subsetneq P'\subset R,\ P'\subset K_Q^{3/4},$ and $\ell(P')\approx \dist(P',Q)$,
			\item $Q\subsetneq Q'\subset R,\ \ell(Q')\approx M^{-1}\ell(P')$, and $\dist(P',Q')\approx \ell(P')$.
		\end{itemize}
		Moreover, if $M$ is taken big enough, we have for all $x\in 2B_{Q'}$
		\begin{equation*}
		2B_{P'}\subset K(x).
		\end{equation*}
		Thus, if $\eta$ is taken small enough (say, $\eta\ll M^{-1}$), we have
		\begin{equation}\label{eq:density P estimate}
		\bigg(\frac{\mu(2B_{P'})}{\ell(P')^{n}}\bigg)^p\mu(2B_{Q'})\lesssim_{\eta} \int_{2B_{Q'}} \int_{\eta r(Q')}^{\eta^{-1}r(Q')}\left(\frac{\mu(K(x,r))}{r^n}\right)^p\ \frac{dr}{r}d\mu(x) =\E(Q')\mu(Q').
		\end{equation}
		Since $Q\in\Tree(R)$ and $Q\subsetneq Q'$, we have $Q'\in\Tree(R)\setminus\Stop(R)$, and so
		\begin{equation*}
		\Theta_{\mu}(2B_{P'})^p\approx \bigg(\frac{\mu(2B_{P'})}{\ell(P')^{n}}\bigg)^p\overset{\eqref{eq:density P estimate}}{\lesssim_{\eta}} \frac{\mu(Q')}{\mu(2B_{Q'})}\E(Q')\le\E(Q')\overset{\eqref{eq:energy controlled in Tree}}{\le}\varepsilon\,\Theta_{\mu}(2B_R)^p.
		\end{equation*}
		It follows that, for $\varepsilon$ small enough, $P'\in\LD_0(R)$. Since $P\subsetneq P'$, we get that $P\notin\Tree(R)$.
	\end{proof}
	We set
	\begin{equation}\label{eq:def of GR}
	G_R = R\setminus\bigcup_{Q\in\Stop(R)} Q\qquad \text{and}\qquad \widetilde{G}_R = \bigcap_{k=1}^{\infty}\bigcup_{\substack{Q\in\Tree(R)\\ r(Q)\le A_0^{-k}}} 2MB_{Q}.
	\end{equation}
	Note that $G_R\subset\widetilde{G}_R$.
	
	\begin{lemma}\label{lem:xy from GR outside Kx}
		For all $x, y\in\widetilde{G}_R$ we have $y\not\in K^{1/2}(x)$. Thus, $\widetilde{G}_R$ is contained in an $n$-dimensional Lipschitz graph with Lipschitz constant depending only on $\alpha$.
	\end{lemma}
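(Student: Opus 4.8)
The plan is to show that the cone-avoidance condition $y \notin K^{1/2}(x)$ holds for all $x,y \in \widetilde{G}_R$, and then invoke the classical fact recalled in \eqref{eq:lip graph condition} (that a set avoiding a fixed cone at each of its points lies in a Lipschitz graph with constant $\le 1/((1/2)\alpha) = 2/\alpha$). So the entire content is the first sentence. Fix $x, y \in \widetilde{G}_R$ with $x \neq y$, and suppose toward a contradiction that $y \in K^{1/2}(x)$. The idea is to produce small cubes $Q, P \in \Tree(R)$ near $x$ and $y$ respectively, to which \lemref{lem:key lemma} applies, contradicting $P \in \Tree(R)$.

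First I would use the definition of $\widetilde{G}_R$ in \eqref{eq:def of GR}: for every $k$ there exist $Q_k, P_k \in \Tree(R)$ with $r(Q_k), r(P_k) \le A_0^{-k}$ such that $x \in 2MB_{Q_k}$ and $y \in 2MB_{P_k}$. For $k$ large (depending on $|x-y|$, on $M$, and on $\alpha$), the cubes $Q_k$ and $P_k$ are tiny compared to $|x-y|$, so $x_{Q_k}$ is within $O(M\,r(Q_k))$ of $x$ and $x_{P_k}$ within $O(M\,r(P_k))$ of $y$; hence $\dist(Q_k, P_k) \approx |x-y|$, which for $k$ large forces both $\dist(Q_k,P_k) \ge M\,r(P_k)$ and $P_k \cap K^{1/2}_{Q_k} \setminus MB_{Q_k} \ne \varnothing$. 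For the latter: since $y \in K^{1/2}(x)$ is an open condition and $x_{Q_k} \to x$, a short computation shows $y \in K^{1/2-\delta}(x_{Q_k}) \subset K^{1/2}_{Q_k}$ once the cubes are small enough; and since the $P_k$ shrink to $y$, the point $y$ (or a nearby point of $P_k$) lies in $K^{1/2}_{Q_k}$ and outside $MB_{Q_k}$ (as $MB_{Q_k}$ also shrinks to $x \ne y$). Then \lemref{lem:key lemma} gives $P_k \notin \Tree(R)$, the desired contradiction.

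The step that needs the most care is checking that $y \in K^{1/2}(x)$ propagates to $y' \in K^{1/2}(x')$ for points $x'$ near $x$ and $y'$ near $y$ (uniformly, once we know a quantitative lower bound on how deep inside the cone $y$ sits). This is where having aperture $1/2 \cdot \alpha$ rather than $\alpha$ in the hypothesis is used: it leaves room to absorb the perturbations of size $O(M\,r(Q_k))$ and $O(M\,r(P_k))$ coming from the fact that $x,y$ are only in $2MB_{Q_k}, 2MB_{P_k}$ rather than being the cube centers. Concretely, if $\dist(y, (V+x)) < \tfrac12\alpha|x-y|$, then for $|x'-x| + |y'-y| \le c\,|x-y|$ with $c = c(\alpha)$ small one still gets $\dist(y', (V+x')) < \tfrac34\alpha|x'-y'|$, say; choosing $k$ large makes the perturbation this small. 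Once the cone-avoidance $\widetilde{G}_R \cap K^{1/2}(x) = \varnothing$ for $x \in \widetilde{G}_R$ is established, the Lipschitz graph conclusion with constant $\lesssim 1/\alpha$ is immediate from \cite[Proof of Lemma 15.13]{mattila1999geometry}.
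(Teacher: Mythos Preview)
Your proof is correct and follows essentially the same approach as the paper's: argue by contradiction, use the definition of $\widetilde{G}_R$ to find arbitrarily small cubes $Q,P\in\Tree(R)$ with $x\in 2MB_Q$, $y\in 2MB_P$, then invoke the openness of the cone $K^{1/2}$ to verify the hypotheses of \lemref{lem:key lemma} for small enough cubes, obtaining $P\notin\Tree(R)$. One minor remark: your explanation of the role of the aperture factor $1/2$ is slightly off---openness of $K^{1/2}$ alone suffices to absorb the perturbations here, and the gap between $1/2$ and $1$ is actually exploited inside the proof of \lemref{lem:key lemma} (passing from $K^{1/2}$ to $K^{3/4}$ to $K$), not in this lemma.
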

	\begin{proof}
		Proof by contradiction. Suppose that $x,y\in\widetilde{G}_R$ and $x-y\in K^{1/2}$.  Let $Q, P\in\Tree(R)$ be such that $x\in 2MB_Q,\ y\in 2MB_P$, with sidelength so small that $P\cap (K_Q^{1/2}\setminus MB_Q)\neq\varnothing$ and $\dist(Q,P)\ge Mr(P)$ (note that this can be done because $K^{1/2}$ is an open cone, and so $x'-y'\in K^{1/2}$ also for $x'\in B(x,\varepsilon')$ and $y'\in B(y,\varepsilon')$, assuming $\varepsilon'>0$ small enough). It follows by \lemref{lem:key lemma} that $P\notin\Tree(R)$, and so we reach a contradiction.
	\end{proof}

	\subsection{Construction of \texorpdfstring{$\Gamma_R$}{Gamma R}}\label{subsec:GammaR}
	The Lipschitz graph from \lemref{lem:xy from GR outside Kx} can be thought of as a first approximation of $\Gamma_R$. It contains the ``good set'' $\widetilde{G}_R$, but we would also like for $\Gamma_R$ to lie close to cubes from $\Tree(R)$. In this subsection we show how to do it.
	
	Given $t>1$, we say that cubes $Q,P\in\D$ are \emph{$t$-neighbours} if they satisfy
	\begin{equation}\label{eq:neighbour condition 1}
	t^{-1}\,r(Q)\le r(P)\le t\,r(Q)
	\end{equation}
	and
	\begin{equation}\label{eq:neighbour condition 2}
	\dist(Q,P)\le t(r(Q)+r(P)).
	\end{equation}
	If at least one of the conditions above does not hold, we say that $Q$ and $P$ are \emph{$t$-separated.} We will also say that a family of cubes is {$t$-separated} if the cubes from that family are pairwise $t$-separated.
	
	Consider a big constant $t=t(M,\alpha)>M$ which will be fixed later on. We denote by $\Sep(R)$ a maximal $t$-separated subfamily of $\Stop(R)$ (it exists by Zorn's lemma). Clearly, for every $Q\in\Stop(R)$ there exists some $P\in\Sep(R)$ which is a $t$-neighbour of $Q$.

	Furthermore, we define $\Sep^*(R)$ as the family of all cubes $Q\in\Sep(R)$ satisfying the following two conditions:
	\begin{equation}\label{eq:Sep* 1st condition}
	2MB_{Q}\cap\widetilde{G}_R=\varnothing,
	\end{equation}
	and for all $P\in\Sep(R),\ P\neq Q$, we have 
	\begin{equation}\label{eq:Sep* 2nd condition}
	2MB_P\not\subset 2MB_Q.
	\end{equation}
%	\begin{equation*}
%	\Sep^*(R) = \{Q\in\Sep(R)\ :\ 2MB_Q\cap\widetilde{G}_R  \}
%	\end{equation*}
	\begin{lemma}\label{lem:Sep* cubes are not subsets}
		Suppose $t=t(M)$ is big enough. Then, for all $Q, P\in\Sep^*(R),\ Q\neq P$, we have $Q\not\subset1.5MB_P.$
	\end{lemma}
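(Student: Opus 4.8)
The plan is to argue by contradiction, assuming there exist distinct $Q, P\in\Sep^*(R)$ with $Q\subset 1.5MB_P$. Without loss of generality I would assume $r(Q)\le r(P)$. The starting point is that $Q\in\Sep^*(R)$ forces, via condition \eqref{eq:Sep* 1st condition}, the ball $2MB_Q$ to be disjoint from $\widetilde G_R$, and similarly for $P$. Since $Q\subset 1.5MB_P$, and $B_Q=28B(Q)$ has radius comparable to $\ell(Q)$ while $B_P$ has radius comparable to $\ell(P)$, the inclusion already places $x_Q$ within distance $\approx M\ell(P)$ of $x_P$. The key point to extract is a lower bound on $\ell(Q)/\ell(P)$: if $\ell(Q)$ were much smaller than $\ell(P)$ (by a factor depending on $M$ and the lattice constants), then I claim $2MB_P$ would contain $2MB_Q$, contradicting \eqref{eq:Sep* 2nd condition} applied to the pair $(P,Q)$ — note that $P,Q\in\Sep^*(R)\subset\Sep(R)$, so \eqref{eq:Sep* 2nd condition} is available. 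Indeed, $Q\subset 1.5MB_P$ together with $r(B_{2MB_Q})=2M\cdot 28 r(Q)$ small relative to $2M\cdot 28 r(P)$ gives $2MB_Q\subset 2MB_P$ by the triangle inequality once $r(Q)/r(P)$ is small enough in terms of $M$. Hence we may assume $r(Q)\approx_M r(P)$, i.e. $Q$ and $P$ satisfy the first neighbour condition \eqref{eq:neighbour condition 1} with constant $\approx_M 1$.

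Next I would use the containment $Q\subset 1.5MB_P$ to get $\dist(Q,P)\le 1.5M\cdot 28\, r(P)\lesssim_M r(P)+r(Q)$, which is the second neighbour condition \eqref{eq:neighbour condition 2}. Combining this with the comparability of radii from the previous paragraph, $Q$ and $P$ would be $t'$-neighbours for some $t'=t'(M)$. Now here is where the choice of $t$ enters: if we fix $t=t(M)$ larger than this $t'(M)$, then $Q$ and $P$ being $t'$-neighbours contradicts the fact that $Q$ and $P$, as members of $\Sep(R)$, are pairwise $t$-separated. This closes the contradiction and proves the lemma.

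The main obstacle — really the only delicate bookkeeping — is making the two threshold choices precise and consistent: first, how small $r(Q)/r(P)$ must be (in terms of $M$ and the David–Mattila constants $C_0, A_0$) to force $2MB_Q\subset 2MB_P$ and thereby invoke \eqref{eq:Sep* 2nd condition}; and second, verifying that outside that regime the radius comparison $r(Q)\approx r(P)$ one obtains has constant depending only on $M$ (not on $t$), so that choosing $t$ large afterward is legitimate and does not create circularity. One should double-check that the $28$ factors in $B_Q=28B(Q)$ and the relation $r(Q)\approx\ell(Q)$ are absorbed cleanly; since $C_0, A_0$ are treated as absolute constants this is harmless, but it is worth stating explicitly that all implicit constants in the neighbour relations depend only on $M$. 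With $t$ chosen accordingly at the end, both cases lead to a contradiction, which is exactly the statement "for $t=t(M)$ big enough."
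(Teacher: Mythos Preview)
Your approach is the same as the paper's: split according to the ratio $r(Q)/r(P)$, invoking \eqref{eq:Sep* 2nd condition} when the ratio is small and the $t$-separation of $\Sep(R)$ when the radii are comparable, with $t=t(M)$ chosen large at the end. One point to tighten: the ``without loss of generality $r(Q)\le r(P)$'' is not justified, since the hypothesis $Q\subset 1.5MB_P$ is asymmetric in $Q$ and $P$; instead note that if, say, $r(Q)\ge 2r(P)$, then $x_Q\in 1.5MB_P$ forces $2MB_P\subset 2MB_Q$, contradicting \eqref{eq:Sep* 2nd condition} applied to $Q$ --- this supplies the missing upper bound on $r(Q)/r(P)$ by the same mechanism you already use for the lower bound.
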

	\begin{proof}
		Suppose $Q\in\Sep^*(R)$, and $Q\subset 1.5MB_P$. We will show that $P\notin\Sep^*(R)$. 
		
		Firstly, if $r(Q)>t^{-1}r(P)$, then $Q\subset 1.5MB_P$ implies that $Q$ and $P$ are $t$-neighbours (for $t$ big enough), and so $P\notin\Sep^{*}(R)$. On the other hand, if $r(Q)\le t^{-1}r(P)$, then (if $t$ is big enough) $Q\subset 1.5MB_P$ implies $2MB_Q\subset 2MB_P$, contradicting \eqref{eq:Sep* 2nd condition}. 
	\end{proof}
	
	\begin{lemma}\label{lem:Sep contains Sep* cubes}
		For every $Q\in\Sep(R)$ at least one of the following is true:
		\begin{itemize}
			\item[(a)] $2MB_Q\cap\widetilde{G}_R\neq\varnothing$,
			\item[(b)] there exists $P\in \Sep^*(R)$ such that $2MB_P\subset 2MB_Q$.
		\end{itemize}
	\end{lemma}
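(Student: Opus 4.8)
The plan is to argue by dichotomy on whether condition (a) holds, and if it fails, to produce the cube $P \in \Sep^*(R)$ required by (b) by a ``descent'' argument among the $t$-separated cubes. So fix $Q \in \Sep(R)$ and assume (a) fails, i.e. $2MB_Q \cap \widetilde{G}_R = \varnothing$. If moreover $Q$ satisfies the second defining condition \eqref{eq:Sep* 2nd condition} of $\Sep^*(R)$ — namely that there is no $P \in \Sep(R)$, $P \neq Q$, with $2MB_P \subset 2MB_Q$ — then $Q$ itself lies in $\Sep^*(R)$, and taking $P = Q$ gives (b) trivially. Hence we may assume there exists $P_1 \in \Sep(R)$, $P_1 \neq Q$, with $2MB_{P_1} \subsetneq 2MB_Q$.

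The key point is that any such $P_1$ must itself satisfy condition \eqref{eq:Sep* 1st condition}: since $2MB_{P_1} \subset 2MB_Q$ and $2MB_Q \cap \widetilde{G}_R = \varnothing$, we get $2MB_{P_1} \cap \widetilde{G}_R = \varnothing$. So $P_1$ is a candidate for $\Sep^*(R)$; it fails to be in $\Sep^*(R)$ only if it in turn fails \eqref{eq:Sep* 2nd condition}, i.e. only if there is $P_2 \in \Sep(R)$, $P_2 \neq P_1$, with $2MB_{P_2} \subset 2MB_{P_1}$. Iterating, we obtain a chain $2MB_Q \supsetneq 2MB_{P_1} \supsetneq 2MB_{P_2} \supsetneq \cdots$ of enclosing balls of cubes in $\Sep(R)$, each of which satisfies \eqref{eq:Sep* 1st condition}. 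The plan is to show this chain must terminate, and its last element is the desired cube in $\Sep^*(R)$.

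To see that the chain terminates, I would argue that the radii $r(B_{P_j})$ must decrease by a definite factor at each step, which forces termination because all cubes in $\D$ lying inside $R$ have radius bounded below by (something comparable to) the smallest scale appearing — or, more robustly, because a strictly decreasing sequence of David--Mattila cubes, each properly containing the next in the sense $2MB_{P_{j+1}} \subsetneq 2MB_{P_j}$, must have $r(P_{j+1}) \le c\, r(P_j)$ for some fixed $c = c(M) < 1$ (otherwise $P_j$ and $P_{j+1}$ would be comparable in size with nested enlarged balls, hence $t$-neighbours for $t$ large, contradicting that $\Sep(R)$ is $t$-separated). Indeed $2MB_{P_{j+1}} \subset 2MB_{P_j}$ combined with $r(P_{j+1}) > t^{-1} r(P_j)$ would give $\dist(P_j, P_{j+1}) \lesssim M\, r(P_j) \le M t\,(r(P_j) + r(P_{j+1}))$ and comparable radii, i.e. the $t$-neighbour conditions \eqref{eq:neighbour condition 1}--\eqref{eq:neighbour condition 2}, a contradiction. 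Hence $r(P_{j+1}) \le t^{-1} r(P_j)$, the radii decay geometrically, and since $\D$ is a lattice with a largest cube $R$ the cubes $P_j$ cannot become arbitrarily deep (for a fixed finite lattice) — more carefully, one notes each step strictly drops the radius, so after finitely many steps no further $P_{j+1}$ exists, meaning the final $P_k$ satisfies both \eqref{eq:Sep* 1st condition} and \eqref{eq:Sep* 2nd condition}, i.e. $P_k \in \Sep^*(R)$, and $2MB_{P_k} \subset 2MB_Q$ gives (b).

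The main obstacle is making the termination argument fully rigorous: a priori the lattice $\D$ is infinite (scales $A_0^{-k}$ for all $k \ge 0$), so ``finitely many steps'' needs the geometric decay of radii together with the observation that once $2MB_{P_j}$ is very small it can no longer contain a strictly smaller $2MB_{P_{j+1}}$ of a distinct $t$-separated cube — or, alternatively, one invokes Zorn's lemma / a minimality argument directly: among all $P \in \Sep(R)$ with $2MB_P \subset 2MB_Q$ and $P$ satisfying \eqref{eq:Sep* 1st condition}, pick one, call it $P^*$, with $2MB_{P^*}$ minimal (such a minimal element exists because the radii are bounded below along any chain by the geometric-decay estimate, so a chain of such balls is finite); then $P^*$ automatically satisfies \eqref{eq:Sep* 2nd condition} by minimality, hence $P^* \in \Sep^*(R)$, proving (b). I would present the Zorn/minimality version to sidestep the bookkeeping, citing \lemref{lem:Sep* cubes are not subsets} (or rather its proof technique) for the $t$-neighbour contradiction that yields the radius decay.
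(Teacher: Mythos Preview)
Your descent argument and the radius-decay step ($r(P_{j+1}) \le t^{-1} r(P_j)$ via $t$-separation) are correct and match the paper's approach up to that point. The gap is in the termination argument. Both fixes you propose fail: there is no smallest scale in $\Sep(R)$, so ``once $2MB_{P_j}$ is very small it can no longer contain a strictly smaller $2MB_{P_{j+1}}$'' is false, and your Zorn/minimality argument is circular --- geometric decay shows the radii along a chain go to \emph{zero}, not that they are bounded below, so an infinite descending chain is not ruled out and a minimal element need not exist.

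The paper closes this gap differently and more cleanly: rather than proving the chain terminates directly, it observes that if the chain were infinite, then (since the radii $r(Q_j)\to 0$, which your decay estimate in fact establishes) any point $x \in \bigcap_j 2MB_{Q_j}$ would, by the very definition of $\widetilde{G}_R$ in \eqref{eq:def of GR}, lie in $\widetilde{G}_R$; but $\bigcap_j 2MB_{Q_j} \subset 2MB_Q$, contradicting the standing assumption that (a) fails. So the chain must be finite, and its last element is the desired $P \in \Sep^*(R)$. You were one observation away: instead of forcing termination, use the failure of termination to produce a point of $\widetilde{G}_R$ inside $2MB_Q$.
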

	\begin{proof}
		If $Q\in\Sep^*(R)$, then of course (b) holds (with $P=Q$). Suppose that $Q\notin\Sep^*(R)$, and that (a) does not hold (i.e. $2MB_Q\cap\widetilde{G}_R=\varnothing$). We will find $P\in\Sep^*(R)$ such that $2MB_P\subset 2MB_Q$.
		
		Since $Q\notin\Sep^*(R)$ and \eqref{eq:Sep* 1st condition} holds, condition \eqref{eq:Sep* 2nd condition} must be false. Thus, we get a cube $Q_1\in\Sep(R)$ such that $2MB_{Q_1}\subset 2MB_Q$. If $Q_1\in\Sep^*(R)$, we get (b) with $P=Q_1$. Otherwise, we continue as follows. 
		
		%Observe that, since $Q$ and $Q_1$ are $t$-separated, and $Q_1\subset 2MB_Q$, we get that $t\,\ell(Q_1)< \ell(Q)$ (for $t\gg M$ big enough). Moreover, 
		Reasoning as before, $Q_1\in \Sep(R)\setminus\Sep^*(R)$ and $2MB_{Q_1}\cap\widetilde{G}_R=\varnothing$ ensures that there exists a cube $Q_2\in\Sep(R)$ such that $2MB_{Q_2}\subset 2MB_{Q_1}$. Iterating this process, we get a (perhaps infinite) sequence of cubes $Q_0 := Q,\, Q_1,\, Q_2,\,\dots$ satisfying $2MB_{Q_{j+1}}\subset 2MB_{Q_{j}}$.
		% and $t\,\ell(Q_{j+1})< \ell(Q_j)$.
		
		If the algorithm never stops, then $\bigcap_{j=0}^{\infty}2MB_{Q_j}\neq\varnothing$. But, by the definition of $\widetilde{G}_R$ \eqref{eq:def of GR} we have $\bigcap_{j=0}^{\infty}2MB_{Q_j}\subset \widetilde{G}_R,$ and so we get a contradiction with $2MB_Q\cap\widetilde{G}_R=\varnothing$. Thus, the algorithm stops at some cube $Q_m$, which means that $Q_m\in\Sep^{*}(R)$. Setting $P=Q_m$ finishes the proof.
	\end{proof}

	\begin{lemma}\label{lem:Q in Sep* and KP dont intersect}
		Suppose $t=t(M)$ is big enough. Then:
		\begin{itemize}
			\item[(a)] for all $Q,P\in\Sep^*(R),\ Q\neq P,$ we have 
			\begin{equation}\label{eq:Q and KP dont intersect}
			Q\cap K_P^{1/2} = P\cap K_Q^{1/2}=\varnothing,
			\end{equation}
			\item[(b)] for all $x\in\widetilde{G}_R$ and for all $Q\in\Sep^*(R)$ we have
			\begin{equation}
			x\notin K_Q^{1/2}\quad\text{and}\quad Q\cap K^{1/2}(x)=\varnothing.
			\end{equation}
		\end{itemize}
	\end{lemma}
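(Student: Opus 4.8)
**Proof plan for Lemma (the $Q$ in $\Sep^*$ and $K_P$ don't intersect lemma).**

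The plan is to deduce both (a) and (b) from \lemref{lem:key lemma} and \lemref{lem:xy from GR outside Kx}, by showing that the geometric hypotheses of the key lemma are forced whenever two $\Sep^*(R)$-cubes, or a $\Sep^*(R)$-cube and a point of $\widetilde G_R$, would be in bad conical position. The two ingredients we need to produce from the hypotheses of \lemref{lem:key lemma} are: a cube $P$ lying in $K^{1/2}_Q\setminus MB_Q$ (or realizing the cone relation for a point), and a separation estimate $\dist(Q,P)\ge Mr(P)$. Both should come essentially for free from $t$-separation, once $t$ is taken sufficiently large compared to $M$; this is exactly why the statement carries the hypothesis ``$t=t(M)$ big enough.''

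For part (a): suppose towards a contradiction that $Q,P\in\Sep^*(R)$, $Q\ne P$, and (say) $P\cap K_Q^{1/2}\ne\varnothing$; by symmetry of the cone relation the two assertions in \eqref{eq:Q and KP dont intersect} are equivalent up to swapping $Q$ and $P$. Both $Q,P\in\Stop(R)\subset\Tree(R)$. I first want to arrange $P\subset K^{1/2}_Q\setminus MB_Q$: since $Q$ and $P$ are $t$-separated and $P\cap K^{1/2}_Q\ne\varnothing$, either $r(P)$ and $r(Q)$ are very different in scale or $\dist(Q,P)>t(r(Q)+r(P))$; in either regime, taking $t\gg M$ forces $P\cap MB_Q=\varnothing$ (a $t$-neighbour-type computation using $r(Q)\approx r(B_Q)$ up to the absolute constant $28$ from the David--Mattila construction), and it also gives $\dist(Q,P)\ge M r(P)$ — if the scales are comparable this is the separation clause with $t\gg M$, and if $r(P)\ll r(Q)$ then $\dist(Q,P)$ is either large or else $P$ would be swallowed into $MB_Q$ contradicting $P\cap K^{1/2}_Q\setminus MB_Q\neq\varnothing$. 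Once these two conditions hold, \lemref{lem:key lemma} yields $P\notin\Tree(R)$, contradicting $P\in\Stop(R)\subset\Tree(R)$. (The slightly delicate sub-case is $r(P)\gg r(Q)$: there one applies the symmetric statement, using $Q\cap K^{1/2}_P\ne\varnothing$, which holds because $x-y\in K^{1/2}$ is equivalent to $y-x\in -K^{1/2}$ and $K$ is a symmetric cone up to sign — here one should check the cone is genuinely two-sided, or else run the argument with whichever of $Q,P$ is smaller playing the role of ``$P$'' in \lemref{lem:key lemma}.)

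For part (b): let $x\in\widetilde G_R$ and $Q\in\Sep^*(R)$, and suppose for contradiction that $x\in K^{1/2}_Q$ or $Q\cap K^{1/2}(x)\ne\varnothing$. By \eqref{eq:Sep* 1st condition} we have $2MB_Q\cap\widetilde G_R=\varnothing$, so $x\notin 2MB_Q$, which already gives $\dist(x,Q)\gtrsim M r(Q)$. Now $x\in\widetilde G_R$ means that for every $k$ there is $Q'\in\Tree(R)$ with $r(Q')\le A_0^{-k}$ and $x\in 2MB_{Q'}$; pick such a $Q'$ with $r(Q')$ so small that $Q'\cap(K^{1/2}_Q\setminus MB_Q)\ne\varnothing$ (possible since $K^{1/2}$ is open and $x$ sits in the open cone relative to $Q$, up to shrinking via the $2MB_{Q'}$-ball) and $\dist(Q,Q')\ge M r(Q')$ (automatic once $r(Q')\ll r(Q)/M$, since $\dist(x,Q)\gtrsim Mr(Q)$). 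Then \lemref{lem:key lemma} applied to the pair $(Q,Q')$ gives $Q'\notin\Tree(R)$, a contradiction. The symmetric statement $Q\cap K^{1/2}(x)=\varnothing$ follows the same way after noting that $y\in K^{1/2}(x)$ for some $y\in Q$ puts $x$ in the (opposite) cone based at $y\in Q$, so the same key-lemma mechanism applies; alternatively, part (b) for the point $x$ is a limiting/degenerate case of part (a) with one cube replaced by an arbitrarily small $\Tree(R)$-cube containing $x$, and one can also invoke \lemref{lem:xy from GR outside Kx} directly when $x$ is compared against points of $\widetilde G_R$.

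The main obstacle is purely bookkeeping: making the choice of $t=t(M,\alpha)$ explicit enough that, given only $t$-separation of $Q$ and $P$ (or the smallness of $Q'$), one genuinely lands in the regime ``$P\subset K^{1/2}_Q\setminus MB_Q$ and $\dist(Q,P)\ge Mr(P)$'' required by \lemref{lem:key lemma}. This requires a careful split into the cases (i) $r(P)\approx r(Q)$ with $\dist(Q,P)$ large, (ii) $r(P)\ll r(Q)$, and (iii) $r(P)\gg r(Q)$, handling (iii) by symmetry; in each case one uses $r(B_Q)\approx\ell(Q)\approx r(Q)$ and the inclusions $E\cap\frac1{28}B_Q\subset Q\subset B_Q$ to convert between cube-distances and ball-distances, losing only absolute constants. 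No new analytic input is needed beyond \lemref{lem:key lemma}, \lemref{lem:xy from GR outside Kx}, and the openness and (anti)symmetry of the cone $K^{1/2}$.
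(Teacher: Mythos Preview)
Your overall strategy is right and part (b) matches the paper's argument closely. But in part (a) there is a genuine gap, and it is not ``purely bookkeeping.''

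The issue is the case where the smaller cube could lie inside the $M$-ball of the larger one. Concretely: after reducing by symmetry to, say, $r(Q)\le r(P)$, $t$-separation only tells you that either $r(Q)\le t^{-1}r(P)$ or $\dist(Q,P)>t(r(Q)+r(P))$. In the first alternative the distance can be small, and nothing you have written rules out $Q\subset MB_P$. Your sentence ``or else $P$ would be swallowed into $MB_Q$ contradicting $P\cap K^{1/2}_Q\setminus MB_Q\neq\varnothing$'' is circular: you have only assumed $P\cap K^{1/2}_Q\neq\varnothing$, not the stronger statement with $\setminus MB_Q$, so there is nothing to contradict. If $Q\subset MB_P$ (equivalently $P\subset MB_Q$ in your labelling), the hypotheses of \lemref{lem:key lemma} simply fail and the argument stalls.

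This is exactly where the paper uses the second defining condition of $\Sep^*(R)$, namely \eqref{eq:Sep* 2nd condition}, through \lemref{lem:Sep* cubes are not subsets}: that lemma gives $Q\not\subset 1.5MB_P$, and combining this with either branch of $t$-separation yields $Q\cap 1.2MB_P=\varnothing$, hence $Q\cap(K_P^{1/2}\setminus MB_P)\neq\varnothing$ and $\dist(Q,P)\ge Mr(Q)$. You never invoke \eqref{eq:Sep* 2nd condition} or \lemref{lem:Sep* cubes are not subsets}; without them, $t$-separation alone is not enough to feed \lemref{lem:key lemma}. Once you insert that step, your argument becomes the paper's.
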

	\begin{proof}[Proof of (a)] 
		Proof by contradiction. Suppose $Q\cap K_P^{1/2}\neq\varnothing$ (which by symmetry of cones implies $P\cap K_Q^{1/2}\neq\varnothing$). Without loss of generality, assume $r(Q)\le r(P)$. Since $Q$ and $P$ are $t$-separated, at least one of the conditions \eqref{eq:neighbour condition 1}, \eqref{eq:neighbour condition 2} fails, i.e.
		\begin{equation*}
		r(Q)\le t^{-1}r(P)\quad\text{or}\quad\dist(Q,P)>t(r(Q)+r(P)).
		\end{equation*}
		
		We know by \lemref{lem:Sep* cubes are not subsets} that $Q\not\subset 1.5MB_P$. It is easy to see that in either of the cases considered above, this implies $Q\cap 1.2MB_P = \varnothing$. It follows that $Q\cap(K_P^{1/2}\setminus MB_P)\neq\varnothing$ and $r(Q)\le r(P)\le M^{-1}\dist(Q,P)$. Hence, we can use \lemref{lem:key lemma} to conclude that $Q\notin\Tree(R)$. This contradicts $Q\in\Sep^*(R)$.
%		To prove \eqref{eq:Q and KP dont intersect} we have to show that, for any $x\in Q,\ y\in P$ we have
%		\begin{equation}
%		x-y\notin K^{1/2}.
%		\end{equation}
	\end{proof}
	\begin{proof}[Proof of (b)]
		Proof by contradiction. Suppose $x\in K_Q^{1/2}$. We have $x\notin 2MB_Q$ by \eqref{eq:Sep* 1st condition}. Since $x\in\widetilde{G}_R$, we can find an arbitrarily small cube $P\in\Tree(R)$ such that $x\in 2MB_P$. Taking $r(P)$ small enough we will have $r(P)\le M^{-1}\dist(Q,P)$ and $P\cap K_Q^{1/2}\setminus MB_Q\neq\varnothing$ (because $x\in K^{1/2}(x')\setminus 2MB_Q$ for some $x'\in Q$, and $K^{1/2}(x')$ is an open set). \lemref{lem:key lemma} yields $P\notin\Tree$, a contradiction.
	\end{proof}
	\begin{lemma}\label{lem:Lipschitz graph}
		There exists a Lipschitz graph $\Gamma_R$, with Lipschitz constant depending only on $\alpha$, such that
		\begin{equation*}
		\widetilde{G}_R\subset\Gamma_R.
		\end{equation*}
		Moreover, there exists a big constant $\Lambda=\Lambda(M,t)>1$ such that for every $Q\in\Tree(R)$ we have
		\begin{equation}\label{eq:LambdaBQ intersects Gamma}
		\Lambda B_Q\cap\Gamma_R\neq\varnothing.
		\end{equation}
	\end{lemma}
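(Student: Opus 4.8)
The plan is to build $\Gamma_R$ by enlarging the set $\widetilde{G}_R$ from \lemref{lem:xy from GR outside Kx} with one point from each cube of $\Sep^*(R)$, checking that the enlarged set still avoids all the half-aperture cones $K^{1/2}(\cdot)$, and then invoking the classical Lipschitz-graph criterion recalled around \eqref{eq:lip graph condition}.

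Concretely, for every $Q\in\Sep^*(R)$ pick the center $x_Q\in Q$ from \lemref{lem:DM lattice} and set $F_R=\widetilde{G}_R\cup\{x_Q:\ Q\in\Sep^*(R)\}$. The first step is to verify that $w'\notin K^{1/2}(w)$ for every pair of distinct points $w,w'\in F_R$; since $K^{1/2}(w)$ is a symmetric double cone with vertex $w$, this is exactly condition \eqref{eq:lip graph condition} for $F_R$ with direction $V$ and aperture $\alpha/2$. There are three cases. If $w,w'\in\widetilde{G}_R$, this is \lemref{lem:xy from GR outside Kx}. If $w=x_Q$ with $Q\in\Sep^*(R)$ and $w'\in\widetilde{G}_R$, it follows from \lemref{lem:Q in Sep* and KP dont intersect}(b), using $K^{1/2}(x_Q)\subset K_Q^{1/2}$ in one direction and $x_Q\in Q$ in the other. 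If $w=x_Q$ and $w'=x_P$ with $Q\neq P$ in $\Sep^*(R)$, it follows from \lemref{lem:Q in Sep* and KP dont intersect}(a) together with $K^{1/2}(x_P)\subset K_P^{1/2}$ and $x_Q\in Q$. Hence \cite[Proof of Lemma 15.13]{mattila1999geometry} yields an $n$-dimensional Lipschitz graph $\Gamma_R\supset F_R$ with $\lip(\Gamma_R)\le 2/\alpha$, which gives $\widetilde{G}_R\subset\Gamma_R$ and the claimed bound on the Lipschitz constant in terms of $\alpha$ alone.

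For \eqref{eq:LambdaBQ intersects Gamma} I would distinguish, for a given $Q\in\Tree(R)$, whether $Q$ contains a cube of $\Stop(R)$. If it does not, then — using that a cube of $\Tree(R)$ is never strictly contained in a stopping cube, and that $Q\notin\Stop(R)$ — no stopping cube meets $Q$, so $Q\subset G_R\subset\widetilde{G}_R\subset\Gamma_R$ by \eqref{eq:def of GR}, and $x_Q\in Q\subset B_Q\subset\Lambda B_Q$ already lies in $\Gamma_R$. Otherwise fix $Q'\in\Stop(R)$ with $Q'\subset Q$ (so $Q'=Q$ when $Q\in\Stop(R)$), and by maximality of $\Sep(R)$ a $t$-neighbour $P\in\Sep(R)$ of $Q'$. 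From \eqref{eq:neighbour condition 1}--\eqref{eq:neighbour condition 2}, the inclusion $Q'\subset Q$, and the lattice bounds on radii one gets $r(P)\lesssim r(Q)$ and $\dist(P,Q)\lesssim r(Q)$ with constants depending on $t$, hence $2MB_P\subset\Lambda B_Q$ for a suitable $\Lambda=\Lambda(M,t)$. Now \lemref{lem:Sep contains Sep* cubes} applied to $P$ gives either a point of $\widetilde{G}_R$ inside $2MB_P\subset\Lambda B_Q$, or a cube $P'\in\Sep^*(R)$ with $x_{P'}\in P'\subset 2MB_{P'}\subset 2MB_P\subset\Lambda B_Q$; in either case $\Lambda B_Q$ meets $\Gamma_R$.

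The geometric heart of the matter — the cone-avoidance of $\widetilde{G}_R$ and of the cubes of $\Sep^*(R)$ — is already contained in \lemref{lem:key lemma} through \lemref{lem:Q in Sep* and KP dont intersect}, so the remaining work is essentially assembly and bookkeeping: matching the three cone-condition cases for $F_R$, and, in the proof of \eqref{eq:LambdaBQ intersects Gamma}, tracking the comparability of radii in the $t$-neighbour step so that the resulting $\Lambda$ depends only on $M$ and $t$ (and the absolute lattice constants $C_0,A_0$). The only point requiring a little care is the dichotomy for $Q\in\Tree(R)$ between cubes that eventually stop and cubes all of whose descendants remain in the tree, since the latter are handled directly by $\widetilde{G}_R$ while the former route through a separated stopping cube.
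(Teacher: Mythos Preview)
Your proof is correct and follows essentially the same approach as the paper: define $F_R=\widetilde{G}_R\cup\{x_Q:Q\in\Sep^*(R)\}$, verify the cone condition on $F_R$ case by case via Lemmas~\ref{lem:xy from GR outside Kx} and~\ref{lem:Q in Sep* and KP dont intersect}, and then handle \eqref{eq:LambdaBQ intersects Gamma} by routing a general $Q\in\Tree(R)$ through a stopping cube, a $t$-neighbour in $\Sep(R)$, and \lemref{lem:Sep contains Sep* cubes}. The only cosmetic difference is that the paper organizes the second part as a chain $\Sep^*(R)\to\Sep(R)\to\Stop(R)\to\Tree(R)$, whereas you start from the dichotomy for $Q\in\Tree(R)$ and collapse the middle steps; the content is identical.
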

	\begin{proof}
		Recall that for each cube $Q\in\D$ we have a ``center'' denoted by $x_Q\in Q$. Set $F = \{x_Q\, :\, Q\in\Sep^*(R)\}\cup \widetilde{G}_R$. It follows by \lemref{lem:xy from GR outside Kx} and \lemref{lem:Q in Sep* and KP dont intersect} that for any $x,y\in F$ we have $x-y\notin K^{1/2}$. Thus, there exists a Lipschitz graph $\Gamma_R$, with slope depending only on $\alpha$, such that $F\subset \Gamma_R$.
		
		Concerning the second statement, it is clearly true for $Q\in\Sep^*(R)$ (even with $\Lambda=1$). For $Q\in\Sep(R)$, we have by \lemref{lem:Sep contains Sep* cubes} that either $2MB_Q\cap\widetilde{G}_R\neq\varnothing$ or there exists $P\in\Sep^*(R)$ with $2MB_P\subset 2MB_Q$. Thus, \eqref{eq:LambdaBQ intersects Gamma} holds if $\Lambda\ge 2M$.
		
		If $Q\in\Stop(R)$, there exists some $P\in\Sep(R)$ which is a $t$-neighbour of $Q$, so that for some $\Lambda=\Lambda(t,M)>1$ we have $\Lambda B_Q\supset 2MB_P,$ and $2MB_P$ intersects $\Gamma_R$. Finally, for a general $Q\in\Tree(R)$, either $Q$ contains some cube from $\Stop(R)$, or $Q\subset\widetilde{G}_R$. In any case, $\Lambda B_Q\cap\Gamma_R\neq\varnothing$.
	\end{proof}
	
	\begin{remark}
		Note that while for a general cube $Q\in\Tree(R)$ we only have $\Lambda B_Q\cap\Gamma_R\neq\varnothing$, we have a better estimate for the root $R$:
		\begin{equation}\label{eq:2BR intersects the graph}
		B_R\cap\Gamma_R\neq\varnothing.
		\end{equation}
		Indeed, \eqref{eq:2BR intersects the graph} is clear if the set $\widetilde{G}_R$ is non-empty. If $\widetilde{G}_R=\varnothing$, then $\Sep^*(R)\neq\varnothing$, so that for some $P\in\Sep^*(R)$ we have $x_P\in \Gamma_R\cap B_R$.
	\end{remark}

	\section{Small measure of cubes from \texorpdfstring{$\LD(R)$}{LD(R)}}\label{sec:LD estimate}
	
	In the proof of the packing estimate \eqref{eq:packing estimate} it will be crucial to have a bound on the measure of low density cubes.
	\begin{lemma}\label{lem:LD small measure}
		We have
		\begin{equation*}
		\sum_{Q\in\LD(R)}\mu(Q)\lesssim_{t,\alpha} \tau \mu(R).
		\end{equation*}
	\end{lemma}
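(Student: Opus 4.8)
The plan is to cover the low-density cubes $Q \in \LD(R)$ by $t$-neighbours from $\Sep(R)$ and then exploit the fact that the separated cubes in $\Sep(R)$ have disjoint enlarged balls, together with the key estimate \lemref{lem:key lemma}, to bound the total mass. First I would recall that, by maximality of $\Sep(R)$ as a $t$-separated subfamily of $\Stop(R)$, every $Q \in \LD(R) \subset \Stop(R)$ has a $t$-neighbour $P \in \Sep(R)$; fix such an assignment $Q \mapsto P(Q)$. For a fixed $P \in \Sep(R)$, all cubes $Q$ with $P(Q) = P$ satisfy $t^{-1} r(P) \le r(Q) \le t r(P)$ and $\dist(Q,P) \le t(r(Q)+r(P)) \le 2t^2 r(P)$, so $Q \subset C(t) B_P$ for some dimensional-and-$t$ constant. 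Moreover, since the $5B(Q)$ are disjoint for cubes $Q$ of a fixed generation (property (b) of \lemref{lem:DM lattice}), and the $Q$ assigned to $P$ range over only boundedly many (depending on $t$) generations, the number of such $Q$ is $\lesssim_t 1$; in particular $\sum_{Q : P(Q)=P} \mu(Q) \lesssim \mu(C(t) B_P)$.

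The next step is to estimate $\mu(C(t)B_P)$ for $P \in \Sep(R)$. Here the point is that $P \in \LD_0(R)$-type behaviour should persist on the slightly larger ball. Since $P \in \Stop(R)$ but $P \notin \BCE_0(R)$ (low-density and high-density stopping cubes are not in $\BCE_0(R)$, and in fact we only need the $\LD$ cubes plus their neighbours — one should check the neighbour $P$ is itself low density, or argue directly), and using \lemref{lem:density estimates on Tree} together with \lemref{lem:density dropping for nondoubling} to pass from $2B_P$ to the larger concentric ball while only losing a constant, we get $\mu(C(t)B_P) \lesssim_t \tau\, \Theta_\mu(2B_R)\, r(P)^n$. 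Wait — one must be careful that $P$ genuinely inherits the low-density bound: if $P$ is a $t$-neighbour of a low-density cube $Q$, then $r(P) \approx_t r(Q)$ and $2B_Q \subset C B_P$ roughly, so $\mu(2B_Q) \le \mu(CB_P) \lesssim_t \tau \Theta_\mu(2B_R) r(P)^n$, which gives what we want after summing over the bounded family of $Q$'s attached to $P$. So the cleaner route is: each $Q \in \LD(R)$ satisfies $\mu(Q) \le \mu(2B_Q) < \tau \Theta_\mu(2B_R) (2r(Q))^n \lesssim \tau \Theta_\mu(2B_R) r(P(Q))^n$.

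Finally I would sum: $\sum_{Q \in \LD(R)} \mu(Q) \lesssim_t \tau\, \Theta_\mu(2B_R) \sum_{P \in \Sep(R)} r(P)^n$, where in the last sum each $P$ is counted with multiplicity $\lesssim_t 1$ coming from the fibers of $Q \mapsto P(Q)$. Now the balls $5B(P)$, $P \in \Sep(R)$ of a fixed generation are disjoint and contained in a fixed multiple of $B_R$, but $\Sep(R)$ contains cubes of all generations, so a direct ``disjointness'' count fails; instead one uses that the cubes in $\Sep(R)$, being $t$-separated, have $\{t^{-1}B(P)\}$ or some fixed small dilates with bounded overlap — more precisely, the standard fact that a $t$-separated family of David–Mattila cubes inside $B_R$ satisfies $\sum_{P} r(P)^n \lesssim_t r(R)^n$ by a Vitali/packing argument using property (b). Then $\sum_{P \in \Sep(R)} r(P)^n \lesssim_t r(R)^n \approx \ell(R)^n$, giving $\sum_{Q \in \LD(R)} \mu(Q) \lesssim_{t,\alpha} \tau\, \Theta_\mu(2B_R) \ell(R)^n \approx \tau\, \mu(R)$, since $R$ is doubling so $\mu(2B_R) \approx \mu(R)$.

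The main obstacle I anticipate is the packing bound $\sum_{P \in \Sep(R)} r(P)^n \lesssim_t \ell(R)^n$ for the $t$-separated family $\Sep(R)$: unlike cubes of a single generation, these have widely varying sizes, so one cannot simply invoke disjointness of $5B(P)$. The resolution should be that $t$-separation forces that for each generation the cubes behave like a sparse family and a geometric-series summation over generations converges — but making this rigorous (possibly this is where the role of Key Estimate \lemref{lem:key lemma}, via \lemref{lem:Q in Sep* and KP dont intersect} controlling how cones of separated cubes avoid each other, enters) is the delicate point. It is conceivable the author instead bounds $\sum_{Q \in \LD(R)} \mu(Q)$ more directly by first reducing to $\Sep(R)$-cubes and then using that these are contained in disjoint regions of an ambient Lipschitz graph or of $B_R$; I would check whether a cleaner argument avoids the general packing lemma by exploiting that low-density stopping cubes, once enlarged, still have small measure and their $t$-neighbours $\Sep(R)$ can be organized into $O_t(1)$ subfamilies each with disjoint balls.
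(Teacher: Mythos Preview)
Your overall strategy --- use the $\LD$ condition to get $\mu(Q) \lesssim \tau\,\Theta_\mu(2B_R)\, r(Q)^n$ and then reduce matters to a packing estimate $\sum r(P)^n \lesssim r(R)^n$ over a $t$-separated family --- is exactly the skeleton of the paper's argument, and you correctly identify the packing step as the crux. However, the packing bound you need is \emph{false} for an arbitrary $t$-separated family of David--Mattila cubes inside $B_R$ when $\mu$ has only polynomial growth and no lower regularity. The point is that $t$-separation constrains cubes of comparable size (forcing them far apart) but imposes nothing between cubes whose sizes differ by a factor $>t$: one can have $\approx t^{kn}$ disjoint stopping cubes at scale $\approx t^{-k}r(R)$ for every $k$, all pairwise $t$-separated, making $\sum r(P)^n$ diverge. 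Your proposed ``geometric series over generations'' cannot close this, and disjointness of stopping cubes does not help without a lower density bound (which is precisely what $\LD$ cubes lack).

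The paper fills this gap via the mechanism you speculate about in your final sentence, and this is where the real work lies. After passing to a $t$-separated subfamily $\LD_{\Sep}(R)\subset\LD(R)$ (losing only a factor $C(t)$), it refines further to a subfamily $\LD_{\Sep}^*(R)$ and uses the key estimate \lemref{lem:key lemma} together with the $\BCE$ bound \eqref{eq:energy controlled in Tree} to show that in each $Q\in\LD_{\Sep}^*(R)$ one can select a point $w_Q\in Q$ lying outside every cone $K_P^{1/2}\setminus MB_P$, $P\in\LD_{\Sep}^*(R)$ (this is \lemref{lem:some LD estimate}). These points then satisfy $w_Q-w_P\notin K^{1/2}$, hence lie on an auxiliary $n$-dimensional Lipschitz graph $\Gamma_{\LD}$ (distinct from $\Gamma_R$). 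One then shows that every $Q\in\LD_{\Sep}(R)$ has $1.2B_Q$ meeting either $\Gamma_{\LD}$ or $\widetilde G_R\subset\Gamma_R$, so $\Hn(1.5B_Q\cap(\Gamma_{\LD}\cup\Gamma_R))\approx_\alpha r(Q)^n$; a bounded-overlap covering then gives
\[
\sum_Q r(Q)^n \lesssim \Hn\big(2B_R\cap(\Gamma_{\LD}\cup\Gamma_R)\big)\approx_\alpha r(R)^n.
\]
In short, the Lipschitz graph supplies the $n$-dimensional structure that makes the packing estimate true; it is not a consequence of $t$-separation alone, and constructing $\Gamma_{\LD}$ (via the cone-avoidance argument) is the substantive missing ingredient in your proposal.
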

	In particular, for $\tau$ small enough we have
	\begin{equation}\label{eq:small measure LD}
		\sum_{Q\in\LD(R)}\mu(Q)\le \tau^{1/2}\mu(R).
	\end{equation}
	We begin by defining some auxiliary subfamilies of $\LD(R)$. 
	\begin{lemma}\label{lem:LDSep enough}
		There exists a $t$-separated family $\LD_{\Sep}(R)\subset \LD(R)$ such that
		\begin{equation*}
		\sum_{Q\in\LD(R)}\mu(Q)\lesssim_t \sum_{Q\in\LD_{\Sep}(R)}\mu(Q).
		\end{equation*}
	\end{lemma}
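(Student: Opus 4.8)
The plan is to exploit that $\LD(R)\subset\Stop(R)$ is a \emph{disjoint} family of cubes, for which the $t$-neighbour relation is extremely sparse: every $Q\in\LD(R)$ has at most $N=N(t,d)$ $t$-neighbours inside $\LD(R)$. Granting this, a soft pigeonhole argument produces the desired $t$-separated subfamily.

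To prove the degree bound I would fix $Q\in\LD(R)$ and analyse the cubes $P\in\LD(R)$ that are $t$-neighbours of $Q$. By \eqref{eq:neighbour condition 1}--\eqref{eq:neighbour condition 2} we have $t^{-1}r(Q)\le r(P)\le t\,r(Q)$ and $\dist(Q,P)\le 2t^2 r(Q)$, hence $|x_Q-x_P|\lesssim t^2 r(Q)$ and $P\subset B(x_Q,Ct^2 r(Q))$ for an absolute constant $C$. Since $r(P)\approx\ell(P)\approx A_0^{-J(P)}$, the generations of such cubes lie in an interval of length $\lesssim\log t$ about $J(Q)$. Within a single admissible generation $k$, the balls $5B(P)$, $P\in\LD(R)\cap\mathcal{D}_{\mu,k}$, are pairwise disjoint by \lemref{lem:DM lattice}(b), have radii comparable up to factors of $t$ to $r(Q)$, and all lie in $B(x_Q,C't^2r(Q))$; a volume count in $\R^d$ caps their number by some $N_0=N_0(t,d)$. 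Summing over the $\lesssim\log t$ admissible generations yields the claimed bound $N=N(t,d)$ (recall $A_0,C_0$ are absolute).

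With the bound in hand, since $\sum_{Q\in\LD(R)}\mu(Q)\le\mu(R)<\infty$ it suffices to prove the inequality for an arbitrary finite subfamily of $\LD(R)$ and then let it exhaust the mass, so I may assume $\LD(R)$ is finite. On the finite graph with vertex set $\LD(R)$ joining pairs of $t$-neighbours, the maximum degree is less than $N$, so a greedy colouring uses at most $N$ colours; each colour class is pairwise $t$-separated, and by pigeonhole one of them — which I take to be $\LD_{\Sep}(R)$ — carries at least $N^{-1}\sum_{Q\in\LD(R)}\mu(Q)$. The only real obstacle here is the packing estimate in the middle paragraph; the rest is bookkeeping. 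Equivalently, one may skip colouring altogether and greedily select cubes of maximal measure, discarding the $\le N$ $t$-neighbours of each chosen cube, which gives the same conclusion from the same degree bound.
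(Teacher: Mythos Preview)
Your argument is correct and essentially matches the paper's: both rely on the fact that each cube in $\LD(R)$ has at most $C(t)$ $t$-neighbours in $\LD(R)$ (the paper asserts this in one line, you supply the details), then partition $\LD(R)$ into $\le C(t)$ $t$-separated classes and pick the one of largest mass---the paper does the partition by iteratively peeling off maximal $t$-separated subfamilies, which is your greedy colouring in disguise. Your reduction to the finite case is unnecessary (and its limiting step is not quite justified as written), since the iterative-extraction/greedy-colouring argument applies directly to the full countable family.
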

	\begin{proof}
		We construct the family $\LD_{\Sep}(R)$ in the following way. Define $\LD_1(R)$ as a maximal $t$-separated subfamily of $\LD(R)$. Next, define $\LD_2(R)$ as a maximal $t$-separated subfamily of $\LD(R)\setminus\LD_1(R)$. In general, having defined $\LD_j(R)$, we define $\LD_{j+1}(R)$ to be a maximal $t$-separated subfamily of $\LD(R)\setminus(\LD_1(R)\cup\dots\cup \LD_j(R))$.
		
		We claim that there is only a bounded number of non-empty families $\LD_j(R)$, with the bound depending on $t$. Indeed, if $Q\in\LD_j(R)$, then $Q$ has at least one $t$-neighbour in each family $\LD_k(R),\ k\le j$. It follows easily from the definition of $t$-neighbours that the number of $t$-neighbours of any given cube is bounded by a constant $C(t)$. Hence, $j\le C(t)$.
		
		Set $\LD_{\Sep}(R)$ to be the family $\LD_j(R)$ maximizing $\sum_{Q\in\LD_j(R)}\mu(Q)$. Then,
		\begin{equation*}
		\sum_{Q\in\LD(R)}\mu(Q)\le C(t) \sum_{Q\in\LD_{\Sep}(R)}\mu(Q).
		\end{equation*}
	\end{proof}
	We define also a family $\LD_{\Sep}^*(R)\subset\LD_{\Sep}(R)$ in the following way: we remove from $\LD_{\Sep}(R)$ all the cubes $P$ for which there exists some $Q\in\LD_{\Sep}(R)$ such that
	\begin{equation}\label{eq:def of LDSep*}
		1.1B_Q\cap 1.1B_P\neq\varnothing\qquad \text{and}\qquad r(Q)<r(P).
	\end{equation}
	
	\begin{lemma}\label{lem:LDSep alternative}
		For each $Q\in\LD_{\Sep}(R)$ at least one of the following is true:
		\begin{itemize}
			\item[(a)] $1.2B_Q\cap\widetilde{G}_R\neq\varnothing$
			\item[(b)] There exists some $P\in\LD_{\Sep}^*(R)$ such that $1.2B_P\subset 1.2B_Q$.
		\end{itemize}
	\end{lemma}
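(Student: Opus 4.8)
The plan is to mimic the iteration argument used in the proof of \lemref{lem:Sep contains Sep* cubes}, but now applied inside the family $\LD_{\Sep}(R)$ and with the balls $1.2B_Q$ in place of $2MB_Q$. Fix $Q\in\LD_{\Sep}(R)$ and suppose (a) fails, i.e. $1.2B_Q\cap\widetilde{G}_R=\varnothing$; we must produce $P\in\LD_{\Sep}^*(R)$ with $1.2B_P\subset 1.2B_Q$. If $Q\in\LD_{\Sep}^*(R)$ we are done with $P=Q$. Otherwise, by the definition \eqref{eq:def of LDSep*} of $\LD_{\Sep}^*(R)$, there is some $Q_1\in\LD_{\Sep}(R)$ with $1.1B_{Q_1}\cap 1.1B_Q\neq\varnothing$ and $r(Q_1)<r(Q)$. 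The first thing to check is that this forces $1.2B_{Q_1}\subset 1.2B_Q$: since $Q$ and $Q_1$ are $t$-separated with comparable-or-smaller radius, and $t$ is large, the condition $\dist(Q,Q_1)\le t(r(Q)+r(Q_1))$ must fail, so $\dist(Q,Q_1)>t(r(Q)+r(Q_1))$; but $1.1B_{Q_1}\cap 1.1B_Q\neq\varnothing$ gives $\dist(Q,Q_1)\lesssim r(Q)+r(Q_1)$, hence we must be in the case $r(Q_1)\le t^{-1}r(Q)$, and then the triangle inequality together with $r(Q_1)\ll r(Q)$ yields $1.2B_{Q_1}\subset 1.2B_Q$ provided $t=t(M)$ is large enough. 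In particular also $1.2B_{Q_1}\cap\widetilde{G}_R=\varnothing$.

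Now iterate: if $Q_1\notin\LD_{\Sep}^*(R)$, the same reasoning produces $Q_2\in\LD_{\Sep}(R)$ with $r(Q_2)<r(Q_1)$, $1.1B_{Q_2}\cap 1.1B_{Q_1}\neq\varnothing$, hence $1.2B_{Q_2}\subset 1.2B_{Q_1}\subset 1.2B_Q$ and $1.2B_{Q_2}\cap\widetilde{G}_R=\varnothing$. Continuing, we get a (possibly infinite) nested sequence $1.2B_Q\supset 1.2B_{Q_1}\supset 1.2B_{Q_2}\supset\cdots$ with strictly decreasing radii $r(Q_j)$. If the algorithm never terminates, then since at each step $r(Q_{j+1})<r(Q_j)$ and the radii take values in the discrete set $\{r(Q):Q\in\D\}$, in fact $r(Q_j)\to 0$, so by the same argument as in the proof of \lemref{lem:Sep contains Sep* cubes} the intersection $\bigcap_j 1.2B_{Q_j}$ is a single point lying in $\widetilde{G}_R$ — one uses that $r(Q_j)\le A_0^{-k}$ eventually for every $k$, and each $Q_j\in\Tree(R)$, so the point belongs to $\bigcap_k\bigcup_{Q\in\Tree(R),\,r(Q)\le A_0^{-k}}2MB_Q=\widetilde{G}_R$ (after noting $1.2B_{Q_j}\subset 2MB_{Q_j}$). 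This contradicts $1.2B_Q\cap\widetilde{G}_R=\varnothing$. Hence the algorithm stops at some $Q_m$, which by construction satisfies $Q_m\in\LD_{\Sep}^*(R)$, and $1.2B_{Q_m}\subset 1.2B_Q$, giving (b) with $P=Q_m$.

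The one point requiring a little care — the main obstacle — is the geometric step showing that $1.1B_{Q_1}\cap 1.1B_Q\neq\varnothing$ together with $t$-separation and $r(Q_1)<r(Q)$ forces $1.2B_{Q_1}\subset 1.2B_Q$; this is where the largeness of $t=t(M)$ relative to the fixed multiplicative constants ($1.1$, $1.2$, and the bounded ratio $r(B_Q)/r(Q)=28\cdot 28$ coming from the David--Mattila lattice) gets used, exactly as in the proofs of \lemref{lem:Sep* cubes are not subsets} and \lemref{lem:Sep contains Sep* cubes}. Everything else is the same termination-by-nesting argument already used for $\Sep^*(R)$, with $\widetilde{G}_R$ playing its role as the set trapped by infinite nests of cube-balls.
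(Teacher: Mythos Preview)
Your proof is correct and follows essentially the same iteration argument as the paper: from $1.1B_{Q_1}\cap 1.1B_Q\neq\varnothing$ one sees the distance condition \eqref{eq:neighbour condition 2} holds, so $t$-separation forces $r(Q_1)\le t^{-1}r(Q)$, hence $1.2B_{Q_1}\subset 1.2B_Q$, and then one iterates and terminates via $\widetilde{G}_R$ exactly as in \lemref{lem:Sep contains Sep* cubes}. Your phrasing around ``the condition $\dist(Q,Q_1)\le t(r(Q)+r(Q_1))$ must fail'' is a bit garbled (you mean to argue by cases that \eqref{eq:neighbour condition 2} cannot fail, hence \eqref{eq:neighbour condition 1} does), and the reason $r(Q_j)\to 0$ is really the geometric decay $r(Q_{j+1})\le t^{-1}r(Q_j)$ rather than discreteness, but the argument is sound.
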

	\begin{proof}
		Suppose $Q\in\LD_{\Sep}(R)$, and that (a) does not hold. We will find $P$ such that (b) is satisfied. 
		
		If $Q\notin\LD^*_{\Sep}(R)$, then there exists some cube $Q_1\in\LD_{\Sep}(R)$ such that
		\begin{equation}
		1.1B_Q\cap 1.1B_{Q_1}\neq\varnothing\qquad \text{and}\qquad r(Q_1)<r(Q).
		\end{equation}
		Since $Q$ and $Q_1$ are $t$-separated, and \eqref{eq:neighbour condition 2} holds, it follows that $t\,r(Q_1)<r(Q)$. Thus, $Q_1$ is tiny compared to $Q$ and we have $1.2B_{Q_1}\subset 1.2B_Q$. If $Q_1\in \LD^*_{\Sep}(R)$, we set $P=Q_1$ and we are done. Otherwise, we iterate as in \lemref{lem:Sep contains Sep* cubes}  (with $2M$ replaced by $1.2$) to find a finite sequence $Q_1,\ Q_2, \dots,\ Q_m$ satisfying $1.2B_{Q_{j+1}}\subset 1.2B_{Q_j}$, and such that $Q_m\in\LD_{\Sep}^*(R)$.
	\end{proof}
	\begin{lemma}\label{lem:some LD estimate}
		For each $Q\in\LD_{\Sep}^*(R)$ we have
		\begin{equation}\label{eq:cones are a small subset}
		\mu\Big(Q\cap\bigcup_{P\in\LD_{\Sep}^*(R)}(K_P^{1/2}\setminus MB_P)\Big)\lesssim_{\tau,\alpha,\eta} \varepsilon \mu(Q).
		\end{equation}
		In particular, if $\varepsilon$ is small enough, then for each $Q\in\LD_{\Sep}^*(R)$ we can choose a point
		\begin{equation}\label{eq:def of wQ}
		w_Q\in Q\setminus\bigcup_{P\in\LD_{\Sep}^*(R)}(K_P^{1/2}\setminus MB_P).
		\end{equation}
	\end{lemma}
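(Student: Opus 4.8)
The plan is the following. We may assume $R\notin\Stop(R)$, since otherwise $\Tree(R)=\Stop(R)=\{R\}$, hence $\LD(R)=\varnothing$ and there is nothing to prove. Write $E_Q:=Q\cap\bigcup_{P\in\LD_{\Sep}^*(R)}(K_P^{1/2}\setminus MB_P)$. The heart of the matter will be a pointwise lower bound for the conical energy density of $\mu$ at every $x\in E_Q$, valid at scales comparable to $r(Q)$; feeding it into $\E(Q)$ and using the stopping condition $Q\notin\BCE_0(R)$ will then give the estimate by Chebyshev's inequality.

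First I would prove that the cubes of $\LD_{\Sep}^*(R)$ ``seen'' from $E_Q$ are comparable to, and close to, $Q$: namely, if $x\in E_Q$ and $P\in\LD_{\Sep}^*(R)$ satisfies $x\in K_P^{1/2}\setminus MB_P$, then $r(P)\le r(Q)$, $\dist(Q,P)\lesssim_\alpha r(Q)$, and consequently $\rho_x:=|x-x_P|\approx_\alpha r(Q)$. The lower bound $\rho_x\ge 2.8\,r(Q)$ follows from the pairwise disjointness of $\{1.1B_{P'}:P'\in\LD_{\Sep}^*(R)\}$, itself a consequence of the $t$-separation of $\LD_{\Sep}(R)$ and the defining property of $\LD_{\Sep}^*(R)$ when $t=t(M)$ is large. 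For the upper bound I argue by contradiction: if $P$ were much larger than $Q$, or if $\dist(Q,P)$ were large compared with $r(Q)$, then --- using $x\notin MB_P$ (so $Q\not\subseteq MB_P$), the $t$-separation, and a comparison of radii --- one checks that $P\cap MB_Q=\varnothing$ and $\dist(Q,P)\ge M\,r(P)$; on the other hand $x\in Q\cap K_P^{1/2}$ together with the symmetry of cones produces a point $z\in P\cap K^{1/2}(x)\subseteq P\cap K_Q^{1/2}$, so that $P\cap K_Q^{1/2}\setminus MB_Q\neq\varnothing$. Then \lemref{lem:key lemma}, applied with $Q$ (which lies in $\Tree(R)$) playing the role of the ``inner'' cube and $P$ that of the ``outer'' one, gives $P\notin\Tree(R)$, contradicting $P\in\Stop(R)\subseteq\Tree(R)$. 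I expect this to be the main obstacle: the argument is elementary, but it needs a case split according to whether $r(P)\lessgtr r(Q)$ and whether $Q,P$ are $t$-comparable, and the constants ($M$, $t$, and the various cushions) must be matched carefully.

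Next, fix $x\in E_Q$, the corresponding $P=P(x)$, and a point $z\in P$ with $z\in K^{1/2}(x)$. With $M=M(\alpha)$ large and $\beta=\beta(\alpha)$ small, let $S=S(x)\in\D$ be the cube containing $z$ with $\ell(S)\approx\beta\rho_x$. Then $\ell(S)\gtrsim\beta M\,r(P)>\ell(P)$, so $S\supsetneq P$; since $S$ strictly contains the stopping cube $P$ (and $\ell(S)<\ell(R)$ with $z\in S\cap R$, so $S\subseteq R$), one has $S\in\Tree(R)\setminus\Stop(R)$, and \eqref{eq:lower density estimate on Tree} yields $\Theta_\mu(2B_S)\ge\tau\,\Theta_\mu(2B_R)$. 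On the other hand, because $z\in K^{1/2}(x)$, $\diam(2B_S)\lesssim\beta\rho_x$ and $|x-z|\approx\rho_x$, taking $\beta$ small forces $2B_S\subseteq K(x,V,\alpha,2\rho_x)$. Combining these with $\rho_x\approx_\alpha r(Q)$ from the previous paragraph,
\[
\mu(K(x,V,\alpha,2\rho_x))\ \ge\ \mu(2B_S)\ \ge\ \tau\,\Theta_\mu(2B_R)\,(56\,r(S))^n\ \gtrsim_\alpha\ \tau\,\Theta_\mu(2B_R)\,r(Q)^n ,
\]
and since $K(x,V,\alpha,r)$ increases with $r$ while $\rho_x\approx_\alpha r(Q)$, it follows that $\mu(K(x,V,\alpha,r))/r^n\gtrsim_\alpha\tau\,\Theta_\mu(2B_R)$ for all $r\in[2\rho_x,4\rho_x]$, hence
\[
\int_{2\rho_x}^{4\rho_x}\Big(\frac{\mu(K(x,V,\alpha,r))}{r^n}\Big)^p\ \frac{dr}{r}\ \gtrsim_\alpha\ \tau^p\,\Theta_\mu(2B_R)^p .
\]
At this stage I would fix $\eta=\eta(\alpha)$ (equivalently $\eta=\eta(M,t)$, since $M,t$ depend only on $\alpha$) small enough that $[2\rho_x,4\rho_x]\subseteq[\eta\,r(Q),\eta^{-1}r(Q)]$ for every $x\in E_Q$; this is possible precisely because of the scale bound $\rho_x\approx_\alpha r(Q)$ from Step~1.

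Finally, integrating the last inequality over $x\in E_Q$ and using $E_Q\subseteq Q\subseteq 2B_Q$ together with \eqref{eq:defi EQ},
\[
\tau^p\,\Theta_\mu(2B_R)^p\,\mu(E_Q)\ \lesssim_\alpha\ \int_{E_Q}\int_{\eta r(Q)}^{\eta^{-1}r(Q)}\Big(\frac{\mu(K(x,V,\alpha,r))}{r^n}\Big)^p\frac{dr}{r}\,d\mu(x)\ \le\ \E(Q)\,\mu(Q) .
\]
Since $Q\in\LD(R)\subseteq\D(R)\setminus\BCE_0(R)$, we have $\E(Q)\le\sum_{Q\subseteq P\subseteq R}\E(P)\le\varepsilon\,\Theta_\mu(2B_R)^p$, whence $\mu(E_Q)\lesssim_\alpha\varepsilon\,\tau^{-p}\mu(Q)$ --- a bound of the asserted form, with implicit constant of the shape $C(\tau,\alpha)$ (a fortiori $C(\tau,\alpha,\eta)$). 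For the last assertion it then suffices to choose $\varepsilon=\varepsilon(\tau,\alpha,\eta)$ small enough that $C(\tau,\alpha)\,\varepsilon<1$; then $\mu(Q\setminus E_Q)>0$, so $Q\setminus E_Q$ is nonempty and we may pick $w_Q$ in it, as required in \eqref{eq:def of wQ}.
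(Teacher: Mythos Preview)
Your proof is correct and follows essentially the same strategy as the paper's: locate, above the ``witnessing'' cube $P$, an intermediate cube $S\in\Tree(R)\setminus\Stop(R)$ with $2B_S\subset K(x)$ and $r(S)\approx r(Q)$, invoke the lower density bound \eqref{eq:lower density estimate on Tree} on $S$, and then compare the resulting pointwise lower bound on the conical density with $\E(Q)$ via the $\BCE$ stopping condition.

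The one substantive difference is in your Step~1. You apply the key lemma (\lemref{lem:key lemma}) with $Q$ as the Tree cube and $P$ as the cube to be excluded, which forces you to manufacture $P\cap K_Q^{1/2}\setminus MB_Q\neq\varnothing$ via cone symmetry and to verify $\dist(Q,P)\ge Mr(P)$ through a case analysis. The paper instead applies the key lemma with the roles reversed: since $P\in\Tree(R)$ and $Q\cap K_P^{1/2}\setminus MB_P\neq\varnothing$ is already given, the hypothesis $\dist(Q,P)\ge Mr(Q)$ would directly force $Q\notin\Tree(R)$, a contradiction. This one-line argument immediately yields $\dist(Q,P)<Mr(Q)$; combined with $t$-separation and the $\LD_{\Sep}^*$ defining property it gives the sharper conclusion $t\,r(P)\le r(Q)$. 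Your route works too, but the case split is genuinely needed (the ``$\dist(Q,P)$ large'' case only goes through after you have already excluded ``$r(P)$ large''), and your claim ``$r(P)\le r(Q)$'' should strictly read $r(P)\lesssim r(Q)$. Also note that your requirement $\ell(S)\gtrsim\beta M\,r(P)>\ell(P)$ tacitly asks that $\beta M$ exceed an absolute constant; this is harmless since $M$ in \lemref{lem:key lemma} may be enlarged, but it is worth stating.
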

	\begin{proof}
		Suppose $Q\in\LD_{\Sep}^*(R)$ and that we have $Q\cap K_P^{1/2}\setminus MB_P\neq\varnothing$ for some $P\in\LD_{\Sep}^*(R)$. Note that if we had $Mr(Q)\le \dist(Q,P)$, then the assumptions of \lemref{lem:key lemma} would be satisfied, and we would arrive at $Q\not\in\Tree(R)$, a contradiction. Thus,
		\begin{equation}\label{eq:Q and P close}
		\dist(Q,P)\le Mr(Q)< t\,r(Q).
		\end{equation}
		It follows that \eqref{eq:neighbour condition 2}  -- one of the $t$-neigbourhood conditions -- is satisfied. Since $Q$ and $P$ are $t$-separated, we necessarily have $t\,r(Q)\le r(P)$ or $t\,r(P)\le r(Q)$.
		
		If we had $t\,r(Q)\le r(P)$, then \eqref{eq:Q and P close} implies $\dist(Q,P)\le r(P)$. Hence, $1.1B_Q\cap 1.1B_P\neq\varnothing$. But this cannot be true, by the definition of $\LD_{\Sep}^*(R)$. It follows that
		\begin{equation}\label{eq:t fixed}
		t\,r(P)\le r(Q).
		\end{equation}
		
		Let $S\supset P$ be the biggest ancestor of $P$ satisfying $r(S)\le\delta r(Q)$ for some small constant $\delta=\delta(\alpha)$ which will be fixed in a few lines. If $t$ is big enough, then $S\neq P$. Thus, $r(S)\approx_{\delta} r(Q)$, and $S\in\Tree(R)\setminus\Stop(R)$. Recall that by the definition of $\LD_{\Sep}^*(R)$ we have $1.1B_Q\cap 1.1 B_P = \varnothing$. It follows that if $\delta<0.001$, then $4B_S\cap 1.05 B_Q=\varnothing$. Now, using this separation, it is not difficult to check that for $\delta=\delta(\alpha)$ small enough, for any $x\in K_P^{1/2}\cap Q$ we have
		\begin{equation*}
		2B_S\subset K(x).
		\end{equation*}		
		Observe also that, due to \eqref{eq:Q and P close} and the fact that $r(S)\le\delta r(Q)$, we have 
		\begin{equation*}
		2B_S\subset B(x,r)\qquad\text{for}\ r\in\left( \frac{\eta^{-1}}{2}r(Q),\ \eta^{-1} r(Q)\right),
		\end{equation*}
		provided that $\eta$ is small enough (say, $\eta^{-1}\gg t$).
		Putting together the two estimates above, we get that
		\begin{equation*}
		\mu(2B_S)\le\mu(K(x,r))
		\end{equation*}
		for any $x\in K_P^{1/2}\cap Q\supset Q\cap K_P^{1/2}\setminus MB_P$ and all $r\in( {\eta^{-1}}r(Q)/2,\ \eta^{-1} r(Q))$. 
		
		Integrating the above over all $x\in A$, where $A\subset Q\cap K_P^{1/2}\setminus MB_P$ is an arbitrary measurable subset, yields
		\begin{multline}\label{eq:weird estimate}
		\mu(A)\Theta_{\mu}(2B_R)^p\overset{\eqref{eq:lower density estimate on Tree}}{\le} \tau^{-1} \mu(A)\Theta_{\mu}(2B_S)^p\approx_{\tau,\alpha} \mu(A)\left(\frac{\mu(2B_S)}{r(Q)^{n}}\right)^p\\
		\lesssim_{\eta}\int_A\int_{\eta r(Q)}^{\eta^{-1}r(Q)}\left(\frac{\mu(K(x,r))}{r^n}\right)^p\frac{dr}{r}d\mu(x).
		\end{multline}
		
		Now, let $P_i$ be some ordering of cubes $P\in\LD_{\Sep}^*(R)$ satisfying $Q\cap K_P^{1/2}\setminus MB_P\neq\varnothing$. We define $A_1 = Q\cap K_{P_1}^{1/2}\setminus MB_{P_1}$, and for $i>1$ 
		\begin{equation*}
		A_i = Q\cap K_{P_i}^{1/2}\setminus \Big(MB_{P_i}\cup\bigcup_{j=1}^{i-1}A_j\Big).
		\end{equation*}
		Observe that $A_i$ are pairwise disjoint and their union is $Q\cap\bigcup_{P\in\LD_{\Sep}^*(R)}(K_P^{1/2}\setminus MB_P)$. Thus,
		%Decomposing $Q\cap\bigcup_{P\in\LD_{\Sep}^*(R)}K_P^{1/2}\setminus MB_P$ into a countable disjoint union of sets $A(P)$ corresponding to different $P\in\LD_{\Sep}^*(R)$ (we may have $A(P)=\varnothing$ for some $P$) we get
		\begin{multline*}
		\mu\Big(Q\cap\bigcup_{P\in\LD_{\Sep}^*(R)}K_P^{1/2}\setminus MB_P\Big)\Theta_{\mu}(2B_R)^p=\sum_{i}\mu(A_i)\Theta_{\mu}(2B_R)^p\\
		\overset{\eqref{eq:weird estimate}}{\lesssim}_{\tau,\alpha,\eta} \int_{\bigcup_i A_i}  \int_{\eta r(Q)}^{\eta^{-1}r(Q)}\left(\frac{\mu(K({x,r}))}{r^n}\right)^p\frac{dr}{r}d\mu(x)
		\le \E(Q)\mu(Q).
		\end{multline*}
		Note that since $Q\notin\BCE(R)$, we have $ \E(Q)\mu(Q)\le \varepsilon\Theta_{\mu}(2B_R)^p\mu(Q)$. So the estimate \eqref{eq:cones are a small subset} holds.
	\end{proof}
	
	\begin{lemma}\label{lem:GammaLD}
		There exists an $n$-dimensional Lipschitz graph $\Gamma_{\LD}$ passing through all the points $w_P,\ P\in\LD_{\Sep}^*(R)$. The Lipschitz constant of $\Gamma_{\LD}$ depends only on $\alpha$.
	\end{lemma}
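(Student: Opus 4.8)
The plan is to reduce everything to a \emph{cone-separation} statement for the points $w_P$. Concretely, I would show that for every pair of distinct cubes $Q,P\in\LD_{\Sep}^*(R)$ one has $w_Q-w_P\notin K^{1/2}$, where $K^{1/2}=K(0,V,\alpha/2)$. Once this is established, $\Gamma_{\LD}$ is produced by the same elementary fact used throughout the introduction (see the proof of \cite[Lemma 15.13]{mattila1999geometry}): a set all of whose difference vectors avoid $K^{1/2}$ is contained in an $n$-dimensional Lipschitz graph over $V^{\perp}$, with Lipschitz constant at most $2/\alpha$, which depends only on $\alpha$. So the whole proof reduces to the cone-separation claim.

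To prove that claim I would argue by contradiction. Fix distinct $Q,P\in\LD_{\Sep}^*(R)$ with, say, $r(Q)\le r(P)$, and suppose $w_Q-w_P\in K^{1/2}$. Since $K^{1/2}$ is symmetric under $v\mapsto -v$, this gives $w_P-w_Q\in K^{1/2}$, i.e.\ $w_P\in K^{1/2}(w_Q)\subset K_Q^{1/2}$ because $w_Q\in Q$. By the defining property \eqref{eq:def of wQ} of $w_P$, applied with the cube $Q\in\LD_{\Sep}^*(R)$, the point $w_P$ avoids $K_Q^{1/2}\setminus MB_Q$; hence necessarily $w_P\in MB_Q$.

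From here everything is a short computation with the David--Mattila ball radii. Combining $w_P\in MB_Q=B(x_Q,28M\,r(Q))$ with $w_Q\in Q\subset B_Q$ yields $|w_Q-w_P|\le 28(M+1)r(Q)\le 28(M+1)r(P)$, so $\dist(Q,P)\le|w_Q-w_P|<t(r(Q)+r(P))$ once $t\ge 28(M+1)$; thus $Q,P$ satisfy \eqref{eq:neighbour condition 2}. Since $\LD_{\Sep}(R)\supset\LD_{\Sep}^*(R)$ is $t$-separated and $Q\ne P$, condition \eqref{eq:neighbour condition 1} must then fail, which (as $r(Q)\le r(P)$) forces $r(P)>t\,r(Q)$. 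Using once more $w_P\in MB_Q$ (so $|w_P-x_Q|\le 28M\,r(Q)<28M\,t^{-1}r(P)$) together with $w_P\in P\subset B_P$ (so $|w_P-x_P|\le 28\,r(P)$), I get $|x_Q-x_P|<28(1+M t^{-1})r(P)<30.8\,r(P)=r(1.1B_P)$ provided $t>10M$; hence $x_Q\in 1.1B_P\cap 1.1B_Q$, so $1.1B_Q\cap 1.1B_P\ne\varnothing$ while $r(Q)<r(P)$. But then, by the very definition of $\LD_{\Sep}^*(R)$, the cube $P$ would have been discarded from $\LD_{\Sep}(R)$ --- contradicting $P\in\LD_{\Sep}^*(R)$. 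Choosing $t=t(M,\alpha)$ large enough, compatibly with its earlier constraints in Section~\ref{subsec:GammaR} and in \lemref{lem:some LD estimate}, makes all of the above go through, and the lemma follows.

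The delicate point --- and essentially the only one --- is that the condition defining $w_P$ rules out $w_Q-w_P\in K^{1/2}$ only \emph{at long range}, i.e.\ when $w_Q\notin MB_P$; the short-range regime must be excluded separately, and it is precisely here that the $t$-separation of $\LD_{\Sep}(R)$ together with the $1.1B$-disjointness built into the definition of $\LD_{\Sep}^*(R)$ does the work. Everything else is bookkeeping with the radii $r(Q)$, $r(B_Q)=28r(Q)$, and their $M$- and $1.1$-dilates.
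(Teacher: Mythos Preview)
Your proof is correct and follows essentially the same approach as the paper's: reduce to the cone-separation claim $w_Q-w_P\notin K^{1/2}$, use the defining property \eqref{eq:def of wQ} to force the ``bad'' point into an $M$-dilated ball, then combine the $t$-separation of $\LD_{\Sep}(R)$ with the $1.1B$-disjointness built into $\LD_{\Sep}^*(R)$ to reach a contradiction. The only cosmetic difference is that the paper takes the opposite WLOG convention ($r(P)\le r(Q)$) and phrases the last step as ``$1.1B_Q\cap 1.1B_P=\varnothing$ forces $\dist(w_Q,B_P)>Mr(B_P)$'', which is simply the contrapositive of your final computation.
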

	\begin{proof}
		It suffices to show that for any $Q, P\in\LD_{\Sep}^*(R),\ Q\neq P,$ we have 
		\begin{equation}\label{eq:wQ and wP in the cone}
		w_Q-w_P\notin K^{1/2}.
		\end{equation}
		Without loss of generality assume $r(P)\le r(Q)$. By \eqref{eq:def of wQ} we have
		\begin{equation*}
		w_Q\notin K_P^{1/2}\setminus MB_P.%\quad\text{and}\quad w_P\notin K_Q^{1/2}\setminus MB_Q.
		\end{equation*}
		In particular,
		\begin{equation*}
		w_Q\notin K^{1/2}(w_P)\setminus MB_P.%\quad\text{and}\quad w_P\notin K^{1/2}(w_Q)\setminus B(w_Q, CMr(Q)).
		\end{equation*}
		So, to prove \eqref{eq:wQ and wP in the cone}, it is enough to show that
		\begin{equation}\label{eq:wQ notin MBP}
		w_Q\notin MB_P.
		\end{equation}
		
		Assume the contrary, i.e. $w_Q\in MB_P$. Then,
		\begin{equation*}
		\dist(Q,P)\le CMr(P)\le t(r(Q)+r(P)).
		\end{equation*}
		That is, \eqref{eq:neighbour condition 2} holds. But $Q$ and $P$ are $t$-separated, and so \eqref{eq:neighbour condition 1} must fail. Hence,
		\begin{equation*}
		r(P)\le t^{-1}r(Q).
		\end{equation*}
		$Q$ and $P$ belong to $\LD_{\Sep}^*(R)$, so by \eqref{eq:def of LDSep*} we have $1.1B_Q\cap 1.1B_P=\varnothing$. Thus,
		\begin{equation*}
		\dist(w_Q, B_P)\ge 0.1r(B_Q)\ge Ct\, r(B_P)> Mr(B_P).
		\end{equation*}
		So \eqref{eq:wQ notin MBP} holds.
	\end{proof}
	We can finally finish the proof of \lemref{lem:LD small measure}.
	\begin{proof}[Proof of \lemref{lem:LD small measure}]
		By \lemref{lem:LDSep enough} it suffices to estimate the measure of cubes from $\LD_{\Sep}(R)$. Let $\mathcal{G}$ denote an arbitrary finite subfamily of $\LD_{\Sep}(R)$. We use the covering lemma \cite[Theorem 9.31]{tolsa2014analytic} to choose a subfamily $\mathcal{F}\subset\mathcal{G}$ such that
		\begin{equation*}
		\bigcup_{Q\in\mathcal{G}}1.5B_Q\subset \bigcup_{Q\in\mathcal{F}}2B_Q,
		\end{equation*}
		and the balls $\{1.5B_Q\}_{Q\in\mathcal{F}}$ are of bounded superposition. 
				
		The above and the $\LD$ stopping condition give
		\begin{equation}\label{eq:LDSep reduced to F}
		\sum_{Q\in\mathcal{G}}\mu(Q)\le\sum_{Q\in\mathcal{F}}\mu(2B_Q)\lesssim \tau\,\Theta_{\mu}(2B_R)\sum_{Q\in\mathcal{F}}r(B_Q)^n.
		\end{equation}
		Now, it follows from \lemref{lem:LDSep alternative} and \lemref{lem:GammaLD} that for each $Q\in\mathcal{G}\subset\LD_{\Sep}(R)$ there exists either $w_Q\in\Gamma_{\LD}\cap 1.2B_Q$ or $x\in \widetilde{G}_R\cap 1.2B_Q\subset \Gamma_R\cap 1.2B_Q$. Hence,
		\begin{equation*}
		\mathcal{H}^n(1.5B_Q\cap(\Gamma_{\LD}\cup\Gamma_R))\approx_{\alpha} r(B_Q)^n.
		\end{equation*}
		Now, using the bounded superposition property of $\mathcal{F}$ we get
		\begin{multline*}
		\sum_{Q\in\mathcal{F}}r(B_Q)^n\approx_{\alpha} \sum_{Q\in\mathcal{F}}\mathcal{H}^n(1.5B_Q\cap(\Gamma_{\LD}\cup\Gamma_R))\lesssim \mathcal{H}^n\big(\bigcup_{Q \in\mathcal{F}}1.5B_Q\cap(\Gamma_{\LD}\cup\Gamma_R)\big)\\
		\le \mathcal{H}^n(2B_R\cap (\Gamma_{\LD}\cup\Gamma_R))\approx_{\alpha} r(R)^n\approx \mu(2B_R)\Theta_{\mu}(2B_R)^{-1}\overset{R\in\D^{db}}{\approx} \mu(R)\Theta_{\mu}(2B_R)^{-1}.
		\end{multline*}
		Together with \eqref{eq:LDSep reduced to F}, this gives
		\begin{equation*}
		\sum_{Q\in\mathcal{G}}\mu(Q)\lesssim_{\alpha}\tau \mu(R).
		\end{equation*}
		Since $\mathcal{G}$ was an arbitrary finite subfamily of $\LD_{\Sep}(R)$, we finally arrive at
		\begin{equation*}
		\sum_{Q\in\LD_{\Sep}(R)}\mu(Q)\lesssim_{\alpha}\tau \mu(R).
		\end{equation*}
	\end{proof}
	
	\section{Top cubes and packing estimate}\label{sec:Top}
	\subsection{Definition of \texorpdfstring{$\Top$}{Top}} In order to define the $\Top$ family, we need to introduce some additional notation. Given $Q\in\D$, let $\MD(Q)$ denote the family of maximal cubes from $\D^{db}(Q)\setminus \{Q\}$. It follows from \lemref{lem:DM lattice} (c) that the cubes from $\MD(Q)$ cover $\mu$-almost all of $Q$. 
	
	Given $R\in\D^{db}$ set
	\begin{equation*}
	\Next(R) = \bigcup_{Q\in\Stop(R)}\MD(Q).
	\end{equation*}
	Since we always have $\MD(Q)\neq\{Q\}$, it is clear that $\Next(R)\neq\{R\}$.
	
	Observe that if $P\in\Next(R)$, then by \lemref{lem:density estimates on Tree} and \lemref{lem:density dropping for nondoubling} we have for all intermediate cubes $S\in\D,\ P\subset S\subset R,$
	\begin{equation}\label{eq:control on density for next cubes}
	\Theta_{\mu}(2B_S)\lesssim_A\Theta_{\mu}(2B_R).
	\end{equation}
	
	We are finally ready to define $\Top$. It is defined inductively as $\Top = \bigcup_{k\ge0} \Top_k$. First, set
	\begin{equation*}
	\Top_0 = \{R_0\},
	\end{equation*}
	where $R_0$ was defined as $\supp{\mu}$. Having defined $\Top_k$, we set
	\begin{equation*}
	\Top_{k+1} = \bigcup_{R\in\Top_k}\Next(R).
	\end{equation*}
	Note that for each $k\ge 0$ the cubes from $\Top_k$ are pairwise disjoint.
	\subsection{Definition of \texorpdfstring{$\ID$}{ID}}
	We distinguish a special type of $\Top$ cubes. We say that $R\in \Top$ is increasing density, $R\in\ID$, if
	\begin{equation*}
	\mu\bigg(\bigcup_{Q\in\HD(R)} Q \bigg)\ge \frac{1}{2}\mu(R).
	\end{equation*}
	\begin{lemma}\label{lem:ID density}
		If $A$ is big enough, then for all $R\in\ID$
		\begin{equation}\label{eq:density estimate for ID}
		\Theta_{\mu}(2B_R)^p\mu(R)\le\frac{1}{2}\sum_{Q\in\Next(R)}\Theta_{\mu}(2B_Q)^p\mu(Q).
		\end{equation}
	\end{lemma}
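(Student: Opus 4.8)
The plan is to push almost all the mass of each high-density cube $Q\in\HD(R)$ onto a \emph{single} descendant cube belonging to $\Next(R)$, chosen so that it retains both a fixed fraction of $\mu(Q)$ and a density comparable to $\Theta_{\mu}(2B_Q)$; summing over $\HD(R)$ and invoking the defining property of $\ID$ then gives the estimate, provided $A$ is large.

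Here is how I would carry it out. Fix $Q\in\HD(R)$. Since $Q$ is doubling, \lemref{lem:doubling subcube of doubling} supplies a doubling cube $Q'\subsetneq Q$ with $\mu(Q')\approx\mu(Q)$ and $\ell(Q')\approx\ell(Q)$. Let $P=P(Q)$ be the largest cube with $Q'\subset P\subsetneq Q$ and $P\in\D^{db}$; this is well defined because the ancestors of $Q'$ strictly below $Q$ form a finite chain which contains the doubling cube $Q'$ itself. By maximality $P\in\MD(Q)$, and since $Q\in\HD(R)\subset\Stop(R)$ we have $P\in\Next(R)$. From $Q'\subset P\subset Q$ we read off $\mu(P)\ge\mu(Q')\approx\mu(Q)$ and $\ell(Q')\le\ell(P)\le\ell(Q)$, so that $r(2B_P)\approx\ell(P)\approx\ell(Q)\approx r(2B_Q)$. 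Combining this with $P\subset 2B_P$ and the doubling of $Q$ (which gives $\mu(2B_Q)\lesssim\mu(Q)$), we obtain
\begin{equation*}
\Theta_{\mu}(2B_P)=\frac{\mu(2B_P)}{r(2B_P)^n}\gtrsim\frac{\mu(P)}{\ell(Q)^n}\approx\frac{\mu(Q)}{\ell(Q)^n}\gtrsim\frac{\mu(2B_Q)}{r(2B_Q)^n}=\Theta_{\mu}(2B_Q)>A\,\Theta_{\mu}(2B_R),
\end{equation*}
the last inequality being the $\HD$ stopping condition. In short, there are absolute constants $c_1,c_2\in(0,1]$, depending only on the David--Mattila lattice parameters, with $\Theta_{\mu}(2B_{P(Q)})\ge c_1\,A\,\Theta_{\mu}(2B_R)$ and $\mu(P(Q))\ge c_2\,\mu(Q)$.

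Next, the cubes $P(Q)$ for $Q\in\HD(R)$ are pairwise distinct — the cubes of $\HD(R)\subset\Stop(R)$ are pairwise disjoint, $P(Q)\subset Q$, and every cube has positive $\mu$-measure — and they all lie in the disjoint family $\Next(R)$. Hence
\begin{equation*}
\sum_{Q\in\Next(R)}\Theta_{\mu}(2B_Q)^p\mu(Q)\ \ge\ \sum_{Q\in\HD(R)}\Theta_{\mu}(2B_{P(Q)})^p\mu(P(Q))\ \ge\ c_1^{p}c_2\,A^{p}\,\Theta_{\mu}(2B_R)^p\sum_{Q\in\HD(R)}\mu(Q).
\end{equation*}
Since $R\in\ID$, the disjoint cubes of $\HD(R)$ cover at least half of $R$ in measure, so $\sum_{Q\in\HD(R)}\mu(Q)\ge\tfrac12\mu(R)$ and the right-hand side is at least $\tfrac12 c_1^{p}c_2\,A^{p}\,\Theta_{\mu}(2B_R)^p\mu(R)$. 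Finally I would choose $A$ so large that $\tfrac12 c_1^{p}c_2 A^{p}\ge2$, which is exactly \eqref{eq:density estimate for ID}; since $4^{1/p}\le4$ and $c_2^{1/p}\ge c_2$ for $p\ge1$, it suffices to take $A\ge 4/(c_1c_2)$, a choice independent of $p$.

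The one point requiring care is the selection of $P(Q)$: a generic cube of $\MD(Q)$ may have arbitrarily small density, because the density is allowed to collapse along a chain of non-doubling ancestors (see \lemref{lem:density dropping for nondoubling}), so there is no direct lower bound for $\sum_{P\in\MD(Q)}\Theta_{\mu}(2B_P)^p\mu(P)$. Using \lemref{lem:doubling subcube of doubling} to extract from $Q$ a single doubling subcube of comparable size and measure is precisely what sidesteps this difficulty, and it is the crux of the argument.
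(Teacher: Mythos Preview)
Your proof is correct and follows essentially the same approach as the paper: both arguments use \lemref{lem:doubling subcube of doubling} to pass from each $Q\in\HD(R)$ to a cube in $\MD(Q)\subset\Next(R)$ of comparable size, measure, and density, then sum and absorb the constants by choosing $A$ large. The paper compresses your selection of a single $P(Q)$ into the one-line claim $\Theta_{\mu}(2B_Q)^p\mu(Q)\lesssim\sum_{P\in\MD(Q)}\Theta_{\mu}(2B_P)^p\mu(P)$, but the underlying reasoning is identical; your version simply makes explicit why that claim holds, and your closing remark correctly identifies why one cannot bound a generic term of that sum from below.
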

	\begin{proof}
		The definition of $\ID$ and the $\HD$ stopping condition imply that for any $R\in\ID$
		\begin{equation*}
		\Theta_{\mu}(2B_R)^p\mu(R)\le 2\,\Theta_{\mu}(2B_R)^p\sum_{Q\in\HD(R)}\mu(Q)\le 2A^{-p}\sum_{Q\in\HD(R)}\Theta_{\mu}(2B_Q)^p\mu(Q).
		\end{equation*}
		Note that all $Q\in\HD(R)$ are doubling, and so by \lemref{lem:doubling subcube of doubling}
		\begin{equation*}
		\Theta_{\mu}(2B_Q)^p\mu(Q)\lesssim\sum_{P\in\MD(Q)}\Theta_{\mu}(2B_P)^p\mu(P)= \sum_{\substack{P\in\Next(R)\\P\subset Q}}\Theta_{\mu}(2B_P)^p\mu(P).
		\end{equation*}
		If $A$ is taken big enough, then the estimates above yield \eqref{eq:density estimate for ID}.
	\end{proof}
	\subsection{Packing condition}
	We will now establish the packing condition \eqref{eq:packing estimate}. For $S\in\Top$ set $\Top(S) = \Top\cap\D(S)$ and $\Top_j(S) = \Top_j\cap\D(S)$. For $k\ge 0$ we also define
	\begin{align*}
	\Top_0^k(S) &= \bigcup_{0\le j\le k}\Top_j(S),\\
	\ID_0^k(S) &= \ID\cap\Top_0^k(S).
	\end{align*}
	Recall that $\mu$ satisfies the following polynomial growth condition: there exist $C_1>0$ and $r_0>0$ such that for all $x\in\supp\mu,\ 0<r\le r_0,$ we have
	\begin{equation}\label{eq:weaker growth condition}
	\mu(B(x,r))\le C_1 r^n.
	\end{equation}	
	\begin{lemma}
		For all $S\in \Top$ we have
		\begin{equation}\label{eq:packing condition}
		\sum_{R\in\Top(S)}\Theta_{\mu}(2B_R)^p\mu(R) \lesssim_{\varepsilon,\eta,\tau}(C_1)^p\mu(S) + \int_{2B_S} \int_0^{\eta^{-1}C_0r(S)}\bigg(\frac{\mu(K(x,r))}{r^n} \bigg)^p\ \frac{dr}{r} d\mu(x).
		\end{equation}
		The implicit constant does not depend on $r_0$.
	\end{lemma}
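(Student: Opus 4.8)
The strategy is to run a standard stopping-time packing argument along the tree structure, using the $\ID$ dichotomy as the single non-trivial mechanism for density to increase. First I would set up the decomposition
\[
\sum_{R\in\Top(S)}\Theta_{\mu}(2B_R)^p\mu(R) = \sum_{k\ge0}\sum_{R\in\Top_k(S)}\Theta_{\mu}(2B_R)^p\mu(R),
\]
and split each generation according to whether $R\in\ID$ or not. For the $\ID$ cubes, \lemref{lem:ID density} lets us absorb their contribution into the next generation: since each $R\in\ID_0^k(S)$ satisfies $\Theta_{\mu}(2B_R)^p\mu(R)\le\frac12\sum_{Q\in\Next(R)}\Theta_{\mu}(2B_Q)^p\mu(Q)$, and since the $\Next(R)$ families are disjoint as $R$ ranges over a fixed generation, a telescoping/geometric-series argument shows
\[
\sum_{R\in\ID\cap\Top(S)}\Theta_{\mu}(2B_R)^p\mu(R)\lesssim \sum_{R\in\Top(S)\setminus\ID}\Theta_{\mu}(2B_R)^p\mu(R) + (\text{boundary terms that vanish}).
\]
So it suffices to bound the sum over $R\in\Top(S)$ with $R\notin\ID$.

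**Non-$\ID$ cubes.** If $R\notin\ID$, then $\mu\big(\bigcup_{Q\in\HD(R)}Q\big)<\frac12\mu(R)$, hence by \lemref{lem:LD small measure} (with $\tau$ fixed small, using \eqref{eq:small measure LD}) and the trivial bound $\mu\big(\bigcup_{Q\in\BCE(R)}Q\big)$ controlled via \eqref{eq:BCE estimate}, a definite proportion of $\mu(R)$ — say at least $\tfrac14\mu(R)$ once $\tau$ is small enough — is \emph{not} covered by $\HD(R)\cup\LD(R)$ cubes, and the $\BCE(R)$ part is charged to the energy. More precisely, I would write, for each $R\notin\ID$,
\[
\Theta_{\mu}(2B_R)^p\mu(R)\lesssim \Theta_{\mu}(2B_R)^p\,\mu\Big(R\setminus\bigcup_{Q\in\HD(R)\cup\LD(R)}Q\Big) + \Theta_{\mu}(2B_R)^p\sum_{Q\in\BCE(R)}\mu(Q),
\]
and then bound the $\BCE$ term using \eqref{eq:BCE estimate} by $\varepsilon^{-1}\sum_{P\in\Tree(R)}\E(P)\mu(P)$. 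The remaining ``good'' mass $\mu\big(R\setminus\bigcup_{Q\in\HD(R)\cup\LD(R)}Q\big)$ consists of points lying in $\Good(R)=G_R$ together with points in $\BCE(R)$ cubes — the crucial point is that these good portions, as $R$ ranges over $\Top$, have bounded overlap (each such point belongs to a bounded number of $G_R$'s, in fact the $G_R$ are essentially disjoint by the tree structure), so
\[
\sum_{R\in\Top(S)\setminus\ID}\Theta_{\mu}(2B_R)^p\,\mu(G_R) \lesssim \sup_{R}\Theta_{\mu}(2B_R)^p\cdot\mu(S)\lesssim (C_1)^p\mu(S),
\]
using \eqref{eq:upper density estimate on Tree} and polynomial growth \eqref{eq:weaker growth condition} to bound $\Theta_{\mu}(2B_R)\lesssim A\,\Theta_{\mu}(2B_S)\lesssim C_1$ — here one must be a little careful because the growth hypothesis only holds up to scale $r_0$, but $\Theta_\mu(2B_R)\lesssim_A \Theta_\mu(2B_S)$ from \eqref{eq:control on density for next cubes} propagates the bound down from the top cube regardless of $r_0$, which is why the implicit constant is independent of $r_0$.

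**Collecting the energy terms.** Finally I would sum the $\BCE$ contributions: $\sum_{R\in\Top(S)}\sum_{P\in\Tree(R)}\E(P)\mu(P)$. Since $\D(S)=\bigsqcup_{R\in\Top(S)}\Tree(R)$ (the trees partition the cubes below $S$), this equals $\sum_{P\in\D(S)}\E(P)\mu(P)$, and by the definition \eqref{eq:defi EQ} of $\E(P)$ together with bounded overlap of the annuli $[\eta r(P),\eta^{-1}r(P)]$ and of the balls $2B_P$ at each scale, Fubini gives
\[
\sum_{P\in\D(S)}\E(P)\mu(P)=\sum_{P\in\D(S)}\int_{2B_P}\int_{\eta r(P)}^{\eta^{-1}r(P)}\Big(\tfrac{\mu(K(x,r))}{r^n}\Big)^p\tfrac{dr}{r}\,d\mu(x)\lesssim_{\eta}\int_{2B_S}\int_0^{\eta^{-1}C_0 r(S)}\Big(\tfrac{\mu(K(x,r))}{r^n}\Big)^p\tfrac{dr}{r}\,d\mu(x),
\]
where the upper limit $\eta^{-1}C_0 r(S)$ comes from $r(P)\le C_0 A_0^{-J(P)}\le C_0 r(S)$ for $P\subset S$, and the spatial integral stays inside $2B_S$ because $2B_P\subset 2B_S$. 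Combining the three pieces yields \eqref{eq:packing condition}. The main obstacle is the bounded-overlap bookkeeping in the non-$\ID$ step: one has to verify carefully that the ``uncharged'' good mass of the $R\notin\ID$ cubes really does have controlled superposition — this is where the tree structure ($\Good(R)$ disjoint from the $\Next(R)$ cubes, hence from deeper trees) and the disjointness of $\Top_k$ within a generation must be used in tandem — and to make the geometric-series absorption of the $\ID$ cubes rigorous one needs the (standard) observation that truncating at generation $k$ and letting $k\to\infty$ kills the tail because $\mu$ is finite.
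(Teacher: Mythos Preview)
Your overall architecture matches the paper's proof: split into $\ID$ and non-$\ID$, absorb the $\ID$ contribution via \eqref{eq:density estimate for ID}, handle non-$\ID$ by decomposing into the good set, the $\LD$ part, and the $\BCE$ part, and collect the $\BCE$ contributions into the energy integral via bounded overlap of the $2B_P$ and of the scale intervals.

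There is, however, a genuine gap in your density bound for the good term. You claim $\Theta_\mu(2B_R)\lesssim A\,\Theta_\mu(2B_S)\lesssim C_1$ for all $R\in\Top(S)$, citing \eqref{eq:control on density for next cubes}. But that estimate only controls densities \emph{within a single tree}: it bounds $\Theta_\mu(2B_Q)$ for cubes $Q$ between some $P\in\Next(R)$ and $R$ by $\lesssim_A\Theta_\mu(2B_R)$. It does not prevent the density from increasing by a factor comparable to $A$ each time one passes from a $\Top$ cube to one of its $\HD$ descendants in $\Next$, so after $k$ generations the density could be of order $A^k\Theta_\mu(2B_S)$. Moreover, $\Theta_\mu(2B_S)\le C_1$ is not known unless $r(2B_S)\le r_0$, which is not assumed. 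The paper obtains the needed bound by going \emph{downward} instead: if $\mu\big(R\setminus\bigcup_{Q\in\Next(R)}Q\big)>0$, then $\Tree(R)\setminus\Stop(R)$ contains arbitrarily small cubes $P$, and for those the growth hypothesis \eqref{eq:weaker growth condition} applies directly, giving $\Theta_\mu(2B_P)\le C_1$; combining with the low-density lower bound \eqref{eq:lower density estimate on Tree} yields $\Theta_\mu(2B_R)\le\tau^{-1}C_1$. This is why the implicit constant depends on $\tau$ but not on $r_0$.

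A smaller point: in the $\ID$ absorption step the boundary term does not vanish --- it is precisely what produces the $(C_1)^p\mu(S)$ on the right. The paper handles this by noting that for $k$ large the cubes in $\Top_{k+1}(S)$ satisfy $r(2B_Q)\le r_0$ and hence $\Theta_\mu(2B_Q)\le C_1$; and to justify subtracting the truncated sum from both sides one first checks it is finite via a crude bound (which \emph{does} depend on $r_0$, but that dependence disappears after the subtraction and the passage to the limit).
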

	\begin{proof}
		First, we deal with $\ID$ cubes. Note that
		\begin{multline*}
		\sum_{R\in\ID_0^k(S)}\Theta_{\mu}(2B_R)^p\mu(R)\overset{\eqref{eq:density estimate for ID}}{\le}\frac{1}{2}\sum_{R\in\ID_0^k(S)}\sum_{Q\in\Next(R)}\Theta_{\mu}(2B_Q)^p\mu(Q)\\
		\le \frac{1}{2}\sum_{Q\in\Top_0^{k+1}(S)}\Theta_{\mu}(2B_Q)^p\mu(Q),
		\end{multline*}
		where the last inequality follows from the fact that $\bigcup_{R\in\Top_0^k}\Next(R) = \Top_0^{k+1}$. 		
		Now, observe that for $Q\in\Top_{k+1}$ we have $r(Q)\le C_0A_0^{-k}r(R_0)$, and so if $k$ is big enough, then $r(2B_Q)\le r_0$. Thus, by \eqref{eq:weaker growth condition} 
		\begin{equation}\label{eq:small cubes density estimate}
		\Theta_{\mu}(2B_Q)\le C_1.
		\end{equation}
		Hence,
		\begin{multline}\label{eq:reducing to case without ID}
		\sum_{R\in\Top_0^k(S)}\Theta_{\mu}(2B_R)^p\mu(R) = \sum_{R\in\Top_0^k(S)\setminus\ID}\Theta_{\mu}(2B_R)^p\mu(R) + \sum_{R\in\ID_0^k(S)}\Theta_{\mu}(2B_R)^p\mu(R)\\
		\le \sum_{R\in\Top_0^k(S)\setminus\ID}\Theta_{\mu}(2B_R)^p\mu(R) + \frac{1}{2}\sum_{R\in\Top_0^{k+1}(S)}\Theta_{\mu}(2B_R)^p\mu(R)\\
		\le \sum_{R\in\Top_0^k(S)\setminus\ID}\Theta_{\mu}(2B_R)^p\mu(R) + \frac{1}{2}\sum_{R\in\Top_0^{k}(S)}\Theta_{\mu}(2B_R)^p\mu(R) +\frac{(C_1)^p}{2}\mu(S).
		\end{multline}
		Note that for small cubes $Q\in\Top_0^k(S)$ (i.e. satisfying $r(2B_Q)\le r_0$) we have \eqref{eq:small cubes density estimate}, while for big cubes the trivial estimate $\Theta_{\mu}(2B_Q)\le\mu(2B_S) r_0^{-n}$ holds. It follows that
		\begin{equation*}
		\sum_{R\in\Top_0^k(S)}\Theta_{\mu}(2B_R)^p\mu(R) \le (k+1)\left( (C_1)^p + \mu(2B_S)^p r_0^{-np}\right)\mu(S)<\infty,
		\end{equation*}
		and so we may deduce from \eqref{eq:reducing to case without ID} that
		\begin{equation*}
		\sum_{R\in\Top_0^k(S)}\Theta_{\mu}(2B_R)^p\mu(R) \le 2  \sum_{R\in\Top_0^k(S)\setminus\ID}\Theta_{\mu}(2B_R)^p\mu(R) + {(C_1)^p}\mu(S).
		\end{equation*}
		Letting $k\to\infty$ we arrive at
		\begin{equation}\label{eq:reduction to Top minus ID}
		\sum_{R\in\Top(S)}\Theta_{\mu}(2B_R)^p\mu(R) \le 2  \sum_{R\in\Top(S)\setminus\ID}\Theta_{\mu}(2B_R)^p\mu(R) + {(C_1)^p}\mu(S).
		\end{equation}
		Now, we need to estimate the sum from the right hand side. By the definition of $\ID$ we have for all $R\in\Top(S)\setminus \ID$
		\begin{equation*}
		\mu\bigg(R\setminus\bigcup_{Q\in\HD(R)}Q\bigg)\ge\frac{1}{2}\,\mu(R),
		\end{equation*}
		and so by \lemref{lem:DM lattice} (c) we get
		\begin{align*}
		\mu(R)&\le 2\,\mu\bigg(R\setminus\bigcup_{Q\in\Stop(R)}Q\bigg) + 2\,\mu\bigg(\bigcup_{Q\in\Stop(R)\setminus\HD(R)}Q\bigg)\\
		&= 2\,\mu\bigg(R\setminus\bigcup_{Q\in\Next(R)}Q\bigg) + 2\sum_{Q\in\LD(R)}\mu(Q) + 2\sum_{Q\in\BCE(R)}\mu(Q).
		\end{align*}
		The measure of low density cubes is small due to \eqref{eq:small measure LD}, and so for $\tau$ small enough we have
		\begin{equation*}
		\mu(R) \le  3\,\mu\bigg(R\setminus\bigcup_{Q\in\Next(R)}Q\bigg) + 3\sum_{Q\in\BCE(R)}\mu(Q).
		\end{equation*}
		Thus,
		\begin{multline}\label{eq:estimating Top minus ID}
		\sum_{R\in\Top(S)\setminus\ID}\Theta_{\mu}(2B_R)^p\mu(R) \le 3\sum_{R\in\Top(S)}\Theta_{\mu}(2B_R)^p\mu\bigg(R\setminus\bigcup_{Q\in\Next(R)}Q\bigg)\\
		 + 3 \sum_{R\in\Top(S)\setminus\ID}\Theta_{\mu}(2B_R)^p\sum_{Q\in\BCE(R)}\mu(Q).
		\end{multline}
		
		Concerning the first sum, notice that if $\mu\big(R\setminus\bigcup_{Q\in\Next(R)}Q\big)>0$, then we have arbitrarily small cubes $P$ belonging to $\Tree(R)$. In particular, by \eqref{eq:lower density estimate on Tree} and \eqref{eq:weaker growth condition}, we have $\Theta_{\mu}(2B_R)\le\tau^{-1}\Theta_{\mu}(2B_P)\le \tau^{-1}C_1$, taking $P\in\Tree(R)\setminus\Stop(R)$ small enough. Recall also that for $R\in\Top(S),$ the sets $R\setminus\bigcup_{Q\in\Next(R)}Q$ are pairwise disjoint. Hence,
		\begin{equation}\label{eq:estimate of good part}
		\sum_{R\in\Top(S)}\Theta_{\mu}(2B_R)^p\mu\bigg(R\setminus\bigcup_{Q\in\Next(R)}Q\bigg)\le (\tau^{-1}C_1)^p\mu(S).
		\end{equation}
		
		To estimate the second sum from \eqref{eq:estimating Top minus ID}, we apply \eqref{eq:BCE estimate} to get
		\begin{multline*}
		\sum_{R\in\Top(S)}\Theta_{\mu}(2B_R)^p\sum_{Q\in\BCE(R)}\mu(Q)\le \frac{1}{\varepsilon}\sum_{R\in\Top(S)}\sum_{P\in\Tree(R)}\E(P)\mu(P)\\
		\le \frac{1}{\varepsilon}\sum_{P\in\D(S)}\E(P)\mu(P)
		\end{multline*}
		By the definition of $\E(P)$, and the bounded intersection property of the balls $2B_P$ for cubes $P$ of the same generation, we have
		\begin{align*}
		\sum_{P\in\D(S)}\E(P)\mu(P) &= \sum_k \sum_{P\in\mathcal{D}_{\mu,k}(S)} \int_{2B_P}\int_{\eta r(P)}^{\eta^{-1}r(P)}\bigg(\frac{\mu(K(x,r))}{r^n}\bigg)^p\ \frac{dr}{r}\\
		&\lesssim \sum_k \int_{2B_S}\int_{\eta A_0^{-k}}^{\eta^{-1}C_0A_0^{-k}}\bigg(\frac{\mu(K(x,r))}{r^n}\bigg)^p\ \frac{dr}{r}\\
		&\lesssim_{\eta} \int_{2B_S} \int_0^{\eta^{-1}C_0r(S)}\bigg(\frac{\mu(K(x,r))}{r^n} \bigg)^p\ \frac{dr}{r} d\mu(x).
		\end{align*}
		Consequently, 
		\begin{equation*}
		\sum_{R\in\Top(S)}\Theta_{\mu}(2B_R)^p\sum_{Q\in\BCE(R)}\mu(Q)\lesssim_{\varepsilon,\eta} \int_{2B_S} \int_0^{\eta^{-1}C_0r(S)}\bigg(\frac{\mu(K(x,r))}{r^n} \bigg)^p\ \frac{dr}{r} d\mu(x).
		\end{equation*}
		Together with \eqref{eq:reduction to Top minus ID}, \eqref{eq:estimating Top minus ID}, and \eqref{eq:estimate of good part}, this gives \eqref{eq:packing condition}.
	\end{proof}

	 Let us put together all the ingredients of the proof of the main lemma.
	\begin{proof}[Proof of \lemref{lem:main lemma}]
		Let $\Top\subset\D^{db}$ be as above, and $\{\Gamma_R\}_{R\in\Top}$ be as in \lemref{lem:Lipschitz graph}. Then, properties (i) and (ii) are ensured by \lemref{lem:Lipschitz graph}. Property (iii) follows from \eqref{eq:control on density for next cubes}. We get the packing estimate \eqref{eq:packing estimate} from \eqref{eq:packing condition} by taking $S=R_0$.
	\end{proof}

	\section{Application to singular integral operators}\label{sec:SIOs}
	To prove \thmref{thm:SIO theorem}, we will use geometric characterizations of boundedness of operators from $\mathcal{K}^n(\R^d)$ shown in \cite[Sections 4, 5, 9]{girela-sarrion2018}. For $n=1,\ d=2$, a variant of this characterization valid for the Cauchy transform was already proved in \cite{tolsa2005bilipschitz}.
	
	For $Q, S\in\D$, $Q\subset S$, we set
	\begin{equation*}
	\delta_{\mu}(Q,S) = \int_{2B_S\setminus 2B_Q}\frac{1}{|y-x_Q|^n}\ d\mu(y).
	\end{equation*}
	The notation $\Good(R),\ \Tr(R),\ \Next(R)$ used below was introduced in Section \ref{sec:main lemma}.
	\begin{lemma}[{\cite{girela-sarrion2018}}]\label{lem:estimate of Tolsa and Girela}
		Let $\mu$ be a compactly supported Radon measure on $\R^d$ satisfying the growth condition \eqref{eq:growth condition}.
		Assume there exists a family of cubes $\Top\subset\D^{db}$, and a corresponding family of Lipschitz graphs $\{\Gamma_R\}_{R\in\Top}$, satisfying:
		\begin{itemize}
			\item[(i)] Lipschitz constants of $\Gamma_R$ are uniformly bounded by some absolute constant,
			\item[(ii)] $\mu$-almost all $\Good(R)$ is contained in $\Gamma_R$,
			\item[(iii)] for all $Q\in\Tr(R)$ we have $\Theta_{\mu}(2B_Q)\lesssim\Theta_{\mu}(2B_R)$.
			\item[(iv)] for all $Q\in\Next(R)$ there exists $S\in\D,\ Q\subset S,$ such that $\delta_{\mu}(Q,S)\lesssim \Theta_{\mu}(2B_R),$ and $2B_S\cap\Gamma_R\ne\varnothing$.			
		\end{itemize}
		Then, for every singular integral operator $T$ with kernel $k\in\mathcal{K}^n(\R^d)$ we have
		\begin{equation*}
		\sup_{\varepsilon>0}\ \lVert{T}_{\varepsilon}\mu\rVert_{L^2(\mu)}^2\lesssim	\sum_{R\in\Top}\Theta_{\mu}(2B_R)^2\mu(R),
		\end{equation*}
		with the implicit constant depending on $C_1$ and the constant $C_k$ from \eqref{eq:calderon zygmund constant}.
%		Moreover, in the case $n=1${\color{red}, $d=2?,$} we get
%		\begin{equation*}
%		c^2(\mu)\lesssim \sum_{R\in\Top}\Theta_{\mu}(2B_R)^2\mu(R).
%		\end{equation*}	
	\end{lemma}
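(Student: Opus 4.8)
The plan is to deduce this from the corona-decomposition criterion for $L^2(\mu)$ boundedness established by Girela-Sarri\'on, so that the bulk of the argument consists in matching hypotheses: conditions (i)--(iv) are tailored to be exactly the data required by \cite[Sections 4, 5, 9]{girela-sarrion2018}. Let me nonetheless describe the mechanism, which makes clear why (i)--(iv) are the natural assumptions.

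First I would reduce the global estimate $\lVert T_\varepsilon\mu\rVert_{L^2(\mu)}^2=\int|T_\varepsilon\mu|^2\,d\mu$ to a sum over the trees $\Tr(R)$. For $\mu$-a.e.\ $x$ there is a unique $R=R(x)\in\Top$ with $x\in\Good(R)$, and $T_\varepsilon\mu(x)$ is split according to the dyadic generations of cubes of $\Tr(R)$ lying above $x$, together with the contribution of the scales that are not comparable to any scale of $\Tr(R)$. The content of conditions (i)--(iii) is that, at the scales and locations of $\Tr(R)$, the measure $\mu$ behaves like an $n$-dimensional Ahlfors-David regular measure on the Lipschitz graph $\Gamma_R$ with regularity constant $\approx\Theta_\mu(2B_R)$: by (ii) the relevant mass lies on $\Gamma_R$, by (i) the graph has bounded slope, and by (iii) the densities do not blow up inside the tree. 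On a Lipschitz graph with bounded slope every operator $T$ with kernel in $\mathcal{K}^n(\R^d)$ is bounded in $L^2$ (classical, cf.\ \cite{david1991singular}), and, more to the point, the \emph{suppressed} operator $T^{\Phi_R}$ built from a regularized distance $\Phi_R$ to $\Gamma_R$ is bounded in $L^2(\mu)$ with a constant depending only on $C_k$ and the Lipschitz constant, since off $\Gamma_R$ the suppression kills the singularity while on $\Gamma_R$ it reduces, via (iii), to the graph case.

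Next I would telescope across the corona. Writing $T_\varepsilon\mu$ as a sum of the suppressed operators $T^{\Phi_R}_\varepsilon$ plus error terms, each error is localized to a cube $Q\in\Next(R)$ where one passes from the tree of $R$ into a descendant tree. Condition (iv) is precisely what is needed there: it provides an ancestor $S\supset Q$ with $2B_S\cap\Gamma_R\ne\varnothing$ and $\delta_\mu(Q,S)\lesssim\Theta_\mu(2B_R)$, and $\delta_\mu(Q,S)$ is exactly the tail quantity bounding the discrepancy, near $Q$, between the honest kernel and its suppressed replacement. Squaring and summing these errors over $R\in\Top$ and $Q\in\Next(R)$, using $\sum_{Q\in\Next(R)}\mu(Q)\le\mu(R)$ and the uniform $L^2(\mu)$ boundedness of the suppressed operators, produces exactly $\sum_{R\in\Top}\Theta_\mu(2B_R)^2\mu(R)$; letting $\varepsilon\to 0$ then concludes.

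The main obstacle, and the technical heart of \cite{girela-sarrion2018}, is making the passage from the honest truncations $T_\varepsilon$ to the family $\{T^{\Phi_R}_\varepsilon\}_{R\in\Top}$ of suppressed operators rigorous, and organizing the bookkeeping so that every error contribution is charged to a single Carleson box $2B_Q$ with weight $\lesssim\Theta_\mu(2B_R)^2$. Since this is carried out in full in \cite[Sections 4, 5, 9]{girela-sarrion2018} under assumptions equivalent to (i)--(iv), I would simply invoke it, noting that the dependence of the implicit constant on $C_1$ and on the Calder\'on-Zygmund constant $C_k$ of \eqref{eq:calderon zygmund constant} is tracked there.
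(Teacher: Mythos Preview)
Your proposal is correct and matches the paper's own treatment: the paper does not give an independent proof of this lemma either, but simply notes that it is essentially \cite[Section 5, Lemma 1]{girela-sarrion2018}, with the corona-decomposition hypotheses (i)--(iv) matching those of \cite[Lemma D]{girela-sarrion2018}, and that one reads off the sum-over-$\Top$ right-hand side from the last line of \cite[Section 9]{girela-sarrion2018}. Your heuristic sketch of the mechanism (tree-by-tree comparison with operators on the graphs $\Gamma_R$, with (iv) controlling the transition errors) is a reasonable narrative for why (i)--(iv) are the right inputs, but the actual argument is, as you say, simply a citation.
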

	The result above is not explicitly stated in \cite{girela-sarrion2018}, but it is essentially \cite[Section 5, Lemma 1]{girela-sarrion2018}. The ``corona decomposition'' assumptions of \lemref{lem:estimate of Tolsa and Girela} come from \cite[Lemma D]{girela-sarrion2018}, which is treated there as a black-box. The proof of \cite[Lemma 1]{girela-sarrion2018} is concluded in \cite[Section 9]{girela-sarrion2018}, and it is evident from its last line that we may replace the $\beta$-number right hand side of \cite[Lemma 1]{girela-sarrion2018} by the sum-over-$\Top$-cubes right hand side of \lemref{lem:estimate of Tolsa and Girela}.

	We are going to use \lemref{lem:main lemma} together with \lemref{lem:Lipschitz graph} and \lemref{lem:estimate of Tolsa and Girela} to get the following.
	\begin{lemma}\label{lem:checking T1 assumptions}
		Let $\mu$ be a compactly supported Radon measure on $\R^d$ satisfying the growth condition \eqref{eq:growth condition}.
		Assume further that for some $V\in G(d,d-n),$ $\alpha\in (0,1),$ we have $\mathcal{E}_{\mu,2}(\R^d, V, \alpha)<\infty.$
		
		Then, for every singular integral operator $T$ with kernel $k\in\mathcal{K}^n(\R^d)$ we have
		\begin{equation}\label{eq:SIO estimate}
		\sup_{\varepsilon>0}\ \lVert{T}_{\varepsilon}\mu\rVert_{L^2(\mu)}^2\lesssim	\mu(\R^d) + \mathcal{E}_{\mu,2}(\R^d, V,\alpha),
		\end{equation}
		with the implicit constant depending on $C_1,\alpha$ and the constant $C_k$ from \eqref{eq:calderon zygmund constant}.
%		Moreover, in the case $n=1${\color{red}, $d=2?,$} we get
%		\begin{equation}\label{eq:Menger curvature estimate 2}
%		c^2(\mu)\lesssim \mu(\R^d) + \mathcal{E}_{\mu,2}(\R^d).
%		\end{equation}	
	\end{lemma}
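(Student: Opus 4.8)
The plan is to feed the output of \lemref{lem:main lemma} (taken with $p=2$) into \lemref{lem:estimate of Tolsa and Girela}, and then collapse the resulting sum with the packing estimate \eqref{eq:packing estimate}. First I would apply \lemref{lem:main lemma} with $p=2$: the hypotheses give compact support, the polynomial growth \eqref{eq:growth condition} (which implies \eqref{eq:growth condition main lemma} with any $r_0$), and $\mathcal{E}_{\mu,2}(\R^d,V,\alpha)<\infty$, so we obtain a family $\Top\subset\D^{db}$ and Lipschitz graphs $\{\Gamma_R\}_{R\in\Top}$ satisfying properties (i)--(iii) — which are \emph{verbatim} conditions (i)--(iii) of \lemref{lem:estimate of Tolsa and Girela} — together with the packing bound \eqref{eq:packing estimate}. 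The only hypothesis of \lemref{lem:estimate of Tolsa and Girela} not handed to us for free is (iv): for each $Q\in\Next(R)$ one needs $S\in\D$, $Q\subset S$, with $2B_S\cap\Gamma_R\neq\varnothing$ and $\delta_{\mu}(Q,S)\lesssim\Theta_{\mu}(2B_R)$. Granting (iv), \lemref{lem:estimate of Tolsa and Girela} gives $\sup_{\varepsilon>0}\lVert T_\varepsilon\mu\rVert_{L^2(\mu)}^2\lesssim\sum_{R\in\Top}\Theta_{\mu}(2B_R)^2\mu(R)$, and \eqref{eq:packing estimate} with $p=2$ bounds the right-hand side by $\lesssim_\alpha (C_1)^2\mu(\R^d)+\mathcal{E}_{\mu,2}(\R^d,V,\alpha)$, which is \eqref{eq:SIO estimate} since $C_1$ is a fixed constant the implicit constant may depend on.

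So the whole argument reduces to verifying (iv). Given $Q\in\Next(R)$, let $Q_0\in\Stop(R)$ be the stopping cube with $Q\in\MD(Q_0)$; then $Q\subsetneq Q_0$ and every cube strictly between $Q$ and $Q_0$ is non-doubling. Since $Q_0\in\Stop(R)\subset\Tree(R)$, \lemref{lem:Lipschitz graph} furnishes a point of $\Gamma_R$ in $\Lambda B_{Q_0}$. I would take $S$ to be the smallest ancestor of $Q$ inside $\D(R)$ with $r(S)\ge\Lambda r(Q_0)$ (and $S=R$ if there is no such proper ancestor). The David--Mattila lattice properties then give $r(Q_0)\le r(S)\lesssim_\alpha r(Q_0)$ and $\Lambda B_{Q_0}\subset 2B_S$, whence $2B_S\cap\Gamma_R\neq\varnothing$; in the degenerate case $S=R$ with $r(R)<\Lambda r(Q_0)$ one uses \eqref{eq:2BR intersects the graph} instead.

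It remains to bound $\delta_{\mu}(Q,S)=\int_{2B_S\setminus 2B_Q}|y-x_Q|^{-n}\,d\mu(y)$, and here I would split $2B_S\setminus 2B_Q=(2B_S\setminus 2B_{Q_0})\cup(2B_{Q_0}\setminus 2B_Q)$. On the outer piece every $y$ satisfies $|y-x_Q|\gtrsim r(Q_0)\approx_\alpha r(S)$, so its contribution is $\lesssim_\alpha \mu(2B_S)/r(S)^n=\Theta_{\mu}(2B_S)\lesssim_A\Theta_{\mu}(2B_R)$ by \eqref{eq:control on density for next cubes}. On the inner piece I would group $y$ according to the smallest ancestor $T$ of $Q$ with $y\in 2B_T$; on that piece $|y-x_Q|\gtrsim r(T)$, so the contribution is $\lesssim\sum_{Q\subseteq T\subseteq Q_0}\Theta_{\mu}(2B_T)\lesssim\sum_{Q\subseteq T\subseteq Q_0}\Theta_{\mu}(100B(T))$. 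Since all intermediate cubes between $Q$ and $Q_0$ are non-doubling, \lemref{lem:density dropping for nondoubling} collapses this sum to $\lesssim\Theta_{\mu}(100B(Q_0))$; finally $100B(Q_0)\subset 2B_P$ for the parent $P$ of $Q_0$, and $P\in\Tree(R)$, so \lemref{lem:density estimates on Tree} gives $\Theta_{\mu}(100B(Q_0))\lesssim_A\Theta_{\mu}(2B_R)$ (when $Q_0=R$ this follows instead directly from the doubling property \eqref{eq:doubling_balls} of $R$). Altogether $\delta_{\mu}(Q,S)\lesssim_{A,\alpha}\Theta_{\mu}(2B_R)$, which is (iv).

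I expect the main obstacle to be precisely this verification of (iv). The subtlety is twofold: the cube $S$ must be chosen large enough that $2B_S$ reaches $\Gamma_R$, yet with $r(S)$ still comparable to $r(Q_0)$ so that the outer annulus costs only $O(1)$ scales; and the corridor from $Q$ up to $Q_0$ may span many scales, so one must exploit the non-doubling collapse of \lemref{lem:density dropping for nondoubling} rather than a naive scale-by-scale sum, which would lose a logarithmic factor. Everything else — plugging the conclusion of \lemref{lem:main lemma} into \lemref{lem:estimate of Tolsa and Girela} and then into \eqref{eq:packing estimate} — is routine bookkeeping.
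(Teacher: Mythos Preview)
Your proposal is correct and follows essentially the same route as the paper: apply \lemref{lem:main lemma} with $p=2$ to obtain (i)--(iii), verify (iv) by climbing from the stopping cube $Q_0\in\Stop(R)$ above $Q$ to an ancestor $S$ with $r(S)\approx_\Lambda r(Q_0)$ so that $2B_S$ catches $\Gamma_R$, then split $\delta_\mu(Q,S)$ at the level of $Q_0$ and handle the inner annulus via the non-doubling collapse of \lemref{lem:density dropping for nondoubling}. The paper's argument is identical up to notation (it writes $P$ for your $Q_0$ and is slightly terser about the choice of $S$ and the inner-annulus estimate), and the endgame---plugging into \lemref{lem:estimate of Tolsa and Girela} and \eqref{eq:packing estimate}---is the same.
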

	\begin{proof}		
		 Using \lemref{lem:main lemma} (with $p=2$), it is clear that the assumptions (i)-(iii) of \lemref{lem:estimate of Tolsa and Girela} are satisfied. We still have to check if (iv) holds. Once we do that, the packing estimate \eqref{eq:packing estimate} together with \lemref{lem:estimate of Tolsa and Girela} will ensure that \eqref{eq:SIO estimate} holds.
		
		Suppose $R\in\Top,\ Q\in\Next(R)$. We are looking for $S\in\D$ such that $\delta_{\mu}(Q,S)\lesssim \Theta_{\mu}(2B_R),$ and $2B_{S}\cap\Gamma_R\ne\varnothing$. Let $P\in\Stop(R)$ be such that $Q\subset P$. By \lemref{lem:Lipschitz graph} we have some constant $\Lambda$ such that
		\begin{equation*}
		\Lambda B_P\cap\Gamma_R\neq\varnothing.
		\end{equation*}
		Together with \eqref{eq:2BR intersects the graph}, this implies that there exists $S\in\Tree(R)$ such that $P\subset S,\ r(S)\approx_{\Lambda} r(P)$, and
		\begin{equation*}
		2B_{S}\cap\Gamma_R\neq\varnothing.
		\end{equation*}
		We split
		\begin{equation*}
		\delta_{\mu}(Q,S) = \int_{2B_{S}\setminus 2B_P}\frac{1}{|y-x_Q|^n}\ d\mu(y) + \int_{2B_P\setminus 2B_Q}\frac{1}{|y-x_Q|^n}\ d\mu(y).
		\end{equation*}		
		Concerning the first integral, for $y\in 2B_{S}\setminus 2B_P$ we have $|y-x_Q|\approx r(S)\approx_{\Lambda} r(P)$, and so
		\begin{equation*}
		\int_{2B_{S}\setminus 2B_P}\frac{1}{|y-x_Q|^n}\ d\mu(y)\lesssim \Theta_{\mu}(2B_S)\overset{\eqref{eq:upper density estimate on Tree}}{\lesssim_A}\Theta_{\mu}(2B_R).
		\end{equation*}		
		To deal with the second integral, observe that there are no doubling cubes between $Q$ and $P$. Then, it follows from \lemref{lem:density dropping for nondoubling} that
		\begin{equation*}
		\int_{2B_P\setminus 2B_Q}\frac{1}{|y-x_Q|^n}\ d\mu(y)\lesssim\Theta_{\mu}(100B(P)).
		\end{equation*}
		If $P=R$, then $P$ is doubling and we have $\Theta_{\mu}(100B(P))\lesssim\Theta_{\mu}(2B_R)$. Otherwise, the parent of $P$, denoted by $P'$, belongs to $\Tree(R)\setminus\Stop(R)$. Since $100B(P)\subset 2B_{P'}$, we get
		\begin{equation*}
		\Theta_{\mu}(100B(P))\lesssim \Theta_{\mu}(2B_{P'})\overset{\eqref{eq:upper density estimate on Tree}}{\lesssim_A}\Theta_{\mu}(2B_R).
		\end{equation*}
		Either way, we get that $\delta_{\mu}(Q,S)\lesssim_A\Theta_{\mu}(2B_R)$, and so the assumption (iv) of \lemref{lem:estimate of Tolsa and Girela} is satisfied.
	\end{proof}
	\lemref{lem:checking T1 assumptions} allows us to use the non-homogeneous $T1$ theorem of Nazarov, Treil and Volberg \cite{nazarov1997} to prove a version of \thmref{thm:SIO theorem} in the case of a fixed direction $V$, i.e. if for all $x\in\supp\mu$ we have $V_x\equiv V$.
	\begin{lemma}\label{lem:SIOs bdd if single direction}
		Let $\mu$ be a Radon measure on $\R^d$ satisfying the polynomial growth condition \eqref{eq:growth condition}. Suppose that there exist $M_0>1,\ \alpha\in (0,1),\ V\in G(d,d-n),$ such that for every ball $B$ we have
		\begin{equation}\label{eq:Energy bdd in single direction}
		\mathcal{E}_{\mu,2}(B,V,\alpha)\le M_0 \mu(B).
		\end{equation}
		Then, all singular integral operators $T_{\mu}$ with kernels in $\mathcal{K}^n(\R^d)$ are bounded in $L^2(\mu)$. The bound on the operator norm of $T_{\mu}$ depends only on $C_1,\alpha, M_0,$ and the constant $C_k$ from \eqref{eq:calderon zygmund constant}.
	\end{lemma}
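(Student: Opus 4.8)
The plan is to deduce the $L^2(\mu)$-boundedness of $T_\mu$ from \lemref{lem:checking T1 assumptions} via the non-homogeneous $T1$ theorem of Nazarov, Treil and Volberg \cite{nazarov1997} (see also \cite[Chapter 2]{tolsa2014analytic}). Since the kernel $k$ is odd, the operator $T_\mu$ is antisymmetric and $T_\mu^\ast=-T_\mu$, so only a single testing condition has to be verified; in view of the polynomial growth \eqref{eq:growth condition} (a standing hypothesis of that theorem), it suffices to show that there is a constant $C=C(C_1,\alpha,M_0,C_k)$ such that
\begin{equation*}
\sup_{\varepsilon>0}\int_B\big|T_{\mu,\varepsilon}\one_B\big|^2\,d\mu\le C\,\mu(B)\qquad\text{for every ball }B\text{ centered on }\supp\mu.
\end{equation*}

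To check this, I would fix a ball $B$ and replace $\mu$ by the compactly supported measure $\mu_B:=\restr{\mu}{B}$, which still satisfies \eqref{eq:growth condition} with the same constant $C_1$ and assigns no mass to $\partial B$. Since $T_{\mu,\varepsilon}\one_B=T_\varepsilon(\one_B\mu)=T_\varepsilon\mu_B$ and $\mu_B$ is carried by $B$, the left-hand side above equals $\sup_{\varepsilon>0}\lVert T_\varepsilon\mu_B\rVert_{L^2(\mu_B)}^2$, which is exactly the quantity controlled by \lemref{lem:checking T1 assumptions}, provided $\mathcal{E}_{\mu_B,2}(\R^d,V,\alpha)<\infty$. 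I would establish the stronger bound $\mathcal{E}_{\mu_B,2}(\R^d,V,\alpha)\lesssim_{M_0,C_1}\mu(B)$ by splitting the radial integral at $r(B)$: for $r\le r(B)$ one uses $\mu_B(K(x,r))\le\mu(K(x,r))$ and $\mu_B\le\mu$ to dominate that part by $\mathcal{E}_{\mu,2}(B,V,\alpha)$, hence by $M_0\,\mu(B)$ thanks to \eqref{eq:Energy bdd in single direction}; for $r>r(B)$ one uses that $\mu_B$ is carried by $B$, so $\mu_B(K(x,r))\le\mu_B(\R^d)=\mu(B)\le C_1\,r(B)^n$, and a direct computation of $\int_{r(B)}^{\infty}r^{-2n}\,\tfrac{dr}{r}$ shows that this part contributes $\lesssim C_1^2\,\mu(B)$.

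Combining the two estimates and applying \lemref{lem:checking T1 assumptions} to $\mu_B$ yields
\begin{equation*}
\sup_{\varepsilon>0}\lVert T_\varepsilon\mu_B\rVert_{L^2(\mu_B)}^2\lesssim_{C_1,\alpha,C_k}\mu_B(\R^d)+\mathcal{E}_{\mu_B,2}(\R^d,V,\alpha)\lesssim_{M_0,C_1,\alpha,C_k}\mu(B),
\end{equation*}
which is precisely the testing condition, uniformly in $B$ and $\varepsilon$, and with the right dependence of constants; the $T1$ theorem then gives the $L^2(\mu)$-boundedness of every $T_\mu$ with kernel in $\mathcal{K}^n(\R^d)$, with operator norm bounded in terms of $C_1,\alpha,M_0,C_k$. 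I expect the only genuinely delicate point to be the clean invocation of the non-homogeneous $T1$ theorem for a possibly non-doubling and infinite measure of polynomial growth — namely, that testing on characteristic functions of balls, together with antisymmetry of the kernel (which removes the dual testing condition and trivializes the weak boundedness property), really is sufficient for $L^2(\mu)$-boundedness, and that the quantitative control of the testing constant transfers to the operator norm. The remaining steps are routine, with no obstruction from open-versus-closed balls or boundary mass, since $\mu_B(\partial B)=0$.
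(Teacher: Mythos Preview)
Your proposal is correct and follows essentially the same route as the paper: restrict $\mu$ to a ball $B$, bound $\mathcal{E}_{\restr{\mu}{B},2}(\R^d,V,\alpha)\lesssim (M_0+C_1^2)\mu(B)$ by splitting the radial integral at $r(B)$, apply \lemref{lem:checking T1 assumptions} to obtain the testing estimate, and conclude via the non-homogeneous $T1$ theorem of \cite{nazarov1997}. One small inaccuracy: the claim that $\mu_B(\partial B)=0$ is not true for an arbitrary ball and Radon measure, but this plays no role in the argument and can simply be dropped.
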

	
	\begin{proof}
		 We apply \lemref{lem:checking T1 assumptions} to $\restr{\mu}{B}$, where $B$ is an arbitrary ball, and get that
		\begin{equation*}
		\sup_{\varepsilon>0}\ \lVert{T}_{\varepsilon}(\restr{\mu}{B})\rVert_{L^2(\restr{\mu}{B})}^2\lesssim_{C_1,\alpha,C_k} \mu(B)+\mathcal{E}_{\restr{\mu}{B},2}(\R^d,V,\alpha).
		\end{equation*}
		It is easy to see that, using the assumptions \eqref{eq:growth condition} and \eqref{eq:Energy bdd in single direction}, we have 
		\begin{equation*}
		\mathcal{E}_{\restr{\mu}{B},2}(\R^d,V,\alpha)\lesssim \mathcal{E}_{\mu,2}(B,V,\alpha) + C_1^2\mu(B)\le (M_0+C_1^2) \mu(B).
		\end{equation*}
		Hence, 
		\begin{equation}\label{eq:T1 assumption}
		\sup_{\varepsilon>0}\ \lVert{T}_{\varepsilon}(\restr{\mu}{B})\rVert_{L^2(\restr{\mu}{B})}^2\lesssim_{C_1,\alpha,C_k,M_0} \mu(B).
		\end{equation}
%		Together with the polynomial growth condition \eqref{eq:growth condition} this implies for \textit{balls with thin boundaries} (see \cite[(85) and Remark 7.1]{tolsa2001littlewood})
%		\begin{equation*}
%		\sup_{\varepsilon>0}\ \lVert{T}_{\varepsilon}(\restr{\mu}{B})\rVert_{L^2(\mu)}^2\lesssim \mu(2B).
%		\end{equation*}
		The $L^2$ boundedness of $T_{\mu}$ follows by the non-homogeneous $T1$ theorem from \cite{nazarov1997}. The condition \eqref{eq:T1 assumption} is slightly weaker than the original assumption in \cite{nazarov1997}, but this is not a problem, see the discussion in \cite[\S 3.7.2]{tolsa2014analytic}.
	\end{proof}
	We are ready to finish the proof of \thmref{thm:SIO theorem}.
	\begin{proof}[Proof of \thmref{thm:SIO theorem}]
%		By the compactness of $G(d,d-n)$, we can choose a bounded number of $(d-n)$-planes $V_k\in G(d,d-n),\ k\in\{1,\dots, N\},\ N=N(\alpha,d,n),$ such that for any $V\in G(d,d-n)$ there is some $V_k$ satisfying
%		\begin{equation*}
%		K(0,V_k,\alpha/2)\subset K(0,V,\alpha).
%		\end{equation*}
%		Hence, we may partition $\mu$-almost all of $\supp\mu$ into sets $E_1,\dots, E_N$ such that for every $x\in E_k$ we have $K(x,V_k,\alpha/2)\subset K(x,V_x,\alpha)$. 
		
		Let $B$ be an arbitrary ball intersecting $\supp\mu$. Recall that, by the definition of BPBE($2$), there exist $M_0>1,\ \kappa>0,\ V_B\in G(d,d-n),$ and $G_B\subset B$ such that $\mu(G_B)\ge\kappa\mu(B)$ and for all $x\in G_B$
		\begin{equation*}
		\int_0^{r(B)} \left(\frac{\mu(K(x,V_B,\alpha,r))}{r^n}\right)^2\ \frac{dr}{r} \le M_0.
		\end{equation*}
		By the polynomial growth condition \eqref{eq:growth condition} we also have
		\begin{equation*}
		\int_{r(B)}^{\infty}\left(\frac{\mu(K(x,V_B,\alpha,r))}{r^n}\right)^2\ \frac{dr}{r}\le \int_{r(B)}^{\infty}\frac{\mu(B)^2}{r^{2n+1}}\ dr\lesssim \frac{\mu(B)^2}{r(B)^{2n}}\le C_1^2.
		\end{equation*}
		Hence, for all $x\in G_B$
		\begin{equation*}
		\int_0^{\infty} \left(\frac{\mu(K(x,V_B,\alpha,r))}{r^n}\right)^2\ \frac{dr}{r} \lesssim_{C_1,M_0} 1.
		\end{equation*}
		Set $\nu = \restr{\mu}{G_B}$. The estimate above implies that for all balls $B'\subset\R^d$ we have
		\begin{equation*}
		\mathcal{E}_{\nu,2}(B',V_B,\alpha)=\int_{B'}\int_0^{r(B')} \left(\frac{\nu(K(x,V_B,\alpha,r))}{r^n}\right)^2\ \frac{dr}{r}d\nu(x)\lesssim_{C_1,M_0} \nu(B').
		\end{equation*}
		Clearly, $\nu= \restr{\mu}{G_B}$ has polynomial growth, and so we may apply \lemref{lem:SIOs bdd if single direction} to conclude that all singular integral operators $T_{ \nu}$ with kernels in $\mathcal{K}^n(\R^d)$ are bounded in $L^2(\nu)$. Thus, the corresponding maximal operators $T_*$ are bounded from $M(\R^d)$ to $L^{1,\infty}(\nu)$, see \cite[Theorem 2.21]{tolsa2014analytic}.
		
		Recall that for all balls $B$ we have $\mu(G_B)\approx_{\kappa}\mu(B)$. For any fixed $T$, the operator norm of $T_{\restr{\mu}{G_B},\varepsilon}:L^2(\restr{\mu}{G_B})\to L^2(\restr{\mu}{G_B})$ is bounded uniformly in $B$ and $\varepsilon$, and so the same is true for the operator norm of $T_*:M(\R^d)\to L^{1,\infty}(\restr{\mu}{G_B})$. Hence, we may use the good lambda method \cite[Theorem 2.22]{tolsa2014analytic} to conclude that $T_{\mu}$ is bounded in $L^2(\mu)$.
		%Thus, they are also bounded from $M(\R^d)$ (the space of finite real Borel measures) to $L^{1,\infty}(\nu)$ (see e.g. \cite[Theorem 2.16]{tolsa2014analytic})	
%		We still have to show that, for $n=1,\ d=2$, we have $c^2(\mu)\lesssim\mu(\R^2)$. Note that, since the kernel of the Cauchy transform $\mathcal{C}$ belongs to $\mathcal{K}^1(\R^2)$, we already know that
%		\begin{equation}\label{eq:Cauchy transform bdd}
%		\sup_{\varepsilon>0}\ \lVert{\mathcal{C}}_{\varepsilon}\mu\rVert_{L^2(\mu)}^2\lesssim	\mu(\R^2).
%		\end{equation}
%		The connection between Menger curvature and Cauchy transform was established by Melnikov and Verdera \cite{melnikov1995geometric}. We state it as in \cite[Proposition 3.3]{tolsa2014analytic}:
%		\begin{equation}\label{eq:Melnikov Verdera identity}
%		\lVert{\mathcal{C}}_{\varepsilon}\mu\rVert_{L^2(\mu)}^2 = \frac{1}{6}c^2_{\varepsilon}(\mu) + O(\mu(\R^2)),
%		\end{equation}
%		where $|O(\mu(\R^2))|\lesssim c_0^2\,\mu(\R^2)$, and $c^2_{\varepsilon}(\mu)$ is the truncated Menger curvature
%		\begin{equation*}
%		c^2_{\varepsilon}(\mu) = \iiint_{\substack{|x-y|>\varepsilon\\ |y-z|>\varepsilon\\ |z-x|>\varepsilon}}c(x,y,z)^2\ d\mu(x)d\mu(y) d\mu(z).
%		\end{equation*}
%		It follows by \eqref{eq:Cauchy transform bdd} and \eqref{eq:Melnikov Verdera identity} that $c^2_{\varepsilon}(\mu)\lesssim\mu(\R^2)$ uniformly in $\varepsilon$, and so $c^2(\mu)\lesssim\mu(\R^2)$.
	\end{proof}
	
	\section{Sufficient condition for rectifiability}\label{sec:suff rectifiability}
	The aim of this section is to prove the following sufficient condition for rectifiability.
	\begin{prop}\label{prop:suff rectif}
		Suppose $\mu$ is a Radon measure on $\R^d$ satisfying $\Theta^{n,*}(\mu,x)>0$ and  $\Theta_*^{n}(\mu,x)<\infty$ for $\mu$-a.e. $x\in\R^d$. Assume further that for $\mu$-a.e. $x\in\R^d$ there exists some $V_x\in G(d,d-n)$ and $\alpha_x\in (0,1)$ such that
		\begin{equation}\label{eq:pointwise conical energy finite}
		\int_0^{1}\bigg(\frac{\mu(K(x,V_x,\alpha_x,r))}{r^n} \bigg)^p\ \frac{dr}{r}<\infty.
		\end{equation}
		Then, $\mu$ is $n$-rectifiable.
	\end{prop}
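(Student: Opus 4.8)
The plan is to deduce this from the main lemma, Lemma~\ref{lem:main lemma}, after several reductions. Since $n$-rectifiability of a measure is unaffected by countable decompositions, it suffices to show that $\restr{\mu}{E}$ is $n$-rectifiable for each member $E$ of a suitable countable Borel cover of $\supp\mu$ (modulo a $\mu$-null set). Using the separability of $G(d,d-n)$, the fact that an a.e.\ finite measurable function is bounded off a set of arbitrarily small $\mu$-measure, and that $\{x:\Theta_*^n(\mu,x)\le C_0\}$ and $\{x:\Theta^{n,*}(\mu,x)\ge c_0\}$ exhaust $\supp\mu$ modulo $\mu$-null sets as $C_0\uparrow\infty$, $c_0\downarrow 0$, we may assume $E$ is bounded and that there are $V_0\in G(d,d-n)$, $\alpha_0\in(0,1)$ and $M_0,C_0,c_0>0$ such that for all $x\in E$: $d(V_x,V_0)\le\alpha_0/10$, $\alpha_x\ge\alpha_0$, $\mathcal{E}_{\mu,p}(x,V_x,\alpha_x,1)\le M_0$, $\Theta_*^n(\mu,x)\le C_0$ and $\Theta^{n,*}(\mu,x)\ge c_0$. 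The elementary inclusions $K(x,V_0,\alpha_0/2)\subset K(x,V_x,\alpha_0)\subset K(x,V_x,\alpha_x)$, valid since $d(V_x,V_0)$ is small and $\alpha_x\ge\alpha_0$, then give $\mathcal{E}_{\mu,p}(x,V_0,\alpha_0/2,1)\le M_0$ for every $x\in E$. Put $\nu:=\restr{\mu}{E}$, $V:=V_0$, $\alpha:=\alpha_0/2$; by the Lebesgue density theorem all these bounds persist $\nu$-a.e.\ with $\nu$ in place of $\mu$.

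To apply Lemma~\ref{lem:main lemma} we need polynomial growth. For $k\in\mathbb{N}$ set $E_k:=\{x\in E:\ \mu(B(x,r))\le k\,r^n\text{ for all }0<r\le 1/k\}$, so the $E_k$ increase to $\{x\in E:\Theta^{n,*}(\mu,x)<\infty\}$. The measure $\nu_k:=\restr{\nu}{E_k}$ is finite, compactly supported, and satisfies the growth condition \eqref{eq:growth condition main lemma} with $C_1\approx k$ and $r_0\approx 1/k$. Since $\nu_k\le\mu$ we still have $\mathcal{E}_{\nu_k,p}(x,V,\alpha,1)\le M_0$ for $x\in E_k$, and estimating the tail $\int_1^\infty(\cdot)^p\,\tfrac{dr}{r}$ trivially (using that $\nu_k$ is finite with compact support) gives $\mathcal{E}_{\nu_k,p}(\R^d,V,\alpha)<\infty$. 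Hence Lemma~\ref{lem:main lemma} applies to $\nu_k$, producing $\Top\subset\D^{db}$ and uniformly-Lipschitz graphs $\{\Gamma_R\}_{R\in\Top}$ with $\nu_k$-almost all of $\Good(R)$ contained in $\Gamma_R$, together with the packing estimate \eqref{eq:packing estimate}.

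A standard argument then shows that $\nu_k$-a.e.\ point lies in $\Good(R)$ for some $R\in\Top$, hence in $\bigcup_{R\in\Top}\Gamma_R$: otherwise such a point lies in an infinite strictly decreasing chain of $\Top$-cubes, and each passage to the next cube goes through a stopping cube of $\Tree$ of type $\HD$, $\LD$ or $\BCE$; infinitely many $\HD$-passages are incompatible with the polynomial growth of $\nu_k$, infinitely many $\LD$-passages drive the density to $0$ against $\Theta^{n,*}(\nu_k,\cdot)\ge c_0$, and infinitely many $\BCE$-passages force the conical energy at the point to be infinite, contradicting $M_0$. Thus $\nu_k$ is carried by the countable family $\{\Gamma_R\}$; since moreover $\Theta^{n,*}(\nu_k,x)<\infty$ $\nu_k$-a.e., the standard density comparison ($\nu_k(A)\le 2^n t\,\Hn(A)$ for $A\subset\{\Theta^{n,*}(\nu_k,\cdot)\le t\}$) gives $\nu_k\ll\Hn$, so $\nu_k$ is $n$-rectifiable. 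Letting $k\to\infty$, $\nu$ restricted to $\{x:\Theta^{n,*}(\mu,x)<\infty\}$ is $n$-rectifiable.

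It remains to prove $\nu(F)=0$, where $F:=\{x\in E:\Theta^{n,*}(\mu,x)=\infty\}$. One already knows $\Hn(F)=0$ (from $\Hn(F)\le 2^n t^{-1}\mu(F)$ for every $t$), but upgrading this to $\mu(F)=0$ again requires the finiteness of the conical energy, now combined with $\Theta_*^n(\mu,\cdot)\le C_0$: at any scale where the density of $\restr{\nu}{F}$ exceeds a large constant, most of the mass of the corresponding ball lies outside the flat cone $K(x,V,\alpha)$, and confronting this across all scales with the bound $M_0$ on the conical energy and with the finiteness of the lower density yields a contradiction. Equivalently, the heart of the matter is to show that the hypotheses already force $\Theta^{n,*}(\mu,x)<\infty$ for $\mu$-a.e.\ $x$ (which in particular recovers $\mu\ll\Hn$, not assumed a priori); this is the main obstacle, the rest being a fairly routine application of Lemma~\ref{lem:main lemma}. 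Granting it, $\nu$ is $n$-rectifiable, and hence so is $\mu$ after recombining the pieces $E$.
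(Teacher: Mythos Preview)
Your reductions to a fixed direction $V$, fixed aperture $\alpha$, and uniform bounds on the energy and densities are essentially the same as the paper's, and on the set $\{\Theta^{n,*}(\mu,\cdot)<\infty\}$ your use of Lemma~\ref{lem:main lemma} is the right idea. Two remarks on that part: first, your case analysis (``infinitely many $\HD$ / $\LD$ / $\BCE$ passages'') is shakier than it looks. An $\LD$ passage only says $\Theta_\mu(2B_Q)<\tau\,\Theta_\mu(2B_R)$ for the stopping cube $Q$; since $Q$ need not be doubling, this does not immediately propagate to the next $\Top$-cube $R'\in\MD(Q)$, and between successive $\LD$ passages the density may recover. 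The clean way (and what the paper does in Section~\ref{sec:special case}) is to use the packing estimate \eqref{eq:packing estimate} to get $\sum_{R\in\Top,\,x\in R}\Theta_\mu(2B_R)^p<\infty$ for a.e.\ $x$, hence $\Theta_\mu(2B_{R_i})\to 0$ along any infinite chain, and then invoke property~(iii) to conclude $\Theta^{n,*}(\nu_k,x)=0$, contradicting $\Theta^{n,*}\ge c_0$.

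The genuine gap is the set $F=\{\Theta^{n,*}(\mu,\cdot)=\infty\}$. You explicitly write ``Granting it'', and the one-line heuristic you offer (``most of the mass lies outside the flat cone $K(x,V,\alpha)$'') is not a proof: the conical energy controls only an integrated quantity, not $\mu(K(x,V,\alpha,r))/r^n$ at individual scales, and nothing in your hypotheses bounds the mass in $B(x,r)\setminus K(x,V,\alpha,r)$. The paper treats this as the hard part (Section~8.2): using the lower-density bound $\Theta_*^n\le C_*$ one covers $\supp\mu$ by balls $B_i$ of small radius with $\mu(B_i)\lesssim r(B_i)^n$, builds an approximating measure $\nu$ supported on $n$-disks $D_i\subset B_i$ parallel to $V^\perp$, checks that $\nu$ has polynomial growth and inherits a uniform conical-energy bound (Lemma~\ref{lem:conical energy of nu}), applies Lemma~\ref{lem:main lemma} to $\nu$ to get $\int (M_n\nu)^p\,d\nu\lesssim 1$, and finally transfers this to $\int (M_n\mu)^p\,d\mu\lesssim 1$ by a comparison and limiting argument (Lemmas~\ref{lem:Mnnu estimate}--\ref{lem:Mnmu estimate}). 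This yields $M_n\mu<\infty$ $\mu$-a.e., i.e.\ $\mu(F)=0$. None of this is routine, and your sketch does not reproduce it.
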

	We reduce the proposition above to the following lemma.
	\begin{lemma}\label{lem:reduction suff rect}
		Suppose $\mu$ is a Radon measure on $B(0,1)\subset\R^d$, and assume that there exists a constant $C_*>0$ such that $\Theta_*^{n}(\mu,x)\le C_*$ and $\Theta^{n,*}(\mu,x)> 0$ for $\mu$-a.e. $x\in\R^d$. Assume further that there exist $M_0>0$, $V\in G(d,d-n)$ and $\alpha\in (0,1)$ such that for $\mu$-a.e. $x\in\R^d$
		\begin{equation}\label{eq:pointwise conical energy bounded}
		\int_0^{1}\bigg(\frac{\mu(K(x,V,\alpha,r))}{r^n} \bigg)^p\ \frac{dr}{r}\le M_0.
		\end{equation}
		Then, $\mu$ is $n$-rectifiable.
	\end{lemma}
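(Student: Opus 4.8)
The plan is to deduce from the main lemma (\lemref{lem:main lemma}) that $\mu$ is carried by countably many Lipschitz graphs, and then upgrade this to full $n$-rectifiability (i.e.\ add $\mu\ll\Hn$) using the bound $\Theta_*^n(\mu,x)\le C_*$. Before the main lemma can be invoked two hypotheses must be checked. The integrability $\E(\R^d,V,\alpha)<\infty$ is immediate: $\supp\mu\subset B(0,1)$ forces $\mu(\R^d)<\infty$ and $\mu(K(x,V,\alpha,r))\le\mu(\R^d)$ for $r\ge 1$, so $\int_1^\infty(\mu(K(x,V,\alpha,r))/r^n)^p\,\tfrac{dr}{r}\lesssim\mu(\R^d)^p$; integrating this together with \eqref{eq:pointwise conical energy bounded} against $d\mu$ gives $\E(\R^d,V,\alpha)\lesssim(M_0+\mu(\R^d)^p)\,\mu(\R^d)<\infty$. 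The polynomial growth hypothesis is subtler because we only control the \emph{lower} density; this is where I expect the main difficulty.

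The reduction to polynomial growth would go as follows. Set $A_{N,m}=\{x:\mu(B(x,\rho))\le N\rho^n\text{ for all }0<\rho\le 1/m\}$. A routine doubling argument shows $\mu|_{A_{N,m}}$ has polynomial $n$-growth up to scale $1/(2m)$ with constant $\lesssim N$, and $\bigcup_{N,m}A_{N,m}=\{\Theta^{n,*}(\mu,\cdot)<\infty\}$; on each $A_{N,m}$ the upper and lower densities of $\mu|_{A_{N,m}}$ agree $\mu$-a.e.\ with those of $\mu$, so every piece $\mu|_{A_{N,m}}$ inherits all the hypotheses of the lemma and in addition has polynomial growth. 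Since $n$-rectifiability is stable under countable unions and insensitive to $\mu$-null sets, it suffices to prove the lemma (a) for measures with polynomial growth, and (b) to show that $\mu\big(\{\Theta^{n,*}(\mu,\cdot)=\infty\}\big)=0$.

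Granting polynomial growth, the main lemma produces a family $\Top\subset\D^{db}$ and Lipschitz graphs $\{\Gamma_R\}_{R\in\Top}$ of uniformly bounded slope with $\mu$-a.a.\ of $\Good(R)$ contained in $\Gamma_R$, together with $\sum_{R\in\Top}\Theta_\mu(2B_R)^p\mu(R)\lesssim\mu(\R^d)+\E(\R^d,V,\alpha)<\infty$. The points not covered by $\bigcup_{R\in\Top}\Good(R)$ are exactly those lying in an infinite strictly decreasing chain of $\Top$-cubes; using the finiteness of the packing sum together with the tree density estimates \eqref{eq:upper density estimate on Tree}--\eqref{eq:lower density estimate on Tree} and the hypothesis $\Theta^{n,*}(\mu,x)>0$, one checks that this set is $\mu$-null, as in the analogous step of the corona decomposition of \cite{chang2017analytic}. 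Hence $\mu$-a.a.\ of $\supp\mu$ lies in the countable union $\bigcup_{R\in\Top}\Gamma_R$ of Lipschitz graphs. It remains to see $\mu\ll\Hn$: restrict $\mu$ to a graph $\Gamma_R$ and pull back by a bi-Lipschitz chart $\R^n\to\Gamma_R$ to a Radon measure $\nu$ on $\R^n$ with $\Theta_*^n(\nu,z)\lesssim C_*$ for $\nu$-a.e.\ $z$; by the Besicovitch differentiation theorem the singular part of $\nu$ with respect to $\mathcal{L}^n$ would have infinite lower $n$-density $\nu$-a.e., so $\nu\ll\mathcal{L}^n$, whence $\mu|_{\Gamma_R}\ll\Hr{\Gamma_R}$; summing over $R$ gives $\mu\ll\Hn$, and with the previous sentence this proves $n$-rectifiability.

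The main obstacle is step (b): disposing of the part of $\mu$ where the upper density is infinite. This is, I believe, where $\Theta_*^n(\mu,x)\le C_*$ is really used (beyond the final absolute continuity step). The natural route is a stopping-time decomposition of $\D_\mu$ starting from $\supp\mu$, declaring a cube a stopping cube once $\Theta_\mu(B(Q))$ exceeds $A$ times the density of its nearest stopping ancestor: on each non-stopped region the density stays bounded, so the restriction of $\mu$ there has polynomial growth, and one must show the stopping cubes exhaust $\mu$ up to a null set — which is precisely the assertion that $\Theta^{n,*}(\mu,\cdot)<\infty$ $\mu$-a.e. Making this exhaustion rigorous, and checking it is consistent with (rather than obstructed by) the conical energy hypothesis, is the crux; once done, every piece of $\mu$ has polynomial growth and the argument of the preceding paragraphs applies verbatim.
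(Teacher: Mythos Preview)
Your reduction to polynomial growth via the sets $A_{N,m}$ and the subsequent application of the main lemma are correct and match the paper's Section~\ref{sec:special case} essentially verbatim, including the argument that an infinite descending chain of $\Top$-cubes through $x$ forces $\Theta^{n,*}(\mu,x)=0$. The $\mu\ll\Hn$ step you append is in fact redundant: once you restrict to $A_{N,m}$ the measure has polynomial growth at small scales, hence bounded upper density, hence is already absolutely continuous with respect to $\Hn$.

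The gap is exactly where you locate it: step~(b), showing $\mu(\{\Theta^{n,*}(\mu,\cdot)=\infty\})=0$. Your proposed stopping-time decomposition does not resolve this. Declaring a cube a stopping cube when its density exceeds $A$ times that of its nearest stopping ancestor produces a tree, but whether the stopping regions exhaust $\mu$-a.e.\ point is precisely the assertion $\Theta^{n,*}<\infty$ $\mu$-a.e., as you yourself note; you have restated the problem, not solved it. Crucially, the hypotheses $\Theta_*^n\le C_*$ and $\Theta^{n,*}>0$ alone do \emph{not} force $\Theta^{n,*}<\infty$ $\mu$-a.e.\ --- the conical energy bound must enter, and your sketch gives no mechanism for it to do so.

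The paper's route to (b) is substantially different. It builds an auxiliary measure $\nu$: using a Vitali covering by balls $B_i$ at which $\mu$ is doubling at one scale (this is where $\Theta_*^n\le C_*$ enters, via \cite[Lemma~5.1]{tolsa2017rectifiability}), it replaces $\mu|_{B_i}$ by a constant-density measure on a small $n$-disk $D_i\subset\lambda B_i$ parallel to $V^\perp$. This $\nu$ trivially has polynomial growth at scales below $\min_i r(B_i)$, and a geometric argument (\lemref{lem:conical energy of nu}) shows $\mathcal{E}_{\nu,p}(\R^d,V,\alpha/2)$ is controlled by $M_0$ and $\mu(\R^d)$. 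The main lemma applied to $\nu$ then yields a packing estimate which bounds $\int (M_n\nu)^p\,d\nu$ uniformly in the covering parameters (\lemref{lem:Mnnu estimate}); a comparison and limiting argument (\lemref{lem:Mnmu estimate}) transfers this to $\int (M_n\mu)^p\,d\mu<\infty$, whence $\Theta^{n,*}(\mu,\cdot)<\infty$ $\mu$-a.e. This approximation-by-disks trick, which manufactures polynomial growth for free while preserving the conical energy bound, is the missing idea.
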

	\begin{proof}[Proof of \propref{prop:suff rectif} using \lemref{lem:reduction suff rect}]
	%	We know that for $\mu$-a.e. $x\in\R^d$ there exist $V$ and $\alpha$ such that \eqref{eq:pointwise conical energy finite} holds. We claim that, without loss of generality, we may assume that there exist some fixed $V$ and $\alpha$ such that \eqref{eq:pointwise conical energy finite} holds for $\mu$-a.e. $x\in\R^d$. We argue as follows.
		To show that $\mu$ is rectifiable, it suffices to prove that for any bounded $E\subset \supp\mu$ of positive measure there exists $F\subset E,\ \mu(F)>0,$ such that $\restr{\mu}{F}$ is rectifiable. Given any such $E$ we may rescale it and translate it, so without loss of generality $E\subset B(0,1)$.
		
		Since $0<\Theta^{n,*}(\mu,x)$ and $\Theta_*^{n}(\mu,x)<\infty$ for $\mu$-a.e. $x\in E$, choosing $C_*>1$ big enough, we get that the set
		\begin{equation}\label{eq:definition of E'}
		E' = \{x\in E\ :\ \Theta^{n,*}(\mu,x)>0,\ \Theta_*^{n}(\mu,x)\le C_* \}
		\end{equation}
		has positive $\mu$-measure.
		
		Let $\{V_k\}_{k\in\mathbb{N}}$ be a countable and dense subset of $G(d,d-n)$. It is clear that for any $\alpha\in (0,1),\ V\in G(d,d-n),$ there exists $k\in\mathbb{N}$ such that $K(0,V_k,k^{-1})\subset K(0,V,\alpha)$. Set
		\begin{equation*}
		E_k = \set[3]{x\in \R^d\ :\ \int_0^{1}\bigg(\frac{\mu(K(x,V_k,k^{-1},r))}{r^n} \bigg)^p\ \frac{dr}{r}\le k }.
		\end{equation*}
%		and for $k>0$
%		\begin{equation*}
%		E_k = \{x\in E\ :\ K(x,V_k,\alpha_k)\subset K(x,V_x,\alpha_x) \}\setminus \bigcup_{i=0}^{k-1}E_i.
%		\end{equation*}
		It is a simpl exercise to check that for each $k\in\mathbb{N}$ the set $E_k$ is Borel. Moreover, it follows from \eqref{eq:pointwise conical energy finite} that $\mu(\R^d\setminus \bigcup_k E_k)=0$. Pick any $k\in\mathbb{N}$ with $\mu(E'\cap E_k)>0$ and set $F=E'\cap E_k$. Using the Lebesgue differentiation theorem and \eqref{eq:definition of E'}, it is easy to see that for $\mu$-a.e. $x\in F$ we have $\Theta^{n,*}(\restr{\mu}{F},x)=\Theta^{n,*}(\mu,x)>0$ and  $\Theta_*^{n}(\restr{\mu}{F},x)=\Theta_*^{n}(\mu,x)\le C_*$. Hence, $\restr{\mu}{F}$ satisfies the assumptions of \lemref{lem:reduction suff rect}, and so it is $n$-rectifiable.
%		Showing that $\mu_k$ are $n$-rectifiable would imply $n$-rectifiability of $\mu$. Hence, without loss of generality, we may assume that \eqref{eq:pointwise conical energy finite} holds for $\mu$-a.e. $x\in\R^d$ with some fixed $V\in G(d,d-n),\ \alpha\in(0,1)$. 
%		
%		Furthermore, using similar tricks, we may assume that $\supp\mu\subset B(0,1)$, that 
%		\begin{equation*}%\label{eq:pointwise conical energy finite}
%		\int_0^{1}\bigg(\frac{\mu(K(x,V,\alpha,r))}{r^n} \bigg)^p\ \frac{dr}{r}\lesssim 1
%		\end{equation*}
%		for $\mu$-a.e. $x\in\R^d$,
	\end{proof}
	\subsection{Proof of \lemref{lem:reduction suff rect} for \texorpdfstring{$\mu\ll\Hn$}{mu<<Hn}}\label{sec:special case}
	%\texorpdfstring{$\mu\ll\Hn$}{mu<<Hn}
	First, we will prove \lemref{lem:reduction suff rect} under the additional assumption
	$\Theta^{n,*}(\mu,x)<\infty$ for $\mu$-a.e. $x\in\R^d$ (which is equivalent to $\mu\ll\Hn$). 
	
	Using similar tricks as in the proof of \propref{prop:suff rectif}, it is easy to see that we may actually replace $\Theta^{n,*}(\mu,x)<\infty$ by a stronger condition: without loss of generality, we can assume that there exist $C_1>0$ and $r_0>0$ such that for all $x\in\supp\mu$ and all $0<r\le r_0$ we have
	\begin{equation}\label{eq:upper density bounded}
	\mu(B(x,r))\le C_1\, r^n.
	\end{equation}
%	\begin{lemma}\label{lem:suff rectif abs cont case}
%		Suppose $\mu$ is a Radon measure on $\R^d$ satisfying $C_*<\Theta^{n,*}(\mu,x)<C_*$ for $\mu$-a.e. $x\in\R^d$. Assume further that for $\mu$-a.e. $x\in\R^d$ there exists some $V_x\in G(d,d-n)$ and $\alpha_x\in (0,1)$ such that
%		\begin{equation*}
%		\int_0^{1}\bigg(\frac{\mu(K(x,V_x,\alpha_x,r))}{r^n} \bigg)^p\ \frac{dr}{r}<\infty,
%		\end{equation*}
%		and the mapping $x\mapsto (V_x,\alpha_x)$ is measurable. Then, $\mu$ is $n$-rectifiable.
%	\end{lemma}
%	\begin{proof}
%
%	\eqref{eq:upper density bounded} implies that there exists $r_0>0$ such that for all $x\in\supp\mu,\ 0<r\le r_0,$ we have
%	\begin{equation}
%	\mu(B(x,r))\le c_0 r^n.
%	\end{equation}
	Then, the assumptions of \lemref{lem:main lemma} are satisfied, and we get a family of cubes $\Top\subset\D^{db}$ and an associated family of Lipschitz graphs $\Gamma_R,\ R\in\Top$. The cubes from $\Top$ satisfy the packing condition
	\begin{equation*}
	\sum_{R\in\Top}\Theta_{\mu}(2B_R)^p\mu(R)\lesssim \mu(\R^d) + \E(\R^d,V,\alpha)\le (1+M_0)\mu(B(0,1)).
	\end{equation*}
	It follows that for $\mu$-a.e. $x\in\R^d$ we have
	\begin{equation*}
	\sum_{R\in\Top:\, R\ni x}\Theta_{\mu}(2B_R)^p<\infty.
	\end{equation*}
	Fix some $x$ for which the above holds. Denote by $R_0\supset R_1\supset\dots$ the sequence of cubes from $\Top$ containing $x$. We claim that for $\mu$-a.e. $x$ this sequence is finite. 
	
	Indeed, if the sequence is infinite, we have $\Theta_{\mu}(2B_{R_i})\to 0$. On the other hand, let $i\ge 0$ and $r(R_{i+1})\le r\le r(R_i)$. Since $R_{i+1}\in\Next(R_i)$, we get from \eqref{eq:control on density for next cubes}
	\begin{equation*}
	\Theta_{\mu}(x,r)\lesssim_A \Theta_{\mu}(2B_{R_i}).
	\end{equation*}
	In consequence, 
	\begin{equation*}
	\Theta^{n,*}(\mu,x)\lesssim_A \limsup_{i\to\infty}\Theta_{\mu}(2B_{R_i}) = 0,
	\end{equation*}
	which may happen only on a set of $\mu$-measure $0$ because $\Theta^{n,*}(\mu,x)>0$ for $\mu$-a.e. $x\in\R^d$.
	
	Hence, for $\mu$-a.e. $x\in\R^d$ the sequence $\{R_i\}$ is finite. This means that if $R_{k}$ denotes the smallest $\Top$ cube containing $x$, then $x\in\Good(R_k)$. It follows that
	\begin{equation*}
	\mu\bigg(\R^d\setminus\bigcup_{R\in\Top}\Good(R)\bigg)=0.
	\end{equation*}
	By \lemref{lem:main lemma} (ii) we have $\mu(\Good(R_k)\setminus\Gamma_{R_k})=0$. Hence,
	\begin{equation*}
	\mu\bigg(\R^d\setminus\bigcup_{R\in\Top}\Gamma_R\bigg)=0,
	\end{equation*}
	and so $\mu$ is $n$-rectifiable.
%	\end{proof}
	\subsection{Proof of \lemref{lem:reduction suff rect} in full generality}
	Thanks to the partial result from the preceding subsection, it is clear that to prove \lemref{lem:reduction suff rect} in full generality, it suffices to show that for $\mu$ satisfying the assumptions of \lemref{lem:reduction suff rect} we have
	\begin{equation*}
	M_n\mu(x) = \sup_{r>0}\frac{\mu(B(x,r))}{r^n}<\infty\quad \text{for $\mu$-a.e. $x\in B(0,1)$}.
	\end{equation*}
	To do that, we will use techniques from \cite[Section 5]{tolsa2017rectifiability}. 

	\begin{lemma}[{\cite[Lemma 5.1]{tolsa2017rectifiability}}]
		Let $C>2$. Suppose that $\mu$ is a Radon measure on $\R^d$, and that $\Theta_*^{n}(\mu,x)\le C_*$ for $\mu$-a.e. $x\in\R^d$. Then, for $\mu$-a.e. $x\in\R^d$ there exists a sequence of radii $r_k\to 0$ such that
		\begin{equation}
		\mu(B(x,C r_k))\le 2C^d\mu(B(x,r_k))\le 20\, C_*C^{n+d}\,r_k^n.
		\end{equation}
	\end{lemma}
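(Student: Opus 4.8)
The plan is to combine the $\liminf$-definition of the lower density with a descent along geometric scales. We may assume $C_*\ge 1$ (replacing $C_*$ by $\max(C_*,1)$ changes neither the hypothesis nor the conclusion), and it suffices to fix a single point $x$ with $\Theta_*^n(\mu,x)\le C_*$ — these form a set of full $\mu$-measure — and to produce the sequence for that $x$. Since $\liminf_{r\to 0}\mu(B(x,r))/r^n\le C_*$, there is a sequence $s_j\downarrow 0$ with $\mu(B(x,s_j))\le 2C_*\,s_j^n$.

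For each $j$ I would look at the scales $s_j,\,C^{-1}s_j,\,C^{-2}s_j,\dots$ and let $i_j\in\{0,1,2,\dots\}\cup\{\infty\}$ be the smallest index $i$ for which the doubling inequality $\mu(B(x,C^{-i+1}s_j))\le 2C^d\,\mu(B(x,C^{-i}s_j))$ holds (with $C^{-i+1}s_j$ read as $Cs_j$ when $i=0$). The key point to settle is that $i_j<\infty$ for infinitely many $j$, for $\mu$-a.e. $x$. If instead $i_j=\infty$ for some fixed $j$, then iterating the opposite inequality $\mu(B(x,C^{-i+1}s_j))>2C^d\mu(B(x,C^{-i}s_j))$ over $i=1,\dots,m$ gives $\mu(B(x,C^{-m}s_j))<(2C^d)^{-m}\mu(B(x,s_j))\le 2C_*(2C^d)^{-m}s_j^n$; writing $\rho=C^{-m}s_j$ and noting $(2C^d)^{-m}=(\rho/s_j)^{\beta}$ with $\beta:=d+\log_C 2>d$, and interpolating to non-dyadic $\rho$ by monotonicity, one obtains $\mu(B(x,\rho))\le 2C_*C^{\beta}s_j^{\,n-\beta}\rho^{\beta}$ for all $0<\rho\le s_j$. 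Thus any such $x$ lies (for a suitable constant $M$) in a set $A$ on which $\mu(B(\cdot,\rho))\le M\rho^{\beta}$ at small scales; decomposing $A$ according to $M$ and the relevant scale, and using the Frostman-type bound $\mathcal{H}^{\beta}(A)\gtrsim M^{-1}\mu(A)$ together with $\mathcal{H}^{\beta}(\R^d)=0$ (since $\beta>d$), this set is $\mu$-null. In the situation where the lemma is applied one additionally has $\Theta_*^n(\mu,x)>0$ for $\mu$-a.e. $x$, in which case the finiteness of $i_j$ is immediate — the displayed estimate would otherwise force $\Theta_*^n(\mu,x)=0$.

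Now fix $x$ in the full-measure set on which $i_j<\infty$ for infinitely many $j$, and put $r_j:=C^{-i_j}s_j$ for those $j$; then $r_j\le s_j\to 0$. By the choice of $i_j$ we have $\mu(B(x,Cr_j))\le 2C^d\,\mu(B(x,r_j))$, which is the first inequality in the chain. For the second, the minimality of $i_j$ gives $\mu(B(x,C^{-i+1}s_j))>2C^d\mu(B(x,C^{-i}s_j))$ for $1\le i\le i_j-1$; chaining these together with $\mu(B(x,s_j))\le 2C_*s_j^n$ yields $\mu(B(x,r_j))\le \mu(B(x,C^{-(i_j-1)}s_j))\le 2C_*(2C^d)^{-(i_j-1)}s_j^n$, and substituting $s_j=C^{i_j}r_j$ and distinguishing the cases $i_j\le 1$ and $i_j\ge 2$ (here $n<d$ enters, so that the resulting exponent $d-i_j(d-n)$ does not exceed $n$) gives $\mu(B(x,r_j))\le 2C_*C^n r_j^n$. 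Combining, $\mu(B(x,Cr_j))\le 2C^d\mu(B(x,r_j))\le 4C_*C^{n+d}r_j^n\le 20C_*C^{n+d}r_j^n$, so the sequence $(r_j)$ restricted to those $j$ with $i_j<\infty$ is the desired sequence.

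The main obstacle is the termination step, i.e. proving $i_j<\infty$ for $\mu$-a.e. $x$: everything else is a bookkeeping of geometric series in which the strict inequality $n<d$ keeps all the constants under control. That step is where one must invoke either the positivity of the lower density (available in the present application) or the Frostman-type fact that a Radon measure on $\R^d$ cannot have bounded $\beta$-density on a set of positive measure when $\beta>d$.
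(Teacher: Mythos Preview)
The paper does not supply its own proof of this lemma; it is quoted from \cite[Lemma~5.1]{tolsa2017rectifiability} and used as a black box. Your argument is correct and is essentially the standard one: pick scales $s_j\to 0$ realising the low-density bound $\mu(B(x,s_j))\le 2C_*s_j^n$, descend geometrically until the $C$-doubling inequality first holds, and check that the resulting radius $r_j=C^{-i_j}s_j$ satisfies both displayed inequalities. The termination step via $\mathcal H^\beta(\R^d)=0$ for $\beta=d+\log_C 2>d$ is the right way to treat the statement in the generality written; your side remark about the application is also correct, with the caveat that it is the \emph{upper} density $\Theta^{n,*}(\mu,x)>0$ that is assumed there, not the lower one --- but either suffices, since $i_j=\infty$ would force $\mu(B(x,\rho))\lesssim\rho^\beta$ for all small $\rho$ and hence $\Theta^{n,*}(\mu,x)=0$.

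Two minor points of exposition. The chained estimate $\mu(B(x,r_j))\le \mu(B(x,C^{-(i_j-1)}s_j))\le 2C_*(2C^d)^{-(i_j-1)}s_j^n$ is only available for $i_j\ge 1$ (the chain is vacuous otherwise and the left inequality reads $\mu(B(x,s_j))\le\mu(B(x,Cs_j))$, which is not what you want); for $i_j=0$ one should argue directly from $\mu(B(x,r_j))=\mu(B(x,s_j))\le 2C_*r_j^n\le 2C_*C^n r_j^n$, which is presumably what you intend by the ``$i_j\le 1$'' case. The reduction to $C_*\ge 1$ at the start is never actually used and can be omitted.
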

	Let $\lambda<\tfrac{1}{2}$ be a small constant depending on $\alpha$, to be chosen later. By the lemma above (used with $C=\lambda ^{-1}$) and Vitali's covering theorem (see \cite[Theorem 2.8]{mattila1999geometry}), there exists a family of pairwise disjoint closed balls $B_i,\ i\in I,$ centered at $x_i\in\supp\mu\subset B(0,1)$, which cover $\mu$-almost all of $B(0,1)$, and which satisfy
	\begin{equation*}
	\mu(B_i)\le 2\lambda ^{-d}\mu(\lambda  B_i)\le 20\, C_*\lambda ^{-d}\,r(B_i)^n,
	\end{equation*}
	and 
	\begin{equation*}
	r(B_i)\le \rho
	\end{equation*}
	for some arbitrary fixed $\rho>0$. We may assume that \eqref{eq:pointwise conical energy bounded} holds for all the centers $x_i$. Choose $I_0\subset I$ a finite subfamily such that
	\begin{equation*}
	\mu(B(0,1)\setminus \bigcup_{i\in I_0} B_i)\le \varepsilon\mu(B(0,1)),
	\end{equation*}
	where $\varepsilon>0$ is some small constant. Clearly, $I_0=I_0(\rho,\varepsilon)$.
	
	For each $i\in I_0$ we consider an $n$-dimensional disk $D_i$, centered at $x_i$, parallel to $V^{\perp}\in G(d,n),$ with radius $\lambda  r(B_i)$. We define an approximating measure
	\begin{equation*}
	\nu = \sum_{i\in I_0}\frac{\mu(B_i)}{\Hn(D_i)}\Hr{D_i}.
	\end{equation*}
	Note that 
	\begin{equation}\label{eq:growth of nu}
	\nu(D_i) = \mu(B_i)\approx_{\lambda }\mu(\lambda  B_i)\lesssim_{\lambda } C_* r(B_i)^n.
	\end{equation}
	Moreover, since $I_0$ is a finite family, the definition of $\nu$ and \eqref{eq:growth of nu} imply that $\nu$ satisfies the polynomial growth condition \eqref{eq:growth condition main lemma} with  $r_0 = \min_{i\in I_0} r(B_i)/2$ and $C_1=C(\lambda ) C_*$, i.e. for $0<r<r_0$ and $x\in\supp\nu$
	\begin{equation}\label{eq:bounded growth of nu}
	\nu(B(x,r))\le C(\lambda ) C_*r^n.
	\end{equation}
%	We will show that, for $\lambda $ big enough, we have
%	\begin{equation*}
%	\int M_n\nu(x)\ d\nu(x) \lesssim (C(\lambda ) C_* + M_0) \mu(B(0,1)).
%	\end{equation*}
	\begin{lemma}\label{lem:conical energy of nu}
		For $\lambda =\lambda (\alpha)<\tfrac{1}{2}$ small enough, we have
		\begin{equation*}
		\mathcal{E}_{\nu,p}(\R^d,V,\tfrac{1}{2}\alpha) \lesssim_{\lambda ,p} (M_0 + \mu(B(0,1))^p)\mu(B(0,1)).
		\end{equation*}
		The implicit constant does not depend on $\rho, \varepsilon$.
	\end{lemma}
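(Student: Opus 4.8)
The plan is to exploit the very rigid structure of $\nu$: it is a sum over a finite family of \emph{flat} $n$-disks $D_i$, each parallel to $V^{\perp}$ and centered at a point $x_i\in\supp\mu$ at which the pointwise bound \eqref{eq:pointwise conical energy bounded} may be assumed to hold. The first thing I would observe is that the energy of $\nu$ receives \emph{no contribution from within a single disk}: if $x,y\in D_i$ then $y-x\in V^{\perp}$, so $\dist(y,V+x)=|y-x|$, which is never $<\tfrac12\alpha|y-x|$; hence $D_i\cap K(x,V,\tfrac12\alpha)$ is $\Hn$-null. Writing, for $x\in D_i$, $\nu(K(x,V,\tfrac12\alpha,r))=\sum_{j}\frac{\mu(B_j)}{\Hn(D_j)}\Hn(D_j\cap K(x,V,\tfrac12\alpha,r))$ and bounding $\Hn(D_j\cap\cdot)\le\Hn(D_j)$, this yields the crude estimate $\nu(K(x,V,\tfrac12\alpha,r))\le\sum_{j\neq i:\, D_j\cap K(x,V,\tfrac12\alpha,r)\neq\varnothing}\mu(B_j)$.

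The key step — and the main obstacle — is a \emph{transfer lemma}: for $\lambda=\lambda(\alpha)$ small enough, if $i\neq j$, $x\in D_i$, and $D_j\cap K(x,V,\tfrac12\alpha,r)\neq\varnothing$, then $\lambda B_j\subset K(x_i,V,\alpha,3r)$. I would prove this by picking $y\in D_j\cap K(x,V,\tfrac12\alpha)\cap B(x,r)$ and using that the \emph{closed} balls $B_i,B_j$ are disjoint (so $r(B_i),r(B_j)\le|x_i-x_j|$), together with $|x-x_i|\le\lambda r(B_i)$, $|y-x_j|\le\lambda r(B_j)$ and $|y-x|<r$: these already force $|x_i-x_j|\le 2r$ for $\lambda\le\tfrac14$. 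Then, for $z\in\lambda B_j$, I would write $z-x_i=(z-x_j)+(x_j-y)+(y-x)+(x-x_i)$ and project onto $V^{\perp}$; three of the four terms have projection of norm at most $\lambda|x_i-x_j|$ (in fact $x_j-y$ and $x-x_i$ lie in $V^{\perp}$), while $|\pi_{V^{\perp}}(y-x)|<\tfrac12\alpha|y-x|\le\tfrac12\alpha(1+2\lambda)|x_i-x_j|$. Since $|z-x_i|\ge(1-\lambda)|x_i-x_j|$, this gives $\dist(z,V+x_i)\le\frac{3\lambda+\tfrac12\alpha(1+2\lambda)}{1-\lambda}\,|z-x_i|$, which is $<\alpha|z-x_i|$ once $\lambda<\frac{\alpha}{2(3+2\alpha)}$; and $|z-x_i|\le\lambda r(B_j)+2r\le 3r$, so $z\in K(x_i,V,\alpha,3r)$. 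Combining the transfer lemma with the crude estimate, the density bound $\mu(B_j)\le 2\lambda^{-d}\mu(\lambda B_j)$, and the pairwise disjointness of the balls $\lambda B_j$, I obtain $\nu(K(x,V,\tfrac12\alpha,r))\le 2\lambda^{-d}\,\mu(K(x_i,V,\alpha,3r))$ for every $x\in D_i$ and every $r>0$.

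Finally I would integrate. Since the last bound does not depend on $x\in D_i$, we get $\mathcal{E}_{\nu,p}(\R^d,V,\tfrac12\alpha)\le (2\lambda^{-d})^p\sum_i\nu(D_i)\int_0^\infty\big(\mu(K(x_i,V,\alpha,3r))/r^n\big)^p\,\tfrac{dr}{r}$, with $\nu(D_i)=\mu(B_i)$. After the substitution $s=3r$ the inner integral equals $3^{np}\int_0^\infty\big(\mu(K(x_i,V,\alpha,s))/s^n\big)^p\,\tfrac{ds}{s}$; splitting at $s=1$, the part over $(0,1)$ is at most $M_0$ by \eqref{eq:pointwise conical energy bounded} (assumed at every $x_i$), while over $(1,\infty)$ one uses $\mu(K(x_i,V,\alpha,s))\le\mu(B(0,1))$ and $\int_1^\infty s^{-np-1}\,ds=\tfrac1{np}$ to bound it by $\lesssim_p\mu(B(0,1))^p$. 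Since $\sum_i\mu(B_i)=\mu(\bigcup_i B_i)\le\mu(B(0,1))$ by disjointness, this gives $\mathcal{E}_{\nu,p}(\R^d,V,\tfrac12\alpha)\lesssim_{\lambda,p}(M_0+\mu(B(0,1))^p)\mu(B(0,1))$, and one checks that neither $\rho$ nor the finite subfamily $I_0$ (hence $\varepsilon$) enters any of the constants. The only nontrivial ingredient is the transfer lemma: choosing $\lambda$ small depending only on $\alpha$ so that the aperture-$\tfrac12\alpha$ cone seen from an off-center point $x\in D_i$ is absorbed into the aperture-$\alpha$ cone from $x_i$, after shrinking $B_j$ down to $\lambda B_j$ to exploit the density comparison.
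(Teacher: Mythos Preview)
Your argument is correct and follows essentially the same strategy as the paper: observe that a single disk $D_i$ contributes nothing to the $\tfrac12\alpha$-cone at any $x\in D_i$, prove a ``transfer'' inclusion $\lambda B_j\subset K(x_i,V,\alpha,Cr)$ whenever $D_j$ meets the narrow cone at $x$, use the density relation $\mu(B_j)\approx_\lambda\mu(\lambda B_j)$ and disjointness to pass from $\nu$ to $\mu$, and then integrate and split at scale $1$. The only cosmetic difference is that the paper carries out the transfer in two steps (first $K(x,\tfrac12\alpha,r)\setminus B_i\subset K(x_i,\tfrac34\alpha,2r)$, then $D_j\cap K(x_i,\tfrac34\alpha,2r)\setminus B_i\neq\varnothing\Rightarrow\lambda B_j\subset K(x_i,\alpha,4r)$), while you do it in one; your explicit threshold $\lambda<\frac{\alpha}{2(3+2\alpha)}$ is a nice touch.
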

	\begin{proof}
		Let $i\in I_0$ and $x\in D_i$. We will estimate the $\nu$-measure of $K(x,V,\tfrac{1}{2}\alpha,r)$. 
		
		First, note that  $\nu(K(x,V,\tfrac{1}{2}\alpha,r)) = \nu(K(x,V,\tfrac{1}{2}\alpha,r)\setminus B_i)$. Indeed, $B_i\cap \supp\nu= D_i$, and $D_i\cap K(x,V,\tfrac{1}{2}\alpha)=\varnothing$ because $D_i$ is parallel to $V^{\perp}$. Thus, $\nu(K(x,V,\tfrac{1}{2}\alpha,r)\cap B_i)=0$.
		It follows immediately that for $r\le (1-\lambda  )r(B_i)$ we have $\nu(K(x,V,\tfrac{1}{2}\alpha,r))=0$.
		
		Concerning $r>(1-\lambda  )r(B_i)$, if $\lambda =\lambda (\alpha)$ is small enough, then
		\begin{equation*}
		K(x,V,\tfrac{1}{2}\alpha,r)\setminus B_i\subset K(x_i,V,\tfrac{3}{4}\alpha,2r)\setminus B_i
		\end{equation*}
		because $x\in\lambda  B_i$. Thus, it suffices to estimate $\nu(K(x_i,V,\tfrac{3}{4}\alpha,2r)\setminus B_i)$.
		
		Suppose $r>(1-\lambda  )r(B_i)$ and $j\in I_0$ is such that $D_j\cap K(x_i,V,\tfrac{3}{4}\alpha,2r)\setminus B_i\neq\varnothing$. Since $B_i$ and $B_j$ are disjoint, we have
		\begin{equation*}
		r(B_j) + r(B_i)+\dist(B_i,B_j)\le 3r\quad\text{and}\quad \dist(D_i,D_j)\ge \frac{r(B_i)}{2} + \frac{r(B_j)}{2}.
		\end{equation*}
		It follows easily that, for $\lambda =\lambda (\alpha)$ small enough, we get $\lambda  B_j\subset K(x_i,V,\alpha,4r)$. Thus,
		\begin{multline*}
		\nu(K(x_i,V,\tfrac{3}{4}\alpha,2r))=\nu(K(x_i,V,\tfrac{3}{4}\alpha,2r)\setminus B_i)\le \sum_{j\in I_0:\lambda  B_j\subset K(x_i,V,\alpha,4r)}\nu(D_j)\\
		\overset{\eqref{eq:growth of nu}}{\approx_{\lambda }} \sum_{j\in I_0:\lambda  B_j\subset K(x_i,V,\alpha,4r)} \mu(\lambda  B_j)\le \mu( K(x_i,V,\alpha,4r)).
		\end{multline*}
		Hence,
		\begin{equation*}
		\int_0^{1/4}\bigg(\frac{\nu(K(x_i,V,\tfrac{3}{4}\alpha,2r))}{r^n} \bigg)^p\ \frac{dr}{r}\lesssim_{\lambda } \int_0^{1}\bigg(\frac{\mu(K(x_i,V,\alpha,r))}{r^n} \bigg)^p\ \frac{dr}{r}\overset{\eqref{eq:pointwise conical energy bounded}}{\le}  M_0.
		\end{equation*}
		This gives
		\begin{multline*}
		\int_{D_i}\int_0^{\infty} \bigg(\frac{\nu(K(x,V,\tfrac{1}{2}\alpha,r))}{r^n} \bigg)^p\ \frac{dr}{r}d\nu(x)\le 	\int_{D_i}\int_0^{\infty} \bigg(\frac{\nu(K(x_i,V,\tfrac{3}{4}\alpha,r))}{r^n} \bigg)^p\ \frac{dr}{r}d\nu(x)\\
		 \le C(\lambda )M_0\nu(D_i) + \int_{D_i}\int_{1/4}^{\infty} \bigg(\frac{\nu(\R^d)}{r^n} \bigg)^p\ \frac{dr}{r}d\nu(x)\\
		\lesssim_{\lambda ,p} M_0\nu(D_i) + \nu(\R^d)^p\nu(D_i)\le M_0\mu(B_i) + \mu(B(0,1))^p\mu(B_i).
		\end{multline*}
		Summing over $i\in I_0$ yields
		\begin{equation*}
		\mathcal{E}_{\nu,p}(\R^d,V,\tfrac{1}{2}\alpha) \lesssim_{\lambda ,p} (M_0 + \mu(B(0,1))^p)\mu(B(0,1)).
		\end{equation*}
	\end{proof}
	
	\begin{lemma}\label{lem:Mnnu estimate}
		For $\lambda =\lambda (\alpha)<\tfrac{1}{2}$ small enough, we have
		\begin{equation*}
		\int M_n\nu(x)^p\ d\nu(x) \lesssim_{\alpha,\lambda ,p} \big((C_*)^p+ M_0 + \mu(B(0,1))^p\big)\mu(B(0,1)).
		\end{equation*}
		The constants on the right hand side do not depend on $\rho, \varepsilon$.
	\end{lemma}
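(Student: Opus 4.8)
The idea is to apply the main lemma, \lemref{lem:main lemma}, directly to the approximating measure $\nu$, and to extract the bound from the packing estimate \eqref{eq:packing estimate}. First I would verify the hypotheses of \lemref{lem:main lemma} for $\nu$: it is compactly supported (a finite sum of surface measures on bounded disks), it satisfies \eqref{eq:growth condition main lemma} with constant $C(\lambda)C_*$ and $r_0=\min_{i\in I_0}r(B_i)/2$ by \eqref{eq:bounded growth of nu}, and $\mathcal{E}_{\nu,p}(\R^d,V,\tfrac{1}{2}\alpha)<\infty$ by \lemref{lem:conical energy of nu} (note $\tfrac{1}{2}\alpha\in(0,1)$). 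Thus \lemref{lem:main lemma}, applied with direction $V$, aperture $\tfrac{1}{2}\alpha$, and exponent $p$, yields a family $\Top\subset\mathcal{D}_{\nu}^{db}$ (the David--Mattila lattice now built for $\nu$) satisfying (i)--(iii) and
\begin{equation*}
\sum_{R\in\Top}\Theta_{\nu}(2B_R)^p\,\nu(R)\lesssim_{\alpha}(C(\lambda)C_*)^p\,\nu(\R^d)+\mathcal{E}_{\nu,p}(\R^d,V,\tfrac{1}{2}\alpha).
\end{equation*}
Since $\nu(\R^d)=\sum_{i\in I_0}\mu(B_i)\le\mu(B(0,1))$ and $\mathcal{E}_{\nu,p}(\R^d,V,\tfrac{1}{2}\alpha)\lesssim_{\lambda,p}(M_0+\mu(B(0,1))^p)\mu(B(0,1))$ by \lemref{lem:conical energy of nu}, the right-hand side is $\lesssim_{\alpha,\lambda,p}\big((C_*)^p+M_0+\mu(B(0,1))^p\big)\mu(B(0,1))$.

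It then remains to dominate $\int M_n\nu(x)^p\,d\nu(x)$ by the left-hand side above, i.e.\ to prove
\begin{equation*}
M_n\nu(x)^p\lesssim_p\sum_{\substack{R\in\Top\\ R\ni x}}\Theta_{\nu}(2B_R)^p\qquad\text{for }\nu\text{-a.e.\ }x,
\end{equation*}
which is carried out exactly as in Section~\ref{sec:special case}. For $\nu$-a.e.\ $x$ one has $\Theta^{n,*}(\nu,x)>0$, since near such $x$ the measure $\nu$ equals a fixed positive multiple of $\Hn$ restricted to a single flat disk and the disks are pairwise separated; and the packing estimate gives $\sum_{R\in\Top:\,R\ni x}\Theta_{\nu}(2B_R)^p<\infty$ for $\nu$-a.e.\ $x$. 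Combining these with \eqref{eq:control on density for next cubes} forces the chain of $\Top$-cubes through $x$ to be finite (the argument of Section~\ref{sec:special case}, using $\Theta^{n,*}(\nu,x)>0$), so $x\in\Good(R_x)$ for the smallest $\Top$-cube $R_x$ containing $x$. For such $x$ and any $0<r\le r(R_0)$ one picks $Q\in\mathcal{D}_{\nu}$ with $x\in Q$ and $r\le r(Q)\lesssim r$; then $Q\in\Tr(R_Q)$ (or $Q\in\Top$) for the smallest $\Top$-cube $R_Q\supseteq Q$, which lies on the chain through $x$, so property (iii) of \lemref{lem:main lemma} together with \eqref{eq:control on density for next cubes} gives $\Theta_{\nu}(2B_Q)\lesssim\Theta_{\nu}(2B_{R_Q})$, and $B(x,r)\subset 2B_Q$ yields $\Theta_{\nu}(x,r)\lesssim_A\max_{R\in\Top,\,R\ni x}\Theta_{\nu}(2B_R)$; for $r>r(R_0)$, where $R_0=\supp\nu$ is the largest cube, one uses $\Theta_{\nu}(x,r)\le\nu(\R^d)r(R_0)^{-n}\approx\Theta_{\nu}(2B_{R_0})$. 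Taking the supremum over $r$ and then the $p$-th power gives the displayed pointwise inequality.

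Integrating this pointwise inequality against $\nu$ and interchanging the sum with the integral gives
\begin{equation*}
\int M_n\nu(x)^p\,d\nu(x)\lesssim_p\sum_{R\in\Top}\Theta_{\nu}(2B_R)^p\,\nu(R)\lesssim_{\alpha,\lambda,p}\big((C_*)^p+M_0+\mu(B(0,1))^p\big)\mu(B(0,1)),
\end{equation*}
which is the asserted estimate; as neither the packing estimate nor \lemref{lem:conical energy of nu} depends on $\rho$ or $\varepsilon$, the final bound does not either. The main obstacle is the pointwise step: matching an arbitrary radius $r$ with a David--Mattila cube of comparable size (the admissible radii are quantized, with gaps of ratio $\sim A_0/C_0$), checking that this cube lies in $\Tr(R)$ for the correct top cube $R$ along the chain through $x$, and handling the coarsest scales --- but all of this is routine and parallels Section~\ref{sec:special case}.
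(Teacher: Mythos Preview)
Your proposal is correct and follows essentially the paper's approach: apply \lemref{lem:main lemma} to $\nu$, combine the packing estimate with \lemref{lem:conical energy of nu}, and derive the pointwise bound $M_n\nu(x)^p\lesssim\sum_{R\in\Top_\nu,\,R\ni x}\Theta_\nu(2B_R)^p$ before integrating. The paper's pointwise step is more direct than yours---rather than invoking the finite-chain argument of Section~\ref{sec:special case} (which is unnecessary here), it simply fixes a single near-optimal radius $r_1$ with $M_n\nu(x)\le 2\,\Theta_\nu(x,r_1)$, chooses the smallest cube $Q\ni x$ with $B(x,r_1)\cap\supp\nu\subset 2B_Q$ (so $\ell(Q)\approx r_1$), and applies property~(iii) for the unique $R\in\Top_\nu$ with $Q\in\Tr(R)$.
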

	\begin{proof}
		By \eqref{eq:bounded growth of nu} and \lemref{lem:conical energy of nu}, we may use \lemref{lem:main lemma} to get a family of cubes $\Top_{\nu}$ satisfying properties (i)-(iii) of \lemref{lem:main lemma}, and such that
		\begin{multline}\label{eq:packing estimate to the rescue}
		\sum_{R\in\Top_{\nu}} \Theta_{\nu}(2B_R)^p\nu(R)\lesssim_{\alpha,\lambda } (C_*)^p\nu(\R^d) + C(p)(M_0 + \mu(B(0,1))^p)\mu(B(0,1))\\
		\lesssim_{\alpha,\lambda ,p} \big((C_*)^p+ M_0 + \mu(B(0,1))^p\big)\mu(B(0,1)).
		\end{multline}
		
		Now, the property (iii) of \lemref{lem:main lemma} lets us estimate $M_{n}\nu(x)$. Indeed, suppose $x\in\supp\nu$, and let $r_1>0$ be such that
		\begin{equation*}
		M_n\nu(x)\le 2\frac{\nu(B(x,r_1))}{r_1^n}.
		\end{equation*}
		Since $\supp\nu\subset B(0,2)$, we have $r_1\le 4$. Let $Q\in\mathcal{D}_{\nu}$ be the smallest cube satisfying $x\in Q$ and $B(x,r_1)\cap\supp\nu\subset 2B_Q$ (such a cube exists because the largest cube $Q_0:=\supp\nu$ clearly satsfies $\supp\nu\subset 2B_{Q_0}$). Let $R\in\Top_{\nu}$ be the top cube such that $Q\in\Tr(R)$. Clearly, $\ell(Q)\approx r_1$. By \lemref{lem:main lemma} (iii), we have 
		\begin{equation*}
		\frac{\nu(B(x,r_1))}{r_1^n}\lesssim\Theta_{\nu}(2B_Q)\lesssim\Theta_{\nu}(2B_R).
		\end{equation*}
		Thus, $M_n\nu(x)^p\lesssim \sum_{R\in\Top_{\nu}}\one_{R}(x)\Theta_{\nu}(2B_R)^p$. Integrating with respect to $\nu$ and applying \eqref{eq:packing estimate to the rescue} yields the desired estimate.
	\end{proof}
	\begin{lemma}\label{lem:Mnmu estimate}
		We have 
		\begin{equation*}
		\int M_n\mu(x)^p\ d\mu(x) \lesssim_{\alpha,\lambda ,p} \big((C_*)^p+ M_0 + \mu(B(0,1))^p\big)\mu(B(0,1)).
		\end{equation*}
		In particular, $M_n\mu(x)<\infty$ for $\mu$-a.e. $x\in B(0,1)$.
	\end{lemma}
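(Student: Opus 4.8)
The plan is to transfer the bound of \lemref{lem:Mnnu estimate} from the model measure $\nu$ back to $\mu$, using that $\nu$ faithfully reproduces $\mu$ at scales above $\rho$, and then to send $\varepsilon,\rho\to 0$. Write $K=\big((C_*)^p+M_0+\mu(B(0,1))^p\big)\mu(B(0,1))$. For $\delta>0$ put $M_n^\delta\mu(x)=\sup_{r\ge\delta}\mu(B(x,r))/r^n$; since $M_n^\delta\mu\uparrow M_n\mu$ as $\delta\downarrow 0$, by monotone convergence it suffices to bound $\int (M_n^\delta\mu)^p\,d\mu\lesssim_{\alpha,\lambda,p}K$ uniformly in $\delta$. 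I would therefore fix $\delta$, run the construction of the $B_i$, $D_i$, $\nu$ with $\rho<\delta$ and $\varepsilon$ as small as I please, and prove the bound with right-hand side $K$ plus an error that tends to $0$ with $\varepsilon$.

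The first step is a pointwise estimate: for $x\in B_i$ with $i\in I_0$ one should get $M_n^\delta\mu(x)\le 4^n\,M_n\nu(x_i)+\varepsilon\,\mu(B(0,1))/\delta^n$. Indeed, for $r\ge\delta>\rho\ge r(B_i)$ we have $B(x,r)\subset B(x_i,2r)$, and every covering ball $B_\ell$ ($\ell\in I_0$) meeting $B(x_i,2r)$ satisfies $D_\ell\subset B_\ell\subset B(x_i,4r)$; since the $D_\ell$ are disjoint with $\nu(D_\ell)=\mu(B_\ell)$, summing gives $\mu(B(x_i,2r))\le\nu(B(x_i,4r))+\mu\big(B(0,1)\setminus\bigcup_{I_0}B_\ell\big)\le\nu(B(x_i,4r))+\varepsilon\mu(B(0,1))$, and dividing by $r^n$ and taking $\sup_{r\ge\delta}$ finishes this step. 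On the exceptional set $B(0,1)\setminus\bigcup_{i\in I_0}B_i$, of $\mu$-measure $\le\varepsilon\mu(B(0,1))$, I will simply use $M_n^\delta\mu\le\mu(B(0,1))/\delta^n$.

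Summing over $i$ and over the exceptional set, the remaining task is to control $\sum_{i\in I_0}M_n\nu(x_i)^p\mu(B_i)$ by $K$. The key observation is that $M_n\nu(x_i)$ cannot be much larger than $M_n\nu(y)$ for $y\in D_i$: for $r\ge\lambda r(B_i)\ (\ge|x_i-y|)$ one has $B(x_i,r)\subset B(y,2r)$, so $\nu(B(x_i,r))/r^n\le 2^nM_n\nu(y)$; while for $r<\lambda r(B_i)$ disjointness of the $B_\ell$ forces $\dist(x_i,B_\ell)\ge r(B_i)>r$ for every $\ell\ne i$, so $B(x_i,r)$ meets $\supp\nu$ only in $D_i$, giving $\nu(B(x_i,r))=\tfrac{\mu(B_i)}{\Hn(D_i)}\omega_n r^n$ and hence $\nu(B(x_i,r))/r^n=\mu(B_i)/(\lambda r(B_i))^n\le 20\,C_*\lambda^{-d-n}$ by the density bound $\mu(B_i)\le 20C_*\lambda^{-d}r(B_i)^n$. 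Thus $M_n\nu(x_i)\le 2^nM_n\nu(y)+20C_*\lambda^{-d-n}$ for all $y\in D_i$; raising to the $p$-th power, integrating in $y$ over $D_i$ against $\nu$ (using $\nu(D_i)=\mu(B_i)$ and $x_i\in D_i$), summing over $i\in I_0$, and invoking \lemref{lem:Mnnu estimate} yields $\sum_{i\in I_0}M_n\nu(x_i)^p\mu(B_i)\lesssim_p (C_*)^p\lambda^{-p(d+n)}\mu(B(0,1))+\int(M_n\nu)^p\,d\nu\lesssim_{\alpha,\lambda,p}K$. Combining with the first step, letting $\varepsilon\to 0$ and then $\delta\to 0$ will complete the proof, and the finiteness of $K$ gives $M_n\mu<\infty$ $\mu$-a.e.

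The hard part is the small-scale half of the comparison in the third paragraph. A priori $M_n\mu$ (equivalently $\Theta^{n,*}(\mu,\cdot)$) may be infinite, so $\nu$ can only be trusted at scales $\gtrsim\rho$; this is exactly why the truncation $M_n^\delta$, which confines us to scales $r\ge\delta>\rho$, is indispensable in the first step, while at scales $r<\lambda r(B_i)$ the measure $\nu$ reduces to the flat disk $D_i$, whose density is controlled by the construction. Once this is in place, the rest is routine covering and Chebyshev-type bookkeeping.
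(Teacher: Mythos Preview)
Your proof is correct and follows the same overall strategy as the paper: truncate the maximal function, compare $\mu$ to $\nu$ ball-by-ball using that each $B_\ell$ has $\mu(B_\ell)=\nu(D_\ell)$, invoke \lemref{lem:Mnnu estimate}, and then pass to the limit. The tactical choices differ in two places. First, the paper restricts the measure, working with $M_{n,\rho}(\one_{E_{\varepsilon,\rho}}\mu)$ where $E_{\varepsilon,\rho}=\supp\mu\cap\bigcup_{i\in I_0}B_i$; this yields the clean pointwise inequality $M_{n,\rho}(\one_{E_{\varepsilon,\rho}}\mu)(x)\le 5^n\inf_{x'\in B_j}M_{n,\rho}\nu(x')$ with no $\varepsilon$-error, and since $D_j\subset B_j$ one integrates against $\nu$ directly, avoiding your small-scale step entirely. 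You instead keep the full measure and carry the error $\varepsilon\mu(B(0,1))/\delta^n$, which forces the two-stage comparison via the center $x_i$ and the small-scale analysis of $\nu$ on $B(x_i,r)$ for $r<\lambda r(B_i)$. Second, to remove the $\varepsilon$-truncation the paper passes through $G_j=\bigcap_{k\ge j}E_{\varepsilon_k,\rho}$ and uses monotone convergence on the increasing sets $G_j$, whereas you simply let $\varepsilon\to 0$ (the error terms vanish since $\delta$ is fixed) and then $\delta\to 0$. Your limiting argument is arguably cleaner; the paper's avoids the extra small-scale computation. Both routes are valid.
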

	\begin{proof}
		Denote 
		\begin{equation*}
		M_{n,\rho}\mu(x) = \sup_{r\ge \rho}\frac{\mu(B(x,r))}{r^n}.
		\end{equation*}
		Recall that $I_0=I_0(\rho,\varepsilon)$ and set
		\begin{equation*}
		E_{\varepsilon,\rho} = \supp\mu\cap\bigcup_{i\in I_0} B_i.
		\end{equation*}
		We claim that
		\begin{equation}\label{eq:Mnrho estimate}
		\int_{E_{\varepsilon,\rho}}M_{n,\rho}(\one_{E_{\varepsilon,\rho}}\mu)(x)^p\ d\mu(x)\lesssim \int M_{n,\rho}\nu(x)^p\ d\nu(x).
		\end{equation}
		Indeed, let $x,x'\in B_j,\ j\in I_0$, and $r\ge \rho$. Then, using repeatedly the fact that $r(B_i)\le \rho\le r$ for $i\in I_0$,
		\begin{multline*}
		\mu(B(x,r)\cap E_{\varepsilon,\rho})\le \mu(B(x',3r)\cap E_{\varepsilon,\rho}) \le \sum_{i\in I_0: B_i\cap B(x',3r)\neq\varnothing}\mu(B_i)\\
		= \sum_{i\in I_0: B_i\cap B(x',3r)\neq\varnothing}\nu(D_i)\le \nu(B(x',5r)).
		\end{multline*}
		Hence, for all $x\in B_j,\ j\in I_0,$
		\begin{equation*}
		M_{n,\rho}(\one_{E_{\varepsilon,\rho}}\mu)(x)\le 5^n \inf_{x'\in B_j} M_{n,\rho}\nu(x').
		\end{equation*}
		Integrating both sides of the inequality with respect to $\mu$ in $E_{\varepsilon,\rho}$ yields \eqref{eq:Mnrho estimate}.
		
		\lemref{lem:Mnnu estimate} and \eqref{eq:Mnrho estimate} give
		\begin{equation*}
			\int_{E_{\varepsilon,\rho}}M_{n,\rho}(\one_{E_{\varepsilon,\rho}}\mu)(x)^p\ d\mu(x)\le C(\alpha,\lambda ,p) \big((C_*)^p+ M_0 + \mu(B(0,1))^p\big)\mu(B(0,1)) =: K,
		\end{equation*}
		where $K$ is independent of $\rho$ and $\varepsilon$.
		
		Set $\varepsilon_k = 2^{-k}$. Observe that, for a fixed $\rho>0$, we have $\mu(\R^d\setminus\liminf_k E_{\varepsilon_k,\rho})=0$, where
		\begin{equation*}
		\liminf_k E_{\varepsilon_k,\rho} = \bigcup_{j=1}^{\infty} G_j\quad\text{and}\quad G_j = \bigcap_{k=j}^{\infty}E_{\varepsilon_k,\rho}.
		\end{equation*}
		The inclusion $G_j\subset E_{\varepsilon_j,\rho}$ gives
		\begin{equation*}
		\int_{G_j}M_{n,\rho}(\one_{G_j}\mu)(x)^p\ d\mu(x)\le \int_{E_{\varepsilon_j,\rho}}M_{n,\rho}(\one_{E_{\varepsilon_j,\rho}}\mu)(x)^p\ d\mu(x)\le K.
		\end{equation*}
		Since the sequence of sets $G_j$ is increasing, we easily get that for $\mu$-a.e. $x\in B(0,1)$
		\begin{equation*}
		\one_{G_j}(x)\,M_{n,\rho}(\one_{G_j}\mu)(x) \xrightarrow{j\to\infty} M_{n,\rho}\mu(x),
		\end{equation*}
		and the convergence is monotone. Hence, by monotone convergence theorem,
		\begin{equation*}
		\int M_{n,\rho}\mu(x)^p\ d\mu(x)\le K.
		\end{equation*}
		The estimate is uniform in $\rho$, and so once again monotone convergence gives
		\begin{equation*}
		\int M_{n}\mu(x)^p\ d\mu(x)\le K.
		\end{equation*}
	\end{proof}
	Taking into account \lemref{lem:Mnmu estimate} and Section \ref{sec:special case}, the proof of \lemref{lem:reduction suff rect} is finished.

	\section{Necessary condition for rectifiability}\label{sec:necessary rectifiability}
	In this section we will prove the following.
	\begin{prop}\label{prop:necessary condition for rect}
		Suppose $\mu$ is an $n$-rectifiable measure on $\R^d$, and $1\le p<\infty$. Then, for $\mu$-a.e. $x\in\R^d$ there exists $V_x\in G(d,d-n)$ such that for any $\alpha\in (0,1)$ we have
		\begin{equation*}
		\int_0^{1}\bigg(\frac{\mu(K(x,V_x,\alpha,r))}{r^n} \bigg)^p\ \frac{dr}{r}<\infty.
		\end{equation*}
	\end{prop}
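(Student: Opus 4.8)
The plan is to derive the proposition from Tolsa's $\beta_2$ square function estimate for rectifiable measures (\thmref{thm:beta necessary}) together with the existence of approximate tangent planes. Localizing to balls, we may assume $\mu$ is finite. For a ball $B(x,r)$ write $\beta_{\mu,2}(x,r)^2=\inf_L\frac{1}{r^n}\int_{B(x,r)}(\dist(y,L)/r)^2\,d\mu(y)$, the infimum over affine $n$-planes $L$, and fix a near-minimizing plane $L_{x,r}$. By \thmref{thm:beta necessary} and the routine comparison between the continuous square function and its dyadic version, $\sum_{k\ge0}\beta_{\mu,2}(x,2^{-k})^2<\infty$ for $\mu$-a.e.\ $x$. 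Since $\mu$ is $n$-rectifiable, for $\mu$-a.e.\ $x$ it also has a positive and finite density $\Theta^n(\mu,x)$ and a unique approximate tangent plane $W_x\in G(d,n)$ (see \cite[Chapters 15--16]{mattila1999geometry}); at such a point, $\beta_{\mu,2}(x,2^{-k})\to0$ together with $\Theta_*^n(\mu,x)>0$ forces the minimizing planes to converge to the tangent plane, i.e.\ $\dist(x,L_{x,2^{-k}})=o(2^{-k})$ and the angle between $L_{x,2^{-k}}$ and $x+W_x$ tends to $0$ as $k\to\infty$. I fix such a point $x$ and take $V_x=W_x^{\perp}\in G(d,d-n)$.

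The cone enters through the elementary fact that, by definition of $K(x,W_x^{\perp},\alpha)$, a point $y$ in this cone satisfies $|\pi_{W_x}(y-x)|<\alpha|x-y|$, hence
\begin{equation*}
\dist(y,x+W_x)=|\pi^{\perp}_{W_x}(y-x)|=\big(|x-y|^2-|\pi_{W_x}(y-x)|^2\big)^{1/2}>\sqrt{1-\alpha^2}\,|x-y|.
\end{equation*}
Put $c_\alpha=\tfrac12\sqrt{1-\alpha^2}$ and $A_k=B(x,2^{-k})\setminus B(x,2^{-k-1})$. Since $L_{x,2^{-k}}\to x+W_x$, there is $k_0=k_0(x,\alpha)$ so that for every $k\ge k_0$ the affine planes $L_{x,2^{-k}}$ and $x+W_x$ differ by at most $\tfrac{c_\alpha}{2}2^{-k}$ throughout $B(x,2^{-k})$, and also $\Theta_\mu(x,2^{-k})\le2\Theta^n(\mu,x)$. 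Then, for $k\ge k_0$ and $y\in K(x,W_x^{\perp},\alpha)\cap A_k$, using $|x-y|>2^{-k-1}$,
\begin{equation*}
\dist(y,L_{x,2^{-k}})\ge\dist(y,x+W_x)-\tfrac{c_\alpha}{2}2^{-k}>2c_\alpha2^{-k-1}-\tfrac{c_\alpha}{2}2^{-k}=\tfrac{c_\alpha}{2}2^{-k},
\end{equation*}
so by Chebyshev's inequality
\begin{equation*}
\mu\big(K(x,W_x^{\perp},\alpha)\cap A_k\big)\le\frac{4}{c_\alpha^2\,2^{-2k}}\int_{B(x,2^{-k})}\dist(y,L_{x,2^{-k}})^2\,d\mu(y)\lesssim_\alpha2^{-kn}\,\beta_{\mu,2}(x,2^{-k})^2.
\end{equation*}

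It remains to sum up. Set $a_j:=\beta_{\mu,2}(x,2^{-j})^2$; for $j\ge k_0$ one has $a_j\le\Theta_\mu(x,2^{-j})\le2\Theta^n(\mu,x)$, and $\sum_j a_j<\infty$. For $r\in[2^{-k-1},2^{-k}]$ with $k\ge k_0$ we have $K(x,W_x^{\perp},\alpha,r)\subset\bigcup_{j\ge k}\big(K(x,W_x^{\perp},\alpha)\cap A_j\big)$, so $\frac{\mu(K(x,W_x^{\perp},\alpha,r))}{r^n}\le2^n\sum_{j\ge k}\frac{\mu(K(x,W_x^{\perp},\alpha)\cap A_j)}{2^{-kn}}\lesssim_\alpha\sum_{j\ge k}2^{-(j-k)n}a_j=:S_k$. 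Young's inequality for sequences gives $\|(S_k)\|_{\ell^p}\le\|(2^{-mn})_{m\ge0}\|_{\ell^1}\,\|(a_j)_{j\ge k_0}\|_{\ell^p}$, and since $p\ge1$,
\begin{equation*}
\sum_{k\ge k_0}S_k^p\le\Big(\tfrac{1}{1-2^{-n}}\Big)^p\sum_{j\ge k_0}a_j^p\le\Big(\tfrac{1}{1-2^{-n}}\Big)^p\big(2\Theta^n(\mu,x)\big)^{p-1}\sum_j a_j<\infty.
\end{equation*}
Hence $\int_0^{2^{-k_0}}\big(\mu(K(x,W_x^{\perp},\alpha,r))/r^n\big)^p\,\frac{dr}{r}\lesssim_\alpha\sum_{k\ge k_0}S_k^p<\infty$, while trivially $\int_{2^{-k_0}}^{1}\big(\mu(K(x,W_x^{\perp},\alpha,r))/r^n\big)^p\,\frac{dr}{r}\le\mu(\R^d)^p\int_{2^{-k_0}}^{1}r^{-np-1}\,dr<\infty$. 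Thus $\mathcal{E}_{\mu,p}(x,W_x^{\perp},\alpha,1)<\infty$ for $\mu$-a.e.\ $x$, for each fixed $\alpha$; since $\alpha\mapsto\mathcal{E}_{\mu,p}(x,W_x^{\perp},\alpha,1)$ is non-decreasing, intersecting the full-measure sets obtained for a sequence $\alpha_m\uparrow1$ yields a single $\mu$-full set of $x$ on which $\mathcal{E}_{\mu,p}(x,W_x^{\perp},\alpha,1)<\infty$ for \emph{every} $\alpha\in(0,1)$.

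The step needing the most care is the passage from Tolsa's estimate, which controls $\mu$ near the \emph{best} approximating plane at each scale, to cones built from the \emph{single} tangent plane $W_x$: the convergence $L_{x,2^{-k}}\to x+W_x$, valid at $\mu$-a.e.\ point of a rectifiable measure with positive finite density, is exactly what confines the discrepancy between these two planes to the finitely many scales $k<k_0(x,\alpha)$, which then contribute only a harmless finite amount. Note that the argument runs directly for the rectifiable measure $\mu$, with no reduction to individual Lipschitz graphs.
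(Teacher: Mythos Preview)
Your proof is correct and follows essentially the same approach as the paper: both invoke Tolsa's $\beta_2$ square function theorem, use the convergence of the $\beta_2$-minimizing planes $L_{x,r}$ to the approximate tangent $x+W_x$ (the paper records this as \lemref{lem:approx planes converge to approx tangent} and proves it in the appendix), apply Chebyshev to bound the $\mu$-mass of the cone on a dyadic annulus by $\beta_{\mu,2}^2$, and then sum. The only notable differences are technical: you pass directly to dyadic annuli and use Young's convolution inequality to handle general $p$ in one stroke, whereas the paper works with the continuous integral, performs an explicit telescoping computation to prove the $p=1$ case, and then upgrades to $p>1$ via $\bigl(\mu(K(r))/r^n\bigr)^p\le\Theta_\mu(x,r)^{p-1}\,\mu(K(r))/r^n$.
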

	
	First, we recall the definition of $\beta_2$ numbers, as defined by David and Semmes \cite{david1991singular}.
	\begin{definition}
		Given a Radon measure $\mu$, $x\in\supp\mu$, $r>0,$ and an $n$-plane $L$, define
		\begin{equation*}
		\beta_{\mu,2}(x,r) = \inf_L \bigg(\frac{1}{r^n} \int_{B(x,r)}\bigg(\frac{\dist(y,L)}{r}\bigg)^2\ d\mu(y)\bigg)^{1/2},
		\end{equation*}
		 where the infimum is taken over all $n$-planes intersecting $B(x,r)$.
	\end{definition}
	Tolsa showed the following necessary condition for rectifiability in terms of $\beta_2$ numbers.
	\begin{theorem}[\cite{tolsa2015characterization}]\label{thm:beta necessary}
		Suppose $\mu$ is an $n$-rectifiable measure on $\R^d$. Then, for $\mu$-a.e. $x\in\R^d$ we have
		\begin{equation}\label{eq:beta square function}
		\int_0^1\beta_{\mu,2}(x,r)^2\ \frac{dr}{r}<\infty.
		\end{equation}
	\end{theorem}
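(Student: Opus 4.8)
The plan is to reduce the statement first to a dyadic version, then to the case of a Lipschitz graph, and finally to invoke Dorronsoro's inequality.

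\emph{Step 1: dyadic reduction.} A routine comparison between the continuous square function $\int_0^1\beta_{\mu,2}(x,r)^2\,\frac{dr}{r}$ and its dyadic analogue reduces the statement to proving, for every ball $B_0$,
\begin{equation}\label{eq:plan-packing}
\sum_{Q\in\D:\,Q\subset B_0}\beta_{\mu,2}(x_Q,C\ell(Q))^2\,\mu(Q)<\infty
\end{equation}
for some absolute constant $C$. Indeed, for $x\in Q$ and $r\approx\ell(Q)$ the ball $B(x,r)$ is comparable to $Q$, so $\beta_{\mu,2}(x,r)^2\lesssim\beta_{\mu,2}(x_Q,C\ell(Q))^2$; summing over the cubes of $\D$ containing $x$ with $\ell(Q)\le 1$ bounds $\int_0^1\beta_{\mu,2}(x,r)^2\,\frac{dr}{r}$ by $\sum_{Q\ni x}\beta_{\mu,2}(x_Q,C\ell(Q))^2$, and \eqref{eq:plan-packing} together with Tonelli gives $\mu$-a.e.\ finiteness of this sum. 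Since rectifiability is a local property, working inside an arbitrary $B_0$ suffices.

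\emph{Step 2: reduction to a Lipschitz graph.} Write $\mu=\theta\,\Hn\mres E$ with $E$ $n$-rectifiable and $0<\theta<\infty$ $\mu$-a.e.\ (possible since $\mu\ll\Hn$). Using the structure of rectifiable sets one decomposes $E$, up to an $\Hn$-null set, into countably many pairwise disjoint Borel pieces, each contained in an $n$-dimensional Lipschitz graph; subdividing further along dyadic level sets of $\theta$, and using locality again, one is reduced to the situation $\mu=\theta\,\Hn\mres F$ with $F\subset\Gamma$ a compact subset of a Lipschitz graph $\Gamma=\graph(A)$ and $\theta$ comparable to a fixed constant. For a cube $Q$ one then compares $\beta_{\mu,2}(x_Q,C\ell(Q))$ with $\beta_{\Hn\mres\Gamma,2}(x_Q,C\ell(Q))$ by testing the definition of the former with a plane near-optimal for $\Hn\mres\Gamma$; besides the graph term this produces error terms from the oscillation of $\theta$ and from $\Hn\big((\Gamma\setminus F)\cap CB_Q\big)/\ell(Q)^n$. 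At $\mu$-a.e.\ point these errors are negligible at small scales (Lebesgue density theorem in the Ahlfors--David regular space $(\Gamma,\Hn\mres\Gamma)$, together with the fact that $\Hn\mres(E\setminus F)$ is singular to $\Hn\mres\Gamma$ and hence has vanishing upper density $\Hn$-a.e.\ on $\Gamma$), but they are \emph{not} summable over scales a priori; organizing the cubes into a stopping-time/corona decomposition — stopping a tree as soon as the $\theta$-oscillation or the $(\Gamma\setminus F)$-density in $Q$ fails to be small — converts these a.e.\ statements into a packing estimate $\sum_{\text{stopping }Q}\mu(Q)\lesssim\mu(B_0)$ and reduces \eqref{eq:plan-packing} to the same estimate for $\mu=\Hn\mres\Gamma$ with $\Gamma$ a Lipschitz graph.

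\emph{Step 3: the graph case, via Dorronsoro.} Let $\Gamma=\graph(A)$ with $A\colon\R^n\to\R^{d-n}$ Lipschitz; after multiplying $A$ by a smooth cutoff (which does not affect $\beta_{\Hn\mres\Gamma,2}$ at scales $\le 1$ over a fixed bounded region) we may assume $A$ is compactly supported, so $\nabla A\in L^2(\R^n)$. Parametrizing $\Gamma$ by the bi-Lipschitz map $F(u)=(u,A(u))$ we have $\Hn\mres\Gamma\approx F_{\#}\mathcal{L}^n$, and, the slope of $\Gamma$ being bounded, the distance from $F(v)$ to the graph of an affine map is comparable to the corresponding vertical deviation; hence
\begin{equation*}
\beta_{\Hn\mres\Gamma,2}(F(u),r)^2\lesssim\inf_{P}\ \frac{1}{r^n}\int_{B(u,Cr)}\Big(\frac{|A(v)-P(v)|}{r}\Big)^2\,dv,
\end{equation*}
the infimum over affine maps $P\colon\R^n\to\R^{d-n}$. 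Dorronsoro's inequality (the $L^2$, degree-one case) states precisely that the $\frac{dr}{r}$-integral over $(0,\infty)$ of the right-hand side, integrated in $u\in\R^n$, is $\lesssim\|\nabla A\|_{L^2(\R^n)}^2<\infty$; therefore $\int_0^1\beta_{\Hn\mres\Gamma,2}(F(u),r)^2\,\frac{dr}{r}<\infty$ for $\mathcal{L}^n$-a.e.\ $u$, i.e.\ for $\Hn$-a.e.\ $x\in\Gamma$, and its dyadic form is \eqref{eq:plan-packing} for $\mu=\Hn\mres\Gamma$. The analytically deep input is Dorronsoro's theorem, used as a black box; the genuinely delicate point is Step 2, where the qualitative a.e.\ consequences of rectifiability (Lebesgue points of $\theta$, vanishing density of the complementary pieces) must be upgraded, through the corona decomposition, to a quantitative packing estimate, since the pointwise bounds alone are not summable over scales. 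This is the technical heart of the proof.
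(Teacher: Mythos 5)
The paper does not prove this statement: Theorem~\ref{thm:beta necessary} is quoted as a black box from \cite{tolsa2015characterization} and used in Section~\ref{sec:necessary rectifiability} as an external input. So there is no in-paper proof to compare against; I will assess your sketch as a proposed proof of Tolsa's result.

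Your overall architecture (dyadic reduction, localize to a Lipschitz graph with bounded density, finish with the $L^2$ degree-one Dorronsoro inequality) is indeed the standard route, and it is in the spirit of Tolsa's argument; Step~1 and Step~3 are fine. The problem is Step~2. You pass, ``by locality,'' from the general $n$-rectifiable $\mu$ to $\mu=\theta\,\Hn\mres F$ with $F\subset\Gamma$, but that passage is precisely where the theorem is hard, and the reduction as you state it has a gap. For $x\in F$, $\beta_{\mu,2}(x,r)$ is computed from $\mu\mres B(x,r)$ which still contains the mass of the other pieces $E\setminus F$. The naive error from this mass, obtained by testing with the $\Gamma$-optimal plane and bounding $\dist(y,L)\le r$, is $\frac{\mu(B(x,r)\setminus F)}{r^n}$, and $\int_0^1\frac{\mu(B(x,r)\setminus F)}{r^n}\,\frac{dr}{r}$ is an $n$-dimensional Riesz potential of $\mu\mres(E\setminus F)$ at $x$, which can be infinite $\mu$-a.e.\ on $F$ even for perfectly nice rectifiable configurations (e.g.\ $F$ a unit cube in a hyperplane and $E\setminus F$ a countable union of rapidly accumulating parallel translates with summable weights). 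The fact that the theorem is nonetheless true is because $E\setminus F$ is itself rectifiable and accumulates \emph{tangentially} to the approximate tangent plane of $\mu$ at $x$, so the true contribution involves $(\dist(y,L)/r)^2\ll1$ and is drastically smaller than the crude density error. Exploiting this requires working with the whole measure $\mu$ (not piece by piece) and quantifying the closeness of the complementary pieces to the tangent plane across scales; this is the actual technical content of Tolsa's proof and it does not follow from ``vanishing density + stopping time $\Rightarrow$ trivial packing $\sum_{\text{stopping }Q}\mu(Q)\le\mu(B_0)$,'' since that packing bound alone does not control the nonstopping scales.

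Two smaller inaccuracies: once you genuinely have $\mu=\theta\,\Hn\mres F$ with $F\subset\Gamma$, the quantity $\Hn((\Gamma\setminus F)\cap CB_Q)/\ell(Q)^n$ does \emph{not} appear as an error when you test $\beta_{\mu,2}$ with a near-optimal plane for $\Hn\mres\Gamma$ --- in that case $\beta_{\mu,2}^2\le\|\theta\|_\infty\,\beta_{\Hn\mres\Gamma,2}^2$ directly --- so you have identified the wrong error term. And $\Hn\mres(E\setminus F)$ need not be singular with respect to $\Hn\mres\Gamma$ (other pieces of $E$ may sit inside $\Gamma$ on a set of positive measure); what is true, and what you need, is that $E\setminus F$ has vanishing $n$-density at $\Hn$-a.e.\ point of $F$, not of $\Gamma$.
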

	\begin{remark}
		When showing that rectifiable sets have approximate tangents almost everywhere one uses the so-called \emph{linear approximation} properties, see \cite[Theorems 15.11 and 15.19]{mattila1999geometry}. The theorem of Tolsa improves on the linear approximation property, and that allows us to improve on the classical approximate tangent plane result.
	\end{remark}

	For a fixed $n$-rectifiable measure $\mu$, let $L_{x,r}$ denote a plane minimizing $\beta_{\mu,2}(x,r)$ (it may be non-unique, in which case we simply choose one of the minimizers). 
	
	Recall that in Definition \ref{def:approx tangent} we defined the approximate tangent to $\mu$ to be an $n$-plane $W'_x\in G(d,n)$. Let $W_x:=x+W_x'$, whenever the approximate tangent exists and is unique (that is for $\mu$-a.e. $x$, by \thmref{thm:classical tangents}). In order to apply Tolsa's result in our setting, we need the following intuitively clear result.
	
	\begin{lemma}\label{lem:approx planes converge to approx tangent}
		Let $\mu$ be a rectifiable measure. Then for $\mu$-a.e. $x\in\supp\mu$ we have
		\begin{equation}\label{eq:Lxr converge to approximate tangent}
			\frac{\dist_H(L_{x,r}\cap B(x,r), W_x\cap B(x,r))}{r}\xrightarrow{r\to 0 } 0.
		\end{equation}
	\end{lemma}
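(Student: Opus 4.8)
The plan is to deduce \eqref{eq:Lxr converge to approximate tangent} from the standard fact that at $\mu$-a.e.\ point the rescalings of $\mu$ converge to a flat measure in the weak-$\ast$ topology, combined with a compactness argument over the affine $n$-planes meeting the unit ball. Concretely, I would fix $x$ in a set of full $\mu$-measure on which, by the theory of tangent measures for rectifiable measures (see \cite[Chapter 16]{mattila1999geometry}): $0<\Theta^n(\mu,x)<\infty$; the approximate tangent plane $W_x'\in G(d,n)$ of Definition~\ref{def:approx tangent} is unique; and, writing $\Phi_{x,r}(y)=(y-x)/r$ and $\mu_{x,r}=r^{-n}(\Phi_{x,r})_{\#}\mu$, we have $\mu_{x,r}\to c\,\Hr{W_x'}$ in the weak-$\ast$ topology as $r\to0^+$, where $c:=\Theta^n(\mu,x)>0$ and $W_x=x+W_x'$, so that $\Phi_{x,r}(W_x)=W_x'$. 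Since $\mu(B(x,r))/r^n\to c<\infty$, I would also record the mass bound $\sup_{0<r\le r_0(x)}\mu_{x,r}(\overline{B(0,2)})<\infty$.

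The next step is to show that $\beta_{\mu,2}(x,r)\to0$ as $r\to0$. Testing the infimum defining $\beta_{\mu,2}(x,r)$ with the admissible plane $L=W_x$ (admissible since $x\in W_x\cap B(x,r)$) and changing variables $z=(y-x)/r$ gives $\beta_{\mu,2}(x,r)^2\le\int_{B(0,1)}\dist(z,W_x')^2\,d\mu_{x,r}(z)$. To pass this to the limit one must rule out concentration of mass near $\partial B(0,1)$: for $\varepsilon\in(0,1)$, weak-$\ast$ convergence applied on the compact set $\overline{B(0,1)}\setminus B(0,1-\varepsilon)$ yields $\limsup_{r\to0}\mu_{x,r}\big(B(0,1)\setminus B(0,1-\varepsilon)\big)\le c\,\Hn\big(W_x'\cap(\overline{B(0,1)}\setminus B(0,1-\varepsilon))\big)\lesssim_n c\,\varepsilon$, the flat limit charging no sphere; on the complementary region one tests against $\phi\in C_c(B(0,1))$ with $\one_{B(0,1-\varepsilon)}\le\phi\le\one_{B(0,1)}$, for which $\int\phi\,\dist(\cdot,W_x')^2\,d\mu_{x,r}\to c\int\phi\,\dist(\cdot,W_x')^2\,d\Hr{W_x'}=0$. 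Combining the two estimates (and using $\dist(\cdot,W_x')^2\le1$ on $B(0,1)$) and letting $\varepsilon\to0$ gives $\beta_{\mu,2}(x,r)\to0$. This step could also be extracted from \thmref{thm:beta necessary}, but the soft argument is self-contained.

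Finally I would upgrade $\beta_{\mu,2}(x,r)\to0$ to \eqref{eq:Lxr converge to approximate tangent} by contradiction. If \eqref{eq:Lxr converge to approximate tangent} failed, there would be $\delta>0$ and $r_k\to0$ such that $\tilde L_k:=\Phi_{x,r_k}(L_{x,r_k})$, an affine $n$-plane meeting $\overline{B(0,1)}$, is at local Hausdorff distance $\ge\delta$ from $W_x'$ inside $\overline{B(0,1)}$. The affine $n$-planes meeting $\overline{B(0,1)}$ form a compact space in the local Hausdorff metric, so along a subsequence $\tilde L_k\to L_\infty$, still meeting $\overline{B(0,1)}$ and still at distance $\ge\delta$ from $W_x'$. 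On the other hand, changing variables as before, $\int_{B(0,1)}\dist(z,\tilde L_k)^2\,d\mu_{x,r_k}(z)=\beta_{\mu,2}(x,r_k)^2\to0$, and since $\dist(\cdot,\tilde L_k)^2\to\dist(\cdot,L_\infty)^2$ locally uniformly, for every $\phi\in C_c(B(0,1))$ with $0\le\phi\le1$ the weak-$\ast$ convergence forces $c\int\phi\,\dist(\cdot,L_\infty)^2\,d\Hr{W_x'}=0$. As $c>0$ and $\phi$ is arbitrary, $\dist(z,L_\infty)=0$ for every $z\in W_x'\cap B(0,1)$; the affine hull of $W_x'\cap B(0,1)$ is $W_x'$, so $W_x'\subset L_\infty$ and hence $L_\infty=W_x'$ by dimension, contradicting the lower bound $\ge\delta$. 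The main obstacle in carrying this out is precisely the boundary bookkeeping in the last two steps: weak-$\ast$ convergence only controls integrals of compactly supported test functions, so the non-concentration of $\mu_{x,r}$ near $\partial B(0,1)$ has to be deduced from the flatness of the limit before the $\beta$-estimate and the plane-distance functionals can be passed to the limit.
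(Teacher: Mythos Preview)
Your argument is correct and takes a genuinely different route from the paper's proof. The paper (in the appendix) avoids tangent measures entirely: it restricts to a set $E_M=\{x:M^{-1}\le\Theta^n(\mu,x)\le M\}$, invokes \thmref{thm:beta necessary} to get $\beta_{\mu,2}(x,r_k)\to0$, extracts a limit plane $V$ of the rescaled minimizers just as you do, and then derives the contradiction by a direct geometric estimate. Namely, Chebyshev's inequality and the $\beta$-smallness force most of the mass of $\nu=\restr{\mu}{E_M}$ in $B(0,r_k)$ into a thin slab around $V$, while the approximate tangent condition forces most of it into a thin slab around $W$; since $\dist_H(V\cap B(0,1),W\cap B(0,1))\ge\varepsilon$, the intersection of these two slabs is contained in a tube behaving like an $(n-1)$-dimensional set, which can carry only $O(\eta)r_k^n$ mass by the density upper bound, contradicting $\nu(B(0,r_k))\gtrsim_M r_k^n$. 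Your approach instead packages the blowup behaviour into the weak-$\ast$ convergence $\mu_{x,r}\to c\,\Hr{W_x'}$ (a standard consequence of rectifiability, \cite[Chapter~16]{mattila1999geometry}) and reads off the contradiction by passing $\int\phi\,\dist(\cdot,\tilde L_k)^2\,d\mu_{x,r_k}$ to the limit, using the uniformly bounded local mass to combine uniform convergence of the integrands with weak-$\ast$ convergence of the measures. This is softer and considerably shorter; the price is the dependence on the tangent-measure theory and the boundary bookkeeping you correctly flag and handle. The paper's argument is more self-contained and elementary, but longer and more computational.
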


	A relatively simple (although lengthy) proof can be found in \appref{sec:appendix}.	
	
	Before proving \propref{prop:necessary condition for rect} we need one more lemma. Recall that if $\alpha>0,\ W$ is an $n$-plane, and $0<r<R,$ then $K(x,W^{\perp},\alpha,r,R)=K(x,W^{\perp},\alpha,R)\setminus K(x,W^{\perp},\alpha,r)$.
	\begin{lemma}\label{lem:cone outside of tube}
		Let $\alpha,\varepsilon\in (0,1)$ be some constants satisfying $\eta := 1-\alpha - 3\varepsilon>0$. Let $x\in\R^d,\ r>0,$ and suppose that $W$ and $L$ are $n$-planes satisfying $x\in W$ and
		\begin{equation}\label{eq:L close to W}
		{\dist_H(L\cap B(x,r), W\cap B(x,r))}\le \varepsilon r.
		\end{equation}
		Then 
		\begin{equation*}
		K(x,W^{\perp},\alpha,r,2r)\subset B(x,2r)\setminus B_{\eta r}(L).
		\end{equation*}
	\end{lemma}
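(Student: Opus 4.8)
The plan is to reduce everything to a purely geometric statement about cones, tubes, and balls, using only elementary Euclidean geometry. By translation invariance we may assume $x=0$, so $0\in W$, and we must show that any point $y$ with $r<|y|\le 2r$ and $\dist(y,W^\perp)<\alpha|y|$ satisfies $\dist(y,L)\ge\eta r$. First I would decompose $y=\pi_{W^\perp}(y)+\pi_W(y)$; the cone condition $\dist(y,W^\perp)=|\pi_W(y)|<\alpha|y|\le 2\alpha r$ gives an upper bound on the $W$-component of $y$, hence a \emph{lower} bound $|\pi_{W^\perp}(y)|\ge\sqrt{|y|^2-4\alpha^2r^2}$ — but actually it is cleaner to argue directly: $\dist(y,W)=|\pi_{W^\perp}(y)|>\sqrt{1-\alpha^2}\,|y|\ge\sqrt{1-\alpha^2}\,r$, while one only needs $\dist(y,W)\ge(1-2\varepsilon)r$ or so, which follows from $\alpha+3\varepsilon<1$. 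The point is that a point in the cone $K(0,W^\perp,\alpha)$ at distance $>r$ from the origin is far from the plane $W$.

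Next I would use hypothesis \eqref{eq:L close to W} to transfer this ``far from $W$'' estimate into a ``far from $L$'' estimate. Since $\dist_H(L\cap B(0,r),W\cap B(0,r))\le\varepsilon r$, every point of $W\cap B(0,r)$ is within $\varepsilon r$ of $L$, and conversely; by a standard scaling argument for affine planes through comparable points, the Hausdorff-closeness on $B(0,r)$ upgrades to closeness on $B(0,2r)$ with a constant blow-up: every $z\in W\cap B(0,2r)$ satisfies $\dist(z,L)\le 2\varepsilon r$ (one writes $z=2z'$ with $z'\in W\cap B(0,r)$, finds $w'\in L$ with $|z'-w'|\le\varepsilon r$, and notes $2w'\in L$ since $L$ need not pass through $0$ — here one has to be slightly careful, so instead I would use that $L$ lies within $\varepsilon r$ of the linear plane $W$ \emph{and} passes within $\varepsilon r$ of $0\in W$, so $L$ is within, say, $2\varepsilon r$ of $W$ throughout $B(0,2r)$ in the Hausdorff sense as well). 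Consequently, for $y$ as above,
\begin{equation*}
\dist(y,L)\ge\dist(y,W)-\dist_H\big((W)\cap B(0,2r),L\cap B(0,2r)\big)\ge (1-\alpha)|y|\cdot\tfrac{?}{}-3\varepsilon r,
\end{equation*}
and choosing the bookkeeping so that $\dist(y,W)\ge(1-\alpha)r$ (valid since $|\pi_W(y)|<\alpha|y|$ forces $\dist(y,W)=\sqrt{|y|^2-|\pi_W(y)|^2}>\sqrt{1-\alpha^2}\,|y|\ge(1-\alpha)r$ using $|y|>r$) and the transfer loss is at most $3\varepsilon r$, we get $\dist(y,L)\ge(1-\alpha-3\varepsilon)r=\eta r$, i.e. $y\notin B_{\eta r}(L)$. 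Combined with the trivial inclusion $K(0,W^\perp,\alpha,r,2r)\subset B(0,2r)$, this is exactly the claim.

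The main obstacle — really the only non-automatic point — is the passage from Hausdorff closeness of the two planes on $B(x,r)$ to a usable bound on $\dist(y,L)$ for $y$ in the larger ball $B(x,2r)$, because $L$ is an affine plane not necessarily containing $x$, so one cannot simply rescale. I would handle this by first noting \eqref{eq:L close to W} implies $\dist(x,L)\le\varepsilon r$ and that the \emph{direction} of $L$ is within $O(\varepsilon)$ of $W$'s direction, then estimating $\dist(y,L)$ for $|y|\le 2r$ directly from these two facts, at the cost of replacing $\varepsilon$ by a fixed multiple of $\varepsilon$ — which is precisely why the hypothesis is stated with the comfortable margin $1-\alpha-3\varepsilon>0$ rather than a tight one. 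Everything else is a one-line triangle-inequality bookkeeping, so I would keep the write-up short and front-load the geometric picture.
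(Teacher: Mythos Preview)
Your approach is correct and essentially the same as the paper's: both reduce the lemma to the two facts that (i) the cone condition forces $|\pi_W(y-x)|<\alpha|y-x|$, hence the ``$W$-perpendicular'' component of $y-x$ is at least $(1-\alpha)|y-x|\ge(1-\alpha)r$, and (ii) the Hausdorff hypothesis \eqref{eq:L close to W} yields both $\dist(x,L)\le\varepsilon r$ and $\lVert\tilde\pi_W-\tilde\pi_L\rVert_{op}\le\varepsilon$ for the linear projections, which together control the transfer from $W$ to $L$ with total loss $3\varepsilon r$.

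The only difference is in the bookkeeping. You first estimate $\dist(y,W)$ and then bound $\dist(y,L)$ from below via a ``transfer'' step; the paper instead writes directly
\[
\dist(y,L)=|y-\pi_L(y)|\ge |x-y|-|\pi_L(x)-\pi_L(y)|-\dist(x,L),
\]
and then bounds $|\pi_L(x)-\pi_L(y)|=|\tilde\pi_L(x-y)|\le|\tilde\pi_W(x-y)|+\varepsilon|x-y|\le\alpha|x-y|+2\varepsilon r$. This one-line decomposition sidesteps the scaling issue you flagged (closeness on $B(x,r)$ versus $B(x,2r)$) because the factor $|x-y|\le 2r$ enters only through the operator-norm term, and it delivers the exact constant $\eta=1-\alpha-3\varepsilon$ without any slack. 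Your route works too, but to hit the precise constant you would need to organize the triangle inequalities a bit more carefully than the sketch indicates; the paper's decomposition is the tidier way to do that same computation.
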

	\begin{proof}
		Suppose $y\in K(x,W^{\perp},\alpha,r,2r)$, so that $r<|x-y|<2r$ and $|x-\pi_{W}(y)|<\alpha|x-y|$. We need to show that $\dist(y, L)>\eta r$.
		
		Set $y' = \pi_{L}(y),\ x' = \pi_{L}(x)$. Then
		\begin{multline*}
		\dist(y, L) = |y-y'|\ge |x-y|-|x'-y'|-|x-x'|\\
		= |x-y| - |x'-y'| - \dist(x,L)
		\overset{\eqref{eq:L close to W}}{\ge} |x-y| - |x'-y'| - \varepsilon r.
		\end{multline*}
		Let $\tilde{\pi}_{W}$ and $\tilde{\pi}_{L}$ denote the orthogonal projections onto the $n$-planes parallel to $W$ and $L$ passing through the origin. It follows from \eqref{eq:L close to W} that $\norm{\tilde{\pi}_{W}-\tilde{\pi}_{L}}_{op}\le \varepsilon$. Thus,
		\begin{equation*}
		|x'-y'| = \abs{\tilde{\pi}_{L}(x-y)}\le \abs{\tilde{\pi}_{W}(x-y)} + \norm{\tilde{\pi}_{W}-\tilde{\pi}_{L}}_{op}|x-y|\le \abs{\tilde{\pi}_{W}(x-y)} +2\varepsilon r.
		\end{equation*}
		Hence, using the fact that $|\tilde{\pi}_{W}(x-y)| =|x-\pi_{W}(y)| <\alpha|x-y|$, we get from the two estimates above
		\begin{equation*}
		\dist(y, L) \ge |x-y| - \abs[0]{\tilde{\pi}_{W}(x-y)} - 3\varepsilon r 
		\ge (1-\alpha)|x-y| - 3\varepsilon r \ge (1-\alpha - 3\varepsilon)r = \eta r.
		\end{equation*}
	\end{proof}
	
	\begin{proof}[Proof of \propref{prop:necessary condition for rect}]
		Let $\mu$ be $n$-rectifiable. For $r>0$ and $x\in\supp\mu$ let $L_{x,r}$ be the $n$-plane minimizing $\beta_{\mu,2}(x,r)$. We know that for $\mu$-a.e. $x\in\supp\mu$ we have \eqref{eq:beta square function} and \eqref{eq:Lxr converge to approximate tangent} (in particular, the approximate tangent plane $W_x$ exists). Fix such $x$. Set $V_x = W_x^{\perp}$, let $\alpha\in (0,1)$ be arbitrary, and for $0<r<R$ set $K(r) = K(x,V_x,\alpha, r),\ K(r,R) = K(x,V_x,\alpha, r,R)$. We will show that 
		\begin{equation}\label{eq:necessary condition for rect}
		\int_0^{1}\bigg(\frac{\mu(K(r))}{r^n} \bigg)^p\ \frac{dr}{r}<\infty.
		\end{equation}
		
		Let $\varepsilon>0$ be a constant so small that $\eta := 1-\alpha-3\varepsilon>0$. Use \lemref{lem:approx planes converge to approx tangent} to find $r_0>0$ such that for $0< r\le r_0$ we have 
		\begin{equation*}
		{\dist_H(L_{x,r}\cap B(x,r), W_x\cap B(x,r))}\le \varepsilon r.
		\end{equation*}
		
		Then, it follows from \lemref{lem:cone outside of tube} that for all $0<r\le r_0$
		\begin{equation*}
		K(r,2r)\subset B(x,2r)\setminus B_{\eta r}(L_{x,r}).
		\end{equation*}
		Note that by Chebyshev's inequality
		\begin{equation*}
		\mu(B(x,2r)\setminus B_{\eta r}(L_{x,r}))\le {\eta^{-2}}\int_{B(x,2r)}\bigg(\frac{\dist(y,L_{x,r})}{r}\bigg)^2\ d\mu(y)=\eta^{-2}(2r)^n{\beta}_{\mu,2}(x,2r)^2.
		\end{equation*}
		Hence, for $0<r\le r_0$ we have
		\begin{equation*}
		\frac{\mu(K(r,2r))}{r^n}\lesssim_{\eta}{\beta}_{\mu,2}(x,2r)^2,
		\end{equation*}
		and so 
		\begin{equation}\label{eq:integral over annuli estimated with betas}
		\int_0^{r_0}\frac{\mu(K(r,2r))}{r^n}\ \frac{dr}{r}\lesssim_{\eta}\int_0^{2r_0}{\beta}_{\mu,2}(x,r)^2\ \frac{dr}{r} \overset{\eqref{eq:beta square function}}{<} \infty.
		\end{equation}
		Now, observe that for any integer $N>0$
		\begin{multline*}
		\int_{2^{-N}r_0}^{r_0}\frac{\mu(K(r))}{r^n}\ \frac{dr}{r}\lesssim(r_0)^{-n}\sum_{k=0}^{N}\mu(K(2^{-k}r_0))2^{kn}\\
		\le 2^n(r_0)^{-n}\sum_{k=0}^{N}\mu(K(2^{-k}r_0))2^{kn} - (r_0)^{-n}\sum_{k=0}^{N}\mu(K(2^{-k}r_0))2^{kn}\\
		= (r_0)^{-n}\sum_{k=0}^{N}\mu(K(2^{-k}r_0))2^{(k+1)n} - (r_0)^{-n}\sum_{k=0}^{N}\mu(K(2^{-k}r_0))2^{kn}\\
		=(r_0)^{-n}\sum_{k=1}^{N+1}\big(\mu(K(2^{-k+1}r_0))- \mu(K(2^{-k}r_0))\big)2^{kn} + \frac{\mu(K(2^{-(N+1)}r_0))}{(2^{-(N+1)}r_0)^n} - \frac{\mu(K(r_0))}{r_0^n}\\
		\lesssim \int_{0}^{r_0/2}\frac{\mu(K(r,2r))}{r^n}\ \frac{dr}{r} +  \Theta_{\mu}(x,2^{-(N+1)}r_0) +0.
		\end{multline*}
		Letting $N\to\infty$, we get from the above and \eqref{eq:integral over annuli estimated with betas} that
		\begin{equation*}
		\int_0^{r_0}\frac{\mu(K(r))}{r^n}\ \frac{dr}{r} \lesssim_{\eta}\int_0^{2r_0}{\beta}_{\mu,2}(x,r)^2\ \frac{dr}{r} + \Theta^{n,*}(\mu,x)< \infty,
		\end{equation*}
		for $\mu$-a.e. $x\in\supp\mu$, where we also used the fact that $\Theta^{n,*}(\mu,x)<\infty$ $\mu$-almost everywhere (because $\mu$ is $n$-rectifiable).
		The integral $\int_{2r_0}^1\frac{\mu(K(r))}{r^n}\ \frac{dr}{r}$ is obviously finite, and so we get that
		\begin{equation*}
		\int_0^{1}\frac{\mu(K(r))}{r^n}\ \frac{dr}{r}<\infty,
		\end{equation*}
		which is precisely \eqref{eq:necessary condition for rect} with $p=1$. To get the same with $p>1$, note that since $\Theta^{n,*}(\mu,x)<\infty$ for $\mu$-a.e. $x$, we have
		\begin{equation*}
		\int_0^{1}\bigg(\frac{\mu(K(r))}{r^n}\bigg)^p\ \frac{dr}{r}\le \int_0^{1}\frac{\mu(K(r))}{r^n}\Theta_{\mu}(x,r)^{p-1}\ \frac{dr}{r}\le \sup_{0<r<1}\Theta_{\mu}(x,r)^{p-1}\int_0^{1}\frac{\mu(K(r))}{r^n}\ \frac{dr}{r}<\infty.
		\end{equation*}		
	\end{proof}
	\section{Sufficient condition for BPLG}\label{sec:suff BPLG}
	In this section we prove the ``sufficient part'' of \thmref{thm:suff BPLG}. After a suitable translation and rescaling, it suffices to show the following:
	\begin{prop}\label{prop:suff BPLG}
		Suppose $p\ge 1,\ E\subset\R^d$ is $n$-AD-regular, and $0\in E$. Let $\alpha>0,\ M_0>1,\ \kappa>0$, and assume that there exist $F\subset E\cap B(0,1)$ and $V\in G(d,d-n)$, such that $\Hn(F)\ge \kappa$, and for all $x\in F$
		\begin{equation}\label{eq:BPLG main assumption}
		\int_0^{1} \bigg(\frac{\Hn(K(x,V,\alpha,r)\cap F)}{r^n} \bigg)^p \ \frac{dr}{r}\le M_0.
		\end{equation}
		Then there exists a Lipschitz graph $\Gamma$, with Lipschitz constant depending on $\alpha, n, d$, such that
		\begin{equation}\label{eq:Gamma for BPLG}
		\Hn(F\cap\Gamma)\gtrsim 1,
		\end{equation}
		with the implicit constant depending on $\kappa,p,M_0,\alpha,n,d$, and the $AD$-regularity constants of $E$.
	\end{prop}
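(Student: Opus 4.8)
The plan is to apply the main lemma, \lemref{lem:main lemma}, to the measure $\mu:=\Hr{F}$, and then extract from its conclusion a single Lipschitz graph capturing a definite portion of $F$. First I would check the hypotheses of the main lemma: $\mu$ is finite and compactly supported (since $F\subset B(0,1)$), and the $n$-ADR property of $E$ gives polynomial growth $\mu(B(x,r))\le\Hn(E\cap B(x,r))\le C_1 r^n$ for all $x\in\supp\mu$ and $r>0$, where $C_1$ denotes the upper ADR constant (for $r\gtrsim\diam E$ one uses instead $\mu(\R^d)=\Hn(F)\le C_1$, again by ADR). Since $\mu(K(x,V,\alpha,r))=\Hn(K(x,V,\alpha,r)\cap F)$, hypothesis \eqref{eq:BPLG main assumption} controls the integral over $r\in(0,1]$ while polynomial growth controls $r\ge 1$, so $\E(\R^d,V,\alpha)\lesssim_p (M_0+C_1^p)\,\mu(\R^d)<\infty$. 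The main lemma then yields $\Top\subset\D^{db}$ with Lipschitz graphs $\{\Gamma_R\}_{R\in\Top}$ satisfying (i)--(iii) and the packing estimate $S:=\sum_{R\in\Top}\Theta_\mu(2B_R)^p\mu(R)\lesssim_\alpha C_1^p\mu(\R^d)+\E(\R^d,V,\alpha)\lesssim_{\alpha,p,M_0,C_1}1$.

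Second, I would argue exactly as in Section~\ref{sec:special case} that $\mu$-almost every $x\in F$ lies in some $\Good(R)$: from the packing estimate, $\sum_{R\in\Top:\,R\ni x}\Theta_\mu(2B_R)^p<\infty$ for $\mu$-a.e.\ $x$, and an infinite descending chain of $\Top$-cubes through $x$ would force $\Theta_\mu(2B_{R_i})\to 0$, hence $\Theta^{n,*}(\mu,x)=0$ via \eqref{eq:control on density for next cubes}, contradicting $\Theta^{n,*}(\Hr{F},x)\ge 2^{-n}$, which holds for $\Hn$-a.e.\ (hence $\mu$-a.e.) $x\in F$. So the chain terminates, and $x\in\Good(R)$ for the smallest $\Top$-cube $R\ni x$. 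Since the sets $\Good(R)$ are pairwise disjoint and $\mu(\Good(R)\setminus\Gamma_R)=0$ by (ii), this gives $\Hn(F)=\sum_{R\in\Top}\mu(\Good(R))=\sum_{R\in\Top}\mu(\Good(R)\cap\Gamma_R)$.

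The crux — and the step I expect to be the main obstacle — is to make the packing estimate bite against this partition; the issue is that $\mu=\Hr{F}$ is not assumed lower regular, so a priori $\Theta_\mu(2B_R)$ could be tiny and the packing sum could be carried by cubes of negligible mass. The remedy is a density lower bound: \emph{every} $R$ with $\mu(\Good(R))>0$ satisfies $\Theta_\mu(2B_R)\gtrsim 1$. Indeed, choosing $x\in\Good(R)$ with $\Theta^{n,*}(\mu,x)\ge 2^{-n}$ and lying in no $\Stop(R)$-cube (possible, as both conditions hold $\mu$-a.e.\ and $\mu(\Good(R))>0$), a small scale $r$ with $\mu(B(x,r))\gtrsim r^n$ and the David--Mattila cube $Q\ni x$ of size $\approx r$ satisfy $Q\in\Tree(R)$ and $B(x,r)\subset 2B_Q$, so $\Theta_\mu(2B_Q)\gtrsim 1$, and then \eqref{eq:upper density estimate on Tree} gives $\Theta_\mu(2B_R)\gtrsim_{A}1$. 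Writing $c_0$ for this constant and $\Top':=\{R\in\Top:\mu(\Good(R))>0\}$, and using that $\Top$-cubes are doubling (so $\mu(R)\approx\Theta_\mu(2B_R)r(R)^n$), the packing estimate upgrades to $\sum_{R\in\Top'}\mu(R)\le c_0^{-p}S\lesssim 1$.

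Finally I would conclude by truncation and pigeonhole. As $\sum_{R\in\Top'}\mu(R)$ converges, choose $\delta=\delta(\kappa,C_1,\alpha,p,M_0)>0$ with $\sum_{R\in\Top',\,\ell(R)<\delta}\mu(R)\le\kappa/2$; then the partition gives $\sum_{R\in\Top',\,\ell(R)\ge\delta}\mu(\Good(R))\ge\Hn(F)-\kappa/2\ge\kappa/2$. On the other hand, each $R\in\Top'$ with $\ell(R)\ge\delta$ has $\mu(R)\gtrsim\delta^n$ by the density bound and doubling, so $\sum_{R\in\Top'}\mu(R)\lesssim 1$ leaves at most $N\lesssim\delta^{-n}$ such cubes; by pigeonhole one of them has $\mu(\Good(R))\ge\kappa/(2N)\gtrsim 1$. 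Taking $\Gamma:=\Gamma_R$, which by (i) is a Lipschitz graph with constant depending only on $\alpha$ (and $n,d$), we obtain $\Hn(F\cap\Gamma)\ge\mu(\Good(R)\cap\Gamma_R)=\mu(\Good(R))\gtrsim 1$, with implicit constant depending only on $\kappa,p,M_0,\alpha,n,d$ and the ADR constants of $E$.
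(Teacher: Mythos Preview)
Your route is quite different from the paper's, and the final pigeonhole step does not close.

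The paper never invokes the main lemma for this proposition. Instead it uses the Martikainen--Orponen machinery directly: first pass to $F_1=F\setminus F_\varepsilon$ (via \lemref{lem:Fvarepsilon estimate}) so that $\Hn(F\cap B(x,r))>\varepsilon r^n$ for all $x\in F_1$ and $0<r\le 1$; then observe that whenever $K(x,V,\alpha/2,2^{-j},2^{-j+1})\cap F_1\neq\varnothing$ the lower regularity forces $\gtrsim\varepsilon$ mass of $F$ in the cone at scale $2^{-j}$, so \eqref{eq:BPLG main assumption} bounds the number of such $j$'s by $M\lesssim M_0/\varepsilon^p$. This is the $(\alpha/2,M)$-property, and \cite[Proposition~1.12]{martikainen2018characterising} then produces the Lipschitz graph. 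The whole argument is a page long and sidesteps the corona decomposition entirely.

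Your argument is correct through the density lower bound $\Theta_\mu(2B_R)\gtrsim_A 1$ for $R\in\Top'$ and the resulting estimate $\sum_{R\in\Top'}\mu(R)\le S_0$. The gap is in the sentence ``choose $\delta=\delta(\kappa,C_1,\alpha,p,M_0)>0$ with $\sum_{R\in\Top',\,\ell(R)<\delta}\mu(R)\le\kappa/2$''. Convergence of $\sum_{R\in\Top'}\mu(R)$ gives no \emph{uniform} rate of decay of the tail; the threshold $\delta$ you obtain depends on the particular set $F$, not only on the listed parameters. Concretely, nothing you have written rules out a scenario in which $R_0\notin\Top'$ and the entire family $\Top'$ sits at arbitrarily small scales while $\sum_{R\in\Top'}\mu(R)$ stays bounded. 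In that case the ``large-scale'' subfamily $\{R\in\Top':\ell(R)\ge\delta\}$ is empty for any fixed $\delta$, and your pigeonhole has nothing to bite on. The obstruction is structural: the main lemma yields a \emph{countable} cover of $F$ by Lipschitz graphs with a packing bound, which is exactly what one needs for rectifiability (Section~\ref{sec:suff rectifiability}), but extracting a \emph{single} graph carrying a uniform proportion of $F$ requires a genuinely quantitative input---here supplied by the lower regularity trick of \lemref{lem:Fvarepsilon estimate}---that your argument does not provide.
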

%	\begin{remark}
%		Note that in \eqref{eq:BPLG main assumption} we are bounding the measure of $K(x,V,\alpha,r)\cap F$, while the condition \eqref{eq:BPLG condition in thm} involves $K(x,V,\alpha,r)\cap E.$
%	\end{remark}
	
	To prove the above we will use techniques developed in \cite{martikainen2018characterising}. 	
	Fix $V\in G(d,d-n)$. Let $\theta>0$ and $M\in\{0,1,2\dots\}$. In the language of Martikainen and Orponen, a set $E\subset\R^d$ has the \emph{$n$-dimensional $(\theta,M)$-property} if for all $x\in E$
	\begin{equation*}
	\#\{j\in\mathbb{Z}\ :\ K(x,V,\theta,2^{-j}, 2^{-j+1})\cap E\neq\varnothing \}\le M.
	\end{equation*}
	It is easy to see that if $E$ has the $n$-dimensional $(\theta,0)$-property, then $E$ is contained in a Lipschitz graph with Lipschitz constant bounded by $1/\theta$, see \cite[Remark 1.11]{martikainen2018characterising}.
	
	The main proposition of \cite{martikainen2018characterising} reads as follows.
	\begin{prop}[{\cite[Proposition 1.12]{martikainen2018characterising}}]
		Assume that $E$ is $n$-AD-regular, and assume that $F_1\subset E\cap B(0,1)$ is an $\Hn$-measurable subset with $\Hn(F_1)\approx_C 1$. Suppose further that $F_1$ satisfies the $n$-dimensional $(\theta,M)$-property for some $\theta>0,\ M\ge 0$. Then there exists and $\Hn$-measurable subset $F_2\subset F_1$ with $\Hn(F_2)\approx_{C,\theta,M} 1$ which satisfies the $(\theta/b,0)$-property. Here $b\ge 1$ is a constant depending only on $d$.
	\end{prop}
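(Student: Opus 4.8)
The plan is to prove the proposition by induction on $M$, peeling off one ``layer'' of bad scales at a time: at each stage we pass to an $\Hn$-measurable subset of comparable measure and shrink the aperture by a factor close to $1$, the factors being chosen so that the $M$ successive shrinks telescope (together with a single $d$-dependent loss incurred in a dyadic reformulation) to a total aperture loss $b=b(d)$. As a preliminary step I would transfer the $(\theta,M)$-property to a dyadic setting: using the $n$-AD-regularity of $E$ and the standard dyadic lattice $\mathcal D$ of $\R^d$, one checks that, up to adjusting $\theta$ and $M$ by $d$-dependent factors, $F_1$ having the $(\theta,M)$-property is equivalent to the statement that for every $x\in F_1$ the open cone $K(x,V,\theta)$ meets $F_1$ in at most $M$ dyadic generations. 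AD-regularity is also recorded here in the form that will be used repeatedly, $\Hn(E\cap B(y,r))\lesssim_C r^n$.

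The heart of the argument is a \emph{one-step reduction lemma}: if $G\subset E\cap B(0,1)$ is $\Hn$-measurable with $\Hn(G)>0$ and has the $(\theta,m)$-property for some $m\ge 1$, then there is an $\Hn$-measurable $G'\subset G$ with $\Hn(G')\ge c(C,\theta,m)\,\Hn(G)$ having the $(\theta\,2^{-1/M},\,m-1)$-property. To prove it, for each ``bad'' $x\in G$ let $j_\ast(x)$ be the \emph{coarsest} generation at which $K(x,V,\theta)$ meets $G$, and fix a witness $y_x\in G$ with $|x-y_x|\approx 2^{-j_\ast(x)}$ and $y_x\in K(x,V,\theta)$; by definition $x$ sees no point of $G$ in its cone at any generation coarser than $j_\ast(x)$. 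One applies a Vitali-type covering to the balls $\{B(x,2^{-j_\ast(x)})\}_{x\ \mathrm{bad}}$, extracting a disjoint subfamily whose fivefold dilates still cover, and then removes from $G$, inside each selected ball (suitably dilated to catch the relevant cone-annulus), the portion of $G$ lying in the corresponding cone. The modest aperture shrink guarantees that after this removal no retained point of $G'$ still meets $G'$ in the cone at its former coarsest generation, so the coarsest bad generation of every retained point strictly increases and $G'$ has only $m-1$ bad generations. The point that makes the measure bound work is that, because each selected center does not see $G$ at scales coarser than its own ball, the selected cones do not ``stack up'', so their overlap is controlled and AD-regularity bounds the total removed measure.

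Iterating the one-step reduction $M$ times from $F_1$ (which has the $(\theta,M)$-property) produces $F_1\supset F_1^{(1)}\supset\cdots\supset F_1^{(M)}=:F_2$ with bad counts $M,M-1,\dots,0$ and apertures $\theta,\theta 2^{-1/M},\dots,\theta 2^{-1}$, so $F_2$ has the $(\theta/b,0)$-property with $b=2\,b_0(d)$, where $b_0(d)$ absorbs the dyadic-reformulation loss. Since $\Hn(F_1^{(k)})\ge c(C,\theta,M)\,\Hn(F_1^{(k-1)})$ at each step, $\Hn(F_2)\ge c(C,\theta,M)^M\,\Hn(F_1)\approx_{C,\theta,M}1$, which is the claimed bound; by the remark preceding the proposition, the $(\theta/b,0)$-property moreover places $F_2$ inside a Lipschitz graph of constant $\lesssim b/\theta$.

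The main obstacle is the one-step reduction, and within it the measure estimate. A priori the union of the selected cones need not be a small set — AD-regularity by itself does not make a cone small, since $E$ could be partly aligned with $V$ — so one genuinely needs the geometric fact that the selected cones are essentially disjoint, which must be extracted from the choice of $j_\ast(x)$ as the \emph{coarsest} bad scale together with the Vitali selection. Making this rigorous, and correctly matching the genuine cones $K(x,V,\theta)$ with their dyadic surrogates (which is precisely what forces the small aperture shrink), is where the real work lies; the iteration, the telescoping of apertures, and the bookkeeping of constants are then routine.
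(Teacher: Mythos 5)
The paper does not prove this statement; it quotes it verbatim as \cite[Proposition 1.12]{martikainen2018characterising} and uses it as a black box in Section \ref{sec:suff BPLG}. So I will assess your proposal on its own merits rather than against an in-paper proof.

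There is a genuine gap in the one-step reduction lemma, and it sits exactly in the two places you flag as ``the real work.'' First, the measure estimate does not close. You remove, around each Vitali-selected center $x_i$, the set $G\cap K(x_i,V,\theta)\cap B(x_i,C\cdot 2^{-j_\ast(x_i)})$. Even granting bounded overlap of the removed pieces (plausible from the symmetry of cones and the coarseness of $j_\ast$), the only upper bound available for each piece is $\Hn\bigl(E\cap B(x_i,C\cdot 2^{-j_\ast(x_i)})\bigr)\lesssim (2^{-j_\ast(x_i)})^n$ from AD-regularity: the $(\theta,M)$-property bounds the \emph{number} of bad dyadic scales at $x_i$, not the measure the cone carries, and at a single scale the cone-annulus can contain a full $\sim r^n$ of $E$. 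Summing over the pairwise disjoint Vitali balls then yields total removed measure $\lesssim \Hn(E\cap B(0,2))\approx 1$, which is merely comparable to $\Hn(G)$, not a definite fraction smaller. Nothing in the construction forces the removed set to have measure $\le(1-c)\Hn(G)$; the ``essential disjointness'' you invoke controls overlap, which is the wrong quantity.

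Second, the structural claim — that after removal every retained point loses its coarsest bad generation — fails for non-selected points. If $x$ is not a Vitali center and $x\in 5B(x_i,2^{-j_\ast(x_i)})$, then the Vitali selection gives $2^{-j_\ast(x_i)}\ge 2^{-j_\ast(x)}$. The removal around $x_i$ lives in a region of diameter $\sim 2^{-j_\ast(x_i)}$ centered at $x_i$ and shaped by the cone $K(x_i,V,\theta)$, whereas the annulus $K(x,V,\theta,2^{-j_\ast(x)},2^{-j_\ast(x)+1})$ whose emptiness you need has radius $2^{-j_\ast(x)}$, possibly far smaller, and sits near $x$, which is typically \emph{not} in $K(x_i,V,\theta)$ (being in a ball around $x_i$, not a cone). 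There is no containment between these two sets, so nothing guarantees that $x$'s coarsest bad annulus has been emptied in $G'$. The modest aperture shrink from $\theta$ to $\theta\cdot 2^{-1/M}$ can absorb small Hausdorff-distance perturbations but cannot bridge a genuine scale gap, so $G'$ need not satisfy the $(\theta\cdot 2^{-1/M},m-1)$-property. Any correct proof has to exploit the $(\theta,M)$-hypothesis to force the cones themselves to carry little measure, or to select the subset by a mechanism that does not run through this Vitali-then-remove scheme; consult \cite[\S 2]{martikainen2018characterising} for how this is actually accomplished.
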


	\begin{remark}\label{rem:sufficies to construct F1}
		It follows immediately from the proposition above that if we construct $F_1\subset E\cap B(0,1)$ with $\Hn(F_1)\approx \kappa$ satisfying the $n$-dimensional $(\alpha/2,M)$-property, then we will get a Lipschitz graph $\Gamma$ such that \eqref{eq:Gamma for BPLG} holds. Hence, we will be done with the proof of \propref{prop:suff BPLG}.
	\end{remark}
	
	To construct $F_1$ we will use another lemma from \cite{martikainen2018characterising}.
	\begin{lemma}[{\cite[Lemma 2.1]{martikainen2018characterising}}]\label{lem:Fvarepsilon estimate}
		Let $E$ be an $n$-AD-regular set with $\Hn(E)\ge C>0$, let $F\subset E\cap B(0.1)$ be an $\Hn$-measurable subset, and let
		\begin{equation*}
		F_{\varepsilon} = \{ x\in F\ :\ \Hn(F\cap B(x,r_x))\le\varepsilon r_x^n\ \text{for some radius}\ 0<r_x\le 1 \}.
		\end{equation*}
		Then $\Hn(F_{\varepsilon})\lesssim \varepsilon$ with the bound depending only on $C$ and the AD-regularity constant of $E$.
	\end{lemma}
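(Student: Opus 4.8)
The plan is to run a Besicovitch covering argument; the one subtlety requiring care is not to waste the factor $\varepsilon$ that is hard-wired into the definition of $F_\varepsilon$.

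First I would, for each $x\in F_\varepsilon$, fix a radius $r_x\in(0,1]$ with $\Hn(F\cap B(x,r_x))\le\varepsilon\, r_x^n$. The balls $\{B(x,r_x):x\in F_\varepsilon\}$ cover $F_\varepsilon$, have radii $\le 1$, and centers in a fixed bounded ball, so the Besicovitch covering theorem (see e.g. \cite[Theorem 2.7]{mattila1999geometry}) gives a dimensional constant $N$ and subfamilies $\mathcal{B}_1,\dots,\mathcal{B}_N$, each a countable collection of pairwise disjoint balls, with $F_\varepsilon\subset\bigcup_{k=1}^{N}\bigcup_{B\in\mathcal{B}_k}B$. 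The crucial point is that these are the \emph{original} balls $B(x,r_x)$ (not dilates of them), so, since $\Hn$ is an outer measure,
\begin{equation*}
\Hn(F_\varepsilon)\le\sum_{k=1}^{N}\sum_{B(x,r_x)\in\mathcal{B}_k}\Hn(F\cap B(x,r_x))\le\varepsilon\sum_{k=1}^{N}\sum_{B(x,r_x)\in\mathcal{B}_k}r_x^n .
\end{equation*}

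Next I would bound $\sum_{B(x,r_x)\in\mathcal{B}_k}r_x^n$ by an absolute constant, uniformly in $k$. Here the \emph{lower} Ahlfors--David regularity enters: for $x\in F\subset E$ and $0<r_x\le1$ one has $\Hn(E\cap B(x,r_x))\gtrsim r_x^n$ --- this is the lower ADR inequality when $r_x<\diam E$, and since $\Hn(E)\ge C$ forces $\diam E\gtrsim1$ (a standard consequence of the upper ADR bound), the remaining case $\diam E\le r_x\le1$ is handled by $\Hn(E\cap B(x,r_x))\ge\Hn(E\cap B(x,\tfrac{1}{2}\diam E))\gtrsim1\ge r_x^n$. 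As the balls of $\mathcal{B}_k$ are pairwise disjoint and contained in $B(0,2)$ (centers in $B(0,1)$, radii $\le1$), additivity of $\Hn$ together with the upper ADR bound yields $\sum_{B(x,r_x)\in\mathcal{B}_k}r_x^n\lesssim\sum_{B(x,r_x)\in\mathcal{B}_k}\Hn(E\cap B(x,r_x))\le\Hn(E\cap B(0,2))\lesssim1$. Combining with the previous display gives $\Hn(F_\varepsilon)\lesssim N\varepsilon\lesssim\varepsilon$, with the implicit constant depending only on $n$, $d$, the ADR constants of $E$, and, via the bound on $\diam E$, on $C$.

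The main --- essentially the only --- obstacle is recognizing that one must pass to disjoint subfamilies of the \emph{original} balls $B(x,r_x)$. Using instead the simpler $5r$-covering lemma would leave one estimating $\Hn(F\cap B(x,5r_x))$, for which only the trivial upper bound $\Hn(E\cap B(x,5r_x))\approx r_x^n$ is at hand; this loses the factor $\varepsilon$ and only produces $\Hn(F_\varepsilon)\lesssim1$. With Besicovitch keeping the balls disjoint, the remainder is a routine packing estimate powered by the two-sided ADR regularity of $E$.
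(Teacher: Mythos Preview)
The paper does not actually prove this lemma; it is quoted verbatim from \cite[Lemma 2.1]{martikainen2018characterising} and used as a black box. So there is no ``paper's own proof'' to compare against here.

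That said, your argument is correct and is the standard route to this estimate. The key observation---that one must use Besicovitch rather than the $5r$-lemma so as to retain the hypothesis $\Hn(F\cap B(x,r_x))\le\varepsilon r_x^n$ on the \emph{original} balls---is exactly right, and the packing estimate via lower ADR and disjointness is clean. Your handling of the edge case $\diam E\le r_x\le 1$ (using $\Hn(E)\ge C$ to force $\diam E\gtrsim_C 1$) correctly accounts for where the dependence on $C$ enters. This is almost certainly the same argument as in \cite{martikainen2018characterising}.
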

	Note that the set $F\setminus F_{\varepsilon}$ does not have to be AD-regular. Nevertheless, we gain some extra regularity that will prove useful.
	
	Now, let $E$ and $F\subset E\cap B(0,1)$ be as in the assumptions of \propref{prop:suff BPLG}. 	
	We apply \lemref{lem:Fvarepsilon estimate} to conclude that for some $\varepsilon$, depending on $\kappa$ and the AD-regularity constant of $E$, we have
	\begin{equation*}
	\Hn(F\setminus F_{\varepsilon}) \ge \frac{\kappa}{2}.
	\end{equation*}
	Set $F_1 = F\setminus F_{\varepsilon}$. 
	\begin{lemma}
		There exists $M=M(M_0,\varepsilon,\alpha,n)$ such that $F_1$ satisfies the $n$-dimensional $(\alpha/2,M)$-property.
	\end{lemma}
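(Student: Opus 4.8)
The goal is to show that for $x \in F_1$, the cone annuli $K(x,V,\alpha/2,2^{-j},2^{-j+1})$ can intersect $F_1$ for at most $M$ values of $j$, where $M$ depends only on $M_0,\varepsilon,\alpha,n$. The key observation is that if such an annulus meets $F_1$, it must carry a definite amount of $\Hn$-measure of $F$, because points of $F_1 = F\setminus F_\varepsilon$ are \emph{not} in $F_\varepsilon$, so around every point $y\in F_1$ one has lower density bounds $\Hn(F\cap B(y,r))>\varepsilon r^n$ for \emph{all} $0<r\le 1$. The finiteness of the conical energy integral \eqref{eq:BPLG main assumption} then forces the number of such "heavy" annuli to be bounded.

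**Key steps.** First I would fix $x\in F_1$ and suppose $K(x,V,\alpha/2,2^{-j},2^{-j+1})\cap F_1 \neq\varnothing$; pick a witness point $y_j$ in this intersection. Since $y_j\in F_1$ and $F_1\cap F_\varepsilon=\varnothing$, applying the defining property of $F_\varepsilon$ with the radius $r = c\,2^{-j}$ for a small dimensional constant $c=c(\alpha,n)$ gives $\Hn(F\cap B(y_j, c\,2^{-j})) > \varepsilon (c\,2^{-j})^n = \varepsilon c^n 2^{-jn}$. Next I would check that, for $c=c(\alpha,n)$ small enough, the ball $B(y_j,c\,2^{-j})$ is contained in the slightly wider cone region $K(x,V,\alpha,2^{-j-1},2^{-j+2})\subset K(x,V,\alpha,2^{-j+2})$: this is an elementary geometric fact — a point within distance $\alpha/2$ (relative to its distance from $x$, which is $\approx 2^{-j}$) of the $(d-n)$-plane $V+x$, when thickened by a ball of radius comparable to $2^{-j}$ with small enough comparability constant, still lies in the cone of aperture $\alpha$; one should be slightly careful that the inner and outer radii of the annulus also expand by bounded factors, which is fine. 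Hence
\begin{equation*}
\Hn(K(x,V,\alpha,2^{-j+2})\cap F) \gtrsim_{\alpha,n} \varepsilon\, 2^{-jn}, \qquad\text{i.e.}\qquad \frac{\Hn(K(x,V,\alpha,2^{-j+2})\cap F)}{2^{-jn}} \gtrsim_{\alpha,n} \varepsilon.
\end{equation*}

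**Concluding.** Now I would integrate: for each "bad" index $j$ (i.e. one with $K(x,V,\alpha/2,2^{-j},2^{-j+1})\cap F_1\neq\varnothing$), the function $r\mapsto \Hn(K(x,V,\alpha,r)\cap F)/r^n$ is, by monotonicity of $K(x,V,\alpha,r)$ in $r$, bounded below by a constant multiple of $\varepsilon$ on a whole dyadic interval of $r$ around $2^{-j+2}$ of fixed logarithmic length, say $r\in(2^{-j+1},2^{-j+2})$. Summing over bad indices $j$ with $2^{-j}\le \tfrac14$ (the finitely many larger scales contribute at most an additive constant to $M$) gives
\begin{equation*}
M_0 \ge \int_0^1 \bigg(\frac{\Hn(K(x,V,\alpha,r)\cap F)}{r^n}\bigg)^p\,\frac{dr}{r} \ge \#\{\text{bad }j\}\cdot \log 2 \cdot (c(\alpha,n)\,\varepsilon)^p,
\end{equation*}
so $\#\{\text{bad }j\}\le M_0 (c(\alpha,n)\varepsilon)^{-p}/\log 2 =: M(M_0,\varepsilon,\alpha,n)$, which is exactly the $(\alpha/2,M)$-property. (Here one uses $\Hn(K(x,V,\alpha,r)\cap F)\le\Hn(F\cap B(x,r))\le\Hn(K(x,V,\alpha,r)\cap F)$... rather, simply that $K(x,V,\alpha,r)\cap F\subset K(x,V,\alpha,r')\cap F$ for $r\le r'$, making the integrand monotone nondecreasing in $r$.)

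**Main obstacle.** The only genuinely nontrivial point is the geometric containment $B(y_j,c\,2^{-j})\subset K(x,V,\alpha,2^{-j+2})$ with $c$ depending only on $\alpha,n$ — i.e. verifying that a ball of radius $c\,2^{-j}$ around a point in the $(\alpha/2)$-aperture annulus at scale $2^{-j}$ stays inside the $\alpha$-aperture cone. This is a routine estimate with the triangle inequality and the definition of $\dist(\cdot, V+x)$, comparing $\dist(z, V+x)$ for $z\in B(y_j,c\,2^{-j})$ to $\dist(y_j,V+x)<\tfrac{\alpha}{2}|x-y_j|$ and to $|x-z|\approx|x-y_j|$; one just has to track that choosing $c$ small relative to $\alpha$ absorbs the errors. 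Everything else is bookkeeping with dyadic scales and a Chebyshev-type summation, and it is exactly the mechanism of \lemref{lem:Fvarepsilon estimate} feeding into the finiteness of the conical energy.
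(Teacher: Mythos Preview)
Your proposal is correct and follows essentially the same approach as the paper's proof: pick a witness point $y_j\in F_1$ in each nonempty cone annulus, use $y_j\notin F_\varepsilon$ to get $\Hn(F\cap B(y_j,c\,2^{-j}))>\varepsilon(c\,2^{-j})^n$, verify the geometric containment $B(y_j,c\,2^{-j})\subset K(x,V,\alpha,2^{-j+2})$, and then sum the resulting lower bounds against the conical energy integral \eqref{eq:BPLG main assumption}. The paper phrases this as a contradiction (defining a set $F_{\mathsf{Bad}}$ and showing it is empty for $M$ large), while you directly bound the number of bad indices, but the arguments are otherwise identical.
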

	\begin{proof}
		Denote by $F_{\mathsf{Bad}}\subset F_1$ the set of $x\in F_1$ such that
		\begin{equation}\label{eq:x has no alpha M property}
		\#\{j\in\mathbb{Z}\ :\ K(x,V,\alpha/2,2^{-j}, 2^{-j+1})\cap F_1\neq\varnothing \}> M.
		\end{equation}
		We will show that, if $M$ is chosen big enough, the set $F_{\mathsf{Bad}}$ is empty.
		
		Let $x\in F_{\mathsf{Bad}}$ and $j\in\mathbb{Z}$ be such that there exists $x_j \in K(x,V,\alpha/2,2^{-j}, 2^{-j+1})\cap F_1$. It is easy to see that for some $\lambda=\lambda(\alpha)$, independent of $j$, we have
		\begin{equation*}
		B(x_j,\lambda 2^{-j})\subset K(x,V,\alpha,2^{-j-1}, 2^{-j+2}).
		\end{equation*}
		Since $x_j\in F_1 = F\setminus F_{\varepsilon}$, it follows that 
		\begin{equation*}
		\Hn(F\cap B(x_j,\lambda 2^{-j}))>\varepsilon (\lambda 2^{-j})^n.
		\end{equation*}
		The two observations above give
		\begin{equation*}
		\frac{\Hn(F\cap K(x,V,\alpha,2^{-j+2}))}{(2^{-j+2})^n}\ge \frac{\Hn(F\cap K(x,V,\alpha,2^{-j-1}, 2^{-j+2}))}{(2^{-j+2})^n}\gtrsim_{\alpha,\lambda}\varepsilon.
		\end{equation*}
		By \eqref{eq:x has no alpha M property}, there are more than $M$ different scales (i.e. $j$'s) for which the above holds. Thus, for $x\in F_{\mathsf{Bad}}$ we have
		\begin{equation*}
		\int_0^1 \bigg(\frac{\Hn(K(x,V,\alpha,r)\cap F)}{r^n} \bigg)^p\ \frac{dr}{r}\gtrsim_{\alpha,\lambda} M\varepsilon^p.
		\end{equation*}		
		Taking $M=M(M_0,\varepsilon,\alpha,n,p)$ big enough we get a contradiction with \eqref{eq:BPLG main assumption}. Thus, $F_{\mathsf{Bad}}$ is empty. Now, it follows trivially by the definition of $F_{\mathsf{Bad}}$ that $F_1$ satisfies the $n$-dimensional $(\alpha/2,M)$-property.
%		\begin{equation*}
%		\Hn(F_{\mathsf{Bad}})\lesssim_{\alpha,n} \frac{C_1}{M\varepsilon}.
%		\end{equation*}
%		Choosing $M=M(C_1,\varepsilon,\alpha,n)$ big enough, it follows that
%		\begin{equation*}
%		\Hn(F_1\setminus F_{\mathsf{Bad}})\ge \frac{\kappa}{4}.
%		\end{equation*}
%		We set $F_1 = F_0\setminus F_{\mathsf{Bad}}$, and then 
	\end{proof}
	By \remref{rem:sufficies to construct F1}, this finishes the proof of \propref{prop:suff BPLG}.
	
	\section{Necessary condition for BPLG}\label{sec:necessary BPLG}
	In this section we prove the ``necessary part'' of \thmref{thm:suff BPLG}. After rescaling, translating, and using the BPLG property, it is clear that it suffices to show the following:
	\begin{prop}\label{prop:necessary BPLG}
		Suppose $E\subset\R^d$ is $n$-AD-regular, and $0\in E$. Let $p\ge 1$. Assume there exists a Lipschitz graph $\Gamma$ such that $\Hn(\Gamma\cap E\cap B(0,1))\ge\kappa$. Then there exists $\alpha=\alpha(\lip(\Gamma))>0,\ V\in G(d,d-n),$ and a set $F\subset \Gamma\cap E\cap B(0,1),$ such that $\Hn(F)\gtrsim\kappa$, and for $x\in F$
		\begin{equation}\label{eq:BPLG necessary assumption}
		\int_0^{1} \bigg(\frac{\Hn(K(x,V,\alpha,r)\cap E)}{r^n} \bigg)^p\ \frac{dr}{r}\le M_0,
		\end{equation}
		where  $M_0>1$ is a constant depending on $p,\, \lip(\Gamma),\ \kappa$ and the AD-regularity constant of $E$.
	\end{prop}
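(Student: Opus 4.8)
The plan is the following: fix the ``vertical'' direction $V$ of the graph $\Gamma$, choose the aperture $\alpha$ small in terms of $\lip(\Gamma)$, show that the conical energy density integrated over $G:=\Gamma\cap E\cap B(0,1)$ is bounded by a constant depending only on $p$, $\lip(\Gamma)$ and the ADR constants of $E$, and then extract $F$ by Chebyshev's inequality. Write $\Gamma=\{w+A(w):w\in W\}$ with $W\in G(d,n)$ and $A\colon W\to W^{\perp}$ an $L$-Lipschitz map, $L:=\lip(\Gamma)$, and set $V:=W^{\perp}\in G(d,d-n)$ and $\alpha:=\alpha(L):=\bigl(4\sqrt{1+L^2}\bigr)^{-1}\in(0,1)$. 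For $p,q\in\Gamma$ one computes $\dist(q,V+p)=|\pi_W(p-q)|\ge(1+L^2)^{-1/2}|p-q|>3\alpha|p-q|$, so that
\begin{equation*}
\Gamma\cap K(x,V,3\alpha)=\varnothing\qquad\text{for every }x\in\Gamma .
\end{equation*}

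The key geometric step is the claim that there is a constant $C_L=C_L(L)\ge 2$ such that for every $y\in\R^d\setminus\Gamma$, writing $\delta_y:=\dist(y,\Gamma)>0$, one has $\Gamma\cap K(y,V,\alpha)\subseteq B(y,C_L\delta_y)$. To see this, let $z\in\Gamma$ realise $\delta_y$; a short argument using that $A$ is $L$-Lipschitz shows $|\pi_W(y-z)|\le L\,|\pi_V(y-z)|$, whence $|\pi_W(y-z)|\le\tfrac{L}{\sqrt{1+L^2}}\,\delta_y=:\beta\delta_y$. Now suppose $x\in\Gamma\cap K(y,V,\alpha)$ with $|x-y|\ge C_L\delta_y$, where $C_L:=\max\{2,2\beta/\alpha\}$; then $|x-z|\ge\tfrac12|x-y|$ and
\begin{equation*}
\dist(x,V+z)=|\pi_W(x-z)|\le|\pi_W(x-y)|+|\pi_W(y-z)|\le\alpha|x-y|+\beta\delta_y\le 3\alpha|x-z|,
\end{equation*}
so $x\in K(z,V,3\alpha)\cap\Gamma=\varnothing$, a contradiction. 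Hence $|x-y|<C_L\delta_y$, proving the claim.

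For the Carleson estimate, set $g(\xi):=\int_0^1\bigl(\Hn(K(\xi,V,\alpha,r)\cap E)/r^n\bigr)^p\,\frac{dr}{r}$ for $\xi\in G$. Since $\Hn(K(\xi,V,\alpha,r)\cap E)\le\Hn(E\cap B(\xi,r))\le C_1r^n$ by the upper ADR bound, we may discard $p-1$ of the powers and then apply Fubini to get $g(\xi)\lesssim_{C_1,p}\int_{K(\xi,V,\alpha)\cap E\cap B(\xi,1)}|\xi-y|^{-n}\,d\Hn(y)$ (this is the computation behind \eqref{eq:conical 1 energy comparable to Riesz on cones}). Integrating over $\xi\in G$, using the symmetry $y\in K(\xi,V,\alpha)\Leftrightarrow\xi\in K(y,V,\alpha)$ and Tonelli, and observing that the contribution of $y\in\Gamma$ is zero because $G\subseteq\Gamma$ avoids cones around $V$, we are reduced to integrating over $y\in(E\setminus\Gamma)\cap B(0,2)$ the quantity
\begin{equation*}
\int_{G\cap K(y,V,\alpha)\cap B(y,1)}|\xi-y|^{-n}\,d\Hn(\xi)\le\int_{\Gamma\cap\bigl(B(y,C_L\delta_y)\setminus B(y,\delta_y)\bigr)}|\xi-y|^{-n}\,d\Hn(\xi)\lesssim_L 1,
\end{equation*}
where the first inequality uses the geometric claim together with $|\xi-y|\ge\delta_y$ for $\xi\in\Gamma$, and the last one follows by splitting the annulus into $\lesssim_L 1$ dyadic shells and using $\Hn(\Gamma\cap B(y,\rho))\lesssim_L\rho^n$. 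Since $\Hn((E\setminus\Gamma)\cap B(0,2))\le\Hn(E\cap B(0,2))\lesssim_{C_1}1$, we conclude $\int_G g\,d\Hn\le C_*$ with $C_*=C_*(p,L,C_1)$.

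Finally, set $M_0:=\max\{2,2C_*/\kappa\}$. By Chebyshev's inequality $\Hn(\{\xi\in G:g(\xi)>M_0\})\le C_*/M_0\le\kappa/2$, so $F:=\{\xi\in G:g(\xi)\le M_0\}$ satisfies $F\subseteq\Gamma\cap E\cap B(0,1)$, $\Hn(F)\ge\Hn(G)-\kappa/2\ge\kappa/2$, and \eqref{eq:BPLG necessary assumption} holds for every $\xi\in F$ by the definition of $g$; moreover $\alpha$ depends only on $\lip(\Gamma)$ and $M_0$ on $p,\lip(\Gamma),\kappa$ and the ADR constants of $E$. I expect the main obstacle to be the geometric claim: one must tune $\alpha$ in terms of $\lip(\Gamma)$ so that simultaneously $\Gamma$ avoids the slightly wider cones $K(\cdot,V,3\alpha)$ and the cone $K(y,V,\alpha)$ at a point $y$ at distance $\delta_y$ from $\Gamma$ cannot meet $\Gamma$ beyond radius $C_L\delta_y$ — after that the argument is a routine Carleson-type estimate plus Chebyshev.
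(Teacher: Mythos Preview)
Your proof is correct and follows the same overall plan as the paper: pick $V$ orthogonal to the base plane of $\Gamma$, choose $\alpha$ small in terms of $L=\lip(\Gamma)$, reduce to $p=1$ via the upper ADR bound, prove the geometric lemma $K(y,V,\alpha)\cap\Gamma\subset B(y,C_L\dist(y,\Gamma))$, establish the Carleson estimate $\int_{\Gamma\cap B(0,1)}\int_0^1 \mu(K(\xi,V,\alpha,r))r^{-n}\,\tfrac{dr}{r}\,d\Hn(\xi)\lesssim 1$, and finish with Chebyshev. The paper proves the geometric lemma with the \emph{vertical} projection of $y$ onto $\Gamma$ rather than the nearest point; your variational argument for $|\pi_W(y-z)|\le L|\pi_V(y-z)|$ at the nearest point $z$ is valid as well.

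The one genuine difference is in the Carleson estimate. The paper covers $E\cap B(0,1)\setminus\Gamma$ by a Whitney family of balls $5B_j$ with $r_j\approx\dist(x_j,\Gamma)$, bounds $\mu(K(x,V,\alpha,r))$ by $\sum_j r_j^n\one_{K_j(r)}(x)$, and then uses the geometric lemma to control $\Hn(K_j\cap\Gamma)$ and the range of $r$ for which $K_j(r)$ meets $\Gamma$. You instead pass to the Riesz-type expression $\int_{K(\xi)\cap E\cap B(\xi,1)}|\xi-y|^{-n}\,d\Hn(y)$ (as in \eqref{eq:conical 1 energy comparable to Riesz on cones}), apply Tonelli and the cone symmetry $y\in K(\xi)\Leftrightarrow\xi\in K(y)$, and bound the resulting inner integral over $\Gamma$ directly by the geometric lemma and the $n$-growth of $\Gamma$. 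Your route avoids the covering lemma and is a bit more direct; the paper's route makes the Whitney structure of $E\setminus\Gamma$ explicit. Both buy exactly the same bound.
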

%	In the proof we will use the decomposition of $\R^d\setminus\Gamma$ into a family $\W$ of Whitney dyadic cubes. Recall that the elements of $\W$ are pairwise disjoint, their sum equals $\R^d\setminus\Gamma$, and there exist constants $K>20, D_0\ge 1$ such that for every $Q\in\W$
%	\begin{enumerate}
%		\item[a)] $10Q\subset\R^d\setminus\Gamma$,
%		\item[b)] $KQ\cap\Gamma\ne \varnothing$,
%		\item[c)] there are at most $D_0$ cubes $Q'\in\W$ such that $10Q\cap10Q'\ne \varnothing $. Furthermore, for such cubes $Q'$ we have $\ell(Q')\approx \ell(Q)$.
%	\end{enumerate}
%	For the proof see \cite[Chapter VI, \S 1]{stein1970singular} or \cite[Appendix J]{grafakos2008classical}. 
%	
%	For each $Q\in \W$ denote by $B_Q$ the ball with the same center as $Q$, and with radius $2\diam(Q)$. Note that $Q\subset B_Q$. We claim that the construction of $\W$ may be modified so that the following lemma holds.
%	\begin{lemma}
%		Let $\theta=\theta(\lip(\Gamma))>0$ and $V\in G(d,d-n)$ be such that for every $x\in\Gamma$ we have $K(x,V,\theta)\cap\Gamma = \varnothing$. Suppose that for some $x\in\Gamma$ and $Q\in\W$ we have $Q\cap K(x,V,\theta/2)\neq\varnothing$.
%	\end{lemma}
%	Moreover, it is not difficult to construct Whitney cubes in such a way that if $y\in\Gamma,\ Q\in\W$ and $B(y,r)\cap Q\ne \varnothing $, then
%	\begin{equation}\label{eq:property of Whitney cubes}
%	\begin{gathered}
%	\diam(Q)\leq \frac{r}{2},\\
%	Q\subset B(y,2r),
%	\end{gathered}
%	\end{equation}
%	see \cite[Section 2.3]{tolsa2015characterization} for details. Of course, we have $\ell(Q)\approx \dist(Q,\Gamma)$.
	We begin by fixing some additional notation. Set $\mu=\Hr{E}$. We will denote the AD-regularity constant of $E$ by $C_0$, so that for every $x\in E,\ 0<r<\diam(E),$ 
	\begin{equation*}
	C_0^{-1}r^n \le \mu(B(x,r))\le C_0 r^n.
	\end{equation*}
	\begin{remark}
		Since we assume that $E$ is AD-regular, the exponent $p$ in \eqref{eq:BPLG necessary assumption} does not really matter. For any $p>1$ we have
		\begin{equation*}
		\bigg(\frac{\Hn(K(x,V,\alpha,r)\cap E)}{r^n} \bigg)^p \le C_0^{p-1} \frac{\Hn(K(x,V,\alpha,r)\cap E)}{r^n} ,
		\end{equation*}
		and so it is enough to prove \eqref{eq:BPLG necessary assumption} for $p=1$.
	\end{remark}
	Set $L=\lip(\Gamma)$. Let $V\in G(d,d-n)$ be such that $\Gamma$ is an $L$-Lipschitz graph over $V^{\perp}$, and let $\theta=\theta(L)>0$ be such that 
	\begin{equation*}
	K(x,V,\theta)\cap\Gamma = \varnothing\qquad\text{for all $x\in\Gamma$.}
	\end{equation*}
	Set $\alpha=\min(\frac{\theta}{2},0.1, \frac{1}{4L})$.
	
	For every $x\in E\cap B(0,1)\setminus\Gamma$ consider the ball $B_x = B(x,0.01 \dist(x,\Gamma))$.
	%where $0<\lambda=\lambda(\alpha)<0.01$ is a small constant to be fixed later. 
	We use the $5r$-covering lemma to choose a countable subfamily of pairwise disjoint balls $B_j=B(x_j, r_j),\ r_j=0.01 \dist(x_j,\Gamma),\ j\in \Z,$ such that
	\begin{equation*}
	E\cap B(0,1)\setminus\Gamma\subset \bigcup_{j\in\Z} 5B_j.
	\end{equation*}
	Observe that
	\begin{equation}\label{eq:radii summable}
	\sum_{j\in\Z} r_j^n\le C_0 \sum_{j\in\Z} \mu(B_j) = C_0 \mu\big(\bigcup_{j\in\Z} B_j\big)\le C_0\mu(B(0,2))\lesssim C_0^2.
	\end{equation}
	For each $j\in\Z$ set
	\begin{equation*}
	K_j = \bigcup_{y\in 5B_j} K(y,V,\alpha),\qquad K_j(r) = \bigcup_{y\in 5B_j} K(y,V,\alpha,r).
	\end{equation*}
	\begin{lemma}\label{lem:Kj estimate}
		For each $j\in\Z$ we have
		\begin{equation}\label{eq:Kj estimate}
		\Hn(K_j\cap\Gamma)\lesssim_{L} r_j^n.
		\end{equation}
		Moreover, 
		\begin{equation}\label{eq:Kjr doesnt intersect Gamma}
		K_j(r)\cap\Gamma=\varnothing \qquad\text{for $r<r_j$}.
		\end{equation}
	\end{lemma}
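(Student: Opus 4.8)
The two assertions have quite different flavours, so I would treat them separately, using throughout that the balls $B_j$ are of Whitney type relative to $\Gamma$ (their radius is comparable to their distance to $\Gamma$), together with the defining property of $\theta$, namely $K(x,V,\theta)\cap\Gamma=\varnothing$ for $x\in\Gamma$, and the choice $\alpha\le\theta/2$. For \eqref{eq:Kjr doesnt intersect Gamma} the argument is immediate: since $r_j=0.01\,\dist(x_j,\Gamma)$, every $y\in 5B_j$ satisfies $\dist(y,\Gamma)\ge\dist(x_j,\Gamma)-5r_j=95r_j>r_j>r$; as $K(y,V,\alpha,r)\subset B(y,r)$, this forces $K(y,V,\alpha,r)\cap\Gamma=\varnothing$, and taking the union over $y\in 5B_j$ gives \eqref{eq:Kjr doesnt intersect Gamma}.

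For \eqref{eq:Kj estimate} I would first reduce matters to the inclusion $K_j\cap\Gamma\subset B(x^*,C(L)\,r_j)$, where $x^*$ is a closest point of $\Gamma$ to $x_j$, so that $|x^*-x_j|=100r_j$ and hence $|x^*-y|\le 105r_j=:\rho$ for every $y\in 5B_j$. To prove the inclusion, suppose $z\in K(y,V,\alpha)\cap\Gamma$ for some $y\in 5B_j$ and, arguing by contradiction, assume $|z-x^*|\ge 2\rho/\alpha$. Since $V+y$ and $V+x^*$ are parallel affine planes at distance at most $|y-x^*|\le\rho$, and using $\alpha\le 1$ together with $2\rho\le\alpha|z-x^*|$, we obtain
\[
\dist(z,V+x^*)\le\dist(z,V+y)+\rho<\alpha|z-y|+\rho\le\alpha(|z-x^*|+\rho)+\rho\le\alpha|z-x^*|+2\rho\le 2\alpha|z-x^*|,
\]
so that $z\in K(x^*,V,2\alpha)\subset K(x^*,V,\theta)$. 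But $x^*,z\in\Gamma$, which contradicts the defining property of $\theta$; hence $|z-x^*|<2\rho/\alpha=210r_j/\alpha$, proving the inclusion with $C(L)=210/\alpha$ (recall $\alpha=\alpha(L)$).

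It then remains to combine this with the elementary graph estimate $\Hn(\Gamma\cap B(w,t))\lesssim_L t^n$, valid for every ball since $\Gamma$ is an $L$-Lipschitz graph over $V^\perp$ (the parametrizing map over $V^\perp$ is $\sqrt{1+L^2}$-Lipschitz, and the part of $\Gamma$ inside $B(w,t)$ projects orthogonally into an $n$-dimensional ball of radius $t$ in $V^\perp$). Applying it with $w=x^*$ and $t=210r_j/\alpha$ yields $\Hn(K_j\cap\Gamma)\le\Hn(\Gamma\cap B(x^*,210r_j/\alpha))\lesssim_L r_j^n$, which is \eqref{eq:Kj estimate}. The only step carrying any content is the cone-vertex-shift estimate in the displayed chain of inequalities — the observation that a cone $K(y,V,\alpha)$ with vertex $y$ near $x^*$ is, at points far from $x^*$, contained in the slightly wider cone $K(x^*,V,\theta)$ — but this is purely elementary, so I do not anticipate a genuine obstacle.
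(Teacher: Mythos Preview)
Your argument is correct. Both you and the paper prove \eqref{eq:Kjr doesnt intersect Gamma} by the same Whitney-type distance estimate, and for \eqref{eq:Kj estimate} both reduce to the containment $K_j\cap\Gamma\subset B(\text{point},C(L)r_j)$ followed by the elementary graph measure bound. The difference lies in how that containment is verified. The paper works in coordinates: for $x\notin\Gamma$ and $y\in K(x,V,\alpha)\cap\Gamma$ it projects $x$ vertically to $z\in\Gamma$, uses $|\pi_V^\perp(x-y)|<\alpha|x-y|$ and the Lipschitz bound $|\pi_V(z-y)|\le L|\pi_V^\perp(z-y)|$, and absorbs the resulting $(\alpha+L\alpha)|x-y|$ term (the choice $\alpha\le\min(0.1,1/(4L))$ makes this $\le\tfrac12|x-y|$) to get $|x-y|\lesssim_L\dist(x,\Gamma)$. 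You instead shift the cone vertex to a nearest point $x^*\in\Gamma$ and observe that far from $x^*$ the cone $K(y,V,\alpha)$ sits inside $K(x^*,V,2\alpha)\subset K(x^*,V,\theta)$, which is disjoint from $\Gamma$ by the defining property of $\theta$. Your route is arguably cleaner in that it uses the hypothesis $\alpha\le\theta/2$ directly rather than unwinding the Lipschitz condition again, and it makes transparent why $\alpha$ was chosen small relative to $\theta$; the paper's route has the minor advantage of yielding the containment for \emph{every} $x\in\R^d$ (not just $x\in 5B_j$) as a standalone sublemma.
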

	\begin{proof}
		\eqref{eq:Kjr doesnt intersect Gamma} is very easy -- observe that for $r<r_j$ we have $K_j(r)\subset 6B_j$, and so for $y\in K_j(r)$ 
		\begin{equation*}
		\dist(y,\Gamma)\ge \dist(x_j,\Gamma)- 6r_j = (1-0.06)\dist(x_j,\Gamma)>0.
		\end{equation*}
		
		Concerning \eqref{eq:Kj estimate}, we claim that since $\Gamma = \graph(F)$ for some $L$-Lipschitz function $F:V^{\perp}\to V$, and since $\alpha$ is sufficiently small, for all $x\in\R^d$ we have 
		\begin{equation}\label{eq:cone intersects graph}
		K(x,V,\alpha)\cap\Gamma \subset B(x, C\dist(x,\Gamma)),
		\end{equation}
		where $C=C(L)>1.$ Indeed, if $\dist(x,\Gamma)=0$, then $	K(x,V,\alpha)\cap\Gamma=\varnothing$ and there is nothing to prove. Suppose $\dist(x,\Gamma)>0,$ $y\in K(x,V,\alpha)\cap\Gamma$, and let $z\in\Gamma$ be the image of $x$ under the projection onto $\Gamma$ orthogonal to $V^{\perp}$, i.e. $z = \pi^{\perp}_V(x)+F(\pi^{\perp}_V(x))$. 
		
		Observe that, since $\Gamma$ is a Lipschitz graph,
		\begin{equation*}
		|x-z|\lesssim_L \dist(x,\Gamma),
		\end{equation*}
		and also $\pi_V^{\perp}(x) = \pi_V^{\perp}(z)$. By the definition of a cone, $y\in K(x,V,\alpha)$ gives
		\begin{equation*}
		|\pi_V^{\perp}(z-y)| = |\pi_V^{\perp}(x-y)|<\alpha |x-y|.
		\end{equation*}
		On the other hand, $y\in\Gamma$ and the above imply
		\begin{equation*}
		|\pi_V(z-y)|\le L|\pi_V^{\perp}(z-y)|<L\alpha |x-y|.
		\end{equation*}
		The three estimates above yield
		\begin{multline*}
		|x-y|\le |x-z| + |z-y|\le C(L)\dist(x,\Gamma) + |\pi_V^{\perp}(z-y)| + |\pi_V(z-y)|\\ \le C(L)\dist(x,\Gamma)  + \alpha |x-y|+ L\alpha |x-y|\le C(L)\dist(x,\Gamma) + \frac{1}{2}|x-y|.
		\end{multline*}
		Hence, $|x-y|\lesssim_L \dist(x,\Gamma)$ and \eqref{eq:cone intersects graph} follows.
		%For $y\in 5B_j$ we have $\dist(y,\Gamma)\approx \dist(x_j,\Gamma) = \lambda^{-1}r_j$.
		
		Now, going back to \eqref{eq:Kj estimate}, note that for $y\in 5B_j$ we have $\dist(y,\Gamma)\approx r_j$, so that $K(y,V,\alpha)\cap\Gamma\subset B(y,Cr_j)$ for some $C=C(L)$. Moreover, $B(y,Cr_j)\subset B(x_j,10Cr_j)$. Therefore, $K_j\cap\Gamma\subset B(x_j,10Cr_j)\cap\Gamma$, and \eqref{eq:Kj estimate} easily follows.
	\end{proof}
	\begin{proof}[Proof of \propref{prop:necessary BPLG}]
		Let $x\in\Gamma\cap B(0,1)$ and $0<r<1$. Since $\{5B_j\}_{j\in\Z}$ cover $E\cap B(0,1)\setminus\Gamma$, and $K(x,V,\alpha,r)\cap\Gamma=\varnothing$, we have
		\begin{equation*}
		\mu(K(x,V,\alpha,r))\le \sum_{j\in\Z\, :\, 5B_j\cap K(x,V,\alpha,r)\neq\varnothing} \mu(5B_j) \lesssim C_0 \sum_{j\in\Z\, :\, 5B_j\cap K(x,V,\alpha,r)\neq\varnothing} r_j^n.
		\end{equation*}
		Notice that $5B_j\cap K(x,V,\alpha,r)\neq\varnothing$ if and only if $x\in K_j(r)$. Hence, using the above and \lemref{lem:Kj estimate} yields
		\begin{multline*}
		\int_{\Gamma\cap B(0,1)}\int_0^{1} \frac{\mu(K(x,V,\alpha,r))}{r^n}\ \frac{dr}{r}d\Hn(x)\lesssim_{C_0} \int_{\Gamma\cap B(0,1)}\int_0^{1} \frac{1}{r^n} \sum_{j\in\Z} r_j^n \one_{K_j(r)}(x) \frac{dr}{r}d\Hn(x)\\
		 = \sum_{j\in\Z} r_j^n \int_{\Gamma\cap B(0,1)}\int_0^{1} \frac{1}{r^n}  \one_{K_j(r)}(x) \frac{dr}{r}d\Hn(x) \overset{\eqref{eq:Kjr doesnt intersect Gamma}}{\le} \sum_{j\in\Z} r_j^n \int_{K_j\cap\Gamma}\int_{r_j}^{1} \frac{1}{r^n}  \frac{dr}{r}d\Hn(x) \\
		 \lesssim \sum_{j\in\Z} r_j^n\ \int_{K_j\cap\Gamma} r_j^{-n} d\Hn(x) \overset{\eqref{eq:Kj estimate}}{\lesssim_{L}}\sum_{j\in\Z} r_j^n \overset{\eqref{eq:radii summable}}{\lesssim_{C_0}}1.
		\end{multline*}
		We know that $\Hn(\Gamma\cap B(0,1)\cap E)\ge\kappa$, and so we can use Chebyshev's inequality to conclude that there exist $M_0=M_0(L,C_0,\kappa)>1$ and $F\subset \Gamma\cap B(0,1)\cap E$ with $\Hn(F)\ge \frac{\kappa}{2}$ such that for all $x\in F$
		\begin{equation*}
		\int_0^{1} \frac{\mu(K(x,V,\alpha,r))}{r^n}\ \frac{dr}{r}\le M_0.
		\end{equation*}
	\end{proof}

	\appendix
	\section{Proof of \lemref{lem:approx planes converge to approx tangent}}\label{sec:appendix}
	For reader's convenience we restate \lemref{lem:approx planes converge to approx tangent} below.
	\begin{lemma}
		Let $\mu$ be a $n$-rectifiable measure. For $x\in\supp\mu$ and $r>0$ let $L_{x,r}$ denote a minimizing plane for $\beta_{\mu,2}(x,r)$, let $W'_x$ be the approximate tangent plane to $\mu$ at $x$, whenever it exists, and let $W_x=W_x'+x$. Then for $\mu$-a.e. $x\in\supp\mu$ we have
		\begin{equation}\label{eq:Lxr converge to approximate tangent 2}
		\frac{\dist_H(L_{x,r}\cap B(x,r), W_x\cap B(x,r))}{r}\xrightarrow{r\to 0 } 0.
		\end{equation}
	\end{lemma}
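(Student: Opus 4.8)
The plan is to reduce \eqref{eq:Lxr converge to approximate tangent 2} to the statement that the $\beta_2$-minimising planes of $\mu$ at $x$ must follow the support of the (flat) tangent measure of $\mu$ at $x$. For $r>0$ write $\tilde L_r := r^{-1}(L_{x,r}-x)$ for the rescaled minimising plane. The homothety $z\mapsto r^{-1}(z-x)$ carries $L_{x,r}\cap B(x,r)$ onto $\tilde L_r\cap B(0,1)$ and $W_x\cap B(x,r)$ onto $W_x'\cap B(0,1)$, and multiplies Hausdorff distance by $r^{-1}$, so \eqref{eq:Lxr converge to approximate tangent 2} is equivalent to $\dist_H(\tilde L_r\cap B(0,1),\,W_x'\cap B(0,1))\to0$ as $r\to0$. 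Setting $\mu_{x,r}(A):=r^{-n}\mu(x+rA)$, the same change of variables, together with the scaling invariance $r^{-1}(W_x-x)=W_x'$, gives
\begin{equation*}
\beta_{\mu,2}(x,r)^2=\int_{B(0,1)}\dist(z,\tilde L_r)^2\,d\mu_{x,r}(z).
\end{equation*}

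First I would fix a point $x\in\supp\mu$ in a set of full $\mu$-measure at which $0<\Theta^n(\mu,x)<\infty$, the approximate tangent plane $W_x'$ exists and is unique, and the blow-ups converge weakly, $\mu_{x,r}\rightharpoonup\nu:=\Theta^n(\mu,x)\,\Hr{W_x'}$ as $r\to0$; the existence of such a full-measure set is precisely the structure theory of $n$-rectifiable measures (the tangent measure is unique and flat, and its plane is the approximate tangent plane), see \cite[Chapter 16]{mattila1999geometry}. Since $W_x=W_x'+x$ is an $n$-plane through $x$, it is an admissible competitor in the definition of $\beta_{\mu,2}(x,r)$; choosing a continuous $\psi$ with $\one_{B(0,1)}\le\psi\le\one_{B(0,2)}$ and testing weak convergence against the compactly supported function $\psi(z)\dist(z,W_x')^2$ gives
\begin{equation*}
\limsup_{r\to0}\beta_{\mu,2}(x,r)^2\le\limsup_{r\to0}\int\psi(z)\dist(z,W_x')^2\,d\mu_{x,r}(z)=\int\psi(z)\dist(z,W_x')^2\,d\nu(z)=0,
\end{equation*}
since $\nu$ is concentrated on $W_x'$. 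Hence $\beta_{\mu,2}(x,r)\to0$ as $r\to0$.

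Next I would argue by contradiction (the choice among possibly several minimisers $L_{x,r}$ is irrelevant, since the argument applies to any sequence of $\beta_2$-minimising planes). Suppose \eqref{eq:Lxr converge to approximate tangent 2} fails at such an $x$: there exist $\delta>0$ and $r_k\to0$ with $\dist_H(\tilde L_{r_k}\cap B(0,1),W_x'\cap B(0,1))\ge\delta$. Each $\tilde L_{r_k}$ is an affine $n$-plane meeting $\overline{B(0,1)}$, and the space of such planes is compact for local Hausdorff convergence (parametrise a plane by the foot of the perpendicular dropped from $0$, which lies in $\overline{B(0,1)}$, together with its direction in $G(d,n)$); passing to a subsequence, $\tilde L_{r_k}\to\tilde L$ with $\tilde L$ an affine $n$-plane meeting $\overline{B(0,1)}$, $\tilde L\ne W_x'$, and $\dist(\cdot,\tilde L_{r_k})\to\dist(\cdot,\tilde L)$ uniformly on bounded sets. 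Fix $0<\rho<1$ and $\phi\in C_c(B(0,1))$ with $\one_{B(0,\rho)}\le\phi\le\one_{B(0,1)}$. Using the elementary bound $\dist(z,\tilde L)^2\le 2\dist(z,\tilde L_{r_k})^2+2\eta_k^2$ valid for $z\in B(0,1)$, where $\eta_k\to0$ controls the local Hausdorff deviation of $\tilde L_{r_k}$ from $\tilde L$, and the boundedness of $\mu_{x,r_k}(B(0,1))=\mu(B(x,r_k))/r_k^n$ (as $\Theta^{n,*}(\mu,x)<\infty$), we get
\begin{equation*}
\int\phi(z)\dist(z,\tilde L)^2\,d\mu_{x,r_k}(z)\le\int_{B(0,1)}\dist(z,\tilde L)^2\,d\mu_{x,r_k}(z)\le 2\beta_{\mu,2}(x,r_k)^2+2\eta_k^2\,\mu_{x,r_k}(B(0,1))\to0.
\end{equation*}
On the other hand, weak convergence gives $\int\phi(z)\dist(z,\tilde L)^2\,d\mu_{x,r_k}(z)\to\int\phi(z)\dist(z,\tilde L)^2\,d\nu(z)$, so $\int_{B(0,\rho)}\dist(z,\tilde L)^2\,d\nu(z)=0$. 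Since $\nu=\Theta^n(\mu,x)\Hr{W_x'}$ with $\Theta^n(\mu,x)>0$, this means $\Hn$-almost every point of $W_x'\cap B(0,\rho)$ lies in the closed set $\tilde L$; such a full-$\Hn$-measure subset of $W_x'\cap B(0,\rho)$ affinely spans $W_x'$, forcing $W_x'\subset\tilde L$, hence $W_x'=\tilde L$ (both $n$-dimensional). This contradicts $\tilde L\ne W_x'$, and the lemma follows.

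The main obstacle is the correct invocation of the structure theory: one needs not merely that \emph{some} tangent measure of $\mu$ at $x$ is flat, but that the blow-ups $\mu_{x,r}$ converge as a \emph{full} limit $r\to0$ (not only along a subsequence) to $\Theta^n(\mu,x)\Hr{W_x'}$, with $W_x'$ the approximate tangent plane of Definition~\ref{def:approx tangent}; this uniqueness of the tangent measure, together with the pinning of the normalising constant by the existence of the density, is what makes both the proof that $\beta_{\mu,2}(x,r)\to0$ and the final contradiction go through. The remaining technicalities --- the boundary effects in the weak convergence, handled by the cutoffs $\psi$ and $\phi$, and the non-compactness of the set of affine $n$-planes, handled by passing to intersections with fixed balls (local Hausdorff convergence) --- are routine.
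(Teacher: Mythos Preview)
Your proof is correct, and it takes a genuinely different route from the paper's. Both arguments set up the same contradiction---extract a subsequence of rescaled minimising planes $\tilde L_{r_k}$ converging to some limit plane $\tilde L$, then show $\tilde L$ must equal $W_x'$---but the mechanism for forcing $\tilde L=W_x'$ is quite different. The paper works by hand: it restricts to a set where the density lies in $[M^{-1},M]$, uses Tolsa's theorem $\int_0^1\beta_{\mu,2}(x,r)^2\,dr/r<\infty$ to make $\beta$ small, and then shows that most of the measure of $B(0,r_k)$ concentrates in the intersection of two thin slabs $B_{\eta r_k}(\tilde L)\cap B_{\eta r_k}(W_x')$; since the two planes are $\varepsilon$-separated, this intersection is essentially $(n-1)$-dimensional, and a covering argument contradicts the lower density bound. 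Your argument instead invokes the full tangent-measure structure theorem for rectifiable measures (the blow-ups $\mu_{x,r}$ converge weakly, as a full limit, to $\Theta^n(\mu,x)\mathcal{H}^n\mres W_x'$), which immediately gives both $\beta_{\mu,2}(x,r)\to0$ and the identification $\tilde L=W_x'$ via a single weak-convergence calculation. Your approach is shorter and more conceptual, and it does not need Tolsa's Dini estimate; the paper's approach is more self-contained in that it only uses the \emph{definition} of the approximate tangent plane (conical density going to zero) rather than the stronger tangent-measure uniqueness statement. One small point you gloss over, which the paper does treat explicitly: the limit plane $\tilde L$ actually passes through $0$ (because $\beta\to0$ together with $\Theta^n(\mu,x)>0$ forces $\dist(0,\tilde L_{r_k})\to0$), and this is what makes the passage from $\dist_H(\tilde L_{r_k}\cap B(0,1),W_x'\cap B(0,1))\ge\delta$ to $\tilde L\ne W_x'$ clean; in your write-up this is implicit in the final contradiction.
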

	\begin{proof}
		Recall that since $\mu$ is $n$-rectifiable, the density $\Theta^n(\mu,x)$ exists and satisfies $0<\Theta^n(\mu,x)<\infty$ for $\mu$-a.e. $x$. Let $M\ge 100$ be some big constant. Define
		\begin{equation*}
		E_M := \{x\in\supp\mu\ :\ M^{-1}\le \Theta^n(\mu,x)\le M\}.
		\end{equation*}
		Note that $\mu(\R^d\setminus \bigcup_{M\ge 100} E_M)=0$, and so it suffices to show that for all $M\ge 100$ \eqref{eq:Lxr converge to approximate tangent 2} holds for $\mu$-a.e. $x\in E_M$. Fix some big $M$, and set $\nu=\restr{\mu}{E_M}$. It is well-known that
		\begin{equation}\label{eq:nu density bdd}
		M^{-1}\le \Theta^n(\nu,x) = \Theta^n(\mu,x)\le M\quad \text{for $\nu$-a.e. $x\in \supp\nu$},
		\end{equation}
		which can be shown e.g. using \cite[Corollary 6.3]{mattila1999geometry} in conjunction with Lebesgue differentiation theorem. For $\nu$-a.e. $x$ the plane $W_x$ is well defined by \thmref{thm:classical tangents}, and also by \thmref{thm:beta necessary}
		\begin{equation}\label{eq:beta square function 2}
		\int_0^1\beta_{\mu,2}(x,r)^2\ \frac{dr}{r}<\infty\quad\text{for $\mu$-a.e. $x\in\R^d$}.
		\end{equation}
		Fix $x\in E_M$ such that \eqref{eq:beta square function 2} and \eqref{eq:nu density bdd} hold, and such that $W_x$ is well-defined. Once we show that \eqref{eq:Lxr converge to approximate tangent 2} holds at $x$, the proof will be finished. From now on we will suppress the subscript $x$, so that $L_{r}=: L_{x,r},\ W:=W_x$. By applying an appropriate translation, we may assume that $x=0$.
		
		Given some small $r>0$, let $A_r(y) = \frac{y}{r}$, so that $A_r(B(0,r))=B(0,1)$. Set $L_r' = A_r(L_r)$. It is easy to see that \eqref{eq:Lxr converge to approximate tangent 2} is equivalent to showing
		\begin{equation*}
		\dist_H(L_r'\cap B(0,1),W\cap B(0,1))\xrightarrow{r\to 0 } 0.
		\end{equation*}
		We will prove that the convergence above holds by contradiction. Suppose it is not true, so that there is $\varepsilon>0$ and a sequence $r_k\to 0$ such that for all $k$ we have
		\begin{equation}\label{eq:Lrk far from L}
		\dist_H(L_{r_k}'\cap B(0,1),W\cap B(0,1))\ge \varepsilon.
		\end{equation}
		
		Let $\eta>0$ be some tiny constant. Observe that by \eqref{eq:beta square function 2} for $k\ge k_0(\eta,M)$ large enough we have 
		\begin{equation}\label{eq:beta rk small}
		\beta_{\mu,2}(0,r_k)^2 \le \frac{\eta^3}{M}.
		\end{equation}
		Indeed, otherwise one could use the simple fact that $\beta_{\mu,2}(0,r)\lesssim \beta_{\mu,2}(0,2r)$ to conclude that $\int_0^1 \beta_{\mu,2}(0,r)^2\ dr/r = \infty$. Moreover, let us remark that for every $0<\delta<1/2$, if $k=k(\delta)$ is large enough, then we have $L_{r_k}'\cap B(0,\delta)\neq\varnothing$. This can be shown easily using the fact that $\Theta^n(\mu,x)\ge M^{-1}$, that $L_{r_k}$ are minimizers of $\beta_{\mu,2}(0,r_k)$, and the fact that $\beta_{\mu,2}(0,r_k)\to 0$. We leave checking the details to the reader.
		
		Now, we use the fact that for $k$ large enough $L_{r_k}'\cap B(0,\delta)\neq\varnothing$ and the compactness properties of the Hausdorff distance to conclude that there exists some subsequence (again denoted by $r_k$) such that $L_{r_k}'\cap \overline{B(0,1)}$ converges in Hausdorff distance to a compact set of the form $V\cap \overline{B(0,1)}$, where $V$ is an $n$-plane intersecting $B(0,\delta)$. Since $\delta>0$ can be chosen arbitrarily small, we get that $V$ passes through $0$. Note that by \eqref{eq:Lrk far from L}
		\begin{equation}\label{eq:L0 far from L}
		\dist_H(V\cap B(0,1),W\cap B(0,1))\ge \varepsilon.
		\end{equation}
		
		Let $B_{\eta r_k}(V)$ denote the $\eta r_k$-neighbourhood of $V$. We will show now that a large portion of measure $\nu$ in $B(0,r_k)$ is concentrated at the intersection of $B_{\eta r_k}(V)$ and $B_{\eta r_k}(W)$. 
		
		Since $V$ passes through 0, for every $r>0$ we have $A_r^{-1}(V)=V$.  Thus,
		\begin{equation}\label{eq:Lrk close to L0}
		\frac{\dist_H(L_{r_k}\cap B(0,r_k), V\cap B(0,r_k))}{r_k}\xrightarrow{k\to \infty } 0.
		\end{equation}		
		Note that for $k$ big enough
		\begin{multline*}
		\frac{1}{\nu(B(0,r_k))}\int_{B(0,r_k)}\bigg(\frac{\dist(y,V)}{r_k}\bigg)^2\ d\nu(y)\\
		\le \frac{1}{\nu(B(0,r_k))}\int_{B(0,r_k)}\bigg(\frac{\dist(y,L_{r_k})}{r_k}\bigg)^2\ d\nu(y) + \bigg(\frac{\dist_H(L_{r_k}\cap B(0,2r_k), V\cap B(0,2r_k))}{r_k}\bigg)^2\\ 
		\overset{\eqref{eq:Lrk close to L0}}{\le} \frac{r_k^n}{\nu(B(0,r_k))} \beta_{\mu,2}(0,r_k)^2 + \eta^3\overset{\eqref{eq:nu density bdd}}{\le} 2M\beta_{\mu,2}(0,r_k)^2 +\eta^3 \overset{\eqref{eq:beta rk small}}{\le} 3\eta^3.
		\end{multline*}
		It follows from Chebyshev's inequality and the estimate above that
		\begin{equation*}
		\nu(B(0,r_k)\setminus B_{\eta r_k}(V))\le \eta^{-2} \int_{B(0,r_k)}\bigg(\frac{\dist(y,V)}{r_k}\bigg)^2\ d\nu(y) \le 3\eta \nu(B(0,r)).
		\end{equation*} 
		Hence, $\nu(B(0,r_k)\cap B_{\eta r_k}(V))\ge (1-3\eta r_k)\nu(B(0,r_k)).$ On the other hand, by the definition of the approximate tangent plane $W$ and \eqref{eq:nu density bdd}, for any $0<\alpha<1$ we have
		\begin{multline*}
		\nu(K(0,W,\alpha,r_k))= \nu(B(0,r_k)) - \nu(K(0,W^{\perp},\sqrt{1-\alpha^2},r_k))\\
		\ge \nu(B(0,r_k)) - \frac{\eta}{2M} r_k^n
		\ge (1-\eta)\nu(B(0,r_k)),
		\end{multline*}
		if $k$ is large enough (depending on $\alpha,\ \eta$ and $M$). Note that $K(0,W,\alpha,r_k)\subset B_{\alpha r_k}(W)\cap B(0,r_k)$. Thus, choosing $\alpha = \eta$, if we define
		\begin{equation*}
		S = S(k,\eta) = B(0,r_k)\cap B_{\eta r_k}(V) \cap B_{\eta r_k}(W),
		\end{equation*}
		then by the two previous estimates we have
		\begin{equation}\label{eq:S large}
		\nu(S)\ge (1-4\eta)\nu(B(0,r_k))\ge \frac{1}{2M} r_k^n,
		\end{equation}
		where in the second inequality we used \eqref{eq:nu density bdd}.
		
		We will show that if $\eta$ is chosen small enough (depending on $\varepsilon$, the constant from \eqref{eq:L0 far from L}), then the estimate above leads to a contradiction. Roughly speaking, \eqref{eq:S large} means that a lot of measure is concentrated in the intersection of $B_{\eta r_k}(V)$ and $B_{\eta r_k}(W)$, but since $V$ and $W$ are somewhat well-separated by \eqref{eq:L0 far from L}, this intersection behaves approximately like an $(n-1)$-dimensional set.
		
		Let us start by exploiting \eqref{eq:L0 far from L}. By the definition of Hausdorff distance and the fact that $V$ and $W$ are $n$-planes, it follows from easy linear algebra that there exists some $w\in W^{\perp}$ with $|w|=1$ and $|\pi_{V}(w)|\ge\varepsilon$. Let $v_1  = \pi_{V}(w)/|\pi_{V}(w)|$, and let $V_0\subset V$ be the orthogonal complement of $\spn(v_1)$ in $V$.
		%Consider the orthonormal basis of $\R^d$ such that $\{v_1,\dots,v_n\}$ is an orthonormal basis of $V$, and $\{v_{n+1},\dots,v_d\}$ is a basis of $V^{\perp}$. 
		
		We define $\bbox = \bbox(k,\eta)$ to be a tube-like set defined as
		\begin{equation*}
		\bbox = \bbox(k,\eta)
		= \{z\in\R^d\ :\ |z\cdot v_1|\le 2\eta\varepsilon^{-1} r_k,\ |\pi_{V_0}(z)|\le r_k,\
		|\pi_{V}^{\perp}(z)|\le \eta r_k \}.
		\end{equation*}
		We claim that $S(k,\eta)\subset \bbox(k,\eta)$. Indeed, let $z\in S$. The estimate $|\pi_{V_0}(z)|\le r_k$ is trivial since $S\subset B(0,r_k)$. The estimate $|\pi_{V}^{\perp}(z)|\le \eta r_k$ follows from the fact that $z\in B_{\eta r_k}(V)$. Concerning $|z\cdot v_1|$, note that since $z\in B_{\eta r_k}(W)$ and $w\in W^{\perp}$, we have $|z\cdot w|\le \eta r_k$. We can use our choice of $w$ and $v_1=\pi_{V}(w)/|\pi_{V}(w)|$ to get
		\begin{multline*}
		\eta r_k \ge |z\cdot w| = |z\cdot \pi_{V}(w)+ z\cdot\pi^{\perp}_{V}(w)|\\
		\ge |z\cdot \pi_{V}(w)| - |z\cdot\pi^{\perp}_{V}(w)|
		= {|z\cdot v_1|}{|\pi_{V}(w)|} - |\pi^{\perp}_{V}(z)\cdot\pi^{\perp}_{V}(w)|
		\\ \ge {|z\cdot v_1|}{\varepsilon} - |\pi^{\perp}_{V}(z)| |\pi^{\perp}_{V}(w)| \ge {|z\cdot v_1|}{\varepsilon} - \eta r_k,
		\end{multline*}
		where in the last inequality we used again $z\in B_{\eta r_k}(V)$. Thus, we have $|z\cdot v_1|\le 2\eta \varepsilon ^{-1} r_k$, and the proof of $S(k,\eta)\subset\bbox(k,\eta)$ is finished.
		
		Choose $\eta = \gamma \varepsilon$ for some tiny $\gamma=\gamma(M)>0$, and let $k$ be large enough for \eqref{eq:S large} to hold. It follows from the definition of $\bbox$ that we can cover $\bbox$ with a family of balls $\{B_i\}_{i\in I}$ such that $r(B_i) = \eta r_k$ and $\# I \lesssim \varepsilon^{-1} \eta^{-(n-1)}$. It is well-known that \eqref{eq:nu density bdd} implies that for all $y\in\R^d$ and $r>0$ we have $\nu(B(y,r))\le M r^n$. In particular, for each $i\in I$ we have $\nu(B_i)\le M (\eta r_k)^n$. Thus,
		\begin{equation*}
		\frac{1}{2M} r_k^n \overset{\eqref{eq:S large}}{\le} \nu(S)\le \nu(\bbox)\le \sum_{i\in I}\nu(B_i) \le \#I M (\eta r_k)^n \lesssim \varepsilon^{-1} \eta^{-(n-1)}M (\eta r_k)^n = \varepsilon^{-1} \eta M r_k^n.
		\end{equation*}
		That is,
		\begin{equation*}
		M^{-2}\lesssim \varepsilon^{-1}\eta = \gamma.
		\end{equation*}
		This is a contradiction for $\gamma=\gamma(M)$ small enough. Hence, \eqref{eq:Lrk far from L} is false, and so \eqref{eq:Lxr converge to approximate tangent 2} holds for $\mu$-a.e. $x\in E_M$. Taking $M\to\infty$ finishes the proof.
	\end{proof}

%	\appendix
	%\section{Two examples}\label{sec:examples}


\begin{thebibliography}{CAMT19}
	\expandafter\ifx\csname url\endcsname\relax
	\def\url#1{\texttt{#1}}\fi
	\expandafter\ifx\csname doi\endcsname\relax
	\def\doi#1{\burlalt{doi:#1}{http://dx.doi.org/#1}}\fi
	\expandafter\ifx\csname urlprefix\endcsname\relax\def\urlprefix{URL }\fi
	\expandafter\ifx\csname href\endcsname\relax
	\def\href#1#2{#2}\fi
	\expandafter\ifx\csname burlalt\endcsname\relax
	\def\burlalt#1#2{\href{#2}{#1}}\fi
	
	\bibitem[ADT16]{azzam2016wasserstein}
	J.~Azzam, G.~David, and T.~Toro.
	\newblock {Wasserstein} distance and the rectifiability of doubling measures:
	part {I}.
	\newblock {\em Math. Ann.}, 364(1-2):151--224, 2016.
	\newblock \doi{10.1007/s00208-015-1206-z}.
	
	\bibitem[AM16]{azzam2016characterization}
	J.~Azzam and M.~Mourgoglou.
	\newblock A characterization of $1$-rectifiable doubling measures with
	connected supports.
	\newblock {\em Anal. PDE}, 9(1):99--109, 2016.
	\newblock \doi{10.2140/apde.2016.9.99}.
	
	\bibitem[AT15]{azzam2015characterization}
	J.~Azzam and X.~Tolsa.
	\newblock Characterization of $n$-rectifiability in terms of {Jones'} square
	function: Part {II}.
	\newblock {\em Geom. Funct. Anal.}, 25(5):1371--1412, 2015.
	\newblock \doi{10.1007/s00039-015-0334-7}.
	
	\bibitem[ATT20]{azzam2018characterization}
	J.~{Azzam}, X.~{Tolsa}, and T.~{Toro}.
	\newblock {Characterization of rectifiable measures in terms of
		$\alpha$-numbers}.
	\newblock {\em Trans. Amer. Math. Soc.}, 373(11):7991--8037, 2020.
	\newblock \doi{10.1090/tran/8170}.
	
	\bibitem[Azz21]{azzam2019semi}
	J.~Azzam.
	\newblock Semi-Uniform Domains and the {$A_{\infty}$} Property for Harmonic
	Measure.
	\newblock {\em Int. Math. Res. Not. IMRN}, 2021(9):6717--6771, 2021.
	\newblock \doi{10.1093/imrn/rnz043}.
	
	\bibitem[Bad19]{badger2018generalized}
	M.~Badger.
	\newblock Generalized rectifiability of measures and the identification
	problem.
	\newblock {\em Complex Anal. Synerg.}, 5(1):2, 2019.
	\newblock \doi{10.1007/s40627-019-0027-3}.
	
	\bibitem[BJ94]{bishop1994}
	C.~J. Bishop and P.~W. Jones.
	\newblock {Harmonic measure, $L^2$-estimates and the Schwarzian derivative}.
	\newblock {\em J. Anal. Math.}, 62(1):77--113, 1994.
	\newblock \doi{10.1007/BF02835949}.
	
	\bibitem[BN21]{badger2020radon}
	M.~Badger and L.~Naples.
	\newblock {Radon} measures and {Lipschitz} graphs.
	\newblock {\em Bull. Lond. Math. Soc.}, 53(3):921--936, 2021.
	\newblock \doi{10.1112/blms.12473}.
	
	\bibitem[BS15]{badger2015multiscale}
	M.~Badger and R.~Schul.
	\newblock Multiscale analysis of $1$-rectifiable measures: necessary
	conditions.
	\newblock {\em Math. Ann.}, 361(3-4):1055--1072, 2015.
	\newblock \doi{10.1007/s00208-014-1104-9}.
	
	\bibitem[BS16]{badger2016two}
	M.~Badger and R.~Schul.
	\newblock Two sufficient conditions for rectifiable measures.
	\newblock {\em Proc. Amer. Math. Soc.}, 144(6):2445--2454, 2016.
	\newblock \doi{10.1090/proc/12881}.
	
	\bibitem[BS17]{badger2017multiscale}
	M.~Badger and R.~Schul.
	\newblock Multiscale Analysis of 1-rectifiable Measures {II}:
	Characterizations.
	\newblock {\em Anal. Geom. Metr. Spaces}, 5(1):1--39, 2017.
	\newblock \doi{10.1515/agms-2017-0001}.
	
	\bibitem[CAMT19]{conde-alonso2019failure}
	J.~M. Conde-Alonso, M.~Mourgoglou, and X.~Tolsa.
	\newblock Failure of {$L^2$} boundedness of gradients of single layer
	potentials for measures with zero low density.
	\newblock {\em Math. Ann.}, 373(1):253--285, 2019.
	\newblock \doi{10.1007/s00208-018-1729-1}.
	
	\bibitem[CKRS10]{csornyei2010upper}
	M.~Cs{\"o}rnyei, A.~K{\"a}enm{\"a}ki, T.~Rajala, and V.~Suomala.
	\newblock Upper conical density results for general measures on {$\R^n$}.
	\newblock {\em Proc. Edinb. Math. Soc. (2)}, 53(2):311--331, 2010.
	\newblock \doi{10.1017/S0013091508001156}.
	
	\bibitem[CT20]{chang2017analytic}
	A.~Chang and X.~Tolsa.
	\newblock Analytic capacity and projections.
	\newblock {\em J. Eur. Math. Soc. (JEMS)}, 22(12):4121--4159, 2020.
	\newblock \doi{10.4171/JEMS/1004}.
	
	\bibitem[D{\k{a}}b20a]{dabrowski2019necessary}
	D.~D{\k{a}}browski.
	\newblock Necessary condition for rectifiability involving {Wasserstein}
	distance {$W_2$}.
	\newblock {\em Int. Math. Res. Not. IMRN}, 2020(22):8936–8972, 2020.
	\newblock \doi{10.1093/imrn/rnaa012}.
	
	\bibitem[D{\k{a}}b20b]{dabrowski2020two}
	D.~D{\k{a}}browski.
	\newblock Two examples related to conical energies.
	\newblock {\em To appear in Ann. Acad. Sci. Fenn. Math.}, 2020,
	\burlalt{arXiv:2011.12717}{http://arxiv.org/abs/2011.12717}.
	
	\bibitem[D{\k{a}}b21]{dabrowski2019sufficient}
	D.~D{\k{a}}browski.
	\newblock Sufficient condition for rectifiability involving {Wasserstein}
	distance {$W_2$}.
	\newblock {\em To appear in J. Geom. Anal.}, 2021.
	\newblock \doi{10.1007/s12220-020-00603-y}.
	
	\bibitem[Dav88]{david1988operateurs}
	G.~David.
	\newblock Op\'erateurs d'int\'egrale singuli\`ere sur les surfaces
	r\'eguli\`eres.
	\newblock {\em Ann. Sci. \'Ec. Norm. Sup\'er. (4)}, 21(2):225--258, 1988.
	\newblock \doi{10.24033/asens.1557}.
	
	\bibitem[Dav91]{david1991wavelets}
	G.~David.
	\newblock {\em Wavelets and Singular Integrals on Curves and Surfaces}, volume
	1465 of {\em Lecture Notes in Math.}
	\newblock Springer, Berlin, Heidelberg, 1991.
	\newblock \doi{10.1007/BFb0091544}.
	
	\bibitem[DM00]{david2000removable}
	G.~David and P.~Mattila.
	\newblock Removable sets for {Lipschitz} harmonic functions in the plane.
	\newblock {\em Rev. Mat. Iberoam.}, 16(1):137--215, 2000.
	\newblock \doi{10.4171/RMI/272}.
	
	\bibitem[DNI19]{del2019geometric}
	G.~Del~Nin and K.~O. Idu.
	\newblock Geometric criteria for {$C^{1, \alpha}$} rectifiability.
	\newblock {\em Preprint}, 2019,
	\burlalt{arXiv:1909.10625}{http://arxiv.org/abs/1909.10625}.
	
	\bibitem[DS91]{david1991singular}
	G.~David and S.~Semmes.
	\newblock Singular integrals and rectifiable sets in {$\mathbb{R}^n$}:
	Au-del\`{a} des graphes lipschitziens.
	\newblock {\em Ast{\'e}risque}, 193, 1991.
	\newblock \doi{10.24033/ast.68}.
	
	\bibitem[DS93a]{david1993analysis}
	G.~David and S.~Semmes.
	\newblock {\em Analysis of and on Uniformly Rectifiable Sets}, volume~38 of
	{\em Math. Surveys Monogr.}
	\newblock Amer. Math. Soc., Providence, RI, 1993.
	
	\bibitem[DS93b]{david1993quantitative}
	G.~David and S.~Semmes.
	\newblock {Quantitative rectifiability and Lipschitz mappings}.
	\newblock {\em Trans. Amer. Math. Soc.}, 337(2):855--889, 1993.
	\newblock \doi{10.1090/S0002-9947-1993-1132876-8}.
	
	\bibitem[ENV14]{eiderman2014s}
	V.~Eiderman, F.~Nazarov, and A.~Volberg.
	\newblock The {$s$-Riesz} transform of an $s$-dimensional measure in
	{$\mathbb{R}^2$} is unbounded for $1<s<2$.
	\newblock {\em J. Anal. Math.}, 122(1):1--23, 2014.
	\newblock \doi{10.1007/s11854-014-0001-1}.
	
	\bibitem[ENV16]{edelen2016quantitative}
	N.~Edelen, A.~Naber, and D.~Valtorta.
	\newblock Quantitative Reifenberg theorem for measures.
	\newblock {\em Preprint}, 2016,
	\burlalt{arXiv:1612.08052}{http://arxiv.org/abs/1612.08052}.
	
	\bibitem[Fed47]{federer1947varphi}
	H.~Federer.
	\newblock The ($\varphi$, k) rectifiable subsets of $n$ space.
	\newblock {\em Trans. Amer. Math. Soc.}, 62(1):114--192, 1947.
	\newblock \doi{10.2307/1990632}.
	
	\bibitem[GG20]{ghinassi2020menger}
	S.~Ghinassi and M.~Goering.
	\newblock {Menger} curvatures and {$C^{1,\alpha}$} rectifiability of measures.
	\newblock {\em Arch. Math. (Basel)}, 114(4):419--429, 2020.
	\newblock \doi{10.1007/s00013-019-01414-6}.
	
	\bibitem[Ghi20]{ghinassi2017sufficient}
	S.~Ghinassi.
	\newblock Sufficient conditions for {$C^{1,\alpha}$} parametrization and
	rectifiability.
	\newblock {\em Ann. Acad. Sci. Fenn. Math.}, 45:1065--1094, 2020.
	\newblock \doi{10.5186/aasfm.2020.4557}.
	
	\bibitem[Gra14a]{grafakos2014classical}
	L.~Grafakos.
	\newblock {\em Classical {Fourier} Analysis}, volume 249 of {\em Grad. Texts in
		Math.}
	\newblock Springer, New York, 3rd edition, 2014.
	\newblock \doi{10.1007/978-1-4939-1194-3}.
	
	\bibitem[Gra14b]{grafakos2014modern}
	L.~Grafakos.
	\newblock {\em {Modern Fourier analysis}}, volume 250 of {\em Grad. Texts in
		Math.}
	\newblock Springer, New York, 3rd edition, 2014.
	\newblock \doi{10.1007/978-1-4939-1230-8}.
	
	\bibitem[GS19]{girela-sarrion2018}
	D.~Girela-Sarri{\'o}n.
	\newblock Geometric conditions for the {$L^2$}-boundedness of singular integral
	operators with odd kernels with respect to measures with polynomial growth in
	{$\mathbb{R}^d$}.
	\newblock {\em J. Anal. Math.}, 137(1):339--372, 2019.
	\newblock \doi{10.1007/s11854-018-0075-2}.
	
	\bibitem[JM00]{joyce2000set}
	H.~Joyce and P.~M{\"o}rters.
	\newblock {A Set with Finite Curvature and Projections of Zero Length}.
	\newblock {\em J. Math. Anal. Appl.}, 247(1):126--135, 2000.
	\newblock \doi{10.1006/jmaa.2000.6831}.
	
	\bibitem[Jon90]{jones1990rectifiable}
	P.~W. Jones.
	\newblock Rectifiable sets and the traveling salesman problem.
	\newblock {\em Invent. Math.}, 102(1):1--15, 1990.
	\newblock \doi{10.1007/BF01233418}.
	
	\bibitem[K{\"a}e10]{kaenmaki2010upper}
	A.~K{\"a}enm{\"a}ki.
	\newblock On upper conical density results.
	\newblock In J.~Barral and S.~Seuret, editors, {\em Recent Developments in
		Fractals and Related Fields}, pages 45--54. Birkh\"{a}user, Boston, 2010.
	\newblock \doi{10.1007/978-0-8176-4888-6_4}.
	
	\bibitem[KS08]{kaenmaki2008conical}
	A.~K{\"a}enm{\"a}ki and V.~Suomala.
	\newblock Conical upper density theorems and porosity of measures.
	\newblock {\em Adv. Math.}, 217(3):952--966, 2008.
	\newblock \doi{10.1016/j.aim.2007.07.003}.
	
	\bibitem[KS11]{kaenmaki2011nonsymmetric}
	A.~K{\"a}enm{\"a}ki and V.~Suomala.
	\newblock Nonsymmetric conical upper density and $k$-porosity.
	\newblock {\em Trans. Amer. Math. Soc.}, 363(3):1183--1195, 2011.
	\newblock \doi{10.1090/S0002-9947-2010-04869-X}.
	
	\bibitem[Ler03]{lerman2003quantifying}
	G.~Lerman.
	\newblock Quantifying curvelike structures of measures by using {$L^2$} {Jones}
	quantities.
	\newblock {\em Comm. Pure Appl. Math.}, 56(9):1294--1365, 2003.
	\newblock \doi{10.1002/cpa.10096}.
	
	\bibitem[Mat88]{mattila1988distribution}
	P.~Mattila.
	\newblock Distribution of sets and measures along planes.
	\newblock {\em J. Lond. Math. Soc. (2)}, 2(1):125--132, 1988.
	\newblock \doi{10.1112/jlms/s2-38.1.125}.
	
	\bibitem[Mat95]{mattila1999geometry}
	P.~Mattila.
	\newblock {\em Geometry of sets and measures in {Euclidean} spaces: fractals
		and rectifiability}, volume~44 of {\em Cambridge Stud. Adv. Math.}
	\newblock Cambridge Univ. Press, Cambridge, UK, 1995.
	\newblock \doi{10.1017/CBO9780511623813}.
	
	\bibitem[MMV96]{mattila1996cauchy}
	P.~Mattila, M.~S. Melnikov, and J.~Verdera.
	\newblock The {Cauchy} integral, analytic capacity, and uniform rectifiability.
	\newblock {\em Ann. of Math.}, 144(1):127--136, 1996.
	\newblock \doi{10.2307/2118585}.
	
	\bibitem[MO18a]{martikainen2018boundedness}
	H.~Martikainen and T.~Orponen.
	\newblock Boundedness of the density normalised {Jones}' square function does
	not imply $1$-rectifiability.
	\newblock {\em J. Math. Pures Appl.}, 110:71--92, 2018.
	\newblock \doi{10.1016/j.matpur.2017.07.009}.
	
	\bibitem[MO18b]{martikainen2018characterising}
	H.~Martikainen and T.~Orponen.
	\newblock Characterising the big pieces of {Lipschitz} graphs property using
	projections.
	\newblock {\em J. Eur. Math. Soc. (JEMS)}, 20(5):1055--1073, 2018.
	\newblock \doi{10.4171/JEMS/782}.
	
	\bibitem[MV09]{MAYBORODA2009}
	S.~Mayboroda and A.~Volberg.
	\newblock Boundedness of the square function and rectifiability.
	\newblock {\em C. R. Math. Acad. Sci. Paris}, 347(17):1051 -- 1056, 2009.
	\newblock \doi{10.1016/j.crma.2009.07.007}.
	
	\bibitem[Nap20]{naples2020rectifiability}
	L.~Naples.
	\newblock Rectifiability of pointwise doubling measures in {Hilbert Space}.
	\newblock {\em Preprint}, 2020,
	\burlalt{arXiv:2002.07570}{http://arxiv.org/abs/2002.07570}.
	
	\bibitem[NTV97]{nazarov1997}
	F.~Nazarov, S.~Treil, and A.~Volberg.
	\newblock {Cauchy integral and Calderón-Zygmund operators on nonhomogeneous
		spaces}.
	\newblock {\em Int. Math. Res. Not. IMRN}, 1997(15):703--726, 1997.
	\newblock \doi{10.1155/S1073792897000469}.
	
	\bibitem[NTV14a]{nazarov2014onthe}
	F.~Nazarov, X.~Tolsa, and A.~Volberg.
	\newblock On the uniform rectifiability of AD-regular measures with bounded
	Riesz transform operator: the case of codimension 1.
	\newblock {\em Acta Math.}, 213(2):237--321, 2014.
	\newblock \doi{10.1007/s11511-014-0120-7}.
	
	\bibitem[NTV14b]{nazarov2014}
	F.~Nazarov, X.~Tolsa, and A.~Volberg.
	\newblock {The Riesz transform, rectifiability, and removability for Lipschitz
		harmonic functions}.
	\newblock {\em Publ. Mat.}, 58(2):517--532, 2014.
	\newblock \doi{10.5565/PUBLMAT_58214_26}.
	
	\bibitem[Orp18]{orponen2018absolute}
	T.~Orponen.
	\newblock Absolute continuity and $\alpha$-numbers on the real line.
	\newblock {\em Anal. PDE}, 12(4):969--996, 2018.
	\newblock \doi{10.2140/apde.2019.12.969}.
	
	\bibitem[Orp21]{orponen2020plenty}
	T.~Orponen.
	\newblock Plenty of big projections imply big pieces of {Lipschitz} graphs.
	\newblock {\em To appear in Invent. Math.}, 2021.
	\newblock \doi{10.1007/s00222-021-01055-z}.
	
	\bibitem[Paj97]{pajot1997conditions}
	H.~Pajot.
	\newblock Conditions quantitatives de rectifiabilit{\'e}.
	\newblock {\em Bull. Soc. Math. France}, 125(1):15--53, 1997.
	\newblock \doi{10.24033/bsmf.2298}.
	
	\bibitem[PPT21]{prat2018L2}
	L.~Prat, C.~Puliatti, and X.~Tolsa.
	\newblock {$L^2$}-boundedness of gradients of single layer potentials and
	uniform rectifiability.
	\newblock {\em Anal. PDE}, 14(3):717--791, 2021.
	\newblock \doi{10.2140/apde.2021.14.717}.
	
	\bibitem[Pre87]{preiss1987}
	D.~Preiss.
	\newblock Geometry of Measures in $\textbf{R}^n$: Distribution, Rectifiability,
	and Densities.
	\newblock {\em Ann. of Math.}, 125(3):537--643, 1987.
	\newblock \doi{10.2307/1971410}.
	
	\bibitem[San19]{santilli2019}
	M.~Santilli.
	\newblock Rectifiability and approximate differentiability of higher order for
	sets.
	\newblock {\em Indiana Univ. Math. J.}, 68:1013--1046, 2019.
	\newblock \doi{10.1512/iumj.2019.68.7645}.
	
	\bibitem[Tol05]{tolsa2005bilipschitz}
	X.~Tolsa.
	\newblock {Bilipschitz maps, analytic capacity, and the Cauchy integral}.
	\newblock {\em Ann. of Math. (2)}, 162(3):1243--1304, 2005.
	\newblock \doi{10.4007/annals.2005.162.1241}.
	
	\bibitem[Tol09]{tolsa2008uniform}
	X.~Tolsa.
	\newblock Uniform rectifiability, {Calder{\'o}n}-{Zygmund} operators with odd
	kernel, and quasiorthogonality.
	\newblock {\em Proc. Lond. Math. Soc. (3)}, 98(2):393--426, 2009.
	\newblock \doi{10.1112/plms/pdn035}.
	
	\bibitem[Tol12]{tolsa2012mass}
	X.~Tolsa.
	\newblock Mass transport and uniform rectifiability.
	\newblock {\em Geom. Funct. Anal.}, 22(2):478--527, 2012.
	\newblock \doi{10.1007/s00039-012-0160-0}.
	
	\bibitem[Tol14]{tolsa2014analytic}
	X.~Tolsa.
	\newblock {\em Analytic capacity, the {Cauchy} transform, and non-homogeneous
		{Calder{\'o}n}-{Zygmund} theory}, volume 307 of {\em Progr. Math.}
	\newblock Birkhäuser, Cham, 2014.
	\newblock \doi{10.1007/978-3-319-00596-6}.
	
	\bibitem[Tol15]{tolsa2015characterization}
	X.~Tolsa.
	\newblock Characterization of $n$-rectifiability in terms of {Jones'} square
	function: part {I}.
	\newblock {\em Calc. Var. Partial Differential Equations}, 54(4):3643--3665,
	2015.
	\newblock \doi{10.1007/s00526-015-0917-z}.
	
	\bibitem[Tol17]{tolsa2017rectifiable}
	X.~Tolsa.
	\newblock Rectifiable measures, square functions involving densities, and the
	{Cauchy} transform.
	\newblock {\em Mem. Amer. Math. Soc.}, 245(1158), 2017.
	\newblock \doi{10.1090/memo/1158}.
	
	\bibitem[Tol19]{tolsa2017rectifiability}
	X.~Tolsa.
	\newblock Rectifiability of measures and the $\beta_p$ coefficients.
	\newblock {\em Publ. Mat.}, 63(2):491--519, 2019.
	\newblock \doi{10.5565/PUBLMAT6321904}.
	
	\bibitem[TT15]{tolsa2014rectifiability}
	X.~Tolsa and T.~Toro.
	\newblock Rectifiability via a square function and {Preiss}’ theorem.
	\newblock {\em Int. Math. Res. Not. IMRN}, 2015(13):4638--4662, 2015.
	\newblock \doi{10.1093/imrn/rnu082}.
	
	\bibitem[Vil19]{villa2019square}
	M.~Villa.
	\newblock A square function involving the center of mass and rectifiability.
	\newblock {\em Preprint}, 2019,
	\burlalt{arXiv:1910.13747}{http://arxiv.org/abs/1910.13747}.
	
	\bibitem[Vil20]{villa2018tangent}
	M.~Villa.
	\newblock Tangent points of $d$-lower content regular sets and $\beta$ numbers.
	\newblock {\em J. Lond. Math. Soc. (2)}, 101(2):530--555, 2020.
	\newblock \doi{10.1112/jlms.12275}.
	
\end{thebibliography}
\end{document}